\newtheorem{theorem}{Theorem}
\newtheorem{proposition}{Proposition}
\newtheorem{corollary}{Corollary}
\newtheorem{lemma}{Lemma}
\newtheorem{remark}{Remark}
\newtheorem{thm}{Theorem}[section]
\newtheorem{lem}[thm]{Lemma}
\newtheorem{prop}[thm]{Proposition}
\newtheorem{rem}{Remark}
\def\clap#1{\hbox to 0pt{\hss#1\hss}}
\newcommand{\ownint}[4]{{\int_{#1}^{#2} \! #3 \, \mathrm{d}#4}}
\newcommand{\innprod}[2]{\langle#1,#2\rangle}
\newcommand {\E}{\mathbb{E}}
\renewcommand {\P}{\mathbb{P}}
\newcommand {\R}{\mathbb{R}}
\newcommand{\Tan}{\mathrm{Tan}}
\newcommand{\tocless}[2]{\bgroup\let\addcontentsline=\nocontentsline#1{#2}\egroup}
\numberwithin{equation}{section}
\theoremstyle{plain}
\begin{document}

\begin{frontmatter}

\title{{ Fr\'echet Means and Procrustes Analysis\\in Wasserstein Space\thanksref{T1}}}

\runtitle{Fr\'echet Means and Procrustes Analysis in Wasserstein Space}

\begin{aug}
\author{\fnms{Yoav} \snm{Zemel}\ead[label=e1]{yoav.zemel@epfl.ch}} \and
\author{\fnms{Victor M.} \snm{Panaretos}\ead[label=e2]{victor.panaretos@epfl.ch}}

\thankstext{T1}{Research supported by an ERC Starting Grant Award to Victor M. Panaretos.}

\runauthor{Y. Zemel \& V.M. Panaretos }

\affiliation{Ecole Polytechnique F\'ed\'erale de Lausanne}

\address{Institut de Math\'ematiques\\
Ecole Polytechnique F\'ed\'erale de Lausanne\\
1015 Lausanne, Switzerland\\
\printead{e1}, \printead*{e2}}

\end{aug}

\begin{abstract}
We consider two statistical problems at the intersection of functional and non-Euclidean data analysis: the determination of a Fr\'echet mean in the Wasserstein space of multivariate distributions; and the optimal registration of deformed random measures and point processes. We elucidate how the two problems are linked, each being in a sense dual to the other. We first study the finite sample version of the problem in the continuum. Exploiting the tangent bundle structure of Wasserstein space, we deduce the Fr\'echet mean via gradient descent. We show that this is equivalent to a Procrustes analysis for the registration maps, thus only requiring successive solutions to pairwise optimal coupling problems. We then study the population version of the problem, focussing on inference and stability: in practice, the data are i.i.d. realisations from a law on Wasserstein space, and indeed their observation is discrete, where one observes a proxy finite sample or point process. We construct regularised nonparametric estimators, and prove their consistency for the population mean, and uniform consistency for the population Procrustes registration maps.
\end{abstract}

\begin{keyword}[class=AMS]
\kwd[Primary ]{62M30; 60D05}
\kwd[; secondary ]{62G07; 60G55}
\end{keyword}

\begin{keyword}
\kwd{Functional Data Analysis}
\kwd{Manifold Statistics}
\kwd{Optimal Transportation}
\kwd{Phase Variation}
\kwd{Point Process}
\kwd{Random Measure}
\kwd{Registration}
\kwd{Warping}
\end{keyword}

\end{frontmatter}

\vspace{-0.4cm}

\tableofcontents

\newpage

\section{Introduction}
Functional data analysis (e.g. Hsing \& Eubank \cite{hsing-book}) and non-Euclidean statistics (e.g. Patrangenaru \& Ellingson \cite{patrangenaru2015nonparametric}) represent modern areas of statistical research, whose key challenges arise from the intrinsic complexity of the data and the peculiarities of their ambient space. In the first case, the data are random elements in a separable Hilbert space of functions (typically $L^2[0,1]$), and resulting challenges are linked to infinite dimensionality (e.g. ill-posed studentisation, Munk et al. \cite{munk2008one}, and discrete measurements of continuum random objects, Zhang \& Wang \cite{zhang2016sparse}). In the second case, the data are seen as random elements of a finite-dimensional Riemannian manifold (often a shape space), and resulting challenges are linked to the non-linear structure of the space (e.g. existence/uniqueness of Fr\'echet means, Le \cite{le1998consistency} and Kendall \cite{kendall2010survey}, and analysis of manifold variation, Huckemann, Munk \& Hotz \cite{huckemann2010intrinsic}).

At the intersection of these two domains, with manifestations in neurophysiology, imaging, and environmetrics, one finds data objects that are best modelled as \emph{distributions} over $\mathbb{R}^d$, that is, \emph{random measures} (Stoyan, Kendall \& Mecke \cite{chiu2013stochastic}, Kallenberg \cite{kallenberg1986random}). Such random measures carry the infinite dimensional traits of functional data, but at the same time are characterised by intrinsic non-linearities due to their positivity and integrability constraints, requiring a non-Euclidean point of view. Indeed, despite their functional nature, their dominating variational feature is not due to additive amplitude fluctuations (as can be seen in the Karhunen-Lo\`eve expansion of functional data), but rather to \emph{random deformation} of a structural mean (as in Freitag \& Munk \cite{freitag2005hadamard}) or template (as in \emph{morphometrics}, Bookstein \cite{bookstein1997morphometric}). Still, being infinite dimensional, their observation is typically done discretely, for example noisily over a grid (e.g.\ Amit et al.\ \cite{amit1991structural}, Allassonni{\`e}re et al.\ \cite{allassonniere2007towards}) or via random sampling (e.g. Panaretos \& Zemel \cite{panzem15}), requiring tools and techniques from nonparametric statistics, as used in functional data analysis.

In this setting, the typical statistical objective is to estimate the underlying template that gives rise to the data by random deformation. This can often be modelled as a Fr\'echet mean with respect to some metric structure; dual to this problem is the recovery the deformation maps themselves, in order to \emph{register} the individual realisations in a common coordinate system, given by \emph{registration maps}. These problems are interwoven in \emph{shape theory}, where the template and registration maps are the two ingredients of Procrustes analysis (Gower \cite{gower1975generalized}; Dryden \& Mardia \cite{dryden1998statistical}) and non-Euclidean PCA (Huckemann, Munk \& Hotz \cite{huckemann2010intrinsic}; Huckemann \& Ziezold \cite{huckemann2006principal}). Obviously, the methods and algorithms for estimating a mean and carrying out a registration/Procrustes analysis are inextricably linked with the geometry of the sample space, which can be a matter of modelling choice or of first principles.

In this paper, we choose to study the problem of \emph{Fr\'echet averaging} and \emph{Procrustes registration} when the data are viewed as elements of the $L^2$-Wasserstein space of multivariate measures on $\mathbb{R}^d$. We choose this setting since it has a long history in assessing compatibility and fit of distributions related via deformations (Munk \& Czado \cite{munk1998nonparametric}; Freitag \& Munk \cite{freitag2005hadamard}), and as it can be seen to be a natural analogue of using $L^2$, in the case of measures\footnote{In the sense that the Wasserstein space is topologically homeomorphic to a convex subset of $L^2([0,1]^d)$;  when $d=1$, this homeomorphism is an isometry, whereas for $d>1$, it is a local isometry.} (Panaretos \& Zemel \cite{panzem15}; Bigot \& Klein \cite{bigot2012consistent}). We work at both a sample level and a population level, as well as both at the level of continuum and discrete observation: our object of study is the determination of the Fr\'echet mean and registration maps at the level of a sample, as well as at their estimation when the observed measures are discretely observed realisations from a population of random measures.  When $d=1$, the problem is well understood, owing to the flat geometry of Wasserstein space (Panaretos \& Zemel \cite{panzem15}). When $d>1$, however, the Wasserstein space has non-negative curvature, and one encounters the classical difficulties of non-Euclidean statistics, augmented by the infinite dimensionality and discrete measurement of the problem (see Anderes et al.\ \cite{anderes2015discrete}, Sommerfeld \& Munk \cite{sommerfeld2016inference} and Tameling et al.\ \cite{tameling2017empirical} for challenges involved in the discrete setting).

\smallskip
\noindent In more detail, our contributions are:
\begin{itemize}

\item[(A)] \underline{At the sample level:}  we illustrate how knowledge of the Fr\'echet mean (template) gives an explicit solution to the optimal registration/multicoupling problem (Section \ref{population_to_sample}, Proposition \ref{multicoupling}). We study the tangent space geometry,  using it to determine the gradient of the Fr\'echet functional (Section \ref{section:gradient}, Theorem \ref{gradient_definition}), and characterise Karcher means via its zeroes (Corollary \ref{thm:minisstationary}, Section \ref{characterisation_mean}). We give criteria for determining when a Karcher mean (local optimum) is a Fr\'echet mean (global optimum; Theorem \ref{thm:optimalKarcher}). We construct a gradient descent algorithm (Algorithm \ref{algo}), and find its optimal stepsize (Lemma \ref{lem:iterationbound}) illustrating the algorithm structurally equivalent to a Procrustes algorithm (Section \ref{algorithm_section}), reducing the determination of the mean to the successive solution of pairwise optimal transport problems. We prove that the gradient iterate converges to a Karcher mean in the Wasserstein metric (Subsection~\ref{convergence_section}, Theorem \ref{thm:convtosta}); and that the induced transportation maps converge uniformly to the Procrustes maps (required for optimal mutlicoupling; Theorem \ref{thm:convprocrustes}, Section \ref{convergence_procrustes}). The latter is particularly involved and requires techniques from the geometry of monotone operators on $\mathbb{R}^d$.  {As a noteworthy corollary, we deduce convergence of the multicouplings (Corollary~\ref{cor:convMulti}).}

\item[(B)] \underline{At the population level:} we consider a population level model linking Fr\'echet means and optimal registration and give conditions for model identifiability (Section \ref{sec:discrete_intro}, Theorem~\ref{thm:meanTid});  We then tackle the problem of point estimation of the population mean and registration maps in a functional data analysis setup, where instead of observing an i.i.d. sample $\{\mu^1,\dots,\mu^N\}$ from the population, we observe samples or point processes with these measures as distributions/intensities. In this setting, we construct regularised nonparametric estimators of the Fr\'echet means and Procrustes maps, and prove that they are consistent in Wasserstein distance and uniform norm, respectively (Theorems \ref{thm:consistency} and \ref{thm:consistencyWarp}).
\end{itemize}

\noindent Before presenting our main results, we first provide a short introduction to Wasserstein space in Section \ref{wasser_intro}.  Section \ref{proofs} gathers the main proofs, for the sake of tidiness, and Section \ref{SEC:EX} presents several interesting special examples as an illustration.  Section~\ref{zempan16supp} supplements the main article, providing further technical details.

\noindent In reviewing an earlier version of our paper (\cite{zempan-report}, February 2016), a referee brought to our attention independent parallel work by \'Alvarez-Esteban et al., that had concurrently (January 2016) been submitted for publication in an analysis journal (and has now appeared, see  \cite{alvarez2016fixed}). Their work overlaps with part of ours in (A) above (Subsections \ref{procrustes_analogy} and \ref{convergence_section}). In particular, they too arrive at a (structurally) same algorithm (Algorithm \ref{algo}). Their motivation, construction, and convergence proof differ substantially from ours (theirs is a fixed point iteration heuristically motivated by the Gaussian case, while their proof uses almost sure representations). Indeed, our geometrical framework and proof techniques is what allows us to study the problem of optimal registration (Procrustes analysis), requiring a careful study of the stochastic convergence of monotone operators on $\mathbb{R}^d$ (Section \ref{appendix}).

\section{Optimal Transportation and Wasserstein Space}\label{wasser_intro}

The reason the Wasserstein space arises as the natural space to capture deformation-based variation of random measures lies in its deep connection with the problem of \emph{optimal transportation of measure}. This consists in solving the \emph{Monge problem} (Villani \cite{vil}): given a pair of measures $(\mu,\nu)$, find a mapping $\mathbf{t}_{\mu}^{\nu}:\mathbb{R}^d\mapsto\mathbb{R}^d$ such that $\mathbf{t}_{\mu}^{\nu}\#\mu=\nu$, and
\[
\ownint{\mathbb{R}^d}{}{\left\|\mathbf{t}_{\mu}^{\nu}(x)-x\right\|^2}{\mu(x)}
\le \ownint{\mathbb{R}^d}{}{\left\|\mathbf{q}(x)-x\right\|^2}{\mu(x)},
\]
for any other $\mathbf{q}$ such that $\mathbf{q}\#\mu=\nu$. Here, ``$\#$" denotes the push-forward operation, where $[\mathbf{t}\#\mu](A)=\mu(\mathbf{t}^{-1}(A))$ for all Borel sets $A$ of $\mathbb{R}^d$. The map $\mathbf{t}_{\mu}^{\nu}$ is called an optimal transport plan, and a solution to this problem yields an optimal deformation of $\mu$ into $\nu$ with respect to the \emph{transport cost} given by squared Euclidean distance.

An optimal transport map may fail to exist, and instead, one may need to solve the relaxed Monge problem, known as the \emph{Kantorovich} problem (Villani \cite{vil}). Here instead of seeking a map $\mathbf{t}_{\mu}^{\nu}\#\mu=\nu$, one seeks a distribution $\xi$ on $\mathbb{R}^d\times \mathbb{R}^d$ with marginals $\mu$ and $\nu$, minimising the functional
\[
\ownint{\mathbb{R}^d\times\mathbb{R}^d}{}{\|x-y\|^2}{\xi(x,y)}
\]
over all measures $\xi$ on $\mathbb{R}^d\times \mathbb{R}^d$ with marginals $\mu$ and $\nu$. In probabilistic terms, $\xi$ yields a coupling of random variables $X\sim\mu$ and $Y\sim \nu$ that minimises the quantity
$$\mathbb{E}\|X-Y\|^2,$$ over all possible couplings of $X$ and $Y$. It can be shown that when the measure $\mu$ is regular (absolutely continuous with respect to Lebesgue measure), the Kantorovich problem reduces to the Monge problem, and the optimal coupling $\xi$ is supported on the graph of the function. That is, the optimal coupling exists, is unique, and can be realised by a proper transport map $\mathbf{t}_{\mu}^{\nu}$.

One may consider the space $\mathcal{P}_2(\mathbb{R}^2)$ of all probability measures $\mu$ on $\mathbb{R}^d$ with finite variance (that is, $\ownint{\mathbb{R}^d}{}{\|x\|^2}{\mu(x)}<\infty$) as a metric space, endowed with the $L^2$-Wasserstein distance
\[
d(\mu,\nu)=\inf_{\xi\in \Gamma(\mu,\nu)}\sqrt{\ownint{\mathbb{R}^d\times\mathbb{R}^d}{}{\|x-y\|^2}{\xi(x,y)}},
\]
where $\Gamma(\mu,\nu)$ is the set of probability measures on $\mathbb{R}^d\times \mathbb{R}^d$ with marginals $\mu$ and $\nu$. The induced metric space is colloquially called \emph{Wasserstein space} and will form the geometrical context for our study of \emph{deformation-based} variation of random measures. This space has been used extensively in statistics, as it metrises the topology of weak convergence, and convergence with respect to the metric yields both convergence in law, as well as convergence of the first two moments (for instance, in applications to the bootstrap, e.g. Bickel \& Freedman \cite{bickel1981some}, and to goodness-of-fit, e.g. Rippl, Munk \& Sturm \cite{Rippl201690}).

The appropriateness of this distance when modeling deformations of measures becomes clear based on our previous remark concerning regularity: one can imagine an initial regular template $\mu$, that is \emph{deformed} according to maps $\mathbf{q}_i$ to yield new measures $\mu^i=(\mathbf{q}_i)\#\mu$. It is then natural to quantify the distance of the template to its perturbations by means of the minimal transportation (or deformation) cost
\[
d(\mu,\mu^i)
=\sqrt\ownint{\mathbb{R}^d}{}{\left\|\mathbf{t}_{\mu}^{\mu^i}(x)-x\right\|^2}{\mu(x)}.
\]
That the distance can be expressed via a proper map, is due to the assumed regularity of $\mu$. Note that the maps $\mathbf{q}_i$ themselves will, in general, not be identifiable (many Borel maps can push $\mu$ forward to $\mu^i$). But they can be assumed to be exactly optimal, i.e.\ $\mathbf{q}_i=\mathbf{t}_{\mu}^{\mu^i}$ as a matter of parsimony, and in any case without loss of generality, leading to identifiability. These maps will also solve the registration problem: a map of the form $\mathbf{t}_{\mu}^{\mu^i}-\mathbf{i}$, with $\mathbf{i}$ the identity mapping, shows how the coordinate system of $\mu$ should be deformed to be registered to the coordinate system of $\mu^i$.

This raises the question of how to \emph{characterise} the optimal transportation maps. For instance, in the one-dimensional case, if $\mu$ and $\nu$ are probability measures on $\mathbb{R}$, and $\mu$ is diffuse we may write
\begin{equation}\label{eq:optimal1d}
\mathbf{t}_{\mu}^{\nu}
=G_\nu^{-1}\circ G_{\mu},
\end{equation}
where $G_{\mu}(t)=\ownint {-\infty} t{}{\mu(x)}$, $G_{\nu}(t)=\ownint{-\infty}t{}{\nu(x)}$ are their distribution functions and $G^{-1}_{\nu}$ is the quantile function of $\nu$. This characterises optimal maps in one dimension as non-decreasing functions. More generally, when one has measures on $\mathbb{R}^d$, the class of optimal maps can be seen to be that of \emph{monotone maps} (see Section \ref{appendix}), defined as fields $\mathbf{t}:\mathbb{R}^d\rightarrow\mathbb{R}^d$ that are obtained as gradients of convex functions $\varphi:\mathbb{R}^d\rightarrow \mathbb{R}$,
$$\mathbf{t}=\nabla\varphi.$$
This is known as Brenier's characterisation (Villani \cite[Theorem~2.12]{vil}). With these basic definitions in place, we are now ready to consider the problem of finding a Fr\'echet mean of a collection of measures -- the latter viewed as the common template measure that was deformed to give rise to these measures.

\section{Sample Setting}\label{sample_setting}

\subsection{Fr\'echet Means and Optimal Registration}\label{population_to_sample}

The notion of a Fr\'echet mean (Fr\'echet \cite{frechet1948elements}) generalises that of the mean in a normed vector space to a general metric space. Though it has primarily been studied on Riemannian manifolds, the generality of its definition allows it to be used very broadly: it replaces the usual ``sum of squares", with a ``sum of squared distances", the \emph{Fr\'echet functional}. A closely related notion is that of a \emph{Karcher mean} (Karcher \cite{karcher1977riemannian}; Le \cite{le1995mean}), a term that describes stationary points of the sum of squares functional, when the latter is differentiable. See Kendall \cite{kendall2010survey}, and Kendall \& Le \cite{kendall2011limit} for an overview and a detailed review, respectively. In the context of Wasserstein space,  a Fr\'echet mean of a collection of measures $\{\mu^1,\ldots,\mu^N\}$, is a minimiser of the Fr\'echet functional
\begin{equation}\label{frechet_functional}
F(\gamma):=\frac{1}{2N}\sum_{i=1}^{N}d^2(\mu^i,\gamma)
\end{equation}
over elements $\gamma$ in the Wasserstein space $\mathcal{P}_2(\mathbb{R}^d)$, and a Karcher mean is a stationary point of $F$. The functional will be finite for any $\gamma\in \mathcal{P}_2(\mathbb{R}^d)$, provided that it is so for some $\gamma_0$. Population versions, assuming $\mathcal{P}_2(\mathbb{R}^d)$ is endowed with a probability measure, can also be defined, replacing summation by expectation with respect to that law. Interestingly, Fr\'echet himself \cite{frechet1957distance} considered the Wasserstein metric between probability measures on $\mathbb{R}$, and some refer to this as the \emph{Fr\'echet distance} (e.g.\ Dowson \& Landau \cite{dowson1982frechet}). In general, existence and uniqueness of a sample Fr\'echet mean can be subtle, but Agueh \& Carlier \cite{bary} have shown that it \emph{will uniquely exist} in the Wasserstein space, provided that some regularity is asserted\footnote{For a population version, one needs to tackle measurability and identifiability issues, see Section \ref{sec:discrete_intro}}. Here and in the following, we call a measure \textit{regular} if it is absolutely continuous with respect to Lebesgue measure (this condition can be slightly weakened \cite{bary}).
\begin{proposition}[Agueh \& Carlier \cite{bary}]
Let $\{\mu^1,\ldots,\mu^N\}$ be a collection in the Wasserstein space of measures $\mathcal{P}_2(\mathbb{R}^d)$.  If at least one of the measures is regular with bounded density, then their Fr\'echet mean exists, is unique, and is regular.
\end{proposition}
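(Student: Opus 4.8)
The plan is to prove the three assertions — existence, uniqueness, and regularity — separately: existence by the direct method of the calculus of variations, uniqueness by a strict-convexity argument after coordinatising Wasserstein space through the regular measure, and regularity (the genuinely delicate point) through a Monge--Amp\`ere density bound.

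For existence I would run the direct method. Fix $\gamma_0$ with $F(\gamma_0)<\infty$ and take a minimising sequence $\gamma_n$, so that $F(\gamma_n)\to\inf F$. Since $d^2(\mu^1,\gamma_n)\le 2NF(\gamma_n)$ is bounded and $\int\|x\|^2\,d\gamma_n\le 2d^2(\mu^1,\gamma_n)+2\int\|x\|^2\,d\mu^1$, the second moments of the $\gamma_n$ are uniformly bounded; by Markov's inequality this forces tightness, so along a subsequence $\gamma_n\rightharpoonup\bar\gamma$ weakly. As $d(\mu^i,\cdot)$ is lower semicontinuous under weak convergence (see \cite{vil}), we get $F(\bar\gamma)\le\liminf F(\gamma_n)=\inf F$, while the same lower semicontinuity of second moments keeps $\bar\gamma\in\mathcal{P}_2(\mathbb{R}^d)$. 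Hence $\bar\gamma$ is a Fr\'echet mean.

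For uniqueness I would exploit regularity of $\mu^1$ to linearise the geometry. By Brenier's characterisation, every $\gamma\in\mathcal{P}_2(\mathbb{R}^d)$ possesses a unique optimal map $\mathbf{t}_{\mu^1}^{\gamma}=\nabla\varphi_\gamma$ from $\mu^1$, with $d(\mu^1,\gamma)=\|\nabla\varphi_\gamma-\mathbf{i}\|_{L^2(\mu^1)}$. The assignment $\Phi:\gamma\mapsto\nabla\varphi_\gamma$ is a bijection of $\mathcal{P}_2(\mathbb{R}^d)$ onto the set $\mathcal{K}\subset L^2(\mu^1)$ of gradients of convex functions, and $\mathcal{K}$ is convex, since $(1-t)\nabla\varphi_0+t\nabla\varphi_1=\nabla((1-t)\varphi_0+t\varphi_1)$ is again a gradient of a convex function. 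Transporting $F$ through $\Phi$, the $i=1$ summand becomes $\tfrac1{2N}\|T-\mathbf{i}\|^2_{L^2(\mu^1)}$, which is strictly convex in $T$, while each remaining summand $T\mapsto d^2(\mu^i,T\#\mu^1)$ is convex on $\mathcal{K}$ (convexity of the squared Wasserstein distance along generalised geodesics based at the regular measure $\mu^1$, cf.\ Ambrosio, Gigli \& Savar\'e). Thus $G:=F\circ\Phi^{-1}$ is strictly convex on the convex set $\mathcal{K}$, hence has at most one minimiser; pulling back through $\Phi$ yields uniqueness of the Fr\'echet mean.

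For regularity, assume without loss of generality that $\mu^1$ has density $f_1\le M$, and I would aim for the stronger conclusion that $\bar\gamma$ has bounded density. The mechanism is the following a priori computation. Were $\bar\gamma$ regular, the optimal maps $T_i=\mathbf{t}_{\bar\gamma}^{\mu^i}=\nabla\psi_i$ (with $\psi_i$ convex) would exist and the first-order optimality condition for $F$ would read $\tfrac1N\sum_i\nabla\psi_i=\mathbf{i}$ $\bar\gamma$-a.e.; differentiating $\sum_i\psi_i=\tfrac N2\|\cdot\|^2+\text{const}$ gives $\sum_i D^2\psi_i=NI$, so $0\preceq D^2\psi_1\preceq NI$ and $\det D^2\psi_1\le N^d$. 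The Monge--Amp\`ere identity for $\nabla\psi_1\#\bar\gamma=\mu^1$ then yields, for the density $f$ of $\bar\gamma$, the bound $f(x)=f_1(\nabla\psi_1(x))\det D^2\psi_1(x)\le MN^d$. The obstacle is that this argument is circular: it presupposes $\bar\gamma$ regular in order to define the maps $\nabla\psi_i$ and to apply the change of variables. Breaking this circularity is the technical heart of the statement, and the route I would follow is that of Agueh \& Carlier \cite{bary}: pass to the multi-marginal optimal transport reformulation of the barycenter problem, in which the presence of a marginal with bounded density lets one read off the density bound directly from a solution, and recover $\bar\gamma$ together with the potentials $\psi_i$ from it without assuming regularity in advance.
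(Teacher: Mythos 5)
The paper offers no proof of this proposition---it is quoted verbatim from Agueh \& Carlier \cite{bary}---so your attempt must stand on its own. Your existence argument (direct method, tightness from uniformly bounded second moments, lower semicontinuity of $d^2(\mu^i,\cdot)$ under weak convergence) is correct. The genuine gap is in the uniqueness step: after coordinatising via $\Phi:\gamma\mapsto\mathbf t_{\mu^1}^{\gamma}$, you assert that each term $T\mapsto d^2(\mu^i,T\#\mu^1)$ with $i\neq 1$ is convex on the cone $\mathcal K$ of gradients of convex functions, citing ``convexity along generalised geodesics based at $\mu^1$''. The result in Ambrosio--Gigli--Savar\'e only gives convexity of $d^2(\sigma,\cdot)$ along generalised geodesics based at $\sigma$ \emph{itself}; for the terms $i\neq1$ the base point is $\mu^1\neq\mu^i$, and the claim is false in general. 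A counterexample in $\mathbb R^2$: let $\mu^1$ be uniform on two tiny balls centred at $p=(1,0)$, $q=(-1,0)$, and let $T_0,T_1$ translate these balls to balls centred at $p_0=(\tfrac\epsilon2,1),q_0=(-\tfrac\epsilon2,-1)$ and $p_1=(\tfrac\epsilon2,-1),q_1=(-\tfrac\epsilon2,1)$ respectively; both are monotone, hence optimal, since $\langle p_j-q_j,p-q\rangle=2\epsilon>0$. Setting $P_t=(1-t)p_0+tp_1$, $Q_t=(1-t)q_0+tq_1$ and taking $\mu^2$ to be two tiny balls at $a,b$ with $a-b=(0,1)$, the function $t\mapsto d^2(\mu^2,T_t\#\mu^1)$ is (up to an error controlled by the ball radii) the minimum of two quadratics $m_1,m_2$ whose difference equals $\langle a-b,Q_t-P_t\rangle=4t-2$; the minimum switches branch at $t=\tfrac12$ with a downward jump of the derivative, so the function is not convex there. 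Adding the smooth term $\tfrac1{2N}\|T-\mathbf i\|^2$ cannot remove a concave kink, so $G=F\circ\Phi^{-1}$ need not be convex on $\mathcal K$ and your uniqueness conclusion does not follow.

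The standard repair---and the one this very paper deploys in the supplementary proof of Theorem~\ref{thm:meanTid}---is convexity along \emph{mixtures}: for $\gamma_t=(1-t)\gamma_0+t\gamma_1$ one has $d^2(\nu,\gamma_t)\le(1-t)d^2(\nu,\gamma_0)+td^2(\nu,\gamma_1)$ because $(1-t)\pi_0+t\pi_1$ couples $\nu$ with $\gamma_t$; if $\nu=\mu^1$ is regular and equality holds with $\gamma_0\neq\gamma_1$, the mixture coupling would be optimal yet supported on the union of two distinct graphs over $\mu^1$, which is impossible. Applied to two putative minimisers this yields uniqueness with no appeal to generalised geodesics. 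Finally, your regularity step correctly identifies the Monge--Amp\`ere bound $f\le MN^d$ and, commendably, the circularity in deriving it; but the resolution is then deferred wholesale to the multi-marginal construction of \cite{bary}, so that part remains a plan rather than a proof.
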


We will now show that, once the Fr\'echet mean $\bar{\mu}$ of $\{\mu^1,\ldots,\mu^N\}$ has been determined, it may be used to optimally multi-couple the measures $\{\mu^1,\ldots,\mu^n\}$ in $\mathbb{R}^{d\times N}$, in terms of pairwise mean square distances, thus providing a solution to the \emph{multidimensional Monge--Kantorovich problem} considered by Gangbo \& \'Swi\c{e}ch \cite{gangbo1998optimal}. That is, $\bar{\mu}$ can be used to construct a random vector whose marginals are as concentrated as possible in terms of pairwise mean-square distance, subject to the constraint of having laws $\{\mu^1,\ldots,\mu^N\}$.

\smallskip
\noindent Our first result combines results of \cite{bary} and \cite{gangbo1998optimal} to illustrate precisely how (also see Pass \cite[Theorem~4.2.2]{pass2013optimal} for an analogous result when considering continuous flows of measures).
\begin{proposition}[Optimal Multicoupling via Fr\'echet Means]\label{multicoupling}
Let $\{\mu^1,\ldots,\mu^N\}$ be regular probability measures in $\mathcal{P}_2(\mathbb{R}^d)$, one with bounded density, and let $\bar{\mu}$ be their (unique) Fr\'echet mean with respect to the Wasserstein metric. Let $Z\sim \bar{\mu}$ and define
$$\bm{X}=(X_1,\ldots,X_N),\qquad X_i=\mathbf{t}_{\bar{\mu}}^{\mu^i}(Z),\qquad i=1\ldots,N,$$
where $\mathbf{t}_{\bar{\mu}}^{\mu^i}$ is the optimal transport plan pushing $\bar{\mu}$ forward to $\mu^i$. Then $X_i\sim\mu^i$ for $i=1,\ldots,N$ and furthermore,
\[
\sum_{i=1}^N\sum_{j=i+1}^N\mathbb E\|X_i - X_j\|^2
\le\sum_{i=1}^N\sum_{j=i+1}^N\mathbb E\|Y_i - Y_j\|^2
\]
for any other $\bm{Y}=(Y_1,\ldots,Y_N)$ such that $Y_i\sim \mu^i$, $i=1,\ldots,N$.
\end{proposition}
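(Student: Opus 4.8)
The plan is to reduce the pairwise multicoupling objective to a computation of the Fr\'echet functional by means of an elementary Euclidean identity, and then to exploit the defining (barycentric) property of the Fr\'echet mean. The assertion $X_i\sim\mu^i$ is immediate: since $Z\sim\bar\mu$ and $X_i=\mathbf{t}_{\bar\mu}^{\mu^i}(Z)$, the law of $X_i$ is $\mathbf{t}_{\bar\mu}^{\mu^i}\#\bar\mu=\mu^i$ by definition of the optimal map, which is a genuine map because $\bar\mu$ is regular (Proposition~1). For the optimality claim, the starting point is the polarisation identity: for any $y_1,\dots,y_N\in\mathbb{R}^d$ with centroid $\bar y=\tfrac1N\sum_i y_i$, one has $\sum_{i<j}\|y_i-y_j\|^2=N\sum_{i=1}^N\|y_i-\bar y\|^2$. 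Applying this pointwise and taking expectations converts the pairwise objective into a sum of squared deviations from the (random) centroid, which is precisely the quantity controlled by the Wasserstein distance to the centroid's law.

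Given this, the lower bound is soft. Let $\bm Y=(Y_1,\dots,Y_N)$ be any competitor with $Y_i\sim\mu^i$, set $\bar Y=\tfrac1N\sum_i Y_i$, and let $\rho$ denote its law; since each $\mu^i$ has finite second moment, so does $\rho$, i.e.\ $\rho\in\mathcal{P}_2(\mathbb{R}^d)$. Then $(Y_i,\bar Y)$ is a coupling of $\mu^i$ and $\rho$, whence $\mathbb{E}\|Y_i-\bar Y\|^2\ge d^2(\mu^i,\rho)$. Combining the identity with the \emph{global} optimality of the Fr\'echet mean (which minimises $\gamma\mapsto\sum_i d^2(\mu^i,\gamma)$ over all of $\mathcal{P}_2(\mathbb{R}^d)$) yields
$$\sum_{i<j}\mathbb{E}\|Y_i-Y_j\|^2=N\sum_i\mathbb{E}\|Y_i-\bar Y\|^2\ge N\sum_i d^2(\mu^i,\rho)\ge N\sum_i d^2(\mu^i,\bar\mu).$$

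It remains to show that $\bm X$ attains this bound, which is where the two cited results meet. By the same identity, $\sum_{i<j}\mathbb{E}\|X_i-X_j\|^2=N\sum_i\mathbb{E}\|X_i-\bar X\|^2$ with $\bar X=\tfrac1N\sum_i\mathbf{t}_{\bar\mu}^{\mu^i}(Z)$. The crucial input is the barycentric characterisation of the Fr\'echet mean established by Agueh \& Carlier: the optimal maps emanating from $\bar\mu$ average to the identity, $\tfrac1N\sum_i\mathbf{t}_{\bar\mu}^{\mu^i}=\mathbf{i}$ $\bar\mu$-a.e. Consequently $\bar X=Z$ almost surely, so $\bar X\sim\bar\mu$ and $\mathbb{E}\|X_i-\bar X\|^2=\mathbb{E}\|\mathbf{t}_{\bar\mu}^{\mu^i}(Z)-Z\|^2=d^2(\mu^i,\bar\mu)$, the last equality holding because $(Z,\mathbf{t}_{\bar\mu}^{\mu^i}(Z))$ realises the optimal coupling between $\bar\mu$ and $\mu^i$. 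Hence $\sum_{i<j}\mathbb{E}\|X_i-X_j\|^2=N\sum_i d^2(\mu^i,\bar\mu)$, matching the lower bound and simultaneously exhibiting the explicit solution to the Gangbo--\'Swi\c{e}ch multicoupling problem.

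I expect the main obstacle to be justifying the barycentric identity $\tfrac1N\sum_i\mathbf{t}_{\bar\mu}^{\mu^i}=\mathbf{i}$: the lower bound requires only convexity/optimality, but attainment hinges entirely on this first-order property of the barycenter, which is exactly the nontrivial content imported from Agueh \& Carlier (and which, from the viewpoint developed later in the paper, amounts to the vanishing of the gradient of $F$ at $\bar\mu$). A secondary point demanding care is that all relevant optimal maps exist as genuine transport maps rather than mere Kantorovich couplings; this is secured by the regularity of the $\mu^i$ together with the regularity of $\bar\mu$ furnished by Proposition~1.
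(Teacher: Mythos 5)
Your proof is correct, and its skeleton coincides with the paper's: both reduce the pairwise objective to the Fr\'echet functional via the polarisation identity $\sum_{i<j}\|y_i-y_j\|^2=N\sum_i\|y_i-\bar y\|^2$, and both use the barycentric property $\frac1N\sum_i\mathbf{t}_{\bar\mu}^{\mu^i}=\mathbf{i}$ ($\bar\mu$-a.e.) to show that the coupling $\xi'=(\mathbf{t}_{\bar\mu}^{\mu^1},\dots,\mathbf{t}_{\bar\mu}^{\mu^N})\#\bar\mu$ evaluates to $2N^2F(\bar\mu)$. The one genuine difference is how the lower bound $\sum_{i<j}\mathbb{E}\|Y_i-Y_j\|^2\ge N\sum_i d^2(\mu^i,\bar\mu)$ is obtained: the paper imports the full duality $\min_\mu F(\mu)=\min_\xi G(\xi)$ as a black box from Agueh \& Carlier (their Proposition~4.2), whereas you prove the needed inequality $G(\xi)\ge\inf_\mu F(\mu)$ inline, by observing that the centroid $\bar Y$ of any admissible $\bm Y$ has some law $\rho\in\mathcal{P}_2(\mathbb{R}^d)$, that $(Y_i,\bar Y)$ is a (generally suboptimal) coupling of $\mu^i$ and $\rho$, and that $\bar\mu$ minimises $\gamma\mapsto\sum_i d^2(\mu^i,\gamma)$ globally. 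This makes your argument more self-contained: you only ever need one direction of the Agueh--Carlier duality, and the reverse direction comes for free once $\xi'$ is shown to attain the bound. The remaining external input in both proofs is the same and is correctly identified by you as the crux, namely the mean-identity (first-order/stationarity) property of the optimal maps from the barycenter, together with the regularity of $\bar\mu$ guaranteeing that these are genuine maps.
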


In the language of shape theory, the Fr\'echet mean $\bar{\mu}$ may be used as a \emph{template} to \emph{jointly register} the collection of measures, just as Euclidean configurations can be registered to their Procrustes mean by a Procrustes analysis (Goodall \cite{goodall1991procrustes}). Only in this case, instead of the similarity group of shape theory, registration is \emph{deformation based}, by means of the collection of maps $\{\mathbf{t}_{\bar{\mu}}^{\mu^i}\}_{i=1}^{N}$, where $\mathbf{t}_{\bar{\mu}}^{\mu^i}$ is the optimal transport map
\[
\mathbf{t}_{\bar{\mu}}^{\mu^i}\#\bar\mu
=\mu^i.
\]
By analogy to shape theory, we shall refer to these as \emph{Procrustes maps}. These yield a common coordinate system (corresponding to $\bar{\mu}$) where one can best compare samples from each measure, similarly to ``quantile renormalisation" in one dimension, e.g.\ Bolstad et al.\ \cite{bolstad2003comparison}, Gallon et al.\ \cite{gallon2013statistical}. The Procrustes maps can also be used in order to produce a Principal Component Analysis, capturing the main modes of deformation-based variation (Bigot et al.\ \cite{bigot2013geodesic}, Panaretos \& Zemel \cite{panzem15}; Huckemann, Munk \& Hotz \cite{huckemann2010intrinsic}, Wang et al.\ \cite{wang2013linear}).

\subsection{Wasserstein Geometry and the Gradient of the Fr\'echet Functional}\label{gradient_section}

In this section, we determine the conditions for the Fr\'echet derivative of the Fr\'echet functional \eqref{frechet_functional} to be well defined, and determine its functional form. Furthermore, we characterise Karcher means and give criteria for their optimality, opening the way for the determination of the Fr\'echet mean. The key to our analysis will be to exploit the tangent bundle over the Wasserstein space of regular measures.

\subsubsection{The Tangent Bundle}\label{tangent_bundle}

  Let $\mathcal P_2(\mathbb R^d)$ be the Wasserstein space of probability measures $\mu$ on $\mathbb{R}^d$ such that $\ownint{\mathbb R^d}{}{\|x\|^2}{\mu(x)}$ is finite, as defined in Section \ref{wasser_intro}.  An absolutely continuous measure on $\mathbb R^d$ will be called \emph{regular}. When $\mu^0\in\mathcal P_2(\mathbb{R}^d)$ is regular and $\mu^1\in\mathcal P_2(\mathbb{R}^d)$, the transportation map $\mathbf t_{\mu^0}^{\mu^1}$ uniquely exists, in which case there is a unique geodesic curve between $\mu^0$ and $\mu^1$.  Using again the notation $\mathbf i$ for the identity map, this geodesic is given by
\[
\mu_t=\left[\mathbf i+t(\mathbf t_{\mu^0}^{\mu^1} - \mathbf i)\right]
\#\mu^0,\qquad t\in[0,1].
\]
This curve is known as McCann's interpolation (McCann \cite{mccann1997convexity}, Villani \cite{vil}). 
The tangent space at an arbitrary $\mu\in\mathcal P_2(\mathbb{R}^d)$ is then (Ambrosio et al.\ \cite[Definition 8.4.1, p.\ 189]{ambgigsav})
\[
\Tan_{\mu}
=\Tan_\mu\mathcal P_2(\mathbb R^d)
=\overline{\{\nabla\varphi:\varphi\in C_c^\infty(\mathbb R^d)\}}^{L^2(\mu)},
\]
where $C_c^\infty(\mathbb R^d)$ denotes infinitely differentiable functions $\varphi:\mathbb R^d\to\mathbb R$ with compact support, and the closure operation is taken with respect to the space $L^2(\mu)$.  Note the interesting fact that the closure operation is the only aspect of the tangent space that directly involves the measure $\mu$. An equivalent definition, which is more useful to us, is given by Ambrosio et al.\ \cite[Definition 8.5.1, p.\ 195]{ambgigsav}:
\[
\Tan_\mu
=\overline{\{\lambda(\mathbf r-\mathbf i):\mathbf r\textrm{ optimal between }\mu \textrm{ and }\mathbf r\#\mu; \lambda>0\}}^{L^2(\mu)},
\]
that is, we take the collection of $\mathbf r$'s that are optimal maps from $\mu$ to $\mathbf r\#\mu$; i.e.\ the gradients of convex functions. This is a linear space (not just a cone) by the first definition, even though it is not obvious from the second. The definitions are equivalent by Theorem~8.5.1 of Ambrosio et al.\ \cite[p.\ 195]{ambgigsav}. As was mentioned above, when $\mu^0\in \mathcal P_2(\mathbb R^d)$ is regular, every measure $\mu^1\in\mathcal P_2(\mathbb R^d)$ admits a unique optimal map $\mathbf t_{\mu^0}^{\mu^1}$ that pushes $\mu^0$ forward to $\mu^1$.  Thus, the exponential map
\[
{\rm exp}_{\mu^0}(\mathbf r - \mathbf i)
= \mathbf r\#\mu^0
\]
is surjective, and its inverse, the log map
\[
\log_{\mu^0}(\mu^1)
=\mathbf t_{\mu^0}^{\mu^1} - \mathbf i,
\]
is well-defined throughout $\mathcal P_2(\mathbb R^d)$.  In particular, the geodesic $\left[\mathbf i+t(\mathbf t_{\mu^0}^{\mu^1} - \mathbf i)\right]\#\mu^0$ is mapped bijectively to the line segment $t(\mathbf t_{\mu^0}^{\mu^1} - \mathbf i)\in \Tan_{\mu^0}$ through the log map.

\subsubsection{Gradient of the Fr\'echet functional}\label{section:gradient}

We will now exploit the tangent bundle structure described in the previous section in order to determine the gradient of the empirical Fr\'echet functional. Fix $\mu^0\in\mathcal P_2(\mathbb{R}^d)$ and consider the function
\[
F_0:\mathcal P_2(\mathbb{R}^d)\to\mathbb{R},
\qquad F_0(\mu)=\frac12d^2(\mu,\mu^0).
\]
When $\mu$ is regular, we have that (\cite[Corollary~10.2.7, p.\ 239]{ambgigsav}), for any $\mu^0$
\[
\lim_{\nu\to\mu}\frac{F_0(\nu)- F_0(\mu) +\displaystyle \ownint {\mathbb R^d}{}{\innprod{\mathbf t_\mu^{\mu^0}(x)-x}{\mathbf t_\mu^\nu(x)-x}}{\mu(x)}}
{d(\nu,\mu)}
=0,
\]
where the convergence $\nu\to\mu$ is with respect to the Wasserstein distance. The integral above can be seen as the inner product
\[
\innprod{\mathbf t_\mu^{\mu^0}-\mathbf i}{\mathbf t_\mu^\nu-\mathbf i}
\]
in the space $L^2(\mu)$ that includes as a (closed) subspace the tangent space $\Tan_{\mu}$. In terms of this inner product and the log map, we can write
\[
F_0(\nu) - F_0(\mu)
=-\innprod{\log_{\mu}(\mu^0)}{\log_{\mu}(\nu)}
+o(d(\nu,\mu)),
\qquad \nu\to\mu,
\]
so that $F_0$ is Fr\'echet-differentiable at $\mu$ with derivative
\[
F_0'(\mu)
=- \log_{\mu}(\mu^0)
=-\left(\mathbf t_\mu^{\mu^0} - \mathbf i\right)
\in \Tan_{\mu}.
\]

\noindent We have proven:

\begin{theorem}[Gradient of the Fr\'echet Functional]\label{gradient_definition}
Fix a collection of measures $\mu^1,\dots,\mu^N\in\mathcal P_2(\mathbb{R}^d)$. When $\gamma$ is regular, the Fr\'{e}chet functional
\begin{equation}\label{eq:frechetfun}
F(\gamma)
=\frac1{2N} \sum_{i=1}^Nd^2(\gamma,\mu^i),
\qquad \gamma\in \mathcal P_2(\mathbb{R}^d).
\end{equation}
is Fr\'echet-differentiable, and its gradient satisfies
\begin{equation}\label{eq:frechetfungradientequation}
F'(\gamma)
=- \frac1N \sum_{i=1}^N \log_{\gamma}(\mu^i)
=- \frac1N \sum_{i=1}^N \left(\mathbf t_{\gamma}^{\mu^i}-\mathbf i\right).
\end{equation}
\end{theorem}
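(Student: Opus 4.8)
The plan is to reduce the multi-measure statement to the single-measure computation carried out just above and then invoke linearity of the Fr\'echet derivative. Writing $F_i(\gamma) = \tfrac12 d^2(\gamma,\mu^i)$, one has $F = \tfrac1N\sum_{i=1}^N F_i$, so it will suffice to establish Fr\'echet-differentiability of each $F_i$ at a regular base point $\gamma$ and then add the individual derivatives.

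First I would record the single-measure result in inner-product form. Applying Corollary~10.2.7 of Ambrosio et al.\ with base measure $\gamma$ and target $\mu^i$, the first-order expansion becomes
\[
F_i(\nu) - F_i(\gamma)
= -\innprod{\log_\gamma(\mu^i)}{\log_\gamma(\nu)} + R_i(\nu),
\qquad R_i(\nu) = o(d(\nu,\gamma)),
\]
where the inner product is that of $L^2(\gamma)$ and $\log_\gamma(\nu) = \mathbf t_\gamma^\nu - \mathbf i$ is the tangent increment. The key bookkeeping here is that regularity of $\gamma$ guarantees existence and uniqueness of the optimal maps $\mathbf t_\gamma^\nu$ and $\mathbf t_\gamma^{\mu^i}$, and that the norm of the increment equals the distance, $\|\log_\gamma(\nu)\|_{L^2(\gamma)} = d(\gamma,\nu)$. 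This identity is exactly what upgrades a directional statement to genuine Fr\'echet-differentiability, since it lets the remainder be controlled by the norm of the increment uniformly over the manner in which $\nu \to \gamma$.

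Next I would sum over $i$. Because $N$ is finite,
\[
F(\nu) - F(\gamma)
= -\innprod{\tfrac1N\sum_{i=1}^N \log_\gamma(\mu^i)}{\log_\gamma(\nu)}
+ \tfrac1N\sum_{i=1}^N R_i(\nu),
\]
and $\tfrac1N\sum_i R_i(\nu) = o(d(\nu,\gamma))$ as a finite sum of such terms. This pins down the gradient as $F'(\gamma) = -\tfrac1N\sum_i \log_\gamma(\mu^i) = -\tfrac1N\sum_i(\mathbf t_\gamma^{\mu^i} - \mathbf i)$. To finish, I would verify that this element indeed lies in $\Tan_\gamma$: each $\mathbf t_\gamma^{\mu^i} - \mathbf i$ belongs to $\Tan_\gamma$ via the optimal-map description of the tangent space (taking $\lambda = 1$ and $\mathbf r = \mathbf t_\gamma^{\mu^i}$), and since $\Tan_\gamma$ is a closed linear subspace of $L^2(\gamma)$, so does the average.

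The one genuinely delicate ingredient is the single-measure expansion itself, imported from Corollary~10.2.7 of Ambrosio et al.; its proof is where the non-negative curvature of Wasserstein space and the semiconcavity of $\nu \mapsto \tfrac12 d^2(\nu,\mu^i)$ enter, and where absolute continuity of $\gamma$ is essential to single out a unique rather than set-valued gradient. Granting that, the multi-measure assertion reduces to linearity together with the routine check that the averaged log-map stays tangent, so no further analytic input is needed.
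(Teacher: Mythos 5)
Your proposal is correct and follows essentially the same route as the paper: both rest on the single-measure expansion from Corollary~10.2.7 of Ambrosio et al.\ at a regular base point, rewrite it as an $L^2(\gamma)$ inner product against $\log_\gamma(\nu)$, and obtain the multi-measure gradient by summing the finitely many expansions. Your added remarks --- that $\|\log_\gamma(\nu)\|_{L^2(\gamma)}=d(\gamma,\nu)$ is what makes the remainder genuinely $o$ of the increment norm, and that the averaged log-map lies in $\Tan_\gamma$ --- are correct and only make explicit what the paper leaves implicit.
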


\subsubsection{Karcher and Fr\'echet Means}\label{characterisation_mean}

We can now characterise Karcher means, and also show that the empirical Fr\'echet mean must be sought amongst them, by an immediate corollary to Theorem \ref{gradient_definition}:

\begin{corollary}\label{thm:minisstationary}
Let $\mu^1,\dots,\mu^N\in\mathcal P_2(\mathbb R^d)$ be regular measures, one of which with bounded density.  A measure $\mu$ is a Karcher mean of $\{\mu^i\}$ if and only if
$$ \frac1N \sum_{i=1}^N \left(\mathbf t_{\mu}^{\mu^i}-\mathbf i\right) =0,\qquad\mu-\mbox{almost everywhere}.$$
Furthermore, the Fr\'echet mean of $\{\mu^i\}$ is itself a Karcher mean.
\end{corollary}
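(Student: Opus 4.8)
The plan is to derive the Karcher-mean characterisation directly from the gradient formula in Theorem~\ref{gradient_definition}, and then to establish the final sentence by combining this characterisation with the fact that the Fr\'echet mean is a global minimiser. First I would recall that a Karcher mean is by definition a stationary point of the Fr\'echet functional $F$, i.e. a point $\mu$ at which the gradient vanishes. Since the corollary hypothesises that all $\mu^i$ are regular with one having bounded density, Proposition~1 (Agueh \& Carlier) guarantees that the Fr\'echet mean exists, is unique, and is \emph{regular}; regularity is precisely the condition under which Theorem~\ref{gradient_definition} applies, so the gradient $F'(\mu)$ is well defined at any candidate regular $\mu$. Substituting the explicit form \eqref{eq:frechetfungradientequation} and setting it to zero yields
\[
F'(\mu)=-\frac1N\sum_{i=1}^N\left(\mathbf t_\mu^{\mu^i}-\mathbf i\right)=0
\]
as an element of $\Tan_\mu\subset L^2(\mu)$. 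The equivalence asserted in the corollary is then just the statement that vanishing in $L^2(\mu)$ means vanishing $\mu$-almost everywhere, which I would spell out explicitly.

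The one subtlety I would flag here is that stationarity should be interpreted intrinsically on the Wasserstein space, and one must check that ``gradient equals zero in $L^2(\mu)$'' is the correct notion of a stationary point. I would argue that, because the log map sends the geodesic emanating from $\mu$ toward any $\nu$ bijectively onto the segment $t\log_\mu(\nu)\in\Tan_\mu$, the directional derivative of $F$ along every geodesic direction is given by the $L^2(\mu)$ inner product of $F'(\mu)$ with $\log_\mu(\nu)$; hence $F'(\mu)=0$ in $L^2(\mu)$ is equivalent to all such directional derivatives vanishing, which is exactly what stationarity should mean. This is where the regularity of $\mu$ is essential, since it is what makes the log map (and thus the derivative computation) well defined in all directions.

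For the final sentence, that the Fr\'echet mean is a Karcher mean, I would proceed as follows. Let $\bar\mu$ denote the Fr\'echet mean; by Proposition~1 it is regular, so Theorem~\ref{gradient_definition} gives a well-defined gradient $F'(\bar\mu)\in\Tan_{\bar\mu}$. Since $\bar\mu$ globally minimises $F$, the first-order condition must hold: along any geodesic $\mu_t$ starting at $\bar\mu$, the function $t\mapsto F(\mu_t)$ attains its minimum at $t=0$, so its one-sided derivative at $0$ is nonnegative, giving $\innprod{F'(\bar\mu)}{\log_{\bar\mu}(\nu)}\ge0$ for every $\nu$. Because the exponential map at $\bar\mu$ is surjective (again using regularity), the vectors $\log_{\bar\mu}(\nu)=\mathbf t_{\bar\mu}^\nu-\mathbf i$ range over a set whose closed linear span is all of $\Tan_{\bar\mu}$, the space in which $F'(\bar\mu)$ lives; an inner product that is nonnegative against a spanning set of directions (which, being a linear space, also contains their negatives) must be zero. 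Hence $F'(\bar\mu)=0$, so $\bar\mu$ is a Karcher mean.

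The main obstacle I anticipate is the final density/spanning argument: one must verify that the directions $\log_{\bar\mu}(\nu)$ genuinely probe the whole tangent space, rather than only a cone, so that a one-sided variational inequality upgrades to the two-sided equality $F'(\bar\mu)=0$. The cleanest route is to note that $\Tan_{\bar\mu}$ is by its first definition the $L^2(\bar\mu)$-closure of gradients of smooth compactly supported functions, and that for small $\epss$ the map $\mathbf i+\epss\nabla\varphi$ is the optimal map to its own pushforward (being the gradient of the convex function $\|x\|^2/2+\epss\varphi$); thus $\epss\nabla\varphi=\log_{\bar\mu}(\nu_\epss)$ for such $\nu_\epss$, realising a dense set of tangent directions \emph{together with their negatives}. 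This lets the one-sided inequality be applied in both $\pm$ directions along a dense set, forcing $F'(\bar\mu)$ to be orthogonal to all of $\Tan_{\bar\mu}$ and hence zero, completing the proof.
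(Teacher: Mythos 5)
Your proof is correct, and for the substantive part of the statement (that the Fr\'echet mean is a Karcher mean) it takes a genuinely different route from the paper. The paper's main text simply cites Agueh \& Carlier for this fact, and its own tangent-bundle proof (given in the supplement) is a \emph{single-direction descent argument}: assuming $F'(\bar\mu)=-(S-\mathbf i)\neq 0$ with $S=N^{-1}\sum_i\mathbf t_{\bar\mu}^{\mu^i}$, it observes that $S$ is itself the gradient of a convex function, so $(1-s)\mathbf i+sS$ is the optimal map from $\bar\mu$ to $\nu_s=[(1-s)\mathbf i+sS]\#\bar\mu$; plugging this one curve into the differentiability expansion gives $F(\nu_s)-F(\bar\mu)=-s\|S-\mathbf i\|^2_{L^2(\bar\mu)}+o(s)<0$, contradicting minimality. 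You instead prove the one-sided variational inequality $\innprod{F'(\bar\mu)}{\log_{\bar\mu}(\nu)}\ge 0$ in \emph{every} geodesic direction and then upgrade it to equality by noting that for $\varphi\in C_c^\infty$ and small $\epss$ both $\pm\epss\nabla\varphi$ are realised as $\log_{\bar\mu}$ of suitable pushforwards (since $\mathbf i\pm\epss\nabla\varphi$ is the gradient of a convex function, hence by Brenier uniqueness the optimal map), and such gradients are dense in $\Tan_{\bar\mu}\ni F'(\bar\mu)$. Both arguments are sound and rest on the same two pillars (the first-order expansion of Theorem~\ref{gradient_definition} and the realisability of certain maps as optimal ones); the paper's is shorter because it exhibits an explicit descent direction and needs no density or spanning step, while yours is closer to the generic first-order optimality condition on a manifold and makes explicit why a one-sided inequality over a cone of directions suffices to kill the gradient --- a point the paper leaves implicit. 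Your handling of the ``if and only if'' characterisation and of the identification of $L^2(\mu)$-vanishing with $\mu$-a.e.\ vanishing matches the paper's (both treat it as immediate from Theorem~\ref{gradient_definition}).
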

In fact, the corollary suggests that a Karcher mean is ``almost" a Fr\'echet mean: Agueh and Carlier \cite{bary} show by convex optimisation methods that if $\sum_{i=1}^N \left(\mathbf t_{\mu}^{\mu^i}-\mathbf i\right) =0$ \emph{everywhere} on $\mathbb R^d$ (rather than just $\mu$-almost everywhere), then $\mu$ is in fact the unique Fr\'echet mean. Thus one hopes that this ``gap of measure zero" can be bridged: that a sufficiently regular Karcher mean should in fact be a Fr\'echet mean. We now show that this is indeed the case; if $\mu^1,\dots,\mu^N\in\mathcal P_2(\mathbb R^d)$ are smooth measures with convex support, then a smooth Karcher mean of same support \emph{must be the unique Fr\'echet mean}:

\begin{theorem}[Optimality Criterion for Karcher Means]\label{thm:optimalKarcher}
Let $\mu^i$ for $i=1,\dots,N$ be probability measures on an open convex $X\subseteq \mathbb R^d$ whose densities $g^i$ are bounded and strictly positive  on $X$ and let $\mu$ be a regular Karcher mean of $\{\mu^i\}$ with density $f$.  Then $\mu$ is the unique Fr\'echet mean of $\{\mu^i\}$, provided one of the following holds:

\begin{enumerate}
\item $X=\mathbb R^d$, $f$ is bounded and strictly positive, and the densities $f,g^1,\dots,g^N$ are of class $C^1$;
\item $X$ is bounded, $\mu(X)=1$, $f$ is bounded, and the densities $f,g^1,\dots,g^N$ are bounded from below on $X$.
\end{enumerate}

\end{theorem}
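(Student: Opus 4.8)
The plan is to bridge the ``gap of measure zero'' identified after Corollary~\ref{thm:minisstationary}. By that corollary the Karcher condition $\frac1N\sum_{i=1}^N(\mathbf t_\mu^{\mu^i}-\mathbf i)=0$ holds $\mu$-almost everywhere, and by the cited result of Agueh \& Carlier \cite{bary} it is enough to promote this to an identity holding at \emph{every} point of $X$ (which, in Case~1, is all of $\mathbb R^d$) to conclude that $\mu$ is the unique Fr\'echet mean. The promotion will be achieved through continuity: if each optimal map $\mathbf t_\mu^{\mu^i}$ is continuous on $X$, then $H(x):=\frac1N\sum_{i=1}^N\bigl(\mathbf t_\mu^{\mu^i}(x)-x\bigr)$ is continuous on $X$, so its zero set is relatively closed; since $f$ is strictly positive (Case~1), respectively bounded below by a positive constant (Case~2), on $X$, the measure $\mu$ assigns positive mass to every nonempty open subset of $X$, whence the full-$\mu$-measure set $\{H=0\}$ is also dense in $X$. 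A relatively closed, dense subset of $X$ equals $X$, so $H\equiv 0$ on $X$.

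The heart of the matter is thus the continuity of the Brenier maps $\mathbf t_\mu^{\mu^i}=\nabla\varphi_i$, which I would extract from the regularity theory for the Monge--Amp\`ere equation (Caffarelli). The essential structural input is that the target domain $X$ is convex; combined with densities that are locally bounded above and below, this is precisely the regime in which the convex potential $\varphi_i$ enjoys interior $C^{1,\alpha}_{\mathrm{loc}}$ regularity, and hence $\nabla\varphi_i$ is continuous on $X$. In Case~2, where $X$ is bounded and the densities $f,g^1,\dots,g^N$ are pinched between positive constants, this is a direct application. In Case~1, where $X=\mathbb R^d$ is unbounded and the densities are only pointwise positive (they cannot be uniformly bounded below, being integrable), I would argue locally: around any point the $C^1$ densities are Lipschitz and, being continuous and strictly positive, are bounded above and below on a neighbourhood, so the interior estimates apply on each ball and patch together to give continuity of $\nabla\varphi_i$ on all of $\mathbb R^d$.

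With $H\equiv 0$ on $X$ in hand, it remains to invoke the optimality criterion. In Case~1 the identity holds on all of $\mathbb R^d$, so \cite{bary} applies verbatim and $\mu$ is the unique Fr\'echet mean. In Case~2 the identity holds on the open convex interior $X$ of the common support; here I would note that $\nabla\bigl(\frac1N\sum_i\varphi_i\bigr)=\mathbf i$ on the connected set $X$ forces $\frac1N\sum_i\varphi_i=\frac12\|\cdot\|^2+\mathrm{const}$ there, and verify that the criterion of \cite{bary}, which only tests the transport maps on the support of the candidate mean, is met. The step I expect to be the main obstacle is the regularity claim in Case~1: Caffarelli's theorems are classically stated on bounded domains with densities bounded away from $0$ and $\infty$, so the unbounded setting with merely pointwise positivity requires a careful localisation, using the $C^1$-smoothness of the densities to run the local interior estimates and to ensure the resulting continuity is genuinely global rather than holding only on compacta a priori dependent on the (unknown) image regions.
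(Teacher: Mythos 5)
Your proposal follows essentially the same route as the paper: Caffarelli's regularity theory gives continuity of the Brenier maps $\mathbf t_\mu^{\mu^i}$ on $X$ (the paper cites Villani's Theorem~4.14(iii) for Case~1 and Caffarelli's 1992 interior result for Case~2), the $\mu$-a.e.\ Karcher identity is then upgraded to an everywhere identity by the same topological argument (an open $\mu$-null set is empty, equivalently your closed-dense formulation), and the Agueh--Carlier criterion closes the proof, including the observation in Case~2 that $\frac1N\sum_i\varphi_i=\frac12\|\cdot\|^2+C$ on the convex set $X$. The only point where the paper is slightly more explicit is the end of Case~2: since the Agueh--Carlier inequality is only verified on $X$, one first concludes that $\mu$ minimises $F$ over $\mathcal P_2(X)$ and then uses convexity of $X$ (via the projection argument from the proof of Theorem~\ref{thm:meanTid}) to see that the global minimiser must lie in $\mathcal P_2(X)$.
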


\begin{remark}
In the first condition, the $C^1$ assumption can be weakened to H\"older continuity of the densities for some exponent $\alpha\in(0,1]$.
\end{remark}

\begin{remark}{We conjecture that a stronger result should be valid: specifically, if $\mu^1,\dots,\mu^N$ satisfy the conditions of Theorem~\ref{thm:optimalKarcher}, then we conjecture the Fr\'echet functional $F$ to in fact have a unique Karcher mean, coinciding with the Fr\'echet mean.} \end{remark}

\subsection{Gradient Descent and Procrustes Analysis}\label{algorithm_section}

\subsubsection{Elements of the Algorithm}\label{procrustes_analogy}

Let $\mu^1,\dots,\mu^N\in \mathcal P_2(\mathbb R^d)$ be regular and let $\gamma_j\in \mathcal P_2(\mathbb R^d)$ be a regular measure, representing our current estimate of the Fr\'echet mean of $\mu^1,\dots,\mu^N$ at step $j$.  Following the discussion above, it makes sense to introduce a step size $\tau_j>0$, and to carry out a steepest descent in the space of measures (e.g. Molchanov \& Zuyev \cite{molchanov2002steepest}),  following the negative of the gradient:
\[
\gamma_{j+1}
=\exp_{\gamma_j} \left(-\tau_j F'(\gamma_j)\right)
= \left[\mathbf i   +   \tau_j\frac1N \sum_{i=1}^N \log_{\gamma}(\mu^i)\right]\#\gamma_j=\left[\mathbf i   +   \tau_j\frac1N\sum_{i=1}^N  (\mathbf t_{\gamma_j}^{\mu^i} - \mathbf i)\right]\#\gamma_j.
\]
In order to guarantee that the descent is well-defined, we must make sure that the gradient itself will remain well-defined as we iterate over $j$. In view of Theorem~\ref{gradient_definition}, this requires showing that $\gamma_{j+1}$ remains regular whenever $\gamma_j$ is regular. This is indeed the case, at least if the step size is contained in $[0,1]$:

\begin{lemma}[Regularity of the iterates]\label{iterate_regularity}
If $\gamma_0$ is regular and $\tau_0\in[0,1]$ then so is $\gamma_{1}$.
\end{lemma}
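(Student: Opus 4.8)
The plan is to exhibit the displacement map of the iteration as the gradient of a convex function whose Alexandrov Hessian is positive definite $\gamma_0$–almost everywhere, and then to argue that such a gradient cannot concentrate mass, so that the pushforward stays absolutely continuous. The condition $\tau_0\in[0,1]$ will enter exactly once, to guarantee convexity.

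First I would rewrite the map. Setting $\mathbf t_i:=\mathbf t_{\gamma_0}^{\mu^i}$ and gathering the identity terms,
\[
T:=\mathbf i+\tau_0\frac1N\sum_{i=1}^N(\mathbf t_i-\mathbf i)=(1-\tau_0)\mathbf i+\frac{\tau_0}{N}\sum_{i=1}^N\mathbf t_i,
\]
so that $\gamma_1=T\#\gamma_0$. By Brenier's characterisation each $\mathbf t_i=\nabla\varphi_i$ for a convex $\varphi_i$, and $\mathbf i=\nabla(\|\cdot\|^2/2)$. Since $\tau_0\in[0,1]$ renders the coefficients $1-\tau_0$ and $\tau_0/N$ nonnegative (and summing to one), $T=\nabla\psi$ with $\psi:=(1-\tau_0)\tfrac{\|\cdot\|^2}{2}+\tfrac{\tau_0}{N}\sum_{i=1}^N\varphi_i$ convex. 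This is the sole use of $\tau_0\le1$: for $\tau_0>1$ the identity coefficient turns negative and $T$ need no longer be monotone. The case $\tau_0=0$ is trivial ($T=\mathbf i$), so I assume $\tau_0\in(0,1]$.

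Next I would control the Hessian of $\psi$. Since $\gamma_0$ and each $\mu^i$ are regular, with densities $f$ and $g^i$, McCann's change-of-variables (Monge--Ampère) identity for Brenier maps gives, for $\gamma_0$–a.e.\ $x$, that $f(x)=g^i(\mathbf t_i(x))\,\det D_A^2\varphi_i(x)$, where $D_A^2\varphi_i$ is the a.e.-defined Alexandrov Hessian. Because $f>0$ a.e.\ on $\mathrm{supp}(\gamma_0)$ while $g^i(\mathbf t_i(x))<\infty$ for $\gamma_0$–a.e.\ $x$ (as $\mu^i(\{g^i=\infty\})=0$ and $\mathbf t_i\#\gamma_0=\mu^i$), the identity forces $\det D_A^2\varphi_i(x)>0$, and, being positive semidefinite, $D_A^2\varphi_i(x)\succ0$ for $\gamma_0$–a.e.\ $x$. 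Hence
\[
D_A^2\psi(x)=(1-\tau_0)\,\mathrm{Id}+\frac{\tau_0}{N}\sum_{i=1}^N D_A^2\varphi_i(x)\succ0,\qquad\gamma_0\text{–a.e.},
\]
since $\tau_0>0$; in particular $\det D_A^2\psi(x)>0$ $\gamma_0$–a.e. Then I would convert this nondegeneracy into absolute continuity: given a Lebesgue–null set $B\subseteq\mathbb R^d$, it suffices to show $\gamma_0(T^{-1}(B))=0$. Writing $E:=T^{-1}(B)$ intersected with the full-measure set where $\psi$ is differentiable, the change-of-variables inequality for convex gradients (the a.c.\ part of the Monge--Ampère measure $|\partial\psi(\cdot)|$ has density $\det D_A^2\psi$) yields $\int_E\det D_A^2\psi(x)\,dx\le|\nabla\psi(E)|\le|B|=0$. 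As $\det D_A^2\psi>0$ $\gamma_0$–a.e., $E$ is Lebesgue–null within $\mathrm{supp}(\gamma_0)$, so $\gamma_0(T^{-1}(B))=\int_{T^{-1}(B)}f=0$ and $\gamma_1\ll\mathrm{Leb}$.

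The hard part will be the last step: justifying the area/change-of-variables inequality for the merely monotone map $T=\nabla\psi$, which is only a.e.\ differentiable and not globally Lipschitz. This requires the theory of the Monge--Ampère measure of a convex function — that $|\partial\psi(E)|$ splits into an absolutely continuous part of density $\det D_A^2\psi$ plus a nonnegative singular part — to pass rigorously from ``$\det D_A^2\psi>0$ a.e.'' to ``$T$ sends no positive-measure set into a null set.'' I note that the positivity of each $\det D_A^2\varphi_i$, which is precisely where the regularity of the \emph{targets} $\mu^i$ is used, is what makes the endpoint $\tau_0=1$ (where the strongly convex identity term vanishes) go through.
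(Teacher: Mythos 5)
Your proof is correct, and it follows the same skeleton as the paper's --- write the iterate map as a convex combination $T=(1-\tau_0)\mathbf i+\tau_0 N^{-1}\sum_i\mathbf t_{\gamma_0}^{\mu^i}$, show its derivative is positive definite $\gamma_0$-a.e., and conclude that $T$ cannot push positive mass into a Lebesgue-null set --- but it executes both key steps with different machinery. For the first step, the paper simply cites Proposition~6.2.12 of Ambrosio--Gigli--Savar\'e, which already packages a.e.\ differentiability, $\nabla\mathbf t_{\gamma_0}^{\mu^i}\succ0$, and strict monotonicity; you instead rederive the positivity of $D_A^2\varphi_i$ from the Monge--Amp\`ere/Jacobian identity $f(x)=g^i(\mathbf t_i(x))\det D_A^2\varphi_i(x)$, which is essentially how that proposition is proved (your phrase ``$f>0$ a.e.\ on $\mathrm{supp}(\gamma_0)$'' should read ``$f(x)>0$ for $\gamma_0$-a.e.\ $x$'', but that is what you use). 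For the second step, the paper invokes injectivity (strict monotonicity of $T$ off a $\gamma_0$-null set) together with Lemma~5.5.3 of the same reference; you instead use the Lebesgue decomposition of the Monge--Amp\`ere measure $E\mapsto|\partial\psi(E)|$, whose absolutely continuous part has density $\det D_A^2\psi$, and the chain $\int_E\det D_A^2\psi\le|\partial\psi(E)|=|\nabla\psi(E)|\le|B|$ (valid since $\psi$ is differentiable on $E$, so $\partial\psi=\{\nabla\psi\}$ there). The trade-off: your route never needs injectivity of $T$ --- subdifferentials of a convex function at distinct points overlap only on a Lebesgue-null set, so the Monge--Amp\`ere measure argument is insensitive to it --- at the price of importing the Alexandrov/Monge--Amp\`ere structure theorem, which is the ``hard part'' you correctly flag; the paper's version is shorter by citation but relies on the strict monotonicity that your argument can dispense with. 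Your observation that $\tau_0\le1$ is used exactly once (to keep the coefficient of $\mathbf i$ nonnegative, hence $\psi$ convex) and that the regularity of the targets $\mu^i$ is what rescues the endpoint $\tau_0=1$ is accurate and matches the role these hypotheses play in the paper.
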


Lemma \ref{iterate_regularity} suggests that the step size must be restricted to $[0,1]$. The next result suggests that the objective function essentially tells us that the \emph{optimal} step size, achieving the maximal reduction of the objective function (thus corresponding to an approximate line search), is exactly equal to 1:

\begin{lemma}[Optimal Stepsize]\label{lem:iterationbound}
If $\gamma_0\in \mathcal P_2(\mathbb R^d)$ is regular then
\begin{equation*}
F(\gamma_{1})-F(\gamma_0)
\le
- \|F'(\gamma_0)\|^2\left[\tau  -  \frac{\tau^2}2 \right].
\end{equation*}
and the bound on the right-hand side of the last display is minimised when $\tau=1$.
\end{lemma}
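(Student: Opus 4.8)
The plan is to obtain the inequality from a single explicit (and generally suboptimal) coupling, after which the whole statement reduces to an elementary quadratic expansion in $\tau$. First I would fix notation: write $\mathbf t_i=\mathbf t_{\gamma_0}^{\mu^i}$ for the optimal maps, which exist since $\gamma_0$ is regular, set $a_i=\mathbf t_i-\mathbf i$, and let $\bar a=\tfrac1N\sum_{i=1}^N a_i$. By Theorem~\ref{gradient_definition} we have $F'(\gamma_0)=-\bar a$, and the iterate is $\gamma_1=\mathbf S_\tau\#\gamma_0$ with $\mathbf S_\tau=\mathbf i+\tau\bar a$; in particular $\|F'(\gamma_0)\|^2=\|\bar a\|_{L^2(\gamma_0)}^2=\int_{\mathbb R^d}\|\bar a(x)\|^2\,\mathrm d\gamma_0(x)$.

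The central step is to bound each $d^2(\gamma_1,\mu^i)$ from above by transporting everything through $\gamma_0$. Let $Z\sim\gamma_0$. Then $\mathbf S_\tau(Z)\sim\gamma_1$ by construction and $\mathbf t_i(Z)\sim\mu^i$ since $\mathbf t_i\#\gamma_0=\mu^i$, so $(\mathbf S_\tau(Z),\mathbf t_i(Z))$ is a valid coupling of $\gamma_1$ and $\mu^i$. Because the Wasserstein distance is the infimum over couplings,
\[
d^2(\gamma_1,\mu^i)\le\int_{\mathbb R^d}\|\mathbf S_\tau(x)-\mathbf t_i(x)\|^2\,\mathrm d\gamma_0(x).
\]
This is the only inequality in the argument, and it points in the favorable direction precisely because the coupling need not be optimal; crucially, no optimality of $\mathbf S_\tau$ itself is needed.

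Next I would expand the integrand. Since $\mathbf S_\tau-\mathbf t_i=(\mathbf i+\tau\bar a)-(\mathbf i+a_i)=\tau\bar a-a_i$, we get pointwise
\[
\|\mathbf S_\tau-\mathbf t_i\|^2=\|a_i\|^2-2\tau\innprod{a_i}{\bar a}+\tau^2\|\bar a\|^2.
\]
Summing over $i$, integrating against $\gamma_0$, and dividing by $2N$: the first term reconstitutes $F(\gamma_0)$; the cross term simplifies via $\sum_i a_i=N\bar a$ to $\tau\|\bar a\|_{L^2(\gamma_0)}^2$; and the last term contributes $\tfrac{\tau^2}{2}\|\bar a\|_{L^2(\gamma_0)}^2$. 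Assembling these yields
\[
F(\gamma_1)\le F(\gamma_0)-\|\bar a\|_{L^2(\gamma_0)}^2\Big[\tau-\tfrac{\tau^2}{2}\Big]=F(\gamma_0)-\|F'(\gamma_0)\|^2\Big[\tau-\tfrac{\tau^2}{2}\Big],
\]
which is exactly the claimed bound.

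For the final assertion I would simply analyse the scalar map $\tau\mapsto\tau-\tau^2/2$, whose derivative $1-\tau$ shows it is maximised at $\tau=1$; hence its negative multiple $-\|F'(\gamma_0)\|^2[\tau-\tau^2/2]$ is minimised at $\tau=1$ (indeed globally in $\tau$, so no restriction to $[0,1]$ is needed here). I do not expect a genuine obstacle: the computation is a one-line quadratic expansion once the coupling is in place, and the argument mirrors the classical descent lemma for gradient descent on a quadratic. The only point requiring care is the deliberate use of the suboptimal coupling $(\mathbf S_\tau(Z),\mathbf t_i(Z))$ in place of the (unavailable in closed form) optimal coupling between $\gamma_1$ and $\mu^i$, which is precisely what delivers the inequality in the direction we want.
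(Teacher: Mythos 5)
Your proof is correct and follows essentially the same route as the paper: both arguments bound $d^2(\gamma_1,\mu^i)$ by a suboptimal coupling built from the optimal maps at $\gamma_0$ and then expand the resulting quadratic in $\tau$. The only (cosmetic) difference is that you realise the coupling directly as $(\mathbf S_\tau(Z),\mathbf t_i(Z))$ with $Z\sim\gamma_0$, whereas the paper writes the same coupling as a map pushing $\mu^i$ forward through $S_i^{-1}$ and then changes variables back to $\gamma_0$ — your formulation is slightly cleaner since it avoids invoking $S_i^{-1}$ altogether.
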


In light of the results in Lemmas \ref{iterate_regularity} and \ref{lem:iterationbound}, one needs only concentrate on the case $\tau_j=1$. This has an interesting ramification: when $\tau=1$, the gradient descent iteration is structurally equivalent to a Procrustes analysis. Specifically, the gradient descent algorithm proceeds by iterating the two steps of a Procrustes analysis (Gower \cite{gower1975generalized}; Dryden \& Mardia \cite[p. 90]{dryden1998statistical}):
\begin{enumerate}
\item[(1)] \textbf{Registration}: Each of the measures $\{\mu^1,\ldots,\mu^N\}$ is \emph{registered} to the current template $\gamma_j$, via the optimal transportation (registration) maps $\mathbf{t}_{\gamma_j}^{\mu^i}$. In geometrical terms, the measures $\{\mu^1,\ldots,\mu^N\}$ are lifted to the tangent space at $\gamma_j$ (via the log map), and their linear representation on the tangent space is expressed in local coordinates which coincide with the maps $\mathbf t_{\gamma_j}^{\mu^i} - \mathbf i=\log_{\gamma_j}(\mu^i)$. These can be seen as a common coordinate system for $\{\mu^1,\ldots,\mu^N\}$, i.e.\ a registration.

\item[(2)] \textbf{Averaging}: The registered measures are \emph{averaged coordinate-wise}, using the common coordinates system by the registration step (1). In geometrical terms, the linear representation of $\{\mu^1,\ldots,\mu^N\}$ afforded by their local coordinates $\mathbf t_{\gamma_j}^{\mu^i} - \mathbf i=\log_{\gamma_j}(\mu^i)$ is averaged linearly.  The linear average is then retracted back onto the manifold via the exponential map to yield the estimate at the $(j+1)$-step.

\end{enumerate}

That the gradient descent reduces to Procrustes analysis is not simply of aesthetic value. It is of the essence, as it shows that the algorithm relies entirely on solving a succession of \emph{pairwise optimal transportation problems}, thus reducing the determination of the Fr\'echet mean to the classical Monge problem of optimal transportation (e.g.\ Benamou and Brenier \cite{benamou2000computational}, Haber et al.\ \cite{haber2010efficient}, Chartrand et al.\ \cite{chartrand2009gradient}). After all, this is precisely the point of a Procrustes algorithm: exploiting the (easier) problem of pairwise registration to solve the (harder problem) of multi-registration.  We note that, further to requiring the ability to solve the pairwise optimal transportation problem, and the regularity conditions on the measures, the algorithm does not require additional structural assumptions/workarounds to reduce the problem to the one-dimensional case (as in, for example the ``admissibility" approach of Boissard et al.\ \cite{boissard2015distribution}). An additional practical advantage is that Procrustes algorithms are easily parallelisable, since one can distribute the solution of the pairwise transport problems at each step $j$.  Any regular measure can serve as an initial point for the algorithm, for instance one of the $\mu^i$. We should mention at this point that, if one is content with obtaining an \emph{approximate} or \emph{regularised} Fr\'echet mean, then there are several numerical strategies available, and there is a rapidly growing literature for the efficient computation of such schemes  -- we briefly summarise some such approaches in the concluding remarks section (Section \ref{sec:conclusions}).

\smallskip
\noindent The gradient/Procrustes iteration is presented succinctly as Algorithm~\ref{algo}. 

\begin{algorithm}[t]
\caption{Gradient Descent via Procrustes Analysis}
\label{algo}
\begin{enumerate}
\item[(A)] Set a tolerance threshold $\epsilon>0$.

\item[(B)] For $j=0$, let $\gamma_j$ be an arbitrary regular measure.

\item[(C)]  For $i=1,\ldots,N$ solve the (pairwise) Monge problem and find the optimal transport map $\mathbf t_{\gamma_j}^{\mu^i}$ from $\gamma_j$ to $\mu^i$.

\item[(D)]  Define the map $T_{j}=N^{-1}\sum_{i=1}^N  \mathbf t_{\gamma_j}^{\mu^i}$.

\item[(E)] Set $\gamma_{j+1}=T_{j}\#\gamma_j$, i.e.\ push-forward $\gamma_j$ via $T_{j}$ to obtain $\gamma_{j+1}$.

\item[(F)]  If $\|F'(\gamma_{j+1})\|<\epsilon$, stop, and output $\gamma_{j+1}$ as the approximation of $\bar{\mu}$ and $\mathbf{t}_{\gamma_{j+1}}^{\mu^i}$ as the approximation of $\mathbf{t}_{\bar{\mu}}^{\mu^i}$,  $i=1,\dots,N$.  Otherwise, return to step (C).

\end{enumerate}
\end{algorithm}

\subsubsection{Convergence of the Algorithm}\label{convergence_section}
In order to tackle the issue of convergence, we will use an approach that is specific to the nature of optimal transportation. The reason is that Hessian type arguments that are used to prove similar convergence results for gradient descent on Riemmanian manifolds (Afsari et al.\ \cite{afsari2013convergence}) or Procrustes algorithms (Le \cite{le2001locating}, Groisser \cite{groisser2005convergence}) do not apply here, since the Fr\'echet functional may very well fail to be twice differentiable. Still, this specific geometry of Wasserstein space affords some advantages; for instance, we will place no restriction on the starting point for the iteration, except that it be regular:

\begin{theorem}[Limit Points are Karcher Means]\label{thm:convtosta}
Let $\mu^1,\dots,\mu^N\in \mathcal P_2(\mathbb R^d)$ be absolutely continuous probability measures, one of which with bounded density.  Then, the sequence generated by Algorithm \ref{algo} stays in a compact set of the Wasserstein space $\mathcal P_2(\mathbb R^d)$, and any limit point of the sequence is a Karcher mean of $(\mu^1,\dots,\mu^N)$.
\end{theorem}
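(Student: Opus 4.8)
The plan is to prove the two assertions separately: first that the iterates $\{\gamma_j\}$ are relatively compact in $(\mathcal P_2(\mathbb R^d),d)$, and then that every subsequential limit is a Karcher mean. The descent structure of Lemma~\ref{lem:iterationbound} (with the optimal choice $\tau_j=1$) will drive the second part, while the first will follow from a coupling representation of the iterates that is \emph{uniform} in $j$.

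For compactness, I would exploit that, by construction, $\gamma_{j+1}=T_j\#\gamma_j$ is the law of $W=\frac1N\sum_{i=1}^N X_i$, where $X_i=\mathbf t_{\gamma_j}^{\mu^i}(Z)$ with $Z\sim\gamma_j$, so that $X_i\sim\mu^i$ for every $i$ (exactly the construction of Proposition~\ref{multicoupling}). Crucially, the marginals $X_i\sim\mu^i$ do not depend on $j$. Since $\|W\|^2\le\frac1N\sum_i\|X_i\|^2$ by convexity, a truncation argument — splitting $\mathbb E[\|X_i\|^2\mathbf 1\{\|W\|>R\}]$ according to whether $\|X_i\|$ is large or small, and using Markov's inequality on $\frac1N\sum_i\|X_i\|^2$ — shows that the second moments of $\{\gamma_j\}_{j\ge1}$ are uniformly integrable, with a bound depending only on the fixed quantities $\int\|x\|^2\,d\mu^i(x)$. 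Uniform integrability of $\|x\|^2$ entails tightness and hence relative compactness in the Wasserstein metric (the standard compactness criterion in $\mathcal P_2$, Ambrosio et al.\ \cite{ambgigsav}); adjoining the single point $\gamma_0$ leaves the set relatively compact. This step uses neither monotonicity of $F$ nor regularity.

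For the limit points, summing the descent inequality $F(\gamma_{j+1})-F(\gamma_j)\le-\frac12\|F'(\gamma_j)\|^2$ (Lemma~\ref{lem:iterationbound} at $\tau=1$, legitimate at every step since the iterates stay regular by Lemma~\ref{iterate_regularity}) and using $F\ge0$ yields $\sum_j\|F'(\gamma_j)\|^2\le 2F(\gamma_0)<\infty$, so $\|F'(\gamma_j)\|\to0$. Let $\gamma_{j_k}\to\gamma^*$ in $d$. Two ingredients remain. (i) \emph{Regularity of $\gamma^*$:} I would establish a \emph{uniform} density bound on the iterates by writing $T_j=\nabla\phi+\nabla\psi$, with $\nabla\phi=\frac1N\mathbf t_{\gamma_j}^{\mu^{i_0}}$ (for the index $i_0$ whose density is bounded by $M$) and $\nabla\psi=\frac1N\sum_{i\ne i_0}\mathbf t_{\gamma_j}^{\mu^i}$ both gradients of convex functions; then $(\nabla\phi)\#\gamma_j$ has density $\le N^dM$, and since $D^2\psi\succeq0$ forces $\det(D^2\phi+D^2\psi)\ge\det D^2\phi$ (Minkowski's determinant inequality), the Monge--Ampère change of variables underlying Lemma~\ref{iterate_regularity} gives $\gamma_{j+1}$ a density $\le N^dM$ as well. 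This bound passes to the weak limit, so $\gamma^*$ is regular. (ii) \emph{Passing the averaged gradient to the limit:} I would lift the maps to a single coupling $\Pi_k\in\mathcal P((\mathbb R^d)^{N+1})$, the joint law of $(Z,\mathbf t_{\gamma_{j_k}}^{\mu^1}(Z),\dots,\mathbf t_{\gamma_{j_k}}^{\mu^N}(Z))$, with marginals $\gamma_{j_k},\mu^1,\dots,\mu^N$. By tightness these converge (along a further subsequence) to some $\Pi$ with marginals $\gamma^*,\mu^1,\dots,\mu^N$; each $(0,i)$-marginal of $\Pi_k$ is the optimal plan between $\gamma_{j_k}$ and $\mu^i$, so by stability of optimal plans (Villani \cite{vil}) its limit is optimal between $\gamma^*$ and $\mu^i$, and regularity of $\gamma^*$ forces it to be induced by $\mathbf t_{\gamma^*}^{\mu^i}$; hence $\Pi$ is the joint law of $(Z^*,\mathbf t_{\gamma^*}^{\mu^1}(Z^*),\dots,\mathbf t_{\gamma^*}^{\mu^N}(Z^*))$, $Z^*\sim\gamma^*$. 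Since $(x_0,y_1,\dots,y_N)\mapsto\|\frac1N\sum_i(y_i-x_0)\|^2$ is continuous and nonnegative, the Portmanteau lower-semicontinuity bound gives
\[
\|F'(\gamma^*)\|^2=\int\Big\|\tfrac1N\sum_i(y_i-x_0)\Big\|^2 d\Pi\le\liminf_k\int\Big\|\tfrac1N\sum_i(y_i-x_0)\Big\|^2 d\Pi_k=\liminf_k\|F'(\gamma_{j_k})\|^2=0,
\]
so $\frac1N\sum_i(\mathbf t_{\gamma^*}^{\mu^i}-\mathbf i)=0$ in $L^2(\gamma^*)$, and Corollary~\ref{thm:minisstationary} identifies $\gamma^*$ as a Karcher mean.

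The main obstacle is ingredient (ii), specifically the need to treat the maps \emph{jointly} rather than pairwise: the gradient $F'(\gamma)$ is the average $\frac1N\sum_i(\mathbf t_\gamma^{\mu^i}-\mathbf i)$ along a \emph{single} source point, so pairwise stability of optimal plans is insufficient by itself — one must build the common coupling $\Pi_k$, pass to its limit, and only then invoke pairwise stability to identify each coordinate. A secondary but genuine difficulty is the uniform density bound in (i): without it the limit $\gamma^*$ need not be absolutely continuous, whereupon neither $F'(\gamma^*)$ nor the Karcher characterisation of Corollary~\ref{thm:minisstationary} would be meaningful.
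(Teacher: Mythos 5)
Your proposal is correct, and in its two substantive steps it takes a genuinely different route from the paper. For compactness, the paper first proves the uniform second-moment integrability \eqref{eq:uniformboundedness} under an auxiliary third-moment hypothesis and defers the general case to a de la Vall\'ee-Poussin-type construction in the supplement (building an Orlicz function $H\in\omega(x^2)$ with $\int H(\|x\|)\,d\mu^i<\infty$); your observation that $\gamma_{j+1}$ is the law of $\frac1N\sum_i X_i$ with marginals $X_i\sim\mu^i$ \emph{fixed in $j$}, combined with a direct truncation via Markov's inequality, yields the same uniform integrability more elementarily and with no extra moment assumption. For the identification of limit points, the paper proves the full \emph{continuity} of $\gamma\mapsto\|F'(\gamma)\|^2$ along uniformly density-bounded sequences (Proposition~\ref{prop:Fdercont}), which forces it through the monotone-operator machinery of Section~\ref{appendix} (pointwise a.e.\ convergence of $\mathbf t_{\gamma_n}^{\mu^i}$ via Proposition~\ref{prop:unifcompacts}), Egorov's theorem, and a truncation scheme; you instead note that only \emph{lower semicontinuity} of $\|F'\|^2$ is needed to conclude $\|F'(\gamma^*)\|=0$, and obtain it by gluing the $N$ optimal maps into a single multicoupling $\Pi_k$, passing to a weak limit, identifying each two-dimensional marginal by stability of optimal plans plus regularity (hence uniqueness) at the limit, and applying the Portmanteau bound to the nonnegative continuous integrand $\|\frac1N\sum_i(y_i-x_0)\|^2$. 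This is a leaner argument for Theorem~\ref{thm:convtosta} itself; the trade-off is that the paper's heavier apparatus is not wasted, since Proposition~\ref{prop:unifcompacts} is reused for the uniform convergence of Procrustes maps (Theorem~\ref{thm:convprocrustes}), which your coupling argument does not deliver. Your uniform density bound via Minkowski's determinant inequality is essentially the paper's Proposition~\ref{prop:Cmu} (which uses Fiedler's refinement to get a marginally sharper constant), and the remaining ingredients (descent inequality, summability of $\|F'(\gamma_j)\|^2$, regularity of iterates) coincide with the paper's.
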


In view of Corollary~\ref{thm:minisstationary}, this immediately implies:
\begin{corollary}[Wasserstein Convergence of Gradient Descent]\label{cor:conv}
Under the conditions of Theorem~\ref{thm:convtosta}, if $F$ has a unique stationary point, then the sequence $\{\gamma_j\}$ generated by Algorithm \ref{algo} converges to the Fr\'echet mean of $\{\mu^1,\dots,\mu^N\}$ in the Wasserstein metric,
$$d(\gamma_j,\bar{\mu})\stackrel{j\rightarrow\infty}{\longrightarrow}0.$$
\end{corollary}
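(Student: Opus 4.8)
The plan is to reduce the statement to a purely topological argument that combines the compactness and limit-point characterisation supplied by Theorem~\ref{thm:convtosta} with the identification of stationary points furnished by Corollary~\ref{thm:minisstationary}. First I would record that, under the stated hypotheses (the $\mu^i$ are absolutely continuous with one of bounded density), the Fr\'echet mean $\bar\mu$ exists and is unique, and that by Corollary~\ref{thm:minisstationary} it is itself a Karcher mean, hence a stationary point of $F$. Since $F$ is assumed to possess a \emph{unique} stationary point, this stationary point must be exactly $\bar\mu$; in particular every Karcher mean of $\{\mu^1,\dots,\mu^N\}$ coincides with $\bar\mu$.

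Next I would invoke Theorem~\ref{thm:convtosta} to extract two facts at once: the iterates $\{\gamma_j\}$ remain inside a fixed compact subset $K\subseteq\mathcal P_2(\mathbb R^d)$, and every limit point of the sequence is a Karcher mean. Combining this with the preceding paragraph, every subsequential limit of $\{\gamma_j\}$ in the Wasserstein metric must equal $\bar\mu$.

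The conclusion then follows from the elementary fact that a precompact sequence in a metric space, all of whose subsequential limits agree, converges to their common value. Concretely, I would argue by contradiction: suppose $d(\gamma_j,\bar\mu)\not\to 0$. Then there exist $\epsilon>0$ and a subsequence $\{\gamma_{j_k}\}$ with $d(\gamma_{j_k},\bar\mu)\ge\epsilon$ for all $k$. Since this subsequence lies in the compact set $K$, which is sequentially compact because $\mathcal P_2(\mathbb R^d)$ is metric, it admits a further subsequence converging in the Wasserstein metric to some $\gamma^\ast\in K$. By Theorem~\ref{thm:convtosta} this limit $\gamma^\ast$ is a Karcher mean, whence $\gamma^\ast=\bar\mu$, contradicting $d(\gamma_{j_k},\bar\mu)\ge\epsilon$. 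Therefore $d(\gamma_j,\bar\mu)\to 0$, as claimed.

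I do not expect a genuine obstacle, since all the analytic content is already carried by Theorem~\ref{thm:convtosta}; the only point requiring a moment of care is that the compactness asserted there is with respect to the Wasserstein metric, so that the extraction of the convergent further subsequence is legitimate and the limit $\gamma^\ast$ lies in the same space $\mathcal P_2(\mathbb R^d)$ in which ``Karcher mean'' is defined and in which Corollary~\ref{thm:minisstationary} applies.
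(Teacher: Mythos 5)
Your proposal is correct and follows exactly the route the paper intends: the paper states the corollary as an immediate consequence of Theorem~\ref{thm:convtosta} together with Corollary~\ref{thm:minisstationary}, and the implicit argument is precisely your subsequence extraction --- compactness of $\{\gamma_j\}$ in $\mathcal P_2(\mathbb R^d)$, identification of every limit point as a Karcher mean, and hence (by uniqueness of the stationary point and the fact that $\bar\mu$ is itself a Karcher mean) equality of all subsequential limits with $\bar\mu$. Your care about the compactness being with respect to the Wasserstein metric is exactly the right point to flag, and it is supplied by Lemma~\ref{lem:seqcompact}.
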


{Of course, combining Theorem~\ref{thm:convtosta} with Theorem~\ref{thm:optimalKarcher} shows that the conclusion of Corollary~\ref{cor:conv} holds when the appropriate assumptions on $\{\mu^i\}$ and the Karcher mean $\mu$ are satisfied.} The proof of Theorem \ref{thm:convtosta} is elaborate, and is constructed via a series of intermediate results in a separate section (Section \ref{proof_algorithm}) in the interest of tidiness.  The main challenge is that the standard condition used for convergence of gradient descent algorithms, that gradients be Lipschitz, fails to hold in this setup.  Indeed, $F$ is not differentiable on discrete measures, and these constitute a dense subset of the Wasserstein space.

\subsubsection{Uniform Convergence of Procrustes Maps and Multicoupling}\label{convergence_procrustes}
We conclude our analysis of the algorithm by turning to the Procrustes maps $\mathbf{t}^{\bar{\mu}}_{\mu^i}$, which optimally couple each sample observation $\mu^i$ to their Fr\'echet mean $\bar{\mu}$. These are the key objects required for the solution of the multicoupling problem (as established in Proposition \ref{multicoupling}), and one would use the limit of $\mathbf{t}^{\gamma_j}_{\mu^i}$ in $j$ as their approximation. However,  the fact that $d(\gamma_j,\bar{\mu}){\rightarrow}0$ does not immediately imply the convergence of $\mathbf{t}^{\gamma_j}_{\mu^i}$ to $\mathbf{t}^{\bar{\mu}}_{\mu^i}$:  the Wasserstein convergence only means that certain integrals of the warp maps converge.  Still, convergence of the warp maps \emph{does} hold, indeed uniformly so on compacta, $\bar{\mu}$-almost everywhere:

\begin{theorem}[Uniform Convergence of Procrustes Maps] \label{thm:convprocrustes}
Under the conditions of Corollary~\ref{cor:conv}, there exist sets $A,B^1,\dots,B^N\subseteq \mathbb R^d$ such that $\bar{\mu}(A)=1=\mu^1(B^1)=\dots=\mu^N(B^N)$ and
\[
\sup_{\Omega_1}\,\left\|\mathbf{t}_{\gamma_j}^{\mu^i} - \mathbf{t}_{\bar{\mu}}^{\mu^i}\right\|\stackrel{j\rightarrow\infty}{\longrightarrow}0,
\qquad
\sup_{\Omega_2}\,\left\|\mathbf{t}^{\gamma_j}_{\mu^i} - \mathbf{t}^{\bar{\mu}}_{\mu^i}\right\|\stackrel{j\rightarrow\infty}{\longrightarrow}0,
\qquad i=1,\dots,N,
\]
for any pair of compacta $\Omega_1\subseteq A$,  $\Omega_2\subseteq B^i$,  where the sequence $\mathbf{t}^{\gamma_j}_{\mu^i}$ and $\mathbf{t}_{\gamma_j}^{\mu^i}=\left(\mathbf{t}^{\gamma_j}_{\mu^i}\right)^{-1}$ are the Procrustes maps generated by Algorithm \ref{algo}.  If in addition all the measures $\{\mu^1,\dots,\mu^N\}$ have the same support, then one can choose the sets so that $B^1=\dots=B^N$.
\end{theorem}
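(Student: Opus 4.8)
The plan is to reduce the statement to a stability result for gradients of convex functions, and then to carry out the genuinely hard step — upgrading pointwise to uniform convergence — using the monotonicity of these gradients. Since $\bar\mu$ and the $\mu^i$ are regular, Brenier's characterisation lets me write each forward Procrustes map as a gradient of a convex potential, $\mathbf{t}_{\gamma_j}^{\mu^i} = \nabla\psi_j^i$ and $\mathbf{t}_{\bar\mu}^{\mu^i} = \nabla\psi^i$, and each inverse map as the gradient of a Legendre dual, $\mathbf{t}^{\gamma_j}_{\mu^i}=(\nabla\psi_j^i)^{-1}=\nabla(\psi_j^i)^*$. My first task is to show these potentials converge locally uniformly. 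From $d(\gamma_j,\bar\mu)\to 0$ I obtain $\gamma_j\Rightarrow\bar\mu$ together with $\int\|x\|^2\,d\gamma_j\to\int\|x\|^2\,d\bar\mu$; since $\nabla\psi_j^i$ pushes $\gamma_j$ onto the fixed measure $\mu^i$, the energies $\int\|\nabla\psi_j^i\|^2\,d\gamma_j=\int\|y\|^2\,d\mu^i$ stay uniformly bounded. Normalising the potentials at a fixed point, I would deduce they are locally uniformly bounded, hence — being convex — locally equi-Lipschitz, so that Arzel\`a--Ascoli extracts locally uniformly convergent subsequences whose limits are convex.

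Next I identify the limit. Passing the optimal couplings $(\mathbf{i},\nabla\psi_j^i)\#\gamma_j$ to a weak limit and using lower semicontinuity of the transport cost and stability of optimality, any subsequential limit potential $\psi_\infty^i$ has gradient inducing an optimal coupling of $\bar\mu$ and $\mu^i$. Because $\bar\mu$ is regular this optimal coupling is unique, forcing $\nabla\psi_\infty^i=\nabla\psi^i$ $\bar\mu$-a.e., hence $\psi_\infty^i=\psi^i$ up to an additive constant; uniqueness of the subsequential limit then promotes this to $\psi_j^i\to\psi^i$ locally uniformly. The standard stability of subdifferentials under local uniform convergence of convex functions gives pointwise convergence $\nabla\psi_j^i(x)\to\nabla\psi^i(x)$ at every differentiability point $x$ of $\psi^i$. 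I take $A$ to be the set of common differentiability points of $\psi^1,\dots,\psi^N$ and of all $\psi_j^i$ (removing a countable union of Lebesgue-null sets); absolute continuity of $\bar\mu$ yields $\bar\mu(A)=1$, and on $A$ the limit $\nabla\psi^i$ is in fact continuous, since the gradient of a convex function is continuous on its set of differentiability.

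The crux is the final upgrade from pointwise convergence on $A$ to uniform convergence on compact $\Omega_1\subseteq A$. In one dimension this is a P\'olya-type argument for monotone functions, but in $\mathbb{R}^d$ there is no total order, and this is precisely where I would deploy the geometry of monotone operators developed in Section~\ref{appendix}: the maps $\nabla\psi_j^i$ are monotone, converge pointwise on $A$ to the map $\nabla\psi^i$ which is continuous on $A$, and I would show that such pointwise convergence of monotone maps to a continuous limit is automatically uniform on compacta. The argument sandwiches the values of $\nabla\psi_j^i$ between those at nearby points of $A$ via the monotonicity inequality $\langle \nabla\psi_j^i(x)-\nabla\psi_j^i(y),\,x-y\rangle\ge 0$, combined with the equicontinuity afforded by local equi-Lipschitzness of the potentials. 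This is the main obstacle and the most delicate part of the proof.

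Finally, the inverse maps are handled by the identical scheme with the roles of the measures exchanged: now the fixed regular measure is $\mu^i$ and the varying targets $\gamma_j$ converge to $\bar\mu$, so the same compactness, identification, and monotone-upgrade steps produce a full-measure set $B^i$ with $\mu^i(B^i)=1$ and uniform convergence of $\mathbf{t}^{\gamma_j}_{\mu^i}$ to $\mathbf{t}^{\bar\mu}_{\mu^i}$ on its compacta. When all the $\mu^i$ share a common support, the relevant differentiability sets can be intersected into a single $B=B^1=\dots=B^N$, which gives the last assertion.
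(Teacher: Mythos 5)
Your overall strategy is sound and reaches the right conclusion, but it takes a genuinely different route from the paper. The paper's proof of Theorem~\ref{thm:convprocrustes} is a two-line application of Proposition~\ref{prop:unifcompacts}, which never touches the convex potentials: it works with weak convergence of the optimal \emph{plans} $\pi_j\Rightarrow\pi$ (stability of optimal transport), shows that limit points of the graphs of the maximal monotone operators $u_j=\partial\varphi_j$ lie in the graph of $u=\partial\varphi$ (Lemmas~\ref{lem:exists} and~\ref{lem:liminT}, via the local monotonicity Lemma~\ref{lem:monislocal} at Lebesgue density points of $\mathrm{supp}(\bar\mu)$), establishes uniform boundedness of $u_j(\Omega)$ by a convex-hull and cyclical-monotonicity argument (Proposition~\ref{lem:yinkstrong}), and concludes by compactness--contradiction. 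You instead go through the potentials: normalise, extract locally uniform limits by Arzel\`a--Ascoli, identify the limit potential via uniqueness of the optimal coupling, and then invoke stability of subdifferentials. That route is viable (it is close in spirit to the argument of \'Alvarez-Esteban et al.) and buys a cleaner conceptual picture at the level of convex functions; the paper's route buys finer control at points that are merely Lebesgue density points of the support, which is what lets it choose $A$ and $B^i$ of full measure without assuming anything about the interior of the supports.

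There is one step in your plan that does not work as stated and needs repair: the deduction that the energy bound $\int\|\nabla\psi_j^i\|^2\,d\gamma_j=\int\|y\|^2\,d\mu^i$, together with normalisation at a point, yields \emph{locally uniformly bounded} potentials. That bound controls the gradients only in an average sense against the moving measures $\gamma_j$, and by itself does not prevent the subdifferentials from blowing up on $\gamma_j$-small neighbourhoods of your normalisation point, which would destroy both the local sup bound on $\psi_j^i$ and the equi-Lipschitz property. What actually closes this step is geometric input of the kind in Proposition~\ref{lem:yinkstrong}: use $\gamma_j\Rightarrow\bar\mu$ (Portmanteau) to guarantee that small balls around finitely many support points of $\bar\mu$ carry $\gamma_j$-mass bounded below, use tightness of the (here fixed) target $\mu^i$ to find in each such ball a point whose image under $\partial\psi_j^i$ lies in a fixed compact set, and then propagate this to a uniform bound on all of $\Omega_1$ via the monotone-operator estimate of Alberti--Ambrosio. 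A second, more minor imprecision: uniqueness of the optimal coupling pins down $\nabla\psi_\infty^i$ only $\bar\mu$-almost everywhere, so $\psi_\infty^i=\psi^i$ up to a constant can only be asserted on the region charged by $\bar\mu$ (not globally); this is harmless for the theorem since the compacta $\Omega_1$ sit inside a set of full $\bar\mu$-measure, but you should state the identification accordingly.
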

\noindent {With both ingredients of the registration problem in hand, we deduce a solution to the latter:
\begin{corollary}
[Convergence of Multicouplings]
\label{cor:convMulti}
Under the conditions of Corollary~\ref{cor:conv}, the sequence of multicouplings
\[
\left(\mathbf t_{\gamma_j}^{\mu^1},\dots\mathbf t_{\gamma_j}^{\mu^n}\right)\#\gamma_j
\]
of $\{\mu^1,\dots,\mu^N\}$ converges (in Wasserstein distance on $(\R^d)^N$) to the optimal multicoupling $(\mathbf t_{\bar\mu}^{\mu^1},\dots\mathbf t_{\bar\mu}^{\mu^n})\#\bar{\mu}$.
\end{corollary}
}

\section{Population Setting}\label{sec:population}
In order to carry out \emph{inference}, we must relate the sample collection of measures to a population, and show that the relevant quantities are identifiable parameters. Furthermore, in practice the sample measures will only be discretely observed, and this must be taken into account. We now formulate such a model, and study its nonparametric estimation from discrete observations.

\subsection{Deformation Models and Discrete Observation}\label{sec:discrete_intro}
Let $\lambda$ be a regular probability measure with a strictly positive density on a convex compact $K\subset\mathbb R^d$ of positive Lebesgue measure\footnote{In applied settings, the point processes will be observed on a bounded \textit{observation window} $K$.  For this reason as well as the sake of simplicity, we restrict our discussion to a given compact set (but remark that it could be extended to unbounded observation windows subject to further conditions). }, and let $\{\Pi_1,\ldots,\Pi_N\}$ be i.i.d point processes with intensity measure $\lambda$,
\[
\mathbb{E}[\Pi_i(A)]
=\lambda(A),
\]
for all Borel subsets $A\subseteq K$. Instead of observing the true processes $\{\Pi_1,\ldots,\Pi_N\}$, we are able to observe \emph{warped} versions
\[
\widetilde{\Pi_i}:
=T_i\#\Pi_i,
\qquad i=1,\dots,N,
\]
with conditional warped mean measures
\[
\mathbb{E}[\widetilde{\Pi_i}|T_i]
=\mathbb{E}[T_i\#\Pi_i|T_i]
=\Lambda_i
=T_i\#\lambda,
\]
where the $\{T_i:\mathbb{R}^d\rightarrow\mathbb{R}^d\}$ are i.i.d random homeomorphisms on $K$, satisfying the properties of
\begin{enumerate}
\item Unbiasedness:  the Fr\'echet mean of $\Lambda_i=T_i\#\lambda$ is $\lambda$.

\item Regularity:  $T_i$ is a gradient of a convex function on $K$.
\end{enumerate}
The conditional mean measures $\{\Lambda_i=T_i\#\lambda\}_{i=1}^{N}$ play the role of the unobservable sample of random measures generated from a population law constructed via random deformations of the template $\lambda$. The processes $\{\widetilde{\Pi}_i\}_{i=1}^{N}$ play the role of the discretely observed versions of the $\{\Lambda_i\}_{i=1}^{N}$. Conditions (1) and (2) state that the deformations $\{T_i\}$ are identifiable. They can also be motivated from first principles: (1) states that the maps do not deform the template $\lambda$ on average (otherwise this ``average deformation" would be by definition the template); and (2) states that among all possible deformations that could have mapped $\lambda$ to $\Lambda_i$, we take the parsimonious choice of the optimal deformation. The importance and canonicity of these two assumptions has been discussed in depth in Panaretos \& Zemel \cite[Section 3.3]{panzem15}, who study a one-dimensional version of the above problems (which is qualitatively very different, given the flat nature of 1d Wasserstein space, and the availability of explicit closed form expressions).

The connection of this deformation model to Fr\'echet means, via the optimal maps, is now given as follows (in a general setup, encompassing our model setup).  Let $C_b(K,\mathbb R^d)$ be the space of continuous bounded functions $f:K\to\mathbb R^d$ endowed with the supremum norm $\|f\|_\infty=\sup_{x\in K}\|f(x)\|$.

\begin{theorem}[mean identity warp functions and Fr\'echet means]\label{thm:meanTid}
Let $K\subset\R^d$ be a compact convex set of positive Lebesgue measure, and let $\lambda\in \mathcal{P}_2(K)$ be regular.  Consider the random measure $\Lambda=T\#\lambda$, where $T:K\to K$ is a random deformation (viewed as a random element in $C_b(K,\mathbb R^d)$), almost surely injective, and satisfying
\begin{enumerate}
\item almost surely there exists a convex function $\phi$ such that $T=\nabla\phi$ on the interior of $K$;
\item $\E [T(x)]=x$ for all $x\in K$ (or on a dense subset of $K$);
\item almost surely $T$ is differentiable with a nonsingular derivative for almost all $x\in K$.
\end{enumerate}
Then $\lambda$ is the unique Fr\'echet mean of $\Lambda$, i.e., the unique minimiser of the population Fr\'echet functional $\gamma\mapsto \mathbb Ed^2(\Lambda,\gamma)$.
\end{theorem}
An important requirement for the statement and proof of Theorem~\ref{thm:meanTid} is that $\phi$, $\phi^*$ and $\Lambda$ are measurable as random elements in the appropriate spaces; this is not a priori obvious, but is established as part of the proof.

The statistical problem will now be to estimate the unknown structural mean measure $\lambda$, and the registration maps $T_i$ non-parametrically,  by smoothing the observed point processes $\{\widetilde{\Pi}_1,\ldots,\widetilde{\Pi}_N\}$. Once $\lambda$ and $\{T_i\}$  have been estimated, the processes $\{\widetilde{\Pi}_1,\ldots,\widetilde{\Pi}_N\}$ can be registered by applying the inverses of the estimated maps $T_i$, allowing for further analysis of the point processes in a functional data context.  Theorem~\ref{thm:meanTid} guarantees that the estimands considered are identifiable.

\subsection{Regularised Nonparametric Estimation}

In order to estimate the $\lambda$ and the $\{\Lambda_i,T_i\}$, we will follow the steps below:

\begin{enumerate}
\item \emph{Regularisation:} Estimate $\Lambda_i=T_i\# \lambda$ by a regular kernel estimator $\widehat{\Lambda}_i$ restricted on $K$,
\begin{equation}\label{eq:lambdahat}
\widehat{\Lambda}_i
=\frac1m  \sum_{j=1}^m\frac{\delta\{x_j\}\ast \psi_\sigma}  {[\delta\{x_j\}\ast \psi_\sigma](K)}\bigg|_K,
\end{equation}
where $\psi:\mathbb R^d\to(0,\infty)$ is a unit-variance isotropic density function, $\psi_\sigma(x)=\sigma^{-d}\psi(x/\sigma)$ for $\sigma>0$ (more generally, $\psi$ could be non-isotropic, having a bandwidth matrix, but we focus on the isotropic case for simplicity), and $\widetilde\Pi_i$ is the sum of dirac masses $\sum_{j=1}^m\delta\{x_i\}$.  If $\widetilde{\Pi}_i$ contains no points (that is, $m=0$), define $\hat\Lambda_i$ to be the (normalised) Lebesgue measure on $K$.

\item \emph{Fr\'echet Mean Estimation:} Estimate $\lambda$ by the empirical Fr\'echet mean $\hat{\lambda}$ of $\widehat{\Lambda}_1,\dots,\widehat{\Lambda}_N$, using the Procrustes Algorithm \ref{algo}.

\item \emph{Procrustes Analysis:} Estimate $T_i$ by the optimal transportation map of $\widehat{\lambda}$ onto $\widehat{\Lambda}_i$, as given by the final step in the iteration of Algorithm \ref{algo}. Estimate the map $T_i^{-1}$ by $\widehat{T^{-1}_i}=\widehat{T}^{-1}_i$.

\item \emph{Registration:} Register the observed point processes to a common coordinate system by defining $\widehat{\Pi}_i=\widehat{{T^{-1}_i}}{\#}\widetilde{\Pi}_i$.
\end{enumerate}

 In the next section, we will prove that our estimates are consistent for their population version, as the number of observed processes, and the number of points per process diverge.

\subsection{Asymptotic Theory}
To establish consistency, we will use the \emph{dense} asymptotics regime of functional data analysis, adapted to the current setting.  We will consider a setup where the number of observed point processes $n$ diverges, and the (mean) number of points in each observed process, $\mathbb{E}[\widetilde{\Pi}_i(K)]$, diverge too.  Here we use the index notation ``$n$'' rather than ``$N$" to emphasize that the index is no longer held fixed.  Specifically, let $(\Pi_1^{(n)},\Pi_2^{(n)},\dots,\Pi_n^{(n)})_{n=1}^\infty$ be a triangular array of row-independent and identically distributed point processes on $K$ following the same infinitely divisible distribution and having mean measure $\tau_n\lambda$, where $\tau_n>0$ are constants. Let $T_1,\dots,T_n$ be independent and identically distributed realisations of a random homeomorphism $T$ of $K$ satisfying the unbiasedness and regularity assumptions of Section \ref{sec:discrete_intro}. Let $\widetilde\Pi_i^{(n)}=T_i\#\Pi_i^{(n)}$ and set $\Lambda_i=T_i\#\lambda=\tau_n^{-1}\mathbb E[\widetilde\Pi_i^{(n)}|T_i]$. Suppose that $\widehat\Lambda_i$ is an estimator of $\Lambda_i$, constructed by kernel smoothing of $\Pi_i^{(n)}$ using a (possibly random) bandwidth $\sigma_i^{(n)}$, as described in the previous section. Correspondingly, let $\widetilde\Pi_i^{(N)}=T_i\#\Pi_i^{(n)}$ and set $\Lambda_i=T_i\#\lambda=\tau_n^{-1}\mathbb E[\widetilde\Pi_i^{(n)}|T_i]$.

\begin{theorem}[Consistency  of the regularised Fr\'echet Mean]\label{thm:consistency}
If $\tau_n/\log n\to\infty$ and $\sigma_n=\max_i\sigma^{(n)}_i\stackrel{\rm{p}}\to0$ then
\begin{enumerate}
\item For any $i$,
\[
d(\widehat\Lambda_i,\Lambda_i)\stackrel{\rm{p}}\to0;
\]
\item The estimator $\widehat\lambda_n$ is strongly consistent
\[
d(\widehat\lambda_n,\lambda)\stackrel{\rm{as}}\to0.
\]
\end{enumerate}
If the smoothing is carried out independently across trains, that is, $\sigma_i^{(n)}$ depends only on $\widetilde\Pi_i^{(n)}$, then the result still holds if merely $\tau_n\to\infty$.

If $\mathbb E\left[\Pi_1^{(1)}\right]^4<\infty$, $\sum_n\tau_n^{-2}<\infty$ and $\sigma_n\stackrel{\rm{as}}\to0$ then convergence almost surely holds.
\end{theorem}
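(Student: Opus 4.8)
The plan is to separate a deterministic stability argument for Fréchet means from the probabilistic control of the estimation error, and to handle the three moment/rate regimes only at the level of that error. Throughout I exploit compactness of $K$: every measure in play is supported in $K$, so all Wasserstein distances are bounded by $D:=\mathrm{diam}(K)$, and $\mathcal P_2(K)$ is compact with the Wasserstein topology coinciding with weak convergence. I would treat part (1) first. Conditioning on $T_i$, the estimator $\widehat\Lambda_i$ is a kernel-smoothed, renormalised version of a point process whose mean measure is $\tau_n\Lambda_i$; since $\tau_n\to\infty$ the expected number of points diverges, so the renormalised empirical measure of $\widetilde\Pi_i^{(n)}$ converges weakly to $\Lambda_i$ by a law of large numbers for the infinitely divisible process, while convolution with $\psi_\sigma$ for $\sigma=\sigma_i^{(n)}\to0$ and restriction to $K$ perturb it negligibly in the weak topology. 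Weak convergence on compact $K$ is Wasserstein convergence, which gives $d(\widehat\Lambda_i,\Lambda_i)\to0$ in probability.

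For part (2) I would set $F_n(\gamma)=\tfrac1{2n}\sum_i d^2(\Lambda_i,\gamma)$, $\widehat F_n(\gamma)=\tfrac1{2n}\sum_i d^2(\widehat\Lambda_i,\gamma)$, and $F(\gamma)=\tfrac12\mathbb E d^2(\Lambda,\gamma)$. The crucial structural observation is that the $\Lambda_i=T_i\#\lambda$ are genuinely i.i.d.\ (only the point processes, and hence the estimation errors, are triangular), so $F_n\to F$ pointwise by the SLLN. I upgrade this to $\|F_n-F\|_\infty\to0$ almost surely by a Glivenko--Cantelli argument: $\gamma\mapsto d^2(\cdot,\gamma)$ is uniformly Lipschitz with constant $2D$, so the family is equicontinuous, and equicontinuity plus pointwise convergence on the compact $\mathcal P_2(K)$ yields uniform convergence. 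The estimation error enters only through $\|\widehat F_n-F_n\|_\infty\le \tfrac{D}{n}\sum_i d(\widehat\Lambda_i,\Lambda_i)=:D\bar\epsilon_n$, using $|d^2(a,\gamma)-d^2(b,\gamma)|\le 2D\,d(a,b)$. Once $\bar\epsilon_n\to0$ is established, $\|\widehat F_n-F\|_\infty\to0$, and since $F$ admits the unique minimiser $\lambda$ (Theorem~\ref{thm:meanTid}) over the compact $\mathcal P_2(K)$, a standard $M$-estimation / argmin-stability argument (every weak limit point of $\widehat\lambda_n$ minimises $F$, hence equals $\lambda$) forces $d(\widehat\lambda_n,\lambda)\to0$ in the relevant mode.

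It remains to control $\bar\epsilon_n$, and this is where the three regimes and the main difficulty reside. Since $\bar\epsilon_n\le\max_{i\le n}d(\widehat\Lambda_i,\Lambda_i)$, the coupled-bandwidth regime $\tau_n/\log n\to\infty$ calls for uniform control over $i$: I would prove an exponential concentration inequality $\mathbb P\big(d(\widehat\Lambda_i,\Lambda_i)>\delta\big)\le Ce^{-c\tau_n}$, combining concentration of the point count about its mean $\tau_n$ (available from the infinitely divisible structure) with a bounded-difference concentration of the empirical Wasserstein fluctuation; a union bound over $i\le n$ and summation over $n$ then give $\sum_n n\,e^{-c\tau_n}<\infty$ precisely because the $\log n$ budget makes $e^{-c\tau_n}$ decay faster than any power of $n$, so Borel--Cantelli yields $\max_i d\to0$ almost surely. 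When the bandwidths are chosen per-train, the $\widehat\Lambda_i$ are i.i.d.\ within each row, so $\bar\epsilon_n$ is a bounded i.i.d.\ average with mean $\mathbb E\,d(\widehat\Lambda_1^{(n)},\Lambda_1)\to0$ (by part (1) and dominated convergence), and a triangular-array weak law gives $\bar\epsilon_n\to0$ under the weaker hypothesis $\tau_n\to\infty$.

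Finally, for the almost-sure statement under $\sum_n\tau_n^{-2}<\infty$ and a finite fourth moment of the base count, I would bound the variance (or second moment) of $\bar\epsilon_n$ at rate $\tau_n^{-2}$ using the fourth-moment hypothesis, and conclude by Chebyshev together with Borel--Cantelli along $n$. I expect the exponential concentration of the point-process Wasserstein error that underwrites the $\tau_n/\log n$ threshold to be the hardest and most delicate step; by contrast the uniform SLLN and the argmin stability are comparatively routine, given the compactness of $\mathcal P_2(K)$ and the uniqueness of the population Fréchet mean afforded by Theorem~\ref{thm:meanTid}.
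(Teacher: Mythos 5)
Your overall architecture --- splitting $\widehat F_n-F$ into an estimation error plus an empirical-versus-population term, using the $2D$-Lipschitz property of $\gamma\mapsto d^2(\cdot,\gamma)$, equicontinuity plus the SLLN on the compact $\mathcal P_2(K)$, and argmin stability under uniqueness from Theorem~\ref{thm:meanTid} --- coincides with the paper's proof, and part (1) is handled in essentially the same way. The genuine problem is in how you control the estimation error in the regime $\tau_n/\log n\to\infty$. You bound $\bar\epsilon_n\le\max_{i\le n}d(\widehat\Lambda_i,\Lambda_i)$ and then require an exponential concentration inequality $\mathbb P(d(\widehat\Lambda_i,\Lambda_i)>\delta)\le Ce^{-c\tau_n}$ for the Wasserstein error of a smoothed, normalised, infinitely divisible point process. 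No such inequality is established anywhere, and it is doubtful at this level of generality: an infinitely divisible process is a superposition of i.i.d.\ clusters whose sizes need not have exponential (or even polynomial beyond fourth) moments, so bounded-difference arguments at the point or cluster level do not apply, and empirical-Wasserstein concentration in $\mathbb R^d$ is delicate even for i.i.d.\ samples. Since you yourself flag this as the load-bearing step for the $\tau_n/\log n$ regime, its absence is a real gap, not a deferred technicality.

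The step is also unnecessary, which is how the paper avoids it. Only the \emph{average} $n^{-1}\sum_i d(\widehat\Lambda_i,\Lambda_i)$ needs to vanish, never the maximum. When the bandwidths are chosen per train the summands are i.i.d.\ and bounded by $\mathrm{diam}(K)$, so convergence in probability follows from part (1) and bounded convergence, and almost-sure convergence from a fourth-moment Chebyshev bound of order $n^{-2}$ plus Borel--Cantelli --- no concentration in $\tau_n$ is needed. When the bandwidths may depend on everything (the correlated case that pushes you towards the maximum), the paper instead inserts the unsmoothed functional $M_n^*(\gamma)=n^{-1}\sum_i d^2(\widetilde\Pi_i/N_i,\gamma)$: the smoothing error is controlled \emph{deterministically} by $d(\widehat\Lambda_i,\widetilde\Pi_i/N_i)\le\sqrt{C_{\psi,K}}\,\sigma_n$ (Lemma~\ref{lem:smoothing}), uniformly in $i$, while the summands of $M_n^*$ are again i.i.d.\ within each row. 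The hypothesis $\tau_n/\log n\to\infty$ enters only through Lemma~\ref{lem:numpoints}, to guarantee via a union bound on the point \emph{counts} (not on Wasserstein distances) that every $\widetilde\Pi_i(K)$ is eventually positive so that $M_n^*$ is well defined. If you replace your maximal-inequality step with this decomposition, the rest of your argument goes through; as written, the proof of the $\tau_n/\log n$ regime is incomplete.
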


\begin{remark}
There is no lower bound on $\sigma_n$, and it can vanish at any rate, provided it is strictly positive.  In practice, however, if $\sigma_n$ is very small, then the densities of $\widehat\Lambda_i$ will have very high peaks, and the constant $C_\mu$ in Proposition~\ref{prop:Cmu} (with $\mu^i=\widehat\Lambda_i$) will be large (essentially proportional to $1/\sigma_n$).  The proof of Proposition~\ref{prop:Fdercont} suggests that this may slow down the convergence of Algorithm~\ref{algo}.
\end{remark}

\begin{remark}
It is worth remarking that Le Gouic \& Loubes \cite[Theorem~3]{legouic2016existence} consider the stability of Fr\'echet means in a rather general setting; verification of their assumptions in our particular setting, however, is quite involved and in fact essentially amounts to directly proving Theorem \ref{thm:consistency}.
\end{remark}

Our next two results concern the (uniform) consistency of the Procrustes registration procedure. Though the results themselves parallel their one-dimensional counterparts (see Panaretos \& Zemel \cite{panzem15}), their proofs are entirely different, and substantially more involved (because the geometry of monotone mappings in $\mathbb{R}^d$ is far more rich than the geometry of monotone maps on $\mathbb{R}$). In particular, we have:

\begin{theorem}[Consistency of Procrustes Maps]\label{thm:consistencyWarp}
Under the same conditions of Theorem~\ref{thm:consistency},  for any $i$ and any compact set $\Omega\subseteq\mathrm{int}(K)$,
\[
\sup_{x\in \Omega}\|\widehat T_i^{-1}(x) - T_i^{-1}(x)\|\stackrel{\rm{p}}\to0,
\qquad
\sup_{x\in \Omega}\|\widehat T_i(x) - T_i(x)\|\stackrel{\rm{p}}\to0.
\]
The same remarks at the end of the statement of Theorem~\ref{thm:consistency} apply here as well.
\end{theorem}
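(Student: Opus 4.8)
The plan is to reduce the statement to a deterministic stability result for Brenier maps, combined with the subsequence characterisation of convergence in probability. Recall that the estimated Procrustes map is the optimal transport map $\widehat T_i=\mathbf t_{\widehat\lambda_n}^{\widehat\Lambda_i}$ pushing the estimated Fr\'echet mean onto the smoothed measure $\widehat\Lambda_i$, while the true map is $T_i=\mathbf t_{\lambda}^{\Lambda_i}$ (the model's regularity assumption guarantees that $T_i$ is itself the optimal map from $\lambda$ to $\Lambda_i$). By Theorem~\ref{thm:consistency} we have $d(\widehat\lambda_n,\lambda)\to0$ and $d(\widehat\Lambda_i,\Lambda_i)\to0$ in probability, so both the source and the target marginals of $\widehat T_i$ converge in the Wasserstein metric to those of $T_i$. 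Since convergence in probability is equivalent to almost sure convergence along subsequences, I would fix an arbitrary subsequence, extract a further subsequence along which both Wasserstein convergences hold almost surely, and argue $\omega$-wise; it then suffices to prove the deterministic claim that Wasserstein convergence of source and target forces local uniform convergence of the associated Brenier maps on compacta in $\mathrm{int}(K)$.

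For the deterministic core, I would first record that by Brenier's theorem both $\widehat T_i$ and $T_i$ are gradients of convex functions, hence restrictions of maximal monotone operators on $\R^d$, and that since all measures are supported on the compact $K$ the maps are uniformly locally bounded. The stability theory of optimal transport---relying on the regularity of the limit $\lambda$, which guarantees uniqueness of the limiting optimal coupling---then yields that the optimal plans $(\mathbf i,\widehat T_i)\#\widehat\lambda_n$ converge weakly to $(\mathbf i,T_i)\#\lambda$. The supports of these plans are monotone (indeed cyclically monotone) sets contained in the graphs of the maps, so the weak convergence translates into a one-sided Kuratowski statement: the upper graphical limit $\limsup_n\mathrm{supp}((\mathbf i,\widehat T_i)\#\widehat\lambda_n)$ is again a monotone set, and it contains $\mathrm{supp}((\mathbf i,T_i)\#\lambda)$.

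The heart of the argument, and the step I expect to be the main obstacle, is to upgrade this graphical convergence to genuine local uniform convergence on the interior. This is precisely where the geometry of monotone operators developed in Section~\ref{appendix} enters, and it is the same machinery that powers Theorem~\ref{thm:convprocrustes}. The difficulty is that monotone maps may be multivalued or discontinuous, so the upgrade can succeed only where the limit is single-valued and continuous: at such a point $x_0$, monotonicity of the limiting graph forbids any spurious limit value $y_*\neq T_i(x_0)$, giving pointwise convergence, and the monotonicity inequality then supplies the equicontinuity needed to pass from pointwise to uniform convergence on a compact $\Omega\subseteq\mathrm{int}(K)$. Here I would exploit that, under the standing assumptions, the densities of $\lambda$ and $\Lambda_i$ are bounded above and below on $K$ and that $T_i$ is a homeomorphism, so $T_i$ is continuous throughout $\mathrm{int}(K)$; this makes the continuity hypothesis hold everywhere on the interior and delivers the claim for $\widehat T_i$.

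Finally, the statement for the inverses follows by symmetry. The map $\widehat T_i^{-1}=\mathbf t_{\widehat\Lambda_i}^{\widehat\lambda_n}$ is itself a Brenier map, being the gradient of the Legendre conjugate of the convex potential of $\widehat T_i$, and its target marginal $\widehat\lambda_n$ converges to the regular measure $\lambda$; repeating the argument with the roles of source and target interchanged gives uniform convergence of $\widehat T_i^{-1}$ to $T_i^{-1}$ on compacta of $\mathrm{int}(K)$. Since the subsequence was arbitrary, both convergences hold in probability, which completes the proof, and the strengthenings to almost sure convergence recorded for Theorem~\ref{thm:consistency} propagate verbatim through the same subsequence argument.
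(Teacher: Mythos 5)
Your proposal is correct and follows essentially the same route as the paper: the paper likewise reduces the theorem to the deterministic statement that Wasserstein (hence weak) convergence of the source and target marginals forces uniform convergence of the Brenier maps on compacta where the limit map is univalued and continuous (Proposition~\ref{prop:unifcompacts}, built from stability of optimal plans, the Portmanteau lemma, local monotonicity at Lebesgue density points, and uniform boundedness of the graphs), verifies its hypotheses using the compact convex support $K$, the positive densities, and the continuity of $T_i$ and $T_i^{-1}$, and obtains the statement for the forward maps by interchanging the roles of source and target exactly as you do. The only cosmetic difference is that you phrase the pointwise-to-uniform upgrade via ``equicontinuity'' supplied by monotonicity, whereas the paper runs a compactness--contradiction argument using uniform boundedness of $u_n(\Omega)$ together with continuity of the limit at singletons; the substance is the same.
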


\begin{corollary}[Consistency of Procrustes Registration]\label{cor:consistencyPi}
Under the same conditions of Theorem~\ref{thm:consistency}, the registration procedure is consistent:  for any $i$
\[
d\left( \frac{\widehat \Pi_i} {\widehat\Pi_i(K)}, \frac{\Pi_i} {\Pi_i(K)}    \right)\stackrel{\rm{p}}\to0
,\qquad n\to\infty,
\]
provided one of the following conditions holds:
\begin{enumerate}
\item Every point of the boundary of $K$ is \textit{exposed}, that is, for any $y\in\partial K$ there exists $\alpha\in\mathbb R^d$ such that
\[
\innprod y\alpha
>\innprod {y'}\alpha
,\qquad y'\in K\setminus\{y\}.
\]

\item The warp map $T_i$ is \emph{strictly monotone}
\[
\innprod {T_i(x') - T_i(x)}  {x'  - x}>0,
\qquad x,x'\in\mathrm{int}(K)
,\quad x\ne x'.
\]
\end{enumerate}
\end{corollary}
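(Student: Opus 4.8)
The plan is to reduce the Wasserstein distance between the two normalised registered processes to an average of pointwise discrepancies between the estimated and true inverse warp maps, and then to control that average by splitting the observed points into an interior part, handled by Theorem~\ref{thm:consistencyWarp}, and a boundary part, handled by a law of large numbers. Writing $\widetilde\Pi_i=\sum_{k}\delta\{y_k\}$ for the observed (warped) configuration, both target objects are push-forwards of this \emph{same} configuration: $\Pi_i=T_i^{-1}\#\widetilde\Pi_i=\sum_k\delta\{T_i^{-1}(y_k)\}$ and $\widehat\Pi_i=\widehat T_i^{-1}\#\widetilde\Pi_i=\sum_k\delta\{\widehat T_i^{-1}(y_k)\}$, each carrying the same number of atoms $m=\widetilde\Pi_i(K)$. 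Hence $\widehat\Pi_i(K)=\Pi_i(K)=m$, the two normalisations coincide, and the diagonal coupling (matching the $k$-th atom of each) yields
\[
d^2\!\left(\frac{\widehat\Pi_i}{\widehat\Pi_i(K)},\frac{\Pi_i}{\Pi_i(K)}\right)
\le \frac1m\sum_{k}\bigl\|\widehat T_i^{-1}(y_k)-T_i^{-1}(y_k)\bigr\|^2
=\int_K\bigl\|\widehat T_i^{-1}-T_i^{-1}\bigr\|^2\,\frac{\mathrm d\widetilde\Pi_i}{m}.
\]
It therefore suffices to show that this integral against the empirical measure $\widetilde\Pi_i/m$ tends to zero in probability.

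Working conditionally on $T_i$, I would fix $\epsilon>0$ and choose a compact $\Omega\subseteq\mathrm{int}(K)$ with $\Lambda_i(K\setminus\Omega)$ arbitrarily small; this is possible because $\Lambda_i=T_i\#\lambda$ is absolutely continuous and $\partial K$ is Lebesgue-null for convex $K$, so $\Lambda_i(\partial K)=0$ and $\Omega$ may be taken to exhaust $\mathrm{int}(K)$. Splitting the integral at $\partial\Omega$ gives
\[
\int_K\bigl\|\widehat T_i^{-1}-T_i^{-1}\bigr\|^2\,\frac{\mathrm d\widetilde\Pi_i}{m}
\le \sup_{x\in\Omega}\bigl\|\widehat T_i^{-1}(x)-T_i^{-1}(x)\bigr\|^2
+\mathrm{diam}(K)^2\,\frac{\widetilde\Pi_i(K\setminus\Omega)}{\widetilde\Pi_i(K)}.
\]
The first term vanishes in probability by Theorem~\ref{thm:consistencyWarp}, since $\Omega$ is a compact subset of $\mathrm{int}(K)$. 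For the second, the bound $\|\widehat T_i^{-1}-T_i^{-1}\|\le\mathrm{diam}(K)$ uses that both maps take values in $K$, while the hypothesis $\tau_n\to\infty$ and the infinite divisibility of the process give, via a law of large numbers, $\widetilde\Pi_i(K\setminus\Omega)/\widetilde\Pi_i(K)\stackrel{\rm p}\to\Lambda_i(K\setminus\Omega)$, which was arranged to be small. Combining the two estimates yields the claim.

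The delicate point, and where the two alternative hypotheses enter, is the treatment of observed points lying on or arbitrarily close to $\partial K$: for such $y_k$ the map $\widehat T_i^{-1}$, being the gradient of the convex conjugate of the Brenier potential of $\widehat T_i$, need not be single-valued or $K$-valued, so neither the push-forward $\widehat\Pi_i$ nor the diameter bound is automatic there. This is exactly what the monotone-operator machinery of Section~\ref{appendix} is meant to resolve. Under condition~(1), strict convexity of $K$ renders every boundary point exposed, and a convex function is differentiable at exposed points of the relevant support; hence $\widehat T_i^{-1}$ is single-valued and $K$-valued on all of $K$, and the boundary atoms are legitimately covered. Under condition~(2), strict monotonicity of $T_i$ forces $T_i^{-1}$ to be single-valued and continuous up to $\partial K$, which together with the interior convergence again controls the boundary contribution. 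I expect this boundary analysis --- reconciling the merely interior uniform convergence of Theorem~\ref{thm:consistencyWarp} with the possibility that data accumulate near $\partial K$ --- to be the main obstacle, the remaining steps being a routine coupling bound and law of large numbers.
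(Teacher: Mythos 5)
Your opening reduction and your splitting are sound, but you split the integral in a genuinely different place from the paper, and this changes where (indeed, whether) the two extra hypotheses are needed. The paper bounds the squared distance by the integral of $\|\widehat T_i^{-1}(T_i(x))-x\|^2$ against $\Pi_i/\Pi_i(K)$ and splits the domain of the \emph{unwarped} atoms $x$ into a compact $\Omega\subseteq\mathrm{int}(K)$ with $\lambda(\mathrm{int}(K)\setminus\Omega)$ small and its complement. The price is that the supremum over $\Omega$ becomes $\sup_{y\in T_i(\Omega)}\|\widehat T_i^{-1}(y)-T_i^{-1}(y)\|$, and Theorem~\ref{thm:consistencyWarp} requires the compact set $T_i(\Omega)$ to lie in $\mathrm{int}(K)$. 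That inclusion is the \emph{only} place conditions (1) and (2) are used: supposing $y=T_i(x)\in\partial K$ for an interior $x$, one takes $\alpha$ normal to a supporting hyperplane at $y$, sets $x'=x+t\alpha$, and monotonicity combined with $T_i(x')\in K$ forces $\innprod{T_i(x')-y}{\alpha}=0$; exposedness of $y$ or strict monotonicity of $T_i$ then contradicts injectivity. You instead split the domain of the \emph{warped} atoms $y$, choosing $\Omega$ with $\Lambda_i(K\setminus\Omega)$ small; your good set is then already a (conditionally on $T_i$, deterministic) compact subset of $\mathrm{int}(K)$, so Theorem~\ref{thm:consistencyWarp} applies directly and the inclusion $T_i(\Omega)\subseteq\mathrm{int}(K)$ is never needed. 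The complementary mass is handled by the same law of large numbers in both routes (yours applied to $\widetilde\Pi_i(K\setminus\Omega)=\Pi_i(T_i^{-1}(K\setminus\Omega))$ conditionally on $T_i$), so your decomposition, carried out carefully, is actually the more economical one.

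Where your write-up goes astray is the final paragraph. The boundary issues you flag are real but minor, they are not what conditions (1)--(2) address, and your proposed resolutions do not hold up. Since $\lambda$ and $\Lambda_i$ are absolutely continuous, almost surely no atom of $\widetilde\Pi_i$ lies on $\partial K$; at interior points the conjugate Brenier potential of $\widehat T_i$ (taken canonically as a supremum of affine functions with slopes in $\mathrm{supp}(\widehat\lambda_n)\subseteq K$) has nonempty subdifferential contained in $K$, so $\widehat\Pi_i$ is well defined up to a measurable selection, $\widehat\Pi_i(K)=\Pi_i(K)$, and the $\mathrm{diam}(K)$ bound holds for any selection --- no hypothesis on $\partial K$ or on $T_i$ is required for this. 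Conversely, ``a convex function is differentiable at exposed points of the relevant support'' is not a theorem, and condition (2) constrains $T_i$, not the estimator $\widehat T_i$, so it cannot control single-valuedness of $\widehat T_i^{-1}$ near $\partial K$. So either state explicitly that your decomposition dispenses with the two side conditions and justify the boundary technicalities as above, or, if you wish to follow the paper's statement as given, recognise that the hypotheses enter solely through the inclusion $T_i(\mathrm{int}(K))\subseteq\mathrm{int}(K)$ and supply the supporting-hyperplane argument rather than the heuristic you sketched.
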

The first condition is satisfied by any ellipsoid in $\mathbb R^d$ and more generally if the boundary of $K$ can be written as
$
\partial K
=\{x : \varphi_K(x) = 0\},
$
for a strictly convex function $\varphi_K$.  Indeed, if $\alpha$ creates a supporting hyperplane to $K$ at $y$ and $\innprod\alpha y=\innprod \alpha {y'}$ for $y\ne y'$, then as $\varphi_K$ is strictly convex on the line segment $[y,y']$, it is impossible that $y'\in K$ without the hyperplane intersecting the interior of $K$.  Although this condition excludes some interesting cases, perhaps most prominently polyhedral sets such as $K=[0,1]^d$, such sets can be approximated by convex sets that do satisfy it (Krantz \cite[Proposition~1.12]{krantz2014convex}).

As for the second condition, in general it will hold almost surely. Indeed, as $T_i\#\lambda=\Lambda_i$ and both measures are absolutely continuous,  there exists a $\lambda$-null set $\mathcal N$ such that $T_i$ is strictly monotone outside $\mathcal N$ \cite[Proposition 6.2.12]{ambgigsav}.  By assumption $\lambda$ has a strictly positive density on $K$, so that $\lambda$-null subsets of $K$ are precisely the Lebesgue null subsets of $K$.  In that sense, this condition is not overly restrictive, and will most likely be satisfied under additional regularity assumptions on the warp maps $T_i$ and, possibly, $K$.

\section{Proofs of Formal Statements}\label{proofs}

Our proofs will require us to establish some analytical results that are intrinsic to the optimal transportation problem. These are essential for the proofs, especially of our main results, and some are non-trivial. For tidiness, we will state and prove these results separately at the end of this section (Section \ref{appendix}), developing our main results first, and referring to the analytical background when necessary.

\subsection{Proofs of Statements in Section \ref{population_to_sample}}

\begin{proof}[Proof of Proposition \ref{multicoupling}]
The optimisation problem
\[
\min_{Y_i\sim \mu^i}
\mathbb E  \sum_{i=1}^N\sum_{j=i+1}^N
\|Y_i - Y_j\|^2
=\min_{\xi\in \Gamma(\mu^1,\dots,\mu^N)}
\ownint {\mathbb R^{Nd}}{}{\sum_{i=1}^N\sum_{j=i+1}^N\|t_i - t_j\|^2}{\xi(t_1,\dots,t_N)}
\]
is equivalent to minimising
\[
G(\xi)
=\frac1{2N}
\ownint {\mathbb R^{Nd}}{}{\sum_{i=1}^N\left\|t_i - \frac1N\sum_{j=1}^Nt_j\right\|^2}{\xi(t_1,\dots,t_N)},
\qquad \xi\in \Gamma(\mu^1,\dots,\mu^N),
\]
and Agueh \& Carlier \cite[Proposition~4.2]{bary} show that $\min_\mu F(\mu)=\min_{\xi} G(\xi)$.

Since $\bar\mu$ is regular \cite[Proposition~5.1]{bary}, $\bm X$ is well-defined and has joint distribution
\[
\xi' = h\# \bar\mu,
\quad h : \R^d \to \R^{Nd},
\quad h = \left(  \mathbf t_{\bar\mu}^{\mu^1}, \dots ,\mathbf t_{\bar\mu}^{\mu^N} \right).
\]
Since the coordinates of $h$ have mean identity (see \cite[Equation~(3.9)]{bary} or Corollary~\ref{thm:minisstationary}),
\[
G(\xi')
=\frac1{2N}
\ownint {\mathbb R^d}{}{\sum_{i=1}^N\|\mathbf t_{\bar\mu}^{\mu^i} - \mathbf i\|^2}{\bar\mu}
=\frac1{2N}\sum_{i=1}^N d^2(\bar\mu,\mu^i)
=F(\bar\mu)
=\inf_\mu F(\mu).
\]
Thus $\xi'$ is optimal.
\end{proof}

\subsection{Proofs of Statements in Section \ref{gradient_section}}

\begin{proof}[Proof of Corollary~\ref{thm:minisstationary}]
The characterisation of Karcher means is immediate from Theorem \ref{gradient_definition}. The fact that the Fr\'echet mean $\mu$ satisfies $\sum_{i=1}^N \left(\mathbf t_{\mu}^{\mu^i}-\mathbf i\right) =0$  $\mu$-almost everywhere follows by a result of Agueh \& Carlier \cite{bary}.  For an alternative proof using the tangent bundle, see the supplementary material (Section~\ref{zempan16supp}).
\end{proof}

\begin{proof}[Proof of Theorem~\ref{thm:optimalKarcher}]
The result exploits Caffarelli's regularity theory for Monge--Amp\`ere equations.  In the first case, by Theorem~4.14(iii) in Villani \cite{vil} there exist $C^1$ (in fact, $C^{2,\alpha}$) convex potentials $\varphi_i$ on $\mathbb R^d$ with $\mathbf t_\mu^{\mu^i}=\nabla\varphi_i$, so that $\mathbf t_\mu^{\mu^i}(x)$ is a singleton for all $x\in\mathbb R^d$.  The set $\{x\in\mathbb R^d:\sum \mathbf t_\mu^{\mu^i}(x)/N\ne x\}$ is $\mu$-negligible (and hence Lebesgue-negligible) and open by continuity.  It is therefore empty, so $F'(\mu)=0$ everywhere, and $\mu$ is the Fr\'echet mean (see the discussion after Corollary~\ref{thm:minisstationary}).

In the second case, by the main theorem in Caffarelli \cite[p.\ 99]{caffarelli1992regularity}, and the same argument, we have $\sum\mathbf t_\mu^{\mu^i}(x)/N=x$ for all $x\in X$.  Since $X$ is convex, there must exist a constant $C$ such that $\sum \varphi_i(x)=C+N\|x\|^2/2$ for all $x\in X$.  Hence Equation~(3.9) in \cite{bary} holds with $\mathbb R^d$ replaced by $X$.  Repeating the proof of Proposition~3.8 in \cite{bary}, we see that $\mu$ minimises $F$ on $\mathcal P_2(X)$, the set of measures supported on $X$. (All the integrals that appear in the proof can be taken on $X$, where we know the inequality holds).  Again by convexity of $X$, the minimiser of $F$ must be\footnote{We know that the minimiser must be in $\mathcal P_2(\overline X)$, but minimising on $\mathcal P_2(X)$ suffices by continuity of $F$.} in $\mathcal P_2(X)$ (see the existence proof at the beginning of the proof of Theorem~\ref{thm:meanTid} in the supplementary material, Section~\ref{zempan16supp}).
\end{proof}

\subsection{Proofs of Statements in Section \ref{algorithm_section}}

\begin{proof}[Proof of Lemma \ref{iterate_regularity}] By \cite[Proposition 6.2.12]{ambgigsav} there exists a $\gamma_0$-null set $A_i$ such that on $\mathbb R^d\setminus A_i$, $\mathbf t_{\gamma_0}^{\mu^i}$ is differentiable, $\nabla \mathbf t_{\gamma_0}^{\mu^i}>0$ (positive definite), and $\mathbf t_{\gamma_0}^{\mu^i}$ is strictly monotone
\[
\innprod{\mathbf t_{\gamma_0}^{\mu^i}(x) - \mathbf t_{\gamma_0}^{\mu^i}(x')}{x-x'}
>0,
\qquad x,x'\notin A_i,
\quad x\ne x'.
\]
Since $\mathbf t_{\gamma_0}^{\gamma_1}=(1-\tau)\mathbf i+\tau N^{-1}\sum_{i=1}^N  \mathbf t_{\gamma_0}^{\mu^i}$, it stays strictly monotone (hence injective) and $\nabla \mathbf t_{\gamma_0}^{\gamma_1}>0$ outside $A=\cup A_i$, which is a $\gamma_0$-null set.

Let $h_0$ denote the density of $\gamma_0$ and set $\Sigma=\mathbb R^d\setminus A$.  Then $\mathbf t_{\gamma_0}^{\gamma_1}|_\Sigma$ is injective and $\{h_0>0\}\setminus\Sigma$ is Lebesgue negligible because
\[
0 = \gamma_0(A)
= \gamma_0(\mathbb R^d\setminus \Sigma)
=\ownint{\mathbb R^d\setminus \Sigma} {}{h_0(x)} x
=\ownint{\{h_0>0\}\setminus \Sigma}{}{h_0(x)} x,
\]
and the integrand is strictly positive.  Since $|\mathrm{det}\nabla \mathbf t_{\gamma_0}^{\mu^i}|>0$ on $\Sigma$ we obtain that $\gamma_1=\mathbf t_{\gamma_0}^{\mu^i}\#\gamma_0$ is absolutely continuous by \cite[Lemma~5.5.3]{ambgigsav}.
\end{proof}

\begin{proof}[Proof of Lemma \ref{lem:iterationbound}]
Let $S_i=\mathbf t_{\gamma_0}^{\mu^i}$ be the optimal map from $\gamma_0$ to $\mu^i$, and set $W_i=S_i-\mathbf i$. Then
\begin{equation}\label{eq:costbyW}
2NF(\gamma_0)
=\sum_{i=1}^Nd^2(\gamma_0,\mu^i)
=\sum_{i=1}^N\ownint {\mathbb R^d}{}{\|S_i-\mathbf i\|^2}{\gamma_0}
=\sum_{i=1}^N\innprod{W_i}{W_i}
=\sum_{i=1}^N\|W_i\|^2,
\end{equation}
with the inner product being in $L^2(\gamma_0)$.  By definition
\[
\gamma_1
=\left[(1-\tau)\mathbf i+\frac\tau N\sum_{j=1}^NS_j\right]\#\gamma_0
=\left[(1-\tau)S_i^{-1}+\frac\tau N\sum_{j=1}^NS_j\circ S_i^{-1}\right]\#\mu^i.
\]
This is a map that pushes forward $\mu^i$ to $\gamma_1$ (not necessarily optimally). Hence
\[
d^2(\gamma_1,\mu^i)
\le \ownint {\mathbb R^d}{}{\left\|\left[(1-\tau)S^{-1}_ i+\frac\tau N\sum_{j=1}^NS_j\circ S_i^{-1}\right]-\mathbf i\right\|_{\R^d}^2}{\mu^i}.
\]
Now $\mu^i=S_i\#\gamma_0$, which means that $\ownint{}{} f{\mu^i}=\ownint{}{}{ (f\circ S_i)}{\gamma_0}$ for any measurable $f$.  This change of variables gives
\[
d^2(\gamma_1,\mu^i)
\le
\ownint {\mathbb R^d}{}{\left\|\left[(1-\tau)\mathbf i+\frac\tau N\sum_{j=1}^NS_j\right] - S_i\right\|_{\mathbb R^d}^2}{\gamma_0}
= \left\| -W_i + \frac \tau N\sum_{j=1}^NW_j\right\|^2_{L^2(\gamma_0)}.
\]
The norm is always in $L^2(\gamma_0)$, regardless of $i$. Developing the squares, summing over $i=1,\dots,N$ and using \eqref{eq:costbyW} gives
\begin{align*}
2NF(\gamma_1)
&\le \sum_{i=1}^N \|W_i\|^2
- 2\frac\tau N \sum_{i,j=1}^N\innprod{W_i}{W_j}
+\frac{\tau^2}{N^2} \sum_{i,j,k=1}^N  \innprod{W_j}{W_k}\\
& = 2NF(\gamma_0)
- 2N\tau  \left\|\sum_{i=1}^N\frac 1NW_i\right\|^2
+ N\tau^2   \left\|\sum_{i=1}^N\frac 1NW_i\right\|^2,
\end{align*}
and recalling that $W_i=S_i-\mathbf i$ yields
\[
F(\gamma_1) - F(\gamma_0)
\le \frac{\tau^2 - 2\tau}2\left\|\frac 1N\sum_{i=1}^NW_i\right\|^2
= -\|F'(\gamma_0)\|^2\left[\tau - \frac{\tau^2}2\right].
\]
Since $\tau  -  \tau^2/2$ is clearly maximised at $\tau=1$, the proof is complete.
\end{proof}

\subsubsection{Proof of Theorem \ref{thm:convtosta}}\label{proof_algorithm}

We will prove the theorem by establishing the following facts:
\begin{enumerate}

\item The sequence $\|F'(\gamma_j)\|$ converge to zero as $j\to\infty$.

\item The sequence $\{\gamma_j\}$ is stays in a compact subset of $\mathcal P_2(\mathbb R^d)$.

\item The mapping $\gamma\mapsto \|F'(\gamma)\|^2$ is continuous.

\end{enumerate}

The first two are relatively straightforward, and are proven in the form of the following two Lemmas.

\begin{lemma}
The objective value of the Fr\'echet functional decreases at each step of Algorithm \ref{algo}, and $\|F'(\gamma_j)\|$ vanishes as $j\to\infty$.
\end{lemma}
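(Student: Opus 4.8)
The plan is to prove the two claims separately, as they rest on different parts of the machinery already in place. The monotone decrease of the objective is an immediate consequence of Lemma~\ref{lem:iterationbound}. At each iteration we have $\gamma_{j+1}=\exp_{\gamma_j}(-F'(\gamma_j))$, which corresponds to the stepsize choice $\tau_j=1$; by Lemma~\ref{iterate_regularity}, since $\tau_j=1\in[0,1]$ and $\gamma_0$ is regular, regularity propagates, so $F'(\gamma_j)$ is well-defined at every step via Theorem~\ref{gradient_definition}. Applying Lemma~\ref{lem:iterationbound} with $\gamma_0=\gamma_j$ and $\tau=1$ gives
\[
F(\gamma_{j+1})-F(\gamma_j)
\le -\|F'(\gamma_j)\|^2\Bigl[1-\tfrac12\Bigr]
=-\tfrac12\|F'(\gamma_j)\|^2
\le 0,
\]
so the sequence $\{F(\gamma_j)\}$ is non-increasing.

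For the second claim, I would use the telescoping of this inequality together with the boundedness from below of $F$. Since $F(\gamma)=\frac1{2N}\sum_i d^2(\gamma,\mu^i)\ge 0$ for all $\gamma$, the non-increasing sequence $\{F(\gamma_j)\}$ converges to some limit $F_\infty\ge 0$. Summing the displayed inequality over $j=0,\dots,J-1$ yields
\[
\tfrac12\sum_{j=0}^{J-1}\|F'(\gamma_j)\|^2
\le F(\gamma_0)-F(\gamma_J)
\le F(\gamma_0)<\infty,
\]
so the series $\sum_{j=0}^\infty \|F'(\gamma_j)\|^2$ converges. In particular its terms tend to zero, i.e.\ $\|F'(\gamma_j)\|\to 0$ as $j\to\infty$, which is exactly the second assertion.

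I expect no serious obstacle here, since both statements are essentially bookkeeping built on the already-established Lemma~\ref{lem:iterationbound}; the only point demanding care is the implicit verification that every iterate $\gamma_j$ is regular, so that $F'(\gamma_j)$ and the inequality of Lemma~\ref{lem:iterationbound} are meaningful. This is secured inductively by Lemma~\ref{iterate_regularity} applied at stepsize $\tau=1$. It is worth noting that this lemma does \emph{not} by itself establish convergence of $\{\gamma_j\}$ to a Karcher mean; that the vanishing of $\|F'(\gamma_j)\|$ along the sequence forces limit points to be stationary is precisely where the tightness/compactness of the iterates and the continuity of $\gamma\mapsto\|F'(\gamma)\|^2$ (facts (2) and (3) listed above) will be needed, and those are handled in the subsequent lemmas rather than here.
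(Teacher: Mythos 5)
Your proof is correct and follows essentially the same route as the paper's: apply Lemma~\ref{lem:iterationbound} with $\tau=1$ to get the per-step decrease $F(\gamma_{j+1})-F(\gamma_j)\le-\tfrac12\|F'(\gamma_j)\|^2$, then telescope and use $F\ge0$ to conclude the series $\sum_j\|F'(\gamma_j)\|^2$ converges, hence its terms vanish. Your explicit remark that regularity of the iterates (Lemma~\ref{iterate_regularity} at $\tau=1$) is needed for $F'(\gamma_j)$ to be well-defined is a worthwhile point of care that the paper leaves implicit.
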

\begin{proof} The first statement is clear from Lemma~\ref{lem:iterationbound}, from which it also follows that
\[
\frac12\sum_{j=0}^k   \|F'(\gamma_j)\|^2
\le \sum_{j=0}^k  F(\gamma_{j})  - F(\gamma_{j+1})
= F(\gamma_0) - F(\gamma_{k+1})
\le F(\gamma_0).
\]
Consequently, the series at the left-hand side converges whence $\|F'(\gamma_j)\|^2\to 0$.
\end{proof}

\begin{lemma}\label{lem:seqcompact}
The sequence generated by Algorithm~\ref{algo} stays in a compact subset of the Wasserstein space $\mathcal P_2(\mathbb R^d)$.
\end{lemma}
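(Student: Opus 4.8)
The plan is to show that the sequence $\{\gamma_j\}$ generated by Algorithm~\ref{algo} lies in a set that is relatively compact in $(\mathcal P_2(\mathbb R^d), d)$. The natural tool is Prokhorov-type compactness in Wasserstein space: a subset of $\mathcal P_2(\mathbb R^d)$ is relatively compact in the Wasserstein metric if and only if it is tight and has uniformly integrable second moments, i.e.\ $\sup_\gamma \int_{\|x\|>R} \|x\|^2\,d\gamma(x) \to 0$ as $R\to\infty$ (this is the standard characterisation, e.g.\ Ambrosio et al.\ \cite[Proposition~7.1.5]{ambgigsav}). So it suffices to produce a uniform bound on the second moments of the $\gamma_j$, together with uniform integrability.

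\emph{First} I would control the second moments. Writing $m_2(\gamma)=\int\|x\|^2\,d\gamma$, I would use the fact that each $\gamma_j$ lies within bounded Wasserstein distance of the data. Indeed, since the objective decreases along the iteration (Lemma~\ref{lem:iterationbound}, as recorded in the preceding Lemma), we have $F(\gamma_j)\le F(\gamma_0)$ for all $j$, which means
\[
\frac1{2N}\sum_{i=1}^N d^2(\gamma_j,\mu^i)\le F(\gamma_0)
\]
uniformly in $j$. In particular $d(\gamma_j,\mu^1)^2\le 2N F(\gamma_0)=:R_0^2$, so every $\gamma_j$ lies in the closed Wasserstein ball $\bar B(\mu^1,R_0)$. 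By the triangle inequality for the second moment (equivalently, $\sqrt{m_2(\gamma)}=d(\gamma,\delta_0)\le d(\gamma,\mu^1)+d(\mu^1,\delta_0)$), this immediately gives $\sup_j m_2(\gamma_j)<\infty$.

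\emph{The key observation} is then that a closed Wasserstein ball $\bar B(\mu^1,R_0)$ in $\mathcal P_2(\mathbb R^d)$ is itself compact. This is where I expect the only real subtlety: boundedness of second moments alone gives tightness and hence relative compactness for \emph{weak} convergence, but upgrading to Wasserstein compactness requires the uniform integrability of the second moments, which does not follow from a mere uniform bound on $m_2$. The clean way around this is to note that $\bar B(\mu^1,R_0)$ is a closed and bounded subset of $\mathcal P_2(\mathbb R^d)$, and to invoke the fact that such balls are compact because the map $\gamma\mapsto d(\gamma,\mu^1)$ being bounded forces uniform integrability: for any coupling $\xi$ of $\gamma$ and $\mu^1$, $\int_{\|x\|>R}\|x\|^2 d\gamma$ is controlled by $2\int\|x-y\|^2 d\xi + 2\int_{\|x\|>R}\|y\|^2 d\mu^1$ plus tail terms, and the first summand is $\le 2R_0^2$ while the $\mu^1$-tail vanishes uniformly. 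Making this estimate uniform in $\gamma\in\bar B(\mu^1,R_0)$ yields the required uniform integrability, and combined with tightness (which follows from the uniform moment bound via Markov) gives relative compactness in the Wasserstein metric. Since the ball is also closed, it is compact, and the sequence $\{\gamma_j\}\subseteq\bar B(\mu^1,R_0)$ stays in this compact set, completing the proof.
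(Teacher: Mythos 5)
There is a genuine gap, and it is located exactly where you flagged the ``only real subtlety'': closed bounded balls in $\mathcal P_2(\mathbb R^d)$ are \emph{not} compact for the Wasserstein metric. Take $\gamma_n=(1-n^{-2})\delta_0+n^{-2}\delta_{ne_1}$. Then $d^2(\gamma_n,\delta_0)=1$ for all $n$, so the whole sequence lies in the closed ball $\bar B(\delta_0,1)$, yet no subsequence converges in $\mathcal P_2(\mathbb R^d)$: any weak subsequential limit must be $\delta_0$, while $d(\gamma_n,\delta_0)\equiv 1$. Your attempted repair exhibits the same defect quantitatively: the coupling estimate gives
\[
\int_{\|x\|>R}\|x\|^2\,d\gamma \;\le\; 2\int\|x-y\|^2\,d\xi \;+\;(\text{terms that vanish as }R\to\infty)\;\le\; 2R_0^2+o(1),
\]
so the supremum over the ball of the second-moment tails is only bounded by $2R_0^2$, not driven to $0$. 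That is precisely a uniform bound on $m_2$, which you correctly observed at the outset is insufficient for Wasserstein compactness. The mass that escapes to infinity is exactly the mass that carries the transport cost, so the transport-cost term cannot be made small.

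The paper closes this gap by exploiting the specific structure of the iterates rather than their mere membership in a ball. Tightness is obtained by choosing a compact \emph{convex} $K_\epsilon$ with $\mu^i(K_\epsilon)>1-\epsilon/N$ and noting that $T_j=N^{-1}\sum_i \mathbf t_{\gamma_j}^{\mu^i}$ maps $\cap_i(\mathbf t_{\gamma_j}^{\mu^i})^{-1}(K_\epsilon)$ into $K_\epsilon$ by convexity, giving $\gamma_{j+1}(K_\epsilon)>1-\epsilon$ uniformly in $j$. Uniform integrability of the second moments (condition \eqref{eq:uniformboundedness}) is then obtained by pushing a \emph{strictly super-quadratic} moment through the iteration: since $\|\cdot\|^3$ (or, in general, a de la Vall\'ee-Poussin-type convex $H$ with $H(x)/x^2\to\infty$ constructed in Section~\ref{zempan16supp}) is convex, Jensen's inequality and the identity $\ownint{}{}{\|\mathbf t_{\gamma_{j-1}}^{\mu^i}\|^3}{\gamma_{j-1}}=\ownint{}{}{\|x\|^3}{\mu^i}$ give $\sup_j\ownint{\R^d}{}{\|x\|^3}{\gamma_j}\le M(3)$, whence $\int_{\|x\|>R}\|x\|^2\,d\gamma_j\le M(3)/R$ uniformly in $j$. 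Your first step (monotonicity of $F$ along the iterates, hence a uniform bound on $d(\gamma_j,\mu^1)$) is correct but, by itself, only yields tightness; the super-quadratic moment transfer is the missing ingredient, and it genuinely requires the push-forward representation $\gamma_{j+1}=T_j\#\gamma_j$.
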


\begin{proof}
For any $\epsilon>0$ there exists a compact convex set $K_\epsilon$ such that $\mu^i(K_\epsilon)>1-\epsilon/N$ for $i=1,\dots,N$.  Let $A^i=(\mathbf t_{\gamma_j}^{\mu^i})^{-1}(K_\epsilon)$, $A=\cap_{i=1}^NA^i$.  Then $\gamma_j(A^i)>1-\epsilon/N$, so that $\gamma_j(A)>1-\epsilon$.  Since $K_\epsilon$ is convex, $T_j(x)\in K_\epsilon$ for any $x\in A$, so that
\[
\gamma_{j+1}(K_\epsilon)
=\gamma_j(T_j^{-1}(K_\epsilon))
\ge \gamma_j(A)
>1-\epsilon
,\qquad
j=0,1,\dots.
\]
We shall now show that any weakly convergent subsequence of $\{\gamma_j\}$ is in fact convergent in the Wasserstein space.  By Theorem~7.12 in Villani \cite{vil}, it suffices to show that
\begin{equation}\label{eq:uniformboundedness}
\lim_{R\to\infty}
\sup_{j\in \mathbb N}\ownint{\{x:\|x\|>R\}}{}{\|x\|^2}{\gamma_j(x)}
=0.
\end{equation}
For simplicity, we shall show this under the stronger assumption that the measures $\mu^1,\dots,\mu^N$ have a finite third moment
\begin{equation}\label{eq:3mom}
\ownint {\R^d}{}{\|x\|^3}{\mu^i(x)}
\le M(3)
,\qquad i=1,\dots,N.
\end{equation}
In Section~\ref{zempan16supp} we show that \eqref{eq:uniformboundedness} holds even if \eqref{eq:3mom} does not.

For any $j\ge1$ it holds that
\begin{align*}
\ownint {\R^d}{}{\|x\|^3}{\gamma_j(x)}
=\ownint {\R^d}{}{\left\|\frac1N \sum_{i=1}^N \mathbf t_{\gamma_{j-1}}^{\mu^i} (x)\right\|^3}{\gamma_{j-1}(x)}
&\le \frac1N\sum_{i=1}^N \ownint {\R^d}{}{\|\mathbf t_{\gamma_{j-1}}^{\mu^i}(x)\|^3}{\gamma_{j-1}(x)}\\
&=\frac1N\sum_{i=1}^N \ownint {\R^d}{}{\|x\|^3}{\mu^i(x)}
\le M(3).
\end{align*}
This implies that for any $R>0$ and any $j>0$,
\[
\ownint{\{x:\|x\|>R\}}{}{\|x\|^2}{\gamma_j(x)}
\le\frac1R\ownint{\{x:\|x\|>R\}}{}{\|x\|^3}{\gamma_j(x)}
\le\frac1RM(3),
\]
and \eqref{eq:uniformboundedness} follows.
\end{proof}
\noindent The third statement (continuity of the gradient) is much more subtle to establish. We will prove it in two steps: first we establish a Proposition, giving sufficient conditions for the third statement to hold true. Then, we will verify that the conditions of the Proposition are satisfied in the setting of Theorem \ref{algorithm_section}, in the form of a Lemma and a Corollary. We start with the proposition.

\begin{proposition}[Continuity of $F'$]\label{prop:Fdercont}
Let $\mu^1,\dots,\mu^N\in \mathcal P_2(\R^d)$ be given regular measures, and consider a sequence $\gamma_n$ of regular measures that converges in $\mathcal P_2(\mathbb R^d)$ to a regular measure $\gamma$.  If the densities of $\gamma_n$ are uniformly bounded, then $\|F'(\gamma_n)\|^2\to \|F'(\gamma)\|^2$.
\end{proposition}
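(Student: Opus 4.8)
The plan is to write the squared gradient norm as an integral of a fixed-form integrand against the moving measure,
\[
\|F'(\gamma_n)\|^2=\int_{\R^d}\Bigl\|\tfrac1N\textstyle\sum_{i=1}^N\bigl(\mathbf t_{\gamma_n}^{\mu^i}(x)-x\bigr)\Bigr\|^2\,d\gamma_n(x)=:\int_{\R^d} G_n\,d\gamma_n,
\]
and to show $\int G_n\,d\gamma_n\to\int G\,d\gamma$, where $G$ is the analogous integrand built from the limiting maps $\mathbf t_\gamma^{\mu^i}$; this is exactly the assertion $\|F'(\gamma_n)\|^2\to\|F'(\gamma)\|^2$. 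Write $T_n^i=\mathbf t_{\gamma_n}^{\mu^i}$, $T^i=\mathbf t_\gamma^{\mu^i}$, and let $\pi_n^i=(\mathbf i,T_n^i)\#\gamma_n$ denote the optimal plans. Since both the integrand and the measure move with $n$, the argument splits into two pieces: (I) a \emph{stability} statement, that $T_n^i\to T^i$ Lebesgue-almost everywhere together with local uniform boundedness of the $T_n^i$; and (II) a \emph{passage to the limit}, converting this a.e.\ convergence of the maps into convergence of the $\gamma_n$-integrals. It is precisely in (II), and in the identification within (I), that the uniform bound $h_n\le C$ on the densities of $\gamma_n$ enters.

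For (I) I would argue through the Brenier potentials. Write $T_n^i=\nabla\varphi_n^i$ with $\varphi_n^i$ convex, normalised to vanish at a fixed interior point of $\operatorname{supp}\gamma$. Because $\gamma_n\to\gamma$ in $\mathcal P_2(\R^d)$, the family $\{\gamma_n\}$ is tight and the transport costs $\int\|T_n^i-\mathbf i\|^2\,d\gamma_n=d^2(\gamma_n,\mu^i)$ are bounded; this forces the $\varphi_n^i$ to be locally uniformly bounded and equi-Lipschitz on compacta, so a Blaschke/Arzel\`a--Ascoli compactness argument for convex functions yields, along a subsequence, $\varphi_n^i\to\psi^i$ locally uniformly with $\psi^i$ convex, hence $\nabla\varphi_n^i\to\nabla\psi^i$ Lebesgue-a.e.\ (and $T_n^i$ locally uniformly bounded). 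To identify $\nabla\psi^i$ as $T^i$ I would pass to the limit in $T_n^i\#\gamma_n=\mu^i$: for $f\in C_b$ one has $f\circ T_n^i\to f\circ\nabla\psi^i$ a.e., and combining this with $h_n\le C$ and the weak convergence $h_n\rightharpoonup h$ (a consequence of $\gamma_n\rightharpoonup\gamma$ and the density bound) gives $\int f\circ T_n^i\,d\gamma_n\to\int f\circ\nabla\psi^i\,d\gamma$, i.e.\ $\nabla\psi^i\#\gamma=\mu^i$. Since $\nabla\psi^i$ is a gradient of a convex function pushing the regular measure $\gamma$ to $\mu^i$, Brenier's theorem forces $\nabla\psi^i=T^i$, and uniqueness of the limit upgrades subsequential to full convergence.

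For (II) I would decompose $\int G_n\,d\gamma_n$ over a large ball $B_R$ and its complement. On $B_R$ the $T_n^i$ are uniformly bounded by (I), so $G_n\to G$ a.e.\ and boundedly, whence $G_n\to G$ strongly in $L^2(B_R)$; since $h_n\le C$ and $h_n\rightharpoonup h$ weakly in $L^2(B_R)$, the product of a strongly and a weakly convergent sequence converges, giving $\int_{B_R}G_n h_n\to\int_{B_R}G h$. The tail $\int_{B_R^c}G_n\,d\gamma_n$ is made uniformly small by uniform integrability: from $G_n\le\frac2N\sum_i\|T_n^i\|^2+2\|x\|^2$, the term $\int_{\|x\|>R}\|x\|^2\,d\gamma_n\to0$ uniformly (equivalent to $\mathcal P_2$-convergence), while $\int_{\|x\|>R}\|T_n^i\|^2\,d\gamma_n=\int_{\|x\|>R}\|y\|^2\,d\pi_n^i$ I would bound at a second scale $S$: the part $\|y\|>S$ is at most $\int_{\|y\|>S}\|y\|^2\,d\mu^i$ (uniform in $n$ because the $y$-marginal is the fixed $\mu^i$), and the part $\|y\|\le S$ is at most $S^2\gamma_n(\|x\|>R)$, which vanishes uniformly in $n$ by tightness; choosing $S$ then $R$ makes the tail uniformly small, and the same estimate controls $\int_{B_R^c}G\,d\gamma$.

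The main obstacle is step (I) together with the identification of the limit: reconciling the only-Lebesgue-a.e.\ convergence of the maps with the moving measure $\gamma_n$. The uniform density bound $h_n\le C$ is the essential device bridging this gap, and it is used twice: it produces the weak convergence $h_n\rightharpoonup h$ needed to pass to the limit in $T_n^i\#\gamma_n=\mu^i$, and it again powers the strong-times-weak product convergence on compacta in (II). Without it, mass of $\gamma_n$ could concentrate on the Lebesgue-null set where $T_n^i$ fails to converge, and the conclusion could fail.
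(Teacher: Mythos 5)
Your overall architecture is genuinely different from the paper's: where you obtain stability of the maps via compactness of the Brenier potentials (local uniform convergence of convex functions, hence Lebesgue-a.e.\ convergence of their gradients) and then pass to the limit by a strong-times-weak product argument in $L^2(B_R)$ plus a tail estimate, the paper derives pointwise convergence of $\mathbf t_{\gamma_n}^{\mu^i}$ from maximal-monotone-operator theory (Proposition~\ref{prop:unifcompacts}), truncates the integrand at level $4R$ via a uniform-integrability argument, and then combines Egorov's theorem with the Portmanteau lemma and the bound $\gamma_n(A)\le C\,\mathrm{Leb}(A)$. Your marginal trick for the tail, bounding $\int_{\|x\|>R}\|T_n^i\|^2\,d\gamma_n$ by $\int_{\|y\|>S}\|y\|^2\,d\mu^i+S^2\gamma_n(\{\|x\|>R\})$, is correct and is essentially the same device the paper uses to control its truncation error.

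There is, however, a concrete gap in step (II), echoed in the identification step of (I). The local uniform bounds and the a.e.\ convergence $T_n^i\to T^i$ that convex-potential compactness delivers are only available on compact subsets of the interior of (the convex hull of) $\mathrm{supp}(\gamma)$: outside that region the potentials $\varphi_n^i$ need not be finite, let alone locally equi-Lipschitz, and $T^i=\mathbf t_\gamma^{\mu^i}$ is not even defined. Your claim that ``on $B_R$ the $T_n^i$ are uniformly bounded by (I), so $G_n\to G$ a.e.\ and boundedly'' therefore fails whenever $B_R\not\subseteq\mathrm{conv}(\mathrm{supp}\,\gamma)$, which is the generic situation. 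The set $B_R\setminus\mathrm{supp}(\gamma)$ carries positive (though asymptotically vanishing) $\gamma_n$-mass on which $G_n$ is not uniformly bounded, so vanishing mass alone does not kill its contribution; moreover, proving $\gamma_n(\mathbb R^d\setminus\mathrm{supp}\,\gamma)\to0$ itself requires the density bound together with inner approximation by closed sets, since Portmanteau applied to this open set gives an inequality in the useless direction. To close the gap you must either truncate $G_n$ (as the paper does, replacing it by $\min(G_n,4R)$ with uniformly small truncation error, so that the uncontrolled region contributes at most $4R\,\gamma_n(\cdot)$) or redeploy your marginal estimate on $B_R\setminus U$ for $U$ a small neighbourhood of $\mathrm{supp}(\gamma)$ before invoking bounded a.e.\ convergence. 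Relatedly, your assertion that bounded transport costs ``force'' local uniform bounds on $\varphi_n^i$ is a gloss: an $L^2(\gamma_n)$ bound on the gradients does not yield pointwise local bounds; the correct mechanism is tightness of the fixed targets $\mu^i$ combined with the spreading of $\gamma_n$-mass over $\mathrm{supp}(\gamma)$, as in the paper's convex-hull boundedness lemma (Proposition~\ref{lem:yinkstrong}).
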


\begin{proof}  The regularity of $\gamma_n$ and $\gamma$ implies that $F$ is indeed differentiable there, and so it needs to be shown that
\[
\left\| \frac1N\sum_{i=1}^N \mathbf t_{\gamma_n}^{\mu^i} - \mathbf i\right \|^2_{L^2(\gamma_n)}
\longrightarrow\left\| \frac1N\sum_{i=1}^N \mathbf t_{\gamma}^{\mu^i} - \mathbf i\right \|^2_{L^2(\gamma)},
\qquad n\to\infty.
\]
Denote the integrands by $g_n$ and $g$ respectively.  At a given $x\in\mathbb R^d$, $g_n(x)$ can be undefined, either because some $\mathbf t_{\gamma_n}^{\mu^i}(x)$ is empty, or because they can be multivalued.  Redefine $g_n(x)$ at such points by setting it to 0 in the former case and choosing an arbitrary representative otherwise.  Since the set of these ambiguity points is a $\gamma_n$-null set (because $\gamma_n$ is absolutely continuous), this modification does not affect the value of the integral $\ownint{}{}{g_n}{\gamma_n}$.  Apply the same procedure to $g$.  Then $g_n$ and $g$ are finite and nonnegative throughout $\mathbb R^d$.  Absolute continuity of $\gamma$, Remark~2.3 in \cite{albamb} and Proposition~\ref{prop:cont} imply together that the set of points where $g$ is not continuous is a $\gamma$-null set.

Next, we approximate $g_n$ and $g$ by bounded functions as follows.  Since $\gamma_n$ converge in the Wasserstein space, they satisfy \eqref{eq:uniformboundedness} by \cite[Theorem~7.12]{vil}.  It is easy to see that this implies the uniform absolute continuity
\begin{equation}\label{eq:unifabscont}
\forall\epsilon>0
\exists\delta>0
\forall j\ge1
\forall A\subseteq\mathbb R^d
\textrm{ Borel}:
\quad \gamma_j(A)\le \delta
\quad \Longrightarrow
\ownint A{}{\|x\|^2}{\gamma_j(x)}
<\epsilon.
\end{equation}
The $\delta$'s can be chosen in such a way that \eqref{eq:unifabscont} holds true for the finite collection $\{\mu^1,\dots,\mu^N\}$ as well.  Fix $\epsilon>0$, set $\delta=\delta_\epsilon$ as in \eqref{eq:unifabscont}, and let $A_n=\{x:g_n(x)\ge 4R\}$, where $R=R_\epsilon\ge1$ is such that (using \eqref{eq:uniformboundedness})
\[
\forall i\ \forall n:
\quad
\ownint{\{\|x\|^2>R\}}{}{\|x\|^2}{\gamma_n(x)}
+
\ownint{\{\|x\|^2>R\}}{}{\|x\|^2}{\mu^i(x)}
<\frac \delta {2N}.
\]
The bound
\[
g_n(x)
\le 2\|x\|^2
+\frac2N\sum_{i=1}^N  \|\mathbf t_{\gamma_n}^{\mu^i}(x)\|^2,
\]
implies that
\[
A_n
\subseteq \{x:\|x\|^2>R\}\cup
\bigcup_{i=1}^N \{x:\|\mathbf t_{\gamma_n}^{\mu^i}(x)\|^2>R\}.
\]
To deal with the sets in the union observe that (since $\mathbf t_{\gamma_n}^{\mu^i}$ is $\gamma_n$-almost surely injective),
\[
\gamma_n(\{x:\|\mathbf t_{\gamma_n}^{\mu^i}(x)\|^2>R\})
=\mu^i(\{x:\|x\|^2>R\})
<\frac \delta {2N},
\]
so that $\gamma_n(A_n)<\delta$.  We use this in conjunction with \eqref{eq:unifabscont} to bound
\begin{align*}
\ownint {A_n}{}{g_n(x)}{\gamma_n(x)}
&\le 2\ownint {A_n}{}{\|x\|^2}{\gamma_n(x)}
+\frac 2N\sum_{i=1}^N\ownint {A_n}{}{\|\mathbf t_{\gamma_n}^{\mu^i}(x)\|^2}{\gamma_n(x)}\\
&\le 2\epsilon
+\frac 2N\sum_{i=1}^N\ownint {\mathbf t_{\gamma_n}^{\mu^i}(A_n)}{}{\|x\|^2}{\mu^i(x)}
\le 4\epsilon,
\end{align*}
where we have used the measure-preservation property $\mu^i(\mathbf t_{\gamma_n}^{\mu^i}(A_n))=\gamma_n(A_n)<\delta$.

Define the truncation $g_{n,R}(x)=\min(g_n(x),4R)$.  Then $0\le g_n-g_{n,R}\le g_n\mathbf1\{g_n>4R\}$, so
\[
\ownint {}{}{[g_n(x) - g_{n,R}(x)]}{\gamma_n(x)}
\le
\ownint {A_n}{}{g_n(x)}{\gamma_n(x)}
\le 4\epsilon
,\qquad n=1,2,\dots.
\]
The analogous truncated function $g_R$ satisfies
\begin{equation}\label{eq:gprop}
0\le g_R(x)\le 4R
\quad \forall x\in \mathbb R^d
\quad\textrm{and}\quad
\{x:g_R \textrm{ is continuous }\}\textrm{ is of }\gamma\textrm{-full measure}.
\end{equation}

Let $E=\mathrm{supp}(\gamma)$.  Proposition~\ref{prop:unifcompacts} (Section \ref{appendix}) implies pointwise convergence of $\mathbf t_{\gamma_n}^{\mu^i}(x)$ to $\mathbf t_{\gamma}^{\mu^i}(x)$ for any $i=1,\dots,N$ and any $x\in E\setminus \mathcal N$, where $\mathcal N=\cup_{i=1}^N\mathcal N^i$ and
\[
\mathcal N^i
= (E\setminus E^{\mathrm{den}}) \cup \{x:\mathbf t_{\gamma}^{\mu^i}(x) \textrm{ contains more than one element}\}.
\]
Thus, $g_n$ and $g$ are univalued functions defined throughout $\mathbb R^d$,  and $g_n\to g$ pointwise on $x\in E\setminus \mathcal N$ (for whatever choice of representatives selected to define $g_n$);  consequently, $g_{n,R}\to g_R$ on $E\setminus \mathcal N$.

In order to restrict the integrands to a bounded set we invoke the tightness of the sequence $(\gamma_n)$ and introduce a compact set $K_\epsilon$ such that $\gamma_n(\mathbb R^d\setminus K_\epsilon)<\epsilon/R$ for all $n$.  Clearly, $g_{n,R}\to g_R$ on $E'=K_\epsilon\cap E\setminus \mathcal N$, and by Egorov's theorem (valid as $\mathrm{Leb}(E')\le\mathrm{Leb}(K_\epsilon)<\infty$), there exists a Borel set $\Omega=\Omega_\epsilon\subseteq E'$ on which the convergence is uniform, and $\mathrm{Leb}(E'\setminus \Omega)<\epsilon/R$.  Let us write
\[
\ownint {}{}{g_{n,R}}{\gamma_n} - \ownint {}{}{g_R}\gamma
=\ownint {}{}{g_R}{(\gamma_n-\gamma)}+\ownint \Omega{}{(g_{n,R} - g_R)}{\gamma_n} + \ownint {\mathbb R^d\setminus\Omega}{}{(g_{n,R} - g_R)}{\gamma_n},
\]
and bound each of the three integrals at the right-hand side as $n\to\infty$.

The first integral vanishes as $n\to\infty$, by \eqref{eq:gprop} and the Portmanteau lemma (Lemma~\ref{lem:portmanteau}, Section \ref{appendix}). For a given $\Omega$, the second integral vanishes as $n\to\infty$, since $g_{n,R}$ converge to $g_R$ uniformly. The third integral is bounded by $8R\gamma_n(\mathbb R^d\setminus \Omega)$.  The latter set is a subset of $\mathcal N\cup(E'\setminus \Omega)\cup(\mathbb R^d\setminus E)\cup (\mathbb R^d\setminus K_\epsilon)$, where the first set is Lebesgue-negligible and the second has Lebesgue measure smaller than $\epsilon/R$.  The hypothesis of the densities of $\gamma_n$ implies that $\gamma_n(A)\le C\mathrm{Leb}(A)$ for any Borel set $A\subseteq\mathbb R^d$ and any $n\in\mathbb N$;  it follows from this and $\gamma_n(\mathbb R^d\setminus K_\epsilon)<\epsilon/R$ that
\[
\left|\ownint {\mathbb R^d\setminus\Omega}{}{(g_{n,R} - g_R)}{\gamma_n}\right|
\le 8R(C\epsilon/R + \gamma_n(\mathbb R^d\setminus E)+\epsilon/R)
= 8\left(R\gamma_n(\mathbb R^d\setminus E) + C\epsilon+\epsilon\right).
\]
Write the open set $E_1=\mathbb R^d\setminus E$ as a countable union of closed sets $A_k$ with $\mathrm{Leb}(E_1\setminus A_k)<1/k$, and conclude that
\[
\limsup_{n\to\infty}\gamma_n(E_1)
\le \limsup_{n\to\infty}\gamma_n(A_k)
+ \limsup_{n\to\infty}\gamma_n(E_1\setminus A_k)
\le \gamma(A_k)+\frac Ck
=\frac Ck,
\]
where we have used the Portmanteau lemma again, $A_k\cap \mathrm{supp}(\gamma)=\emptyset$ and $\gamma_n(A)\le C\mathrm{Leb}(A)$.  Consequently, for all $k$
\[
\limsup_{n\to\infty}\left|\ownint {}{}{g_{n,R}}{\gamma_n} - \ownint {}{}{g_R}\gamma\right|
\le\limsup_{n\to\infty}\left|\ownint {\mathbb R^d\setminus\Omega}{}{(g_{n,R} - g_R)}{\gamma_n}\right|
\le \frac{8R_\epsilon C}k +8(C+1)\epsilon.
\]
Letting $k\to\infty$, then incorporating the truncation error yields
\[
\limsup_{n\to\infty}\left|\ownint {}{}{g_{n}}{\gamma_n} - \ownint {}{}{g}\gamma\right|
\le 8(C+1)\epsilon + 8\epsilon.
\]
The proof is complete upon noticing that $\epsilon$ is arbitrary.
\end{proof}

Our proof will now be complete if we show that the sequence $\gamma_k$ generated by the algorithm satisfies the assumptions of the last Proposition.  First we show that limits of the sequence are indeed regular.
\begin{proposition}[Sequence has bounded density]\label{prop:Cmu}
Let $\mu^i$ have density $g^i$ for $i=1,\dots,N$ and let $\gamma_0$ be a regular probability measure.  Then the density of $\gamma_1$ is bounded by a constant $C_\mu=\min\{N^{d-1}\max_i \|g^i\|_\infty, N^d\min_i \|g^i\|_\infty\}$ that depends only on $\{\mu^1,\dots,\mu^N\}$.
\end{proposition}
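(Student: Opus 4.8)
The plan is to compute the density $h_1$ of $\gamma_1=T_0\#\gamma_0$ explicitly through a change-of-variables formula and then bound its Jacobian from below by two separate linear-algebraic estimates, one producing each of the two candidates in $C_\mu$. Write $T_0=N^{-1}\sum_{i=1}^N\mathbf t_{\gamma_0}^{\mu^i}$ and let $h_0$ denote the density of $\gamma_0$. Since each $\mathbf t_{\gamma_0}^{\mu^i}$ is the gradient of a convex function, so is $T_0$; consequently, exactly as established in the proof of Lemma~\ref{iterate_regularity} (via Proposition~6.2.12 and Lemma~5.5.3 of Ambrosio et al.\ \cite{ambgigsav}), outside a $\gamma_0$-null set $T_0$ is injective and differentiable with symmetric positive definite derivative $\nabla T_0(x)=N^{-1}\sum_i\nabla\mathbf t_{\gamma_0}^{\mu^i}(x)$, and $\gamma_1$ is absolutely continuous. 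The area formula then identifies the density via $h_1(T_0(x))\det\nabla T_0(x)=h_0(x)$ for $\gamma_0$-almost every $x$. Applying the same change of variables to each individual map $\mu^i=\mathbf t_{\gamma_0}^{\mu^i}\#\gamma_0$ yields the pointwise Monge--Amp\`ere identity $\det\nabla\mathbf t_{\gamma_0}^{\mu^i}(x)=h_0(x)/g^i(\mathbf t_{\gamma_0}^{\mu^i}(x))$, valid $\gamma_0$-almost everywhere (the denominator is positive $\gamma_0$-a.e.\ since $\mathbf t_{\gamma_0}^{\mu^i}(x)\sim\mu^i$).

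With these identities in hand, the heart of the argument is purely matrix-theoretic. Fixing such a good point $x$, set $A_i=\nabla\mathbf t_{\gamma_0}^{\mu^i}(x)\succeq0$, so that $\nabla T_0(x)=N^{-1}\sum_i A_i$. For the first bound I would invoke the superadditivity of the determinant on the positive semidefinite cone, $\det(\sum_i A_i)\ge\sum_i\det A_i$ (a consequence of Minkowski's determinant inequality), which gives $N^d\det\nabla T_0(x)=\det(\sum_i A_i)\ge\sum_i\det A_i=\sum_i h_0(x)/g^i(\mathbf t_{\gamma_0}^{\mu^i}(x))\ge N h_0(x)/\max_i\|g^i\|_\infty$, whence $\det\nabla T_0(x)\ge h_0(x)/(N^{d-1}\max_i\|g^i\|_\infty)$. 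For the second bound I would instead use monotonicity of the determinant on that cone: for any fixed index $j$, $\nabla T_0(x)\succeq N^{-1}A_j$ forces $\det\nabla T_0(x)\ge N^{-d}\det A_j=N^{-d}h_0(x)/g^j(\mathbf t_{\gamma_0}^{\mu^j}(x))\ge N^{-d}h_0(x)/\|g^j\|_\infty$, and minimising over $j$ gives $\det\nabla T_0(x)\ge h_0(x)/(N^d\min_j\|g^j\|_\infty)$.

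Substituting either lower bound into $h_1(T_0(x))=h_0(x)/\det\nabla T_0(x)$ cancels the factor $h_0(x)$ and yields $h_1(T_0(x))\le N^{d-1}\max_i\|g^i\|_\infty$ and $h_1(T_0(x))\le N^d\min_i\|g^i\|_\infty$, hence $h_1(T_0(x))\le C_\mu$ for $\gamma_0$-almost every $x$. Since $T_0\#\gamma_0=\gamma_1$, this says $h_1\le C_\mu$ holds $\gamma_1$-almost everywhere; as the exceptional set $\{h_1>C_\mu\}$ then carries no $\gamma_1$-mass while $h_1>0$ on it, it must be Lebesgue-null, so $\|h_1\|_\infty\le C_\mu$, as claimed.

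I would expect the only genuine obstacle to be the rigorous justification of the change-of-variables and Monge--Amp\`ere identities for maps that are merely gradients of convex functions, and therefore differentiable only in the Lebesgue/Alexandrov almost-everywhere sense rather than classically; in particular one must ensure the derivative of $T_0$ splits additively a.e., that the relevant Jacobians are strictly positive so the divisions are legitimate, and that all ``almost everywhere'' qualifications refer to compatible null sets. All of this is already supplied by the regularity machinery cited in the proof of Lemma~\ref{iterate_regularity}, so once those identities are in place the remaining determinant inequalities are \emph{elementary}.
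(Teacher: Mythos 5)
Your proof is correct and follows essentially the same route as the paper: the Monge--Amp\`ere change-of-variables identities combined with superadditivity of the determinant on the positive semidefinite cone (the paper cites Fiedler's inequality where you cite Minkowski's, but these yield the same estimate). The only cosmetic difference is that the paper extracts both terms of the minimum from the single sharper harmonic-mean bound $N^d\bigl[\sum_{i=1}^N \|g^i\|_\infty^{-1}\bigr]^{-1}$, whereas you derive the second term separately via monotonicity of the determinant; both are valid.
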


\begin{proof}  Let $h_i$ be the density of $\gamma_i$.  By the change of variables formula, for $\gamma_0$-almost any $x$
\[
h_1(\mathbf t_{\gamma_0}^{\gamma_1}(x))
=\frac{h_0(x)} {\mathrm{det}\nabla \mathbf t_{\gamma_0}^{\gamma_1}(x)}
;\qquad
g^i(\mathbf t_{\gamma_0}^{\mu^i}(x))
=\frac{h_0(x)} {\mathrm{det}\nabla \mathbf t_{\gamma_0}^{\mu^i}(x)}.
\]
Fiedler \cite{fiedler1971bounds} shows that if $B_1$ and $B_2$ are $d\times d$ positive semidefinite matrices with eigenvalues $0\le\alpha_i,\beta_i$, then
\[
\mathrm{det}(B_1+B_2)
\ge \prod_{i=1}^d(\alpha_i + \beta_i).
\]
The right-hand side contains $2^d$ nonnegative summands of which two are $\mathrm{det}B_1$ and $\mathrm{det}B_2$, and so we see that $\mathrm{det}(B_1+B_2)\ge \mathrm{det}B_1+\mathrm{det}B_2$.  (One can show the stronger result $\sqrt[d]{\mathrm{det}(B_1+B_2)}\ge\sqrt[d]{ \mathrm{det}B_1}+\sqrt[d]{\mathrm{det}B_2}$.)  Since $\nabla \mathbf t_{\gamma_0}^{\gamma_1}$ is an average of $N$ $d\times d$ positive semidefinite matrices, we obtain
\[
h_1(\mathbf t_{\gamma_0}^{\gamma_1}(x))
=\frac{N^d h_0(x)} {\mathrm{det}\sum \nabla \mathbf t_{\gamma_0}^{\mu^i}(x)}
\le \frac{N^dh_0(x)} {\sum \mathrm{det}\nabla \mathbf t_{\gamma_0}^{\mu^i}(x)}
= N^d \left[ \sum_{i=1}^N \frac 1{g^i(\mathbf t_{\gamma_0}^{\mu^i}(x))} \right]^{-1}
\le N^d\left[  \sum_{i=1}^N \frac 1{\|g^i\|_\infty} \right]^{-1}.
\]
Let $\Sigma$ be the set of points where this inequality holds;  then $\gamma_0(\Sigma)=1$.  Hence
\[
\gamma_1(\mathbf t_{\gamma_0}^{\gamma_1}(\Sigma))
= \gamma_0[(\mathbf t_{\gamma_0}^{\gamma_1})^{-1}(\mathbf t_{\gamma_0}^{\gamma_1}(\Sigma))]
\ge \gamma_0(\Sigma)=1.
\]

Thus $\gamma_1$-almost surely,
\[
h_1
\le N^d\left[ \sum_{i=1}^N \frac 1{\|g^i\|_\infty}\right]^{-1}
\le \min\left\{N^{d-1}\max_i\|g^i\|_\infty
,N^d\min_i \|g^i\|_\infty\right\}
=C_\mu.
\]
For $C_\mu$ to be finite it suffices that $\|g^i\|_\infty$ be finite for some $i$.
\end{proof}

\noindent Our task is now essentially complete. All that remains is to show:

\begin{corollary}[Limits are regular]
Every limit of the sequence generated by Algorithm~\ref{algo} is absolutely continuous provided the density of $\mu^i$ is bounded for some $i$.
\end{corollary}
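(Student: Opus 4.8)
The plan is to combine the uniform density bound of Proposition~\ref{prop:Cmu} with a standard lower-semicontinuity argument for absolutely continuous measures under weak convergence. The crucial observation is that the constant $C_\mu$ in Proposition~\ref{prop:Cmu} depends only on the fixed measures $\{\mu^1,\dots,\mu^N\}$ and \emph{not} on the base measure $\gamma_0$; consequently the same bound applies uniformly along the entire iteration, and this uniformity is exactly what will survive the passage to the limit.

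First I would establish by induction that every iterate is regular. The initial measure $\gamma_0$ is regular by construction (step~(B) of Algorithm~\ref{algo}), and Lemma~\ref{iterate_regularity}, applied with step size $\tau=1\in[0,1]$, guarantees that $\gamma_{j+1}=T_j\#\gamma_j$ is regular whenever $\gamma_j$ is. Applying Proposition~\ref{prop:Cmu} with base measure $\gamma_{j-1}$ then shows that for every $j\ge1$ the density $h_j$ of $\gamma_j$ satisfies $h_j\le C_\mu$ almost everywhere, equivalently
\[
\gamma_j(A)\le C_\mu\,\mathrm{Leb}(A)
\qquad\textrm{for every Borel }A\subseteq\mathbb R^d\textrm{ and every }j\ge1.
\]

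Next I would transfer this bound to the limit. Let $\gamma$ be any limit of the sequence; since Wasserstein convergence implies weak convergence, there is a subsequence $\gamma_{j_k}\Rightarrow\gamma$. For an arbitrary open set $U$, the Portmanteau lemma (Lemma~\ref{lem:portmanteau}) yields $\gamma(U)\le\liminf_k\gamma_{j_k}(U)\le C_\mu\,\mathrm{Leb}(U)$. To pass from open sets to general Borel sets I would invoke outer regularity: given a Borel set $A$ with $\mathrm{Leb}(A)<\infty$ and $\epsilon>0$, choose an open $U\supseteq A$ with $\mathrm{Leb}(U)<\mathrm{Leb}(A)+\epsilon$, whence $\gamma(A)\le\gamma(U)\le C_\mu(\mathrm{Leb}(A)+\epsilon)$; letting $\epsilon\to0$ gives $\gamma(A)\le C_\mu\,\mathrm{Leb}(A)$. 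In particular every Lebesgue-null set is $\gamma$-null, so $\gamma$ is absolutely continuous (indeed with density bounded by $C_\mu$), which is the assertion.

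The argument is essentially routine once Proposition~\ref{prop:Cmu} is in hand; the only point demanding care, and the step I would flag as the main (mild) obstacle, is the preservation of the density bound in the limit. Weak convergence does \emph{not} in general preserve absolute continuity, so one genuinely needs the uniform bound together with the one-sided Portmanteau inequality for open sets and the outer regularity of both Lebesgue measure and $\gamma$. It is precisely the uniformity of $C_\mu$ across the iteration, inherited from the fact that the bound in Proposition~\ref{prop:Cmu} is independent of the base measure, that makes the limiting step go through.
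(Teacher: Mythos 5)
Your proof is correct and follows essentially the same route as the paper: a uniform density bound $C_\mu$ along the iterates from Proposition~\ref{prop:Cmu}, the one-sided Portmanteau inequality on open sets to transfer the bound $\gamma(O)\le C_\mu\,\mathrm{Leb}(O)$ to any limit point, and the conclusion that $\gamma$ is absolutely continuous with density bounded by $C_\mu$. The paper states this more tersely, leaving implicit the induction on regularity of the iterates and the outer-regularity step from open to general Borel sets, both of which you correctly supply.
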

\begin{proof} Each $\gamma_k$ ($k=1,2,\dots$) has a density that is bounded by the finite constant $C_\mu$.  For any open set $O$, $\liminf \gamma_k(O)\le C_\mu\mathrm{Leb}(O)$, so any limit point $\gamma$ of $(\gamma_k)$ is such that $\gamma(O)\le C_\mu\mathrm{Leb}(O)$ by the Portmanteau lemma.  It follows that $\gamma$ is absolutely continuous with density bounded by $C_\mu$.  We note that Agueh and Carlier \cite{bary} show that the density of the Fr\'echet mean is bounded by $N^d\min_i\|g^i\|_\infty\ge C_\mu$, a slightly weaker bound.
\end{proof}

\begin{proof}[Proof of Theorem~\ref{thm:convprocrustes}]
Let $E=\mathrm{supp}(\bar\mu)$ and set $A^i=E^{\mathrm{den}}\cap \{x:\mathbf t_{\bar\mu}^{\mu^i}(x)\textrm{ is multivalued}\}$.  By Corollary~\ref{cor:pointwise} $\bar\mu(A^i)=1$.  Choose $A=\cap_{i=1}^NA^i$ and apply Proposition~\ref{prop:unifcompacts}.  This proves the first assertion.

Now let $E^i=\mathrm{supp}(\mu^i)$ and set $B^i=(E^i)^{\mathrm{den}}\cap \{x:\mathbf t_{\mu^i}^{\bar\mu}(x)\textrm{ is univalued}\}$.   Since $\mu^i$ is regular, $\mu^i(B^i)=1$.  Apply Proposition~\ref{prop:unifcompacts}.  If in addition $E^1=\dots=E^N$ then $\mu^i(B)=1$ for $B=\cap B^i$.
\end{proof}

\begin{proof}[Proof of Corollary~\ref{cor:convMulti}]

The proof is very similar to that of Proposition~\ref{prop:Fdercont}.  Define $\eta_j,\eta\in \mathcal P_2((\mathbb R^{d})^{N+1})$ by
\[
\eta_j
=\left(\mathbf t_{\gamma_j}^{\mu^1},\dots\mathbf t_{\gamma_j}^{\mu^n},\mathbf i\right)\#\gamma_j
,\qquad
\eta
=\left(\mathbf t_{\gamma}^{\mu^1},\dots\mathbf t_{\gamma}^{\mu^n},\mathbf i\right)\#\gamma.
\]
We establish convergence of $\eta_j$ to $\eta$.  Since the optimal multicouplings are marginals of $\eta_j$ and $\eta$ their convergence follow.  Let $h:(\R^d)^{N+1}\to\R$ be any continuous function such that
\[
|h(t_1,\dots,t_N,y)|
\le
\frac1N\sum_{i=1}^N \|t_i\|^2 + \|y\|^2.
\]
Define $g_j:\mathbb R^d\to\mathbb R$ by $g_j(x)=h(\mathbf t_{\gamma_j}^{\mu^1},\dots\mathbf t_{\gamma_j}^{\mu^n},x)$ and analogously define $g$.  By \cite[Theorem~7.12]{vil} it suffices to show that (if this holds for $h$, it also holds for $a+bh$ with $a,b$ scalars)
\[
\ownint {\R^{d(N+1)}}{}{h}{\eta_j}
=\ownint {\R^d}{}{g_j}{\gamma_j(x)}
\to
\ownint {\R^d}{}{g}{\gamma(x)}
=\ownint {\R^{dN}}{}{h}{\eta}.
\]
(In Proposition~\ref{prop:Fdercont} we had $h=\|y - \bar t\|^2$.)  Since $h$ is continuous, we can modify $g_n$ and $g$ to be well-defined, finite and so that $g$ be continuous $\gamma$-almost surely.  Define $R$ as in the proof of Proposition~\ref{prop:Fdercont}, $A_j=\{x:|g_j(x)|\ge 4R\}$, invoke \eqref{eq:unifabscont} and translate the bound on $h$ to a bound on $|g_j|$ to conclude that $\ownint {A_j}{}{|g_j|}{\gamma_j}\le4\epsilon$.  Carry out the same (now two-sided) truncation $g_{j,R}(x)=\max(-4R,\min(g_j(x),4R))$ to obtain $|g_j - g_{j,R}|\le |g_j|\mathbf 1\{|g_j|>4R\}$, $|g_R|\le 4R$ and $g_R$ is continuous $\gamma$-almost surely (see \eqref{eq:gprop}).  The rest can be done as in the proof of Proposition~\ref{prop:Fdercont}, since it did not depend on the precise form of $g$.
\end{proof}

\subsection{Proofs of Statements in Section \ref{sec:discrete_intro}}
\begin{proof}[Proof of Theorem~\ref{thm:meanTid}]
Since $\lambda$ is regular and $T$ is injective with nonsingular derivative, $\Lambda=T\#\lambda$ is also regular by Lemma~5.5.3 in \cite{ambgigsav}.  Moreover, $\Lambda$ is supported on $K$ because $T$ takes values there.  Consequentely, the Fr\'echet mean of $\Lambda$ is unique and supported itself on $K$;  this is essentially a consequence of Corollary~2.9 in \cite{alvarez2011}. For tidiness, we provide the full details in Section~\ref{zempan16supp}.

In view of the preceding paragraph, it suffices to show that
\[
\mathbb Ed^2(\lambda,\Lambda)
\le \mathbb E d^2(\theta,\Lambda),
\qquad \theta\in \mathcal{P}_2(K).
\]
As a gradient of a convex function, $T=\mathbf t_\lambda^\Lambda$ is optimal.  Let $\phi$ be the convex potential of $T$, and define $\phi^*$ its Legendre transform.  Then the pair $(\|x\|^2/2-\phi,\|y\|^2/2 - \phi^*)$ is dual optimal.  Invoking strong duality for $\lambda$ and weak duality for $\theta$, we find
\begin{align*}
d^2(\lambda,\Lambda)
&= \ownint {\mathbb R^d}{}{\left(\frac12\|x\|^2 - \phi(x)\right)}{\lambda(x)}
+ \ownint {\mathbb R^d}{}{\left(\frac12\|y\|^2 - \phi^*(y)\right)}{\Lambda(y)};\\
d^2(\theta,\Lambda)
&\ge \ownint {\mathbb R^d}{}{\left(\frac12\|x\|^2 - \phi(x)\right)}{\theta(x)}
+ \ownint {\mathbb R^d}{}{\left(\frac12\|y\|^2 - \phi^*(y)\right)}{\Lambda(y)}.
\end{align*}
By Fubini's theorem (see the supplementary material for a justification), we have
\begin{align*}
\E d^2(\lambda,\Lambda)
&= \ownint {\mathbb R^d}{}{\left(\frac12\|x\|^2 - \E\phi(x)\right)}{\lambda(x)}
+ \E\ownint {\mathbb R^d}{}{\left(\frac12\|y\|^2 - \phi^*(y)\right)}{\Lambda(y)};\\
\E d^2(\theta,\Lambda)
&\ge \ownint {\mathbb R^d}{}{\left(\frac12\|x\|^2 - \E\phi(x)\right)}{\theta(x)}
+ \E\ownint {\mathbb R^d}{}{\left(\frac12\|y\|^2 - \phi^*(y)\right)}{\Lambda(y)}.
\end{align*}
The function $\E T$ is continuous (by the bounded convergence theorem and boundedness of $K$), so equals the identity for all $x\in K$.  Again by Fubini's theorem (see the supplementary material), it follows that $\E\phi(x)=\|x\|^2/2$ for all $x\in K$, perhaps up to an additive constant.  Since $\theta$ and $\lambda$ are both supported on $K$, the integrals with respect to $\lambda$ and $\theta$ vanish, and this completes the proof.
\end{proof}

\noindent As part of our proofs, we will need to control the Wasserstein distance between the regularised measures and their true counterparts:

\begin{lemma}\label{lem:smoothing}
The smooth measure $\widehat\Lambda_i$ defined by \eqref{eq:lambdahat} satisfies
\begin{equation}\label{eq:smoothbound}
d^2\left(\widehat\Lambda_i,\frac{\widetilde\Pi_i}{\widetilde\Pi_i(K)}\right)
\le C_{\psi,K}\sigma^2
\quad\textrm{ if }
\sigma\le 1
\quad\textrm{ and } \quad
\widetilde\Pi_i(K)>0,
\end{equation}
where $C_{\psi,K}$ is a (finite) constant that depends only on $\psi$ and $K$.
\end{lemma}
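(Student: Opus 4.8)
The plan is to combine two elementary reductions: a mixture inequality for the squared Wasserstein distance, and the explicit form of the distance to a Dirac mass. Write $\widetilde\Pi_i=\sum_{j=1}^m\delta\{x_j\}$ with all $x_j\in K$ and $m=\widetilde\Pi_i(K)\ge1$. Both measures are then equally weighted mixtures: the normalised process is $\widetilde\Pi_i/\widetilde\Pi_i(K)=\frac1m\sum_{j=1}^m\delta\{x_j\}$, while, by \eqref{eq:lambdahat}, $\widehat\Lambda_i=\frac1m\sum_{j=1}^m\nu_j$, where $\nu_j$ is the probability measure on $K$ with density $y\mapsto\psi_\sigma(y-x_j)/\int_K\psi_\sigma(z-x_j)\,dz$. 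The squared Wasserstein distance satisfies the mixture inequality $d^2(\sum_jw_j\alpha_j,\sum_jw_j\beta_j)\le\sum_jw_jd^2(\alpha_j,\beta_j)$ (concatenate the optimal couplings of $(\alpha_j,\beta_j)$ with weights $w_j$ to obtain an admissible coupling), so it suffices to bound each summand:
\[
d^2\!\left(\widehat\Lambda_i,\frac{\widetilde\Pi_i}{\widetilde\Pi_i(K)}\right)
\le\frac1m\sum_{j=1}^m d^2\big(\nu_j,\delta\{x_j\}\big).
\]

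Next I would use that the only coupling of $\nu_j$ with the Dirac mass $\delta\{x_j\}$ is the product, so that $d^2(\nu_j,\delta\{x_j\})=\int_K\|y-x_j\|^2\,d\nu_j(y)$. Rescaling by $u=(y-x_j)/\sigma$ turns this into
\[
d^2\big(\nu_j,\delta\{x_j\}\big)
=\sigma^2\,\frac{\int_{(K-x_j)/\sigma}\|u\|^2\psi(u)\,du}{\int_{(K-x_j)/\sigma}\psi(u)\,du},
\]
and the whole problem collapses to controlling this ratio uniformly over $x_j\in K$ and $\sigma\le1$. The numerator is bounded above by $M_\psi:=\int_{\R^d}\|u\|^2\psi(u)\,du$, which is finite (in fact equal to $d$) because $\psi$ has unit variance.

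The crux is a uniform lower bound on the denominator, i.e.\ on the truncated mass $\int_{(K-x_j)/\sigma}\psi(u)\,du$. Here I would exploit the convexity of $K$. Since $x_j\in K$, the set $C=K-x_j$ is convex and contains the origin, and for $0<\sigma\le1$ one has $\sigma v=\sigma v+(1-\sigma)\cdot0\in C$ whenever $v\in C$, which gives the inclusion $C\subseteq C/\sigma=(K-x_j)/\sigma$. Consequently $\int_{(K-x_j)/\sigma}\psi(u)\,du\ge\int_{K-x_j}\psi(u)\,du=\int_K\psi(y-x_j)\,dy$, and it remains to note that $x\mapsto\int_K\psi(y-x)\,dy$ is continuous and strictly positive on the compact set $K$ (strict positivity because $\psi>0$ everywhere and $K$ has positive Lebesgue measure), so its infimum $c_{\psi,K}>0$ is attained. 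Combining the bounds gives $d^2(\nu_j,\delta\{x_j\})\le\sigma^2M_\psi/c_{\psi,K}$ for every $j$, and averaging yields the claim with $C_{\psi,K}=M_\psi/c_{\psi,K}$.

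I expect the main obstacle to be exactly this uniform control of the normalising constant: restricting the kernel to $K$ can severely deplete its mass when $x_j$ lies near $\partial K$ and $\sigma$ is of order one, so the bound must not degenerate there. The convexity-driven inclusion $(K-x_j)/\sigma\supseteq K-x_j$, together with continuity and compactness, is precisely what rescues uniformity; the remaining ingredients (the mixture reduction, the Dirac-distance identity, and the rescaling) are routine.
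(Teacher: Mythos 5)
Your proof is correct and follows essentially the same route as the paper's: the same coupling that sends each kernel bump's $1/m$ mass to its centre, the same reduction to $d^2(\nu_j,\delta\{x_j\})=\int\|y-x_j\|^2\,d\nu_j(y)$, the same rescaling, and the same convexity inclusion $(K-x_j)\subseteq(K-x_j)/\sigma$ for $\sigma\le1$. The only (minor) divergence is the final uniform lower bound on the truncated kernel mass: the paper exploits the radial monotonicity of the isotropic $\psi$ to get the explicit constant $\psi_1(d_K)\,\mathrm{Leb}(K)$, whereas you use continuity of $x\mapsto\int_{K-x}\psi$ (which holds by dominated convergence, since $\partial(K-x)$ is Lebesgue-null) plus compactness of $K$ -- a slightly less explicit but equally valid bound that, as the paper's own remark on non-isotropic kernels notes, does not actually require isotropy.
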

We prove the lemma in the supplementary material (Section~\ref{zempan16supp}).
\begin{remark}\label{rem:nonisotropic}
There is no need for $\psi$ to be isotropic:  it is sufficient that merely
\[
\delta_\psi(r)
=\inf_{\|x\|\le r}  \psi(x)
> 0
,\qquad r>0,
\]
which is satisfied as long as $\psi$ is continuous and strictly positive.
\end{remark}

We now remark that a trivial extension of \cite[Lemma~3]{panzem15} yields:
\begin{lemma}[Number of points per process is $O(\tau_n)$]\label{lem:numpoints}
If $\tau_n/\log n\to\infty$, then there exists a constant $C_\Pi>0$, depending only on the distribution of the $\Pi$'s, such that
\[
\liminf_{n\to\infty}  \frac{\min_{1\le i\le n}  \Pi_i^{(n)}(K)}   {\tau_n}
 \ge C_\Pi
\quad \textrm{almost surely}.
\]
In particular, there are no empty point processes, so the normalisation is well-defined.
\end{lemma}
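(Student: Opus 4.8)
The plan is to reduce the statement to an exponential concentration bound for the total count $M_i^{(n)} := \Pi_i^{(n)}(K)$ of each process, followed by a union bound over the $n$ processes in the $n$-th row and a Borel--Cantelli argument (this mirrors the strategy of the one-dimensional Lemma~3 in \cite{panzem15}). First I would observe that, since $\lambda$ is a probability measure, each $M_i^{(n)}$ is a nonnegative random variable with mean $\E[M_i^{(n)}]=\tau_n\lambda(K)=\tau_n$, and that the infinite divisibility of the $\Pi_i^{(n)}$ — together with the assumption that they are of one fixed ID type at intensity $\tau_n$ — forces, via the L\'evy--Khinchine (KLM) representation of infinitely divisible point processes, the Laplace transform of the total count to factorise as
\[
\E\bigl[e^{-t M_i^{(n)}}\bigr]=e^{\tau_n\ell(t)},\qquad t\ge0,
\]
for a single function $\ell$ that does not depend on $n$, with $\ell(0)=0$ and $\ell(t)<0$ for every $t>0$. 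The strict negativity holds because the unit-intensity count has mean one and is therefore not degenerate at the origin, so that $\E[e^{-tM}]<1$ for $t>0$; and finiteness of $\ell(t)$ follows from $1-e^{-t\chi(K)}\le t\chi(K)$ and finiteness of the mean. This factorisation is the only structural input we need from the point process model.

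Next I would apply a Chernoff bound to the lower tail. For any fixed $t_0>0$ and any threshold $a>0$,
\[
\P\bigl(M_i^{(n)}<a\bigr)=\P\bigl(e^{-t_0 M_i^{(n)}}>e^{-t_0 a}\bigr)\le e^{t_0 a}\,\E\bigl[e^{-t_0 M_i^{(n)}}\bigr]=e^{t_0 a+\tau_n\ell(t_0)}.
\]
Taking $a=C_\Pi\tau_n$, the right-hand side becomes $\exp(\tau_n[t_0 C_\Pi+\ell(t_0)])$. Since $\ell(t_0)<0$, I would then choose $C_\Pi\in(0,-\ell(t_0)/t_0)$, so that $\rho:=-(t_0 C_\Pi+\ell(t_0))>0$; because $\ell$ and $t_0$ depend only on the law of the $\Pi$'s, so does $C_\Pi$, as required. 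This yields $\P(M_i^{(n)}<C_\Pi\tau_n)\le e^{-\rho\tau_n}$ for every $i$ and $n$.

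Because the processes within a row are identically distributed, a union bound gives
\[
\P\Bigl(\min_{1\le i\le n}\Pi_i^{(n)}(K)<C_\Pi\tau_n\Bigr)\le n\,e^{-\rho\tau_n}.
\]
Here the hypothesis $\tau_n/\log n\to\infty$ enters decisively: it guarantees $\tau_n\ge(3/\rho)\log n$ for all large $n$, whence $n\,e^{-\rho\tau_n}\le n^{-2}$, so the probabilities are summable in $n$. By the first Borel--Cantelli lemma — which requires no independence across rows — almost surely only finitely many of the events $\{\min_i\Pi_i^{(n)}(K)<C_\Pi\tau_n\}$ occur, and hence $\liminf_{n}\min_i\Pi_i^{(n)}(K)/\tau_n\ge C_\Pi$ almost surely. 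Finally, since $\tau_n\to\infty$ forces $C_\Pi\tau_n\to\infty$, almost surely every process is eventually nonempty; as the maps $T_i$ are homeomorphisms of $K$ and thus preserve the total number of points, the same lower bound and nonemptiness transfer to $\widetilde\Pi_i^{(n)}(K)=\Pi_i^{(n)}(K)$, so the normalisation is well-defined.

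The only genuinely delicate point is the first step: extracting the clean exponential rate $e^{\tau_n\ell(t)}$ from the infinite-divisibility assumption, and verifying that $\ell$ is finite and strictly negative on $(0,\infty)$. Once that factorisation is secured, the Chernoff estimate, the union bound, and the Borel--Cantelli step are entirely routine, with the growth condition $\tau_n/\log n\to\infty$ doing precisely the work of making the tail sum $\sum_n n\,e^{-\rho\tau_n}$ converge.
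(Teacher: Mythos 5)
Your proof is correct and follows essentially the same route as the paper, which simply defers to Lemma~3 of Panaretos \& Zemel \cite{panzem15}: that argument is precisely the KLM/Laplace-functional factorisation $\E[e^{-tM}]=e^{\tau_n\ell(t)}$, a Chernoff bound on the lower tail, a union bound over the $n$ processes, and Borel--Cantelli using $\tau_n/\log n\to\infty$. Your reconstruction, including the verification that $\ell$ is finite and strictly negative and the observation that $\widetilde\Pi_i^{(n)}(K)=\Pi_i^{(n)}(K)$, is a faithful account of that ``trivial extension.''
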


\begin{proof}[Proof of Theorem~\ref{thm:consistency}]
The proof is very similar to that of Theorem~1 in Panaretos \& Zemel \cite{panzem15}, and we give the details in the supplementary material (Section~\ref{zempan16supp}).
\end{proof}

\begin{proof}[Proof of Theorem~\ref{thm:consistencyWarp}]
The argument is considerably different than the case $d=1$ considered in \cite{panzem15}, and brings into play the geometry of convex functions in $\mathbb R^d$.  Let $i$ be a fixed integer and for $n\ge i$ set
\[
\mu_n = \widehat\Lambda_i;
\qquad \nu_n = \widehat\lambda_n;
\qquad\mu = \Lambda_i;
\qquad\nu = \lambda;
\qquad u_n = \widehat T_i^{-1};
\qquad u = T_i^{-1}.
\]
We wish to show that $u_n\to u$ uniformly on compact sets, using our knowledge that
\[
\begin{cases}
\mu_n\to \mu;\\
\nu_n\to \nu;
\end{cases}
\quad u_n\#\mu_n=\nu_n;
\quad u\#\mu = \nu;
\qquad u_n, u\textrm{ optimal}.
\]
This follows from Proposition~\ref{prop:unifcompacts} below.  To verify the conditions, notice that all the measures are supported on $K=E$, a compact and convex set.  Furthermore $\mu_n$, $\mu$ and $\nu$ all have strictly positive densities there, so their support is exactly $K$.  Continuity of $u$ on $\mathrm{int}(K)$ follows from the assumptions that $T_i$ and $T_i^{-1}$ are continuous.  The finiteness in \eqref{eq:measuressetting} follows from the compactness of $K$, and the uniqueness follows from the regularity of $\mu$.

The same proposition can be applied to show convergence of $\widehat T_i$ to $T_i$ uniformly on $\Omega\subseteq \mathrm{int}(K)$:  one needs to reverse the roles of $\mu_n$ and $\nu_n$ and of $\mu$ to $\nu$, and notice that $\nu$ too is regular, which guarantees the uniqueness in \eqref{eq:measuressetting}.
\end{proof}

\begin{proof}[Proof of Corollary~\ref{cor:consistencyPi}]   The square of the distance is
\[
\ownint K{}{\|\widehat T_i^{-1}(T_i(x)) - x\|^2}{\frac{\Pi_i}{\Pi_i(K)}},
\]
and this is well-defined (that is, $\Pi_i(K)>0$) almost surely for $n$ large enough by Lemma \ref{lem:numpoints}.  Since $\lambda(\partial K)=0$, almost surely there are no points on the boundary and the integral can be taken on the interior of $K$.  Let $\Omega\subseteq\mathrm{int}(K)$ be compact and split the integral to $\Omega$ and its complement.  Then
\[
\ownint{\mathrm{int}(K)\setminus \Omega}{}{\|\widehat T_i^{-1}(T_i(x)) - x\|^2}{\frac{\Pi_i}{\Pi_i(K)}}
\le d_K^2\frac{\Pi_i(\mathrm{int}(K)\setminus\Omega)} {\tau_n}  \frac{\tau_n} {\Pi_i(K)}
\stackrel{\rm{as}}\to d_K^2\lambda(\mathrm{int}(K)\setminus\Omega),
\]
by the law of large numbers.  Since the interior of $K$ can be written as a countable union of compact sets, the right-hand side can be made arbitrarily small by selection of $\Omega$.

Let us now consider the integral on $\Omega$.  Since
\[
\ownint \Omega {}{\|\widehat T_i^{-1}(T_i(x)) - x\|^2}{\frac{\Pi_i}{\Pi_i(K)}}
\le \sup_{x\in\Omega} \|\widehat T_i^{-1}(T_i(x)) - x\|^2
= \sup_{y\in T_i(\Omega)} \|\widehat T_i^{-1}(y) - T_i^{-1}(y)\|^2
\]
and $T_i(\Omega)$ is compact, we only need to show that it is included in $\mathrm{int}(K)$ in order to apply Theorem~\ref{thm:consistencyWarp}.  Suppose towards contradiction that $y=T_i(x)\in\partial K$ for $x\in \mathrm{int}(K)$.  Let $\alpha\in\mathbb R^d\setminus\{0\}$ with $\langle y,\alpha\rangle\ge\sup\langle K,\alpha\rangle$.  Let $x'=x+t\alpha$ for $t>0$ small enough such that $x'\in\mathrm{int}(K)$.  Then $y'=T_i(x')\in K$, so that
\[
0\le\innprod{y' - y}{x' - x}
=t\innprod{y' - y}\alpha.
\]
Either condition in the statement of the corollary imply that $y'=y$, in contradiction to $T_i$ being injective.
\end{proof}

\subsection{Monotone Operators, Optimal Transportation, Stochastic Convergence}\label{appendix}
This section contains the statements and proofs of analytical results needed in our proofs, culminating in Proposition~\ref{prop:unifcompacts}.  The latter is the backbone result needed for the proofs of Theorem~\ref{thm:consistencyWarp}, Theorem~\ref{thm:convtosta} (more precisely, Proposition~\ref{prop:Fdercont}) and Theorem~\ref{thm:convprocrustes}.  Rather than start with all the background definitions we will define the necessary objects en route.

We shall follow the notation and terminology of Alberti \& Ambrosio \cite{albamb}.  Let $u$ be a set-valued function (or multifunction) on $\mathbb R^d$, that is, $u:\mathbb R^d\to 2^{\mathbb R^d}$. It is said that $u$ is \emph{monotone} if
\[
\innprod{y_2-y_1}{x_2-x_1}
\ge0
\qquad\textrm{whenever }y_i\in u(x_i)
\quad(i=1,2).
\]
When $d=1$, the definition reduces to $u$ being a nondecreasing (set-valued) function.  It is said that $u$ is \emph{maximal} if no points can be added to its graph while preserving monotonicity:
\[
\left\{
\innprod{y'-y}{x'-x}
\ge0
\quad\textrm{whenever }y\in u(x)
\right\}
\quad\Longrightarrow\quad y'\in u(x').
\]
We sometimes use the notation $(x,y)\in u$ to mean $y\in u(x)$. Note that $u(x)$ can be empty, even when $u$ is maximal.

The relevance of monotonicity stems from the fact that subdifferentials of convex functions are monotone.  That is, if $\varphi:\mathbb R^d\to\mathbb R\cup\{\infty\}$ is lower semicontinuous and convex (and not identically infinite), then $u=\partial\varphi$ is maximally monotone \cite[Section~7]{albamb}, where
\[
\partial\varphi(x)
= \{ y :  \varphi(z)  \ge  \varphi(x) + \innprod y{z-x} \textrm{ for any }z\}
\]
is the \emph{subdifferential} of $\varphi$ at $x$.  Here $u(x)=\emptyset$ if $\varphi(x)=\infty$.

We will use extensively the continuity of $u$ at points where it is univalued.
\begin{proposition}[Continuity at Singletons]\label{prop:cont}
Let $u$ be a maximal monotone function, and suppose that $u(x)=\{y\}$ is a singleton. Then $u$ is nonempty on some neighbourhood of $x$ and it is continuous at $x$: if $x_n\to x$ and $y_n\in u(x_n)$, then $y_n\to y$.
\end{proposition}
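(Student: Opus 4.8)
The plan is to prove both assertions --- that $u$ is locally nonempty near $x$ and that it is continuous there --- by exploiting maximal monotonicity together with a local boundedness argument. The key structural fact I would use is that a maximal monotone operator is \emph{locally bounded} at any point in the interior of its domain, and that the singleton-valuedness of $u(x)$ forces $x$ to be such an interior point. First I would establish that $u$ is nonempty on a neighbourhood of $x$. Since $u=\partial\varphi$ for some lower semicontinuous proper convex $\varphi$ (by maximality and the correspondence recalled just before the statement), and $u(x)=\{y\}$ is a nonempty singleton, $x$ lies in the domain of $\partial\varphi$; moreover $y$ being the \emph{unique} subgradient at $x$ means $\varphi$ is differentiable at $x$ with $\nabla\varphi(x)=y$, which in turn forces $x$ to lie in the interior of $\mathrm{dom}(\varphi)$ (a convex function can be differentiable only at interior points of its domain). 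On the interior of the domain, $\partial\varphi$ is nonempty-valued, giving the first claim.

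For the continuity statement, suppose $x_n\to x$ and $y_n\in u(x_n)$. The main step is to show the sequence $(y_n)$ is bounded. Here I would invoke local boundedness of maximal monotone operators at interior points of the domain: there is a neighbourhood $U$ of $x$ and a constant $M$ with $\|y'\|\le M$ for all $x'\in U$ and $y'\in u(x')$. Since $x_n\to x$, eventually $x_n\in U$, so $(y_n)$ is bounded. Then every subsequence of $(y_n)$ has a further convergent subsequence, say $y_{n_k}\to \tilde y$. Passing to the limit in the monotonicity inequality
\[
\innprod{y_{n_k}-z}{x_{n_k}-w}\ge 0
\qquad\textrm{for all }(w,z)\in u
\]
and using $x_{n_k}\to x$, $y_{n_k}\to\tilde y$ yields $\innprod{\tilde y - z}{x - w}\ge 0$ for all $(w,z)\in u$. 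By maximality of $u$, this forces $(x,\tilde y)\in u$, i.e.\ $\tilde y\in u(x)=\{y\}$, so $\tilde y=y$. Since every convergent subsequence has the same limit $y$ and the full sequence is bounded, the standard subsequence argument gives $y_n\to y$, completing the proof.

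The main obstacle I anticipate is the local boundedness of maximal monotone operators, which is the crux of the argument and is a genuine (not routine) theorem; I would either cite it from the monotone operator literature (it is classical, e.g.\ via the Rockafellar local boundedness theorem, and is available in the reference framework of Alberti and Ambrosio \cite{albamb}) or, if a self-contained argument is preferred, derive it from the monotonicity inequality by a contradiction argument showing that an unbounded sequence of images at points converging into the interior of the domain would violate monotonicity against a fixed pair $(w,z)$ with $w$ on the far side of $x$. Everything else --- the identification of $x$ as an interior point, the limit passage in the monotonicity inequality, and the subsequence wrap-up --- is standard once local boundedness is in hand.
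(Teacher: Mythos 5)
The paper gives no self-contained argument for this statement: it simply cites Alberti and Ambrosio \cite[Corollary~1.3(4)]{albamb} (adding only the remark that this recovers the fact that differentiable convex functions are continuously differentiable, \cite[Corollary~25.5.1]{roc}). Your proposal reconstructs the standard proof behind that citation: local boundedness of a maximal monotone operator at interior points of its domain, extraction of convergent subsequences, passage to the limit in the monotonicity inequality, and identification of the limit via maximality. That architecture is correct, and it is consistent with the rest of the paper, which proves a quantitative local boundedness statement of exactly this flavour in Proposition~\ref{lem:yinkstrong} using \cite[Lemma~1.2(4)]{albamb}.

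There is, however, one step that is wrong as written: you assert that maximality forces $u=\partial\varphi$ for some lower semicontinuous convex $\varphi$. The correspondence recalled in the paper goes only one way --- subdifferentials are maximally monotone --- and the converse fails for $d\ge2$: rotation by $\pi/2$ in $\mathbb R^2$, $u(x)=Ax$ with $A$ antisymmetric, satisfies $\innprod{Ax-Ax'}{x-x'}=0\ge0$, is maximal (an everywhere-defined continuous single-valued monotone map is maximal), yet is not the gradient of any convex function since $A$ is not symmetric. You rely on this false identification to place $x$ in the interior of the domain of $u$ (via differentiability of $\varphi$), which is what both the nonemptiness claim and the local boundedness theorem require. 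The needed fact is nevertheless true for general maximal monotone $u$ and can be obtained directly: if $v$ satisfies $\innprod{v}{x'-x}\le0$ for every $x'$ in the domain of $u$, then for any $(x',y')\in u$ one has $\innprod{(y+v)-y'}{x-x'}=\innprod{y-y'}{x-x'}+\innprod{v}{x-x'}\ge0$, so maximality gives $y+v\in u(x)$; since $u(x)$ is a singleton, the only such $v$ is $0$, i.e.\ the normal cone to $\mathrm{conv}(\mathrm{dom}(u))$ at $x$ is trivial, which by the supporting hyperplane theorem forces $x\in\mathrm{int}(\mathrm{conv}(\mathrm{dom}(u)))$. From there, nonemptiness on a neighbourhood is \cite[Corollary~1.3(2)]{albamb}, local boundedness is Rockafellar's theorem (or \cite[Lemma~1.2(4)]{albamb}), and the remainder of your argument goes through unchanged.
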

\begin{proof} See \cite[Corollary~1.3(4)]{albamb}.  Notice that this result implies that differentiable convex functions are continuously differentiable \cite[Corollary~25.5.1]{roc}.\end{proof}

It turns out that when $u$ is univalued, monotonicity is a local property.  To state the result in the general form that we shall use, we need to introduce the notion of points of Lebesgue density.

Let $B_r(y)=\{x:\|x-y\|<r\}$ for $r\ge0$ and $y\in\mathbb R^d$.  A point $x_0$ is of \emph{Lebesgue density} of a measurable set $G\subseteq \mathbb R^d$ if for any $\epsilon>0$ there exists $t_\epsilon>0$ such that
\[
\frac{\mathrm{Leb}(B_t(x_0) \cap G)}  {\mathrm{Leb}(B_t(x_0))}
> 1 - \epsilon,
\qquad 0<t<t_\epsilon.
\]
We denote the set of points of Lebesgue density of $G$ by $G^{\rm{den}}$.  Clearly, $G^{\rm{den}}$ lies between $\mathrm{int}(G)$ and $\overline G$.  Stein and Shakarchi \cite[Chapter~3, Corollary~1.5]{steinmeasure} show that almost any point of $G$ is in $G^{\rm{den}}$.  By the Hahn--Banach theorem, $G^{\rm{den}}\subseteq \mathrm{int}(\mathrm{conv}(G))$.

\begin{lemma}[Density Points and Distance]\label{lem:densityodist}
Let $x_0$ be a point of Lebesgue density of a measurable set $G\subseteq \mathbb R^d$.  Then
\[
\delta(z)
=\inf_{x\in G} \|z - x\|
=o(\|z-x_0\|),
\qquad \textrm{as }z\to x_0.
\]
\end{lemma}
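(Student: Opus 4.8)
The plan is to unwind the definition of $o(\|z-x_0\|)$: it suffices to show that for every $\eta>0$ there is a radius $\rho>0$ such that $\delta(z)\le\eta\|z-x_0\|$ whenever $0<\|z-x_0\|<\rho$. Fixing such an $\eta$, I would argue by contradiction, supposing that some $z$ arbitrarily close to $x_0$ satisfies $\delta(z)>\eta\|z-x_0\|$. Writing $r:=\|z-x_0\|$, this says precisely that the open ball $B_{\eta r}(z)$ contains no point of $G$.

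The geometric heart of the argument is a simple inclusion of balls. Since every $w\in B_{\eta r}(z)$ satisfies $\|w-x_0\|\le\|w-z\|+\|z-x_0\|<(1+\eta)r$, the ball $B_{\eta r}(z)$ is contained in $B_{(1+\eta)r}(x_0)$; being disjoint from $G$, it lies in $B_{(1+\eta)r}(x_0)\setminus G$. Comparing Lebesgue measures and writing $c_d=\mathrm{Leb}(B_1(0))$, this yields $c_d(\eta r)^d\le\mathrm{Leb}(B_{(1+\eta)r}(x_0)\setminus G)$.

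Now I would invoke the Lebesgue density hypothesis at $x_0$. Choosing $\epsilon>0$ in advance and taking $r$ small enough that $(1+\eta)r<t_\epsilon$, the density condition bounds $\mathrm{Leb}(B_{(1+\eta)r}(x_0)\setminus G)<\epsilon\,\mathrm{Leb}(B_{(1+\eta)r}(x_0))=\epsilon c_d((1+\eta)r)^d$. Combining this with the previous display and cancelling $c_d r^d$ gives $\eta^d<\epsilon(1+\eta)^d$. Selecting $\epsilon<(\eta/(1+\eta))^d$ from the outset makes this inequality impossible, which is the desired contradiction. Hence $\delta(z)\le\eta r=\eta\|z-x_0\|$ for all $z$ with $(1+\eta)\|z-x_0\|<t_\epsilon$, and since $\eta$ was arbitrary we conclude $\delta(z)=o(\|z-x_0\|)$.

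There is no genuinely hard step here; the only point requiring care — and the one I would flag as the main (minor) obstacle — is the correct order of quantifiers, namely fixing $\epsilon$ as a function of $\eta$ and $d$ \emph{before} selecting $z$, and ensuring the comparison radii $(1+\eta)r$ stay below the threshold $t_\epsilon$ furnished by the density condition. The scaling is dimension-homogeneous, so the factors of $c_d$ and $r^d$ cancel cleanly and no room is lost in the estimate.
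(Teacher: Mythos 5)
Your proof is correct and follows essentially the same route as the paper's: both arguments compare the Lebesgue measure of a small ball near $z$ with the measure of $B(x_0)\setminus G$ furnished by the density hypothesis, with the $\epsilon$ chosen in advance so that the two estimates are incompatible. The only (immaterial) difference is that you enclose $B_{\eta r}(z)$ in the slightly larger ball $B_{(1+\eta)r}(x_0)$ and argue by contradiction, whereas the paper inscribes a ball of radius $\epsilon t$ inside $B_t(x_0)\cap B_{2\epsilon t}(z)$ and concludes directly.
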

This result was given as an exercise in \cite{steinmeasure};  for completeness we provide a full proof in the supplementary material (Section~\ref{zempan16supp}).
\begin{lemma}[Local Monotonicity]\label{lem:monislocal}
Let $u$ be a maximal monotone function such that $u(x_0)=\{y_0\}$. Suppose that $x_0$ is a point of Lebesgue density of a set $G$ satisfying
\[
\langle y-y^*,x-x_0\rangle
\ge0
\qquad\forall x\in G
\ \forall y\in u(x).
\]
Then $y^*=y_0$.  In particular, the result is true if the inequality holds on $G=O\setminus \mathcal N$ with $\emptyset\ne O$ open and $\mathcal N$ Lebesgue negligible.
\end{lemma}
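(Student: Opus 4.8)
The plan is to reduce the statement to the one-point behaviour of $u$ at $x_0$ and then to exploit the density of $G$ at $x_0$ in \emph{every} direction. Concretely, I would show that $\innprod{y_0-y^*}{e}=0$ for every unit vector $e\in\R^d$, which forces $y^*=y_0$. The only two external ingredients needed are Proposition~\ref{prop:cont}, which guarantees that $u$ is nonempty on a neighbourhood of $x_0$ and continuous at $x_0$ (so that $y_n\to y_0$ whenever $x_n\to x_0$ and $y_n\in u(x_n)$), and Lemma~\ref{lem:densityodist}, which converts the density hypothesis into the quantitative estimate $\delta(z)=\inf_{x\in G}\|z-x\|=o(\|z-x_0\|)$ as $z\to x_0$.

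First I would fix an arbitrary unit vector $e$ and probe $x_0$ along the ray $z_s=x_0+se$, $s\downarrow 0$. Since $x_0$ is a point of Lebesgue density of $G$, Lemma~\ref{lem:densityodist} provides points $w_s\in G$ with $\|w_s-z_s\|\le 2\delta(z_s)=o(s)$, whence
\[
w_s-x_0=se+r_s,\qquad \|r_s\|=o(s),\qquad w_s\to x_0.
\]
For $s$ small enough $w_s$ lies in the neighbourhood on which $u$ is nonempty (Proposition~\ref{prop:cont}), so I may select $y_s\in u(w_s)$, and continuity at the singleton gives $y_s\to y_0$.

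Next I would feed $w_s$ into the hypothesis. Since $w_s\in G$ and $y_s\in u(w_s)$, the assumed inequality reads $\innprod{y_s-y^*}{w_s-x_0}\ge0$, that is,
\[
0\le s\innprod{y_s-y^*}{e}+\innprod{y_s-y^*}{r_s}.
\]
Because $y_s\to y_0$ the quantities $\|y_s-y^*\|$ are bounded, so the second term is $o(s)$; dividing by $s>0$ and letting $s\downarrow0$ yields $\innprod{y_0-y^*}{e}\ge0$. Applying the same argument to the opposite direction $-e$ gives $\innprod{y_0-y^*}{e}\le0$, hence $\innprod{y_0-y^*}{e}=0$. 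As $e$ was an arbitrary unit vector, $y_0-y^*=0$, which is the claim. For the ``in particular'' part, when $x_0\in O$ with $O$ open, $x_0$ is trivially a density point of $O$, and deleting the Lebesgue-null set $\mathcal N$ leaves the local density ratios unchanged, so $x_0\in(O\setminus\mathcal N)^{\mathrm{den}}$ and the main argument applies verbatim with $G=O\setminus\mathcal N$.

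The main obstacle is the directional step: the hypothesis only controls $u$ on $G$, which need not contain the ideal ray $\{x_0+se\}$, so one must approach $x_0$ from a prescribed direction while staying inside $G$. This is exactly what the density estimate $\delta(z)=o(\|z-x_0\|)$ buys — the perturbation $r_s$ is of smaller order than $s$, so $w_s-x_0$ still points in the direction $e$ to first order and the inequality survives the limit. A secondary point to verify is that $u(w_s)$ is genuinely nonempty along the chosen sequence, which is why invoking the continuity-at-singletons proposition (rather than bare monotonicity) is indispensable.
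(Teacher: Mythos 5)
Your proof is correct and follows essentially the same route as the paper: both use Lemma~\ref{lem:densityodist} to approximate a ray emanating from $x_0$ by points of $G$, Proposition~\ref{prop:cont} for nonemptiness and continuity of $u$ at the singleton, and then pass to the limit in the inequality. The only cosmetic difference is that the paper probes the single direction $y^*-y_0$ and reads off $\|y^*-y_0\|^2\le0$ directly, whereas you probe every unit vector $e$ and its opposite; both work.
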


\begin{proof}  Set $z_t=x_0+t(y^*-y_0)$ for $t>0$ small.  It may be that $z_t\notin G$;  but Lemma~\ref{lem:densityodist} guarantees existence of $x_t\in G$ with $\|x_t - z_t\| /t \to0$.  By Proposition~\ref{prop:cont} $u(x_t)$ is nonempty for $t$ small enough.  For $y_t\in u(x_t)$,
\begin{align*}
0\le \innprod {y_t - y^*} {x_t - x_0}
&= \innprod{y_t - y^*}{x_t - z_t} + \innprod{y_t - y^*}{z_t - x_0}\\
&= \innprod{y_t - y^*}{x_t - z_t} + t\innprod{y_t - y_0}{y^* - y_0} - t\|y^* - y_0\|^2.
\end{align*}
Rearrangement, division by $t>0$ and application of the Cauchy--Schwartz inequality gives
\[
\|y^* - y_0\|^2
\le  \|y_t - y_0\| \|y^* - y_0\| + t^{-1}\|x_t - z_t\| \left(\|y_t - y_0\| + \|y^* - y_0\|\right).
\]
As $t\searrow 0$ the right-hand side vanishes, since $y_t \to y_0$ (Proposition~\ref{prop:cont}) and $\|x_t - z_t\|/t\to0$.  It follows that $y^*=y_0$.
\end{proof}

This concludes the necessary discussion on monotone operators. We will now state some necessary results on optimal transportation maps, and specifically their convergence properties. Consider the following setting:  let $\{\mu_n\}$, $\{\nu_n\}$ be two sequences of probability measures on $\mathbb R^d$ that converge weakly to $\mu$ and $\nu$ respectively.  Let $\pi_n$ be an optimal coupling between $\mu_n$ and $\nu_n$ having finite cost, which is supported on the graph of a subdifferential of a proper (not identically infinite) convex lower semicontinuous function $\varphi_n$ \cite[Chapter~2]{vil}. The set-valued function $u_n=\partial\varphi_n$ that maps $x$ to the subdifferential of $\varphi_n$ at $x$ is maximally monotone \cite[Section~7]{albamb}. The appropriate functions for $\mu$ and $\nu$ will be denoted by $\varphi$ and $u=\partial\varphi$ and the optimal coupling by $\pi$.  This setting will be succinctly referred to by the equation
\begin{equation}\label{eq:measuressetting}
\begin{array}{l}
\mu_n\to \mu\\
\nu_n\to \nu
\end{array}
\quad
\begin{array}{lll}
\pi_n \textrm{ finite} & \textrm{optimal for }\mu_n,\nu_n & (u_n = \partial\varphi_n)\#\mu_n = \nu_n\\
\pi \textrm{ unique} & \textrm{optimal for }\mu,\nu &(u = \partial\varphi)\#\mu = \nu.
\end{array}
\end{equation}
We notice now that uniqueness of $\pi$ and the stability of optimal transportation imply that $\pi_n$ converge weakly to $\pi$ (even if $\pi_n$ is not unique); see Schachermayer \& Teichmann \cite[Theorem~3]{tei} or Cuesta-Albertos et al. \cite[Theorem~3.2]{cuesta1997optimal}.  This weak convergence will be used in the following form:

\begin{lemma}[Portmanteau]\label{lem:portmanteau}
Weak convergence of Borel probability measures $\mu_k$ to $\mu$ on $\mathbb R^d$ is equivalent to any of the following conditions:
\begin{itemize}
\item[(I)] for any open set $G$, $\liminf \mu_k(G)\ge \mu(G)$;
\item[(II)] for any closed set $F$, $\limsup \mu_k(F)\le \mu(F)$;
\item[(III)] $\ownint{}{}h{\mu_k}\to \ownint{}{}h\mu$ for any bounded measurable $h$ whose set of discontinuity points is a $\mu$-null set.
\end{itemize}
\end{lemma}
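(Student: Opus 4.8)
The plan is to prove the three characterisations through the classical cyclic chain of implications: weak convergence (defined as $\ownint{}{}{h}{\mu_k}\to\ownint{}{}{h}{\mu}$ for every bounded \emph{continuous} $h$) implies (I); (I) is equivalent to (II); (I) and (II) together imply (III); and (III) implies weak convergence. The last of these is immediate, since a continuous function has empty discontinuity set, so it is a trivial special case of (III); this closes the loop and gives all equivalences.

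For the implication from weak convergence to (I), given an open set $G$ I would approximate its indicator from below by the bounded continuous functions $f_m(x)=\min(1,\,m\,\mathrm{dist}(x,G^c))$, which increase pointwise to $\mathbf 1_G$. Since $f_m\le \mathbf 1_G$, we get $\liminf_k\mu_k(G)\ge \lim_k\ownint{}{}{f_m}{\mu_k}=\ownint{}{}{f_m}{\mu}$, and letting $m\to\infty$ with monotone convergence yields $\liminf_k\mu_k(G)\ge\mu(G)$. The equivalence of (I) and (II) is pure complementation: apply each to the complementary set $F=G^c$ and use $\mu_k(G)=1-\mu_k(F)$ together with $\liminf(1-a_k)=1-\limsup a_k$.

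The substantive step is deducing (III) from (I) and (II). After an affine normalisation reducing to $0\le h\le 1$ (which leaves the discontinuity set unchanged), I would use the layer-cake identity $\ownint{}{}{h}{\mu}=\ownint 0 1 {\mu(\{h\ge t\})}t$, valid for $\mu$ and for each $\mu_k$. The key geometric observation is that at any continuity point $x$ of $h$ with $h(x)\ne t$ one has either $x\in\mathrm{int}\{h\ge t\}$ or $x\in\mathrm{int}\{h<t\}$; hence the topological boundary obeys $\partial\{h\ge t\}\subseteq D_h\cup\{h=t\}$, where $D_h$ is the discontinuity set. Since the level sets $\{h=t\}$ are pairwise disjoint, $\mu(\{h=t\})>0$ for at most countably many $t$; combined with the hypothesis $\mu(D_h)=0$, this gives $\mu(\partial\{h\ge t\})=0$ for all but countably many $t$. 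For each such level $t$, applying (II) to the closed set $\overline{\{h\ge t\}}$ and (I) to the open set $\mathrm{int}\{h\ge t\}$ squeezes $\mu_k(\{h\ge t\})$ between $\mu(\mathrm{int}\{h\ge t\})$ and $\mu(\overline{\{h\ge t\}})$, both of which equal $\mu(\{h\ge t\})$ because the boundary is $\mu$-null. Thus $\mu_k(\{h\ge t\})\to\mu(\{h\ge t\})$ for Lebesgue-almost every $t\in(0,1)$, and dominated convergence (the integrands are bounded by $1$) applied to the layer-cake identity delivers $\ownint{}{}{h}{\mu_k}\to\ownint{}{}{h}{\mu}$.

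The main obstacle is precisely the bookkeeping in this last step: establishing the inclusion $\partial\{h\ge t\}\subseteq D_h\cup\{h=t\}$ from the continuity hypothesis, and verifying that the exceptional set of levels $t$ is Lebesgue-negligible so that the dominated convergence argument goes through. Everything else reduces to routine approximation and complementation.
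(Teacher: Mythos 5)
Your proof is correct. The paper does not actually prove this lemma---it simply cites Billingsley (Theorem~2.1) for the equivalence of weak convergence with (I) and (II), and Pollard (Section~III.2) for (III)---and your argument is a complete and accurate rendition of exactly the classical chain of implications found in those references: the Lipschitz approximation $\min(1,m\,\mathrm{dist}(x,G^c))$ for (I), complementation for (II), and the layer-cake representation with the inclusion $\partial\{h\ge t\}\subseteq D_h\cup\{h=t\}$ plus the countability of levels with positive mass for (III).
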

\begin{proof}
The equivalence with the first two conditions is classical and can be found in Billingsley \cite[Theorem~2.1]{bil};  for the third, see Pollard \cite[Section~III.2]{pollard2012convergence}.
\end{proof}

We shall now translate this into convergence of $u_n$ to $u$ under certain regularity conditions.

\begin{proposition}[Uniform Convergence of Optimal Maps]\label{prop:unifcompacts}
In the setting of Display \eqref{eq:measuressetting}, denote $E=\mathrm{supp}(\mu)$.

Let $\Omega$ be a compact subset of $E^{\mathrm{den}}$ on which $u$ is univalued, where $E^{\mathrm{den}}$ is the set of points of Lebesgue density of $E$. Then $u_n$ converges to $u$ uniformly on $\Omega$: $u_n(x)$ is nonempty for all $x\in \Omega$ and all $n>N_\Omega$, and
\[
\sup_{x\in \Omega}\sup_{y\in u_n(x)}    \|y - u(x)\|   \to   0
,\qquad n\to\infty.
\]
In particular, if $u$ is univalued throughout $\mathrm{int}(E)$ (so that $\varphi\in C^1$ there), then uniform convergence holds for any compact $\Omega\subset \mathrm{int}(E)$.
\end{proposition}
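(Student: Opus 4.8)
The plan is to combine three ingredients already available: the weak stability $\pi_n\to\pi$ recorded above, the local monotonicity principle (Lemma~\ref{lem:monislocal}), and the continuity of maximal monotone maps at singletons (Proposition~\ref{prop:cont}). Write $u_n=\partial\varphi_n$, $u=\partial\varphi$, and recall $\mathrm{supp}(\pi_n)\subseteq\mathrm{graph}(u_n)$ and $\mathrm{supp}(\pi)\subseteq\mathrm{graph}(u)$. The first fact I would record is that, by the Portmanteau lemma (Lemma~\ref{lem:portmanteau}), weak convergence $\pi_n\to\pi$ places $\mathrm{supp}(\pi)$ in the lower Kuratowski limit of the $\mathrm{supp}(\pi_n)$: for every $(a,b)\in\mathrm{supp}(\pi)$ there are $(a^n,b^n)\in\mathrm{graph}(u_n)$ with $(a^n,b^n)\to(a,b)$. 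Second, since $\Omega\subseteq E^{\mathrm{den}}$ and $E^{\mathrm{den}}\subseteq\mathrm{int}(\mathrm{conv}(E))$, the compact set $\Omega$ lies inside the open convex set $\mathrm{int}(\mathrm{conv}(E))$; and because the $x$-projection $G_0$ of $\mathrm{supp}(\pi)$ is dense in $E=\mathrm{supp}(\mu)$, one can fix finitely many reference points $a_1,\dots,a_m\in G_0$ with $\Omega\subseteq\mathrm{int}(\mathrm{conv}\{a_1,\dots,a_m\})$, together with $b_j\in u(a_j)$ and approximants $(a^n_j,b^n_j)\to(a_j,b_j)$ in $\mathrm{graph}(u_n)$.

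Using these reference pairs, I would establish the two uniform-in-$n$ facts that make the statement meaningful: uniform \emph{local boundedness} and eventual \emph{nonemptiness} of $u_n$ on $\Omega$. For boundedness, monotonicity of $u_n$ gives $\innprod{y-b^n_j}{x-a^n_j}\ge0$ for every $j$ and every $(x,y)\in\mathrm{graph}(u_n)$; since for large $n$ each $x\in\Omega$ lies in $\mathrm{int}(\mathrm{conv}\{a^n_1,\dots,a^n_m\})$ with a uniform interior radius, and the $b^n_j$ are bounded (being convergent), the standard local-boundedness estimate for monotone operators yields a bound $\|y\|\le M$ valid for all $x\in\Omega$, all $y\in u_n(x)$, and all large $n$. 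For nonemptiness, the inclusion $x\in\mathrm{int}(\mathrm{conv}\{a^n_j\})\subseteq\mathrm{int}(\mathrm{conv}(D(u_n)))$, combined with the fact that the interior of the convex hull of the domain of a maximal monotone operator is contained in that domain, shows $u_n(x)\ne\emptyset$ for every $x\in\Omega$ and $n>N_\Omega$.

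With these in hand I would identify the limits. Fix $x_0\in\Omega$, take any $x_n\to x_0$ in $\Omega$ and any $y_n\in u_n(x_n)$; by boundedness a subsequence has $y_n\to y^*$. For each $(a,b)\in\mathrm{supp}(\pi)$, passing to the limit in $\innprod{y_n-b^n}{x_n-a^n}\ge0$ (with $(a^n,b^n)\to(a,b)$) gives $\innprod{y^*-b}{x_0-a}\ge0$. I would then extend this inequality from the dense set $G_0$ to \emph{every} point $x$ at which $u$ is univalued: approximating such $x$ by $a_k\in G_0$ and using continuity at singletons (Proposition~\ref{prop:cont}) to get $u(a_k)\to u(x)$, the inequality passes to the limit to yield $\innprod{y^*-u(x)}{x_0-x}\ge0$. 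Since the set $\mathcal N$ where $u$ is multivalued is Lebesgue-negligible (Remark~2.3 in \cite{albamb}), this holds on $E\setminus\mathcal N$; as $x_0\in E^{\mathrm{den}}$ remains a point of Lebesgue density of $E\setminus\mathcal N$, Lemma~\ref{lem:monislocal} applies and forces $y^*=u(x_0)$. Thus every subsequential limit equals $u(x_0)$, so $y_n\to u(x_0)$.

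The uniform conclusion then follows by contradiction: were $\sup_{x\in\Omega}\sup_{y\in u_n(x)}\|y-u(x)\|\not\to0$, there would exist $\epsilon>0$, a subsequence, points $x_n\in\Omega$ and $y_n\in u_n(x_n)$ with $\|y_n-u(x_n)\|\ge\epsilon$; compactness of $\Omega$ gives $x_n\to x_0\in\Omega$, boundedness gives $y_n\to y^*$, continuity of $u$ on $\Omega$ gives $u(x_n)\to u(x_0)$, and the previous step gives $y^*=u(x_0)$, contradicting $\|y_n-u(x_n)\|\ge\epsilon$. The ``in particular'' clause is immediate, since univaluedness throughout $\mathrm{int}(E)$ makes every compact $\Omega\subset\mathrm{int}(E)\subseteq E^{\mathrm{den}}$ admissible. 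The main obstacle is the reconciliation of the two notions of density in the third step: the monotonicity inequality is obtained only on the \emph{topologically} dense set $G_0=\mathrm{proj}_x\mathrm{supp}(\pi)$, whereas Lemma~\ref{lem:monislocal} demands \emph{Lebesgue} density, and this gap is bridged by the continuity extension to all univalued points together with the Lebesgue-negligibility of $\mathcal N$; securing the uniform (in $n$) boundedness and nonemptiness of the $u_n$ is the other delicate point.
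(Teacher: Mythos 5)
Your proposal is correct and follows essentially the same route as the paper's proof: a Portmanteau-based graph-convergence step (the paper's Lemma~\ref{lem:exists}), uniform boundedness and eventual nonemptiness of $u_n$ on $\Omega$ via a finite convex-hull covering and the Alberti--Ambrosio local boundedness estimate (Proposition~\ref{lem:yinkstrong}), identification of subsequential limits through the local monotonicity Lemma~\ref{lem:monislocal} (the paper's Lemma~\ref{lem:liminT}), and a compactness/contradiction argument for uniformity. The only cosmetic differences are that you obtain bounded reference values $b_j^n$ from convergent support points of $\pi_n$ where the paper invokes tightness of $\{\nu_n\}$, and that you reach the monotonicity inequality on $E\setminus\mathcal N$ by a density-plus-continuity extension from $\mathrm{proj}_x(\mathrm{supp}(\pi))$ rather than by applying the graph-convergence lemma pointwise on $(E\cap V)\setminus\mathcal N$.
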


\begin{corollary}[Pointwise convergence $\mu$-almost surely]\label{cor:pointwise}
If in addition $\mu$ is absolutely continuous then $u_n(x)\to u(x)$ $\mu$-almost surely.
\end{corollary}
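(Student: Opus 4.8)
The plan is to deduce the corollary from Proposition~\ref{prop:unifcompacts} by exhibiting a set of full $\mu$-measure on which the hypotheses of that proposition are met along a compact exhaustion. The whole point is that the proposition only delivers uniform convergence on compact subsets of $E^{\mathrm{den}}$ where $u$ is univalued, so the work consists in showing that such sets can be chosen to exhaust $\mu$.

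First I would identify the \emph{good set}. Since $u=\partial\varphi$ is the subdifferential of a proper convex lower semicontinuous function, $u$ is univalued precisely where $\varphi$ is differentiable; a finite convex function is differentiable Lebesgue-almost everywhere, and as $\mu\ll\mathrm{Leb}$ this transfers to $\mu$-almost everywhere (this is the content of \cite[Proposition~6.2.12]{ambgigsav} invoked earlier). Write $U$ for the Borel set on which $u$ is univalued, so $\mu(U)=1$. Next, $\mu$ is concentrated on $E=\mathrm{supp}(\mu)$ with $\mu(E)=1$, and by the density-point theorem quoted above \cite{steinmeasure} almost every point of $E$ lies in $E^{\mathrm{den}}$; hence $E\setminus E^{\mathrm{den}}$ is Lebesgue-null, and therefore $\mu$-null, giving $\mu(E^{\mathrm{den}})=\mu(E)=1$. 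Setting $S=U\cap E^{\mathrm{den}}$ produces a measurable set with $\mu(S)=1$ on which $u$ is univalued and every point is a Lebesgue-density point of $E$.

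The second step bridges the gap between the uniform-on-compacta conclusion of Proposition~\ref{prop:unifcompacts} and the desired $\mu$-almost-sure pointwise statement. Because $\mu$ is a Borel probability measure on $\mathbb R^d$ it is inner regular, so I would select an increasing sequence of compact sets $\Omega_k\subseteq S$ with $\mu(\Omega_k)\to\mu(S)=1$. Each $\Omega_k$ is a compact subset of $E^{\mathrm{den}}$ on which $u$ is univalued, so Proposition~\ref{prop:unifcompacts} applies and gives $u_n\to u$ uniformly, hence pointwise, on $\Omega_k$ (with $u_n(x)$ eventually nonempty). Consequently $u_n(x)\to u(x)$ for every $x\in\bigcup_k\Omega_k$, and $\mu\bigl(\bigcup_k\Omega_k\bigr)=1$, which is exactly the assertion.

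The main obstacle is conceptual rather than computational: the proposition is a statement about compacta inside the density set where $u$ is a singleton, whereas the corollary demands an almost-sure pointwise conclusion. The resolution rests on making the good set $S$ of full $\mu$-measure—combining almost-everywhere differentiability of convex potentials with $\mu\ll\mathrm{Leb}$, and the Lebesgue density theorem—and then on inner regularity of $\mu$ to manufacture the compact exhaustion. The only technical care needed is the Borel measurability of $U$ (the differentiability set of $\varphi$), which is standard, and the fact that the exhaustion automatically stays inside $E^{\mathrm{den}}$ because $S\subseteq E^{\mathrm{den}}$.
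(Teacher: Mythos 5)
Your argument is correct, and it isolates exactly the same exceptional set as the paper: the points of $E\setminus E^{\mathrm{den}}$ together with the points where $u$ fails to be a singleton, both Lebesgue-negligible and hence $\mu$-negligible by absolute continuity. The difference is in how you invoke Proposition~\ref{prop:unifcompacts}. The paper's proof is a one-liner: a singleton $\{x\}$ is itself a compact subset of $E^{\mathrm{den}}$ on which $u$ is univalued, so the proposition applies directly with $\Omega=\{x\}$ at every good point, and pointwise convergence $\mu$-a.s.\ follows with no further apparatus. You instead build an increasing compact exhaustion $\Omega_k\subseteq S$ via inner regularity of $\mu$ and apply the proposition to each $\Omega_k$. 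This is valid and in fact yields a slightly stronger intermediate conclusion (uniform convergence on compacta of $\mu$-measure arbitrarily close to $1$, an Egorov-type statement), but the inner-regularity step is unnecessary for the corollary as stated. Two minor points of bookkeeping: the correct reference for the Lebesgue-negligibility of the multivalued set of a maximal monotone operator is \cite[Remark~2.3]{albamb} rather than \cite[Proposition~6.2.12]{ambgigsav} (the potential $\varphi$ need not be finite everywhere, so the ``finite convex function is differentiable a.e.'' phrasing should be routed through the domain of $\varphi$); and one should note, as the paper does, that $u$ is nonempty on $\mathrm{int}(\mathrm{conv}(E))\supseteq E^{\mathrm{den}}$ by \cite[Corollary~1.3(2)]{albamb}, so that ``univalued'' on your good set indeed means ``a nonempty singleton.'' Neither issue affects the validity of the argument.
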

\begin{proof}  The set of points $x\in E$ for which $\Omega=\{x\}$ fails to satisfy the conditions of Proposition~\ref{prop:unifcompacts} is included in
\[
(E\setminus E^{\mathrm{den}})
\cup \{x\in \mathrm{int}(\mathrm{conv}(E)):u(x)\textrm{ contains more than one point}\}.
\]
(Since $u$ is nonempty on $\mathrm{int}(\mathrm{conv}(E))$ by \cite[Corollary~1.3(2)]{albamb}.)  Both sets are Lebesgue-negligible (see \cite[Remark~2.3]{albamb} for the latter), and $\mu$ is absolutely continuous.
\end{proof}

\begin{remark}
In the setting of Theorem~\ref{thm:consistencyWarp}, $E$ is convex, $\mu$ is absolutely continuous, and $u$ is univalued on $\mathrm{int}(E)$, so one can take any $\Omega\subseteq\mathrm{int}(E)$, without the need to introduce Lebesgue density.  The more general statement of the proposition is used in the proof of Proposition~\ref{prop:Fdercont}, where we have no control on the support of $\gamma$ or the regularity of the transport maps.
\end{remark}

We split the proof of Proposition \ref{prop:unifcompacts} into two steps: (1) Limit points of the graphs of $u_n$ are in the graph of $u$ (Lemma~\ref{lem:liminT}); (2) Points in the graphs of $u_n$ stay in a bounded set (Proposition~\ref{lem:yinkstrong}). Each of these points will be proven using one intermediate lemma.

\begin{lemma}[Points in the limit graph are limit points]\label{lem:exists}
Assume \eqref{eq:measuressetting}.  For any $x_0\in \mathrm{supp}(\mu)$ such that $u(x_0)=\{y_0\}$ is a singleton there exists a subsequence $(x_{n_k},y_{n_k})\in u_{n_k}$ that converges to $(x_0,y_0)$.
\end{lemma}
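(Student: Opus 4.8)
The plan is to show first that $(x_0,y_0)$ lies in the support of the limiting optimal coupling $\pi$, and then to exploit the weak convergence $\pi_n\to\pi$ to extract approximating points in the graphs of the $u_n$.

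The key step is to establish $(x_0,y_0)\in\mathrm{supp}(\pi)$. Recall that $\pi$ is concentrated on the graph of $u=\partial\varphi$, which is a maximal monotone (hence closed) set, and that $\pi$ has first marginal $\mu$. Since $u(x_0)=\{y_0\}$ is a singleton, Proposition~\ref{prop:cont} gives continuity of $u$ at $x_0$: for every $\epsilon>0$ there is a $\delta>0$ such that $\|x-x_0\|<\delta$ and $y\in u(x)$ force $\|y-y_0\|<\epsilon$. Taking $U=B_\delta(x_0)$ and $V=B_\epsilon(y_0)$, the portion of the graph lying over $U$ is contained in $U\times V$; since $\pi$ is concentrated on the graph and has first marginal $\mu$, this yields $\pi(U\times V)=\mu(U)$. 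As $x_0\in\mathrm{supp}(\mu)$, we have $\mu(U)>0$, so $\pi(U\times V)>0$ for every such pair of neighbourhoods, which is precisely the statement $(x_0,y_0)\in\mathrm{supp}(\pi)$.

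I would then invoke the weak convergence $\pi_n\to\pi$, which holds by uniqueness of $\pi$ and the stability of optimal transportation (Schachermayer \& Teichmann \cite{tei}), together with the Portmanteau lemma (Lemma~\ref{lem:portmanteau}). For the shrinking open neighbourhoods $W_k=B_{1/k}(x_0)\times B_{1/k}(y_0)$, membership of $(x_0,y_0)$ in $\mathrm{supp}(\pi)$ gives $\pi(W_k)>0$, whence $\liminf_n\pi_n(W_k)\ge\pi(W_k)>0$, so $\pi_n(W_k)>0$ for infinitely many $n$. A diagonal selection produces indices $n_1<n_2<\cdots$ with $\pi_{n_k}(W_k)>0$; since $\pi_{n_k}(W_k)>0$ forces $\mathrm{supp}(\pi_{n_k})\cap W_k\ne\emptyset$, and since $\mathrm{supp}(\pi_{n_k})$ is contained in the closed graph of the maximal monotone map $u_{n_k}$, one may pick $(x_{n_k},y_{n_k})\in u_{n_k}\cap W_k$. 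These points converge to $(x_0,y_0)$ by construction.

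The conceptual crux is the first step, namely converting the set-valued, pointwise information $u(x_0)=\{y_0\}$ into the topological statement $(x_0,y_0)\in\mathrm{supp}(\pi)$: the continuity of maximal monotone maps at singletons (Proposition~\ref{prop:cont}) is exactly what makes the graph ``thin'' near $x_0$ and forces the positive mass $\mu(U)>0$ to concentrate inside an arbitrarily small box around $y_0$. Once $(x_0,y_0)$ is known to lie in $\mathrm{supp}(\pi)$, the remainder is a routine consequence of weak convergence and the closedness of monotone graphs, requiring no further regularity of the maps $u_n$.
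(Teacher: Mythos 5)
Your proof is correct and follows essentially the same route as the paper's: both use the continuity of the maximal monotone map $u$ at the singleton $x_0$ (Proposition~\ref{prop:cont}) to show that $\pi$ puts mass $\mu(U)>0$ on small product neighbourhoods of $(x_0,y_0)$, then apply the Portmanteau lemma to the weakly convergent $\pi_n$ and use their concentration on the graphs of $u_{n_k}$ to extract the subsequence. Your intermediate reformulation via $(x_0,y_0)\in\mathrm{supp}(\pi)$ is only a cosmetic repackaging of the paper's direct argument with the adapted neighbourhoods $B_{\delta_k}(x_0)\times B_{\epsilon_k}(y_0)$.
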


\begin{proof}
Since $u=\partial\varphi$ is a maximal monotone function \cite[Section~7]{albamb} that is univalued at $x_0$, it is continuous there (Proposition~\ref{prop:cont}). This means that for any $\epsilon>0$ there exists $\delta>0$ such that if $x\in B_\delta(x_0)=\{x:\|x-x_0\|<\delta\}$ then $u(x)$ is nonempty and if $y\in u(x)$, then $\|y-y_0\|<\epsilon$. Take $\epsilon_k\to0$ and corresponding $\delta_k\to0$, and set $B_k=B_{\delta_k}(x_0)$, $V_k=B_{\epsilon_k}(y_0)$. Then $u(B_k)\subseteq V_k$, so
\[
\pi(B_k\times V_k)
=\pi\{(x,y):x\in B_k,y\in u(x)\cap V_k\}
=\pi\{(x,y):x\in B_k,y\in u(x)\}
=\mu(B_k)
>0,
\]
because $B_k$ is a neighbourhood of $x_0\in \mathrm{supp}(\mu)$. Since $B_k\times V_k$ is open, we have by the Portmanteau lemma that $\pi_n(B_k\times V_k)>0$ for $n$ large. Consequently, there exists $n_k$ such that
\[
\pi_{n_k}(B_k\times V_k)
>0
\qquad\textrm{and }n_k\to\infty
\quad\textrm{as }k\to\infty.
\]
Since $\pi_{n_k}$ is concentrated on the graph of $u_{n_k}$, it follows that there exist $(x_{n_k},y_{n_k})\in u_{n_k}$ with $\|x_{n_k}-x_0\|<\delta_k$ and $\|y_{n_k}-y_0\|<\epsilon_k$.  Hence $(x_{n_k},y_{n_k})\to(x_0,y_0)$.
\end{proof}

\begin{lemma}[Limit points are in the limit graph]\label{lem:liminT}
Assume that \eqref{eq:measuressetting} holds and denote $E=\mathrm{supp}(\mu)$.  If a subsequence $(x_{n_k},y_{n_k})\in u_{n_k}$ converges to $(x_0,y^*)$, where $x_0$ is a point of Lebesgue density of $E$, and $u(x_0)$ is a singleton, then $y^*=u(x_0)$.  In particular, the statement is true if $x_0\in \mathrm{int}(E)$ and $u(x_0)$ is a singleton.
\end{lemma}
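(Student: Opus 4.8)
The plan is to reduce the statement to the Local Monotonicity Lemma (Lemma~\ref{lem:monislocal}) applied to the limit operator $u$. Write $u(x_0)=\{y_0\}$; the goal is to prove $y^*=y_0$. By Lemma~\ref{lem:monislocal} it suffices to exhibit a set $G$ of which $x_0$ is a point of Lebesgue density, and on which
\[
\innprod{y-y^*}{x-x_0}\ge 0 \qquad\forall x\in G,\ \forall y\in u(x).
\]
I would take
\[
G=\{x\in E:u(x)\textrm{ is a singleton}\}.
\]
Since $u=\partial\varphi$ is the subdifferential of a convex function, the set of points at which $u$ is multivalued is Lebesgue-negligible (Remark~2.3 in \cite{albamb}), so $E\setminus G$ is Lebesgue-null and the hypothesis $x_0\in E^{\mathrm{den}}$ then forces $x_0\in G^{\mathrm{den}}$ (removing a null set does not change density points). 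Moreover $G\subseteq E=\mathrm{supp}(\mu)$, so every $x\in G$ is charged by $\mu$ in every neighbourhood.

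The heart of the argument is to produce, for each fixed $x\in G$ with $u(x)=\{y\}$, points of the graphs $u_{n_k}$ \emph{along the given subsequence} that converge to $(x,y)$, so that the monotonicity of $u_{n_k}$ can be passed to the limit. Since $u$ is univalued at $x$, it is continuous there (Proposition~\ref{prop:cont}); hence for each $j$ there are radii $\delta_j,\epsilon_j\to0$ with $u(B_{\delta_j}(x))\subseteq B_{\epsilon_j}(y)$, whence, exactly as in the proof of Lemma~\ref{lem:exists},
\[
\pi\left(B_{\delta_j}(x)\times B_{\epsilon_j}(y)\right)=\mu\left(B_{\delta_j}(x)\right)>0.
\]
Because $\pi_{n_k}\to\pi$ weakly and these product sets are open, the Portmanteau lemma (Lemma~\ref{lem:portmanteau}) gives $\pi_{n_k}(B_{\delta_j}(x)\times B_{\epsilon_j}(y))>0$ for all large $k$. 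As $\pi_{n_k}$ is concentrated on the graph of $u_{n_k}$, each such product set contains a point of $u_{n_k}$; a diagonalisation over $j$ then selects $(x^{(k)},y^{(k)})\in u_{n_k}$ with $(x^{(k)},y^{(k)})\to(x,y)$, all indexed by the original subsequence $n_k$, so no further passage to a subsequence is required.

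With these points in hand, monotonicity of each $u_{n_k}$ yields
\[
\innprod{y^{(k)}-y_{n_k}}{x^{(k)}-x_{n_k}}\ge 0,
\]
and letting $k\to\infty$, using $(x^{(k)},y^{(k)})\to(x,y)$ together with the hypothesis $(x_{n_k},y_{n_k})\to(x_0,y^*)$ and continuity of the inner product, I obtain $\innprod{y-y^*}{x-x_0}\ge 0$. Since this holds for every $x\in G$, Lemma~\ref{lem:monislocal} applies and delivers $y^*=y_0=u(x_0)$, which is the claim; the final assertion follows because $\mathrm{int}(E)\subseteq E^{\mathrm{den}}$, so interior points are automatically density points. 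I expect the delicate bookkeeping to be precisely the diagonal extraction of $(x^{(k)},y^{(k)})\in u_{n_k}$ converging to $(x,y)$ along the \emph{prescribed} subsequence (rather than along some further subsequence), which is what lets the two limits combine cleanly in the monotonicity inequality; the remainder is an assembly of continuity of $u$ at singletons, the Portmanteau lemma, and the Local Monotonicity Lemma.
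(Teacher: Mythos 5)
Your proof is correct and takes essentially the same route as the paper's: reduce to Lemma~\ref{lem:monislocal} on a set $G$ of full Lebesgue density at $x_0$ where $u$ is single-valued, manufacture graph points of $u_{n_k}$ converging to $(x,y)$ by the Portmanteau argument of Lemma~\ref{lem:exists}, and pass monotonicity to the limit (the paper settles for a further subsequence where you diagonalise; both work, since the convergence $(x_{n_k},y_{n_k})\to(x_0,y^*)$ persists along sub-subsequences). The only point to tighten is your claim that $E\setminus G$ is Lebesgue-null: that set also contains points where $u$ is \emph{empty}, which the negligibility of the multivalued set does not cover, so you should intersect $G$ with the neighbourhood of $x_0$ on which Proposition~\ref{prop:cont} guarantees $u$ is nonempty (as the paper does), which still leaves $x_0$ a point of Lebesgue density of $G$.
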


\begin{proof}  The set $\mathcal N\subseteq\mathbb R^d$ of points where $u$ contains more than one element is Lebesgue negligible \cite[Remark~2.3]{albamb}.  There exists a neighbourhood $V$ of $x_0$ on which $u$ is nonempty (Proposition~\ref{prop:cont}).  Thus, $x_0$ is a point of Lebesgue density of $G=(E\cap V)\setminus \mathcal N$, and $u(x)$ is a singleton for every $x\in G$.  Fix such an $x$ and set $y=u(x)$.  By Lemma~\ref{lem:exists} (applied to $\{u_{n_k}\}_{k=1}^\infty$ at $x$) there exist sequences $x_{n_{k_l}}'\to x$ and $y_{n_{k_l}}'\to y$ with $(x_{n_{k_l}}',y_{n_{k_l}}')\in u_{n_{k_l}}$. Consequently,
\[
\langle y-y^*,x-x_0\rangle
=\lim_{l\to\infty}\langle y_{n_{k_l}}'-y_{n_{k_l}},    x_{n_{k_l}}'-x_{n_{k_l}}\rangle
\ge0.
\]
This holds for any $(x,y)\in u$ such that $x\in G$. Since $x_0$ is a point of Lebesgue density of $G$ (and $u$ is maximal), it follows from Lemma~\ref{lem:monislocal} that $y^*=u(x_0)$.
\end{proof}

Let $B_\epsilon^\infty(x_0)=\{x:\|x-x_0\|_\infty<\epsilon\}$ be the $\ell_\infty$ ball around $x_0$ and $\overline B_\epsilon^\infty(x_0)$ its closure.

\begin{lemma}[Continuity of Convex Hulls]\label{lem:linfchull}
Let $Z=\{z_i\}\subseteq\mathbb R^d$ be a set of points whose convex hull, $\mathrm{conv}(Z)$, includes $B_\rho^\infty(x_0)$ and let $\tilde Z=\{\tilde z_i\}$ be a set of points such that $\|\tilde z_i - z_i\|_\infty\le\epsilon$.  Then the convex hull of $\tilde Z$ includes $B_{\rho-\epsilon}^\infty(x_0)$.
\end{lemma}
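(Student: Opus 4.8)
The plan is to argue via support functions, using $\ell_1$–$\ell_\infty$ duality to transfer the $\epsilon$-perturbation of the points into an $\epsilon$-shrinkage of the ball. Recall that for a nonempty $S\subseteq\mathbb R^d$ its support function is $h_S(\alpha)=\sup_{s\in S}\langle\alpha,s\rangle$, that $h_{\mathrm{conv}(S)}=h_S$, and that a point $w$ lies in the closed convex hull of $S$ if and only if $\langle\alpha,w\rangle\le h_S(\alpha)$ for every $\alpha\in\mathbb R^d$. Since in our application $Z$ (hence $\tilde Z$) is finite, $\mathrm{conv}(\tilde Z)$ is compact, and so establishing the inequality $\langle\alpha,w\rangle\le h_{\tilde Z}(\alpha)$ for all $\alpha$ will place $w$ in $\mathrm{conv}(\tilde Z)$ itself.

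The first step is to record the support function of an $\ell_\infty$ ball: for $B_r^\infty(x_0)$ one has $h_{B_r^\infty(x_0)}(\alpha)=\langle\alpha,x_0\rangle+r\|\alpha\|_1$, the supremum being governed by the dual norm and approached even over the open ball. The hypothesis $B_\rho^\infty(x_0)\subseteq\mathrm{conv}(Z)$ therefore yields, for every $\alpha$,
\[
\langle\alpha,x_0\rangle+\rho\|\alpha\|_1\le h_Z(\alpha)=\sup_i\langle\alpha,z_i\rangle.
\]

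The second step is to compare $h_{\tilde Z}$ with $h_Z$. For each $i$, Hölder's inequality gives $|\langle\alpha,\tilde z_i-z_i\rangle|\le\|\alpha\|_1\,\|\tilde z_i-z_i\|_\infty\le\epsilon\|\alpha\|_1$, so that $\langle\alpha,\tilde z_i\rangle\ge\langle\alpha,z_i\rangle-\epsilon\|\alpha\|_1$. Taking suprema over $i$ and combining with the previous display,
\[
h_{\tilde Z}(\alpha)\ge h_Z(\alpha)-\epsilon\|\alpha\|_1\ge\langle\alpha,x_0\rangle+(\rho-\epsilon)\|\alpha\|_1,\qquad\alpha\in\mathbb R^d.
\]
Finally, for any $w\in B_{\rho-\epsilon}^\infty(x_0)$ and any $\alpha\neq0$ we have $\langle\alpha,w\rangle=\langle\alpha,x_0\rangle+\langle\alpha,w-x_0\rangle\le\langle\alpha,x_0\rangle+(\rho-\epsilon)\|\alpha\|_1\le h_{\tilde Z}(\alpha)$ (and the bound is trivial for $\alpha=0$), whence $w\in\mathrm{conv}(\tilde Z)$, which is the assertion.

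I expect no serious obstacle: once the support-function viewpoint is adopted the argument is essentially bookkeeping. The only points requiring care are the evaluation of $h_{B_r^\infty(x_0)}$ as $\langle\alpha,x_0\rangle+r\|\alpha\|_1$ (i.e.\ identifying the $\ell_\infty$-ball support function with the dual $\ell_1$ norm) and the open-versus-closed convex hull technicality, which is harmless here because $\tilde Z$ is finite, so its convex hull is closed, and because the strict inequality $\|w-x_0\|_\infty<\rho-\epsilon$ keeps $w$ comfortably inside.
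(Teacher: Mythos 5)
Your proof is correct, but it takes a genuinely different route from the paper's. You work dually, via support functions: you compute $h_{B_r^\infty(x_0)}(\alpha)=\langle\alpha,x_0\rangle+r\|\alpha\|_1$, observe that the $\ell_\infty$-perturbation of the generators lowers the support function by at most $\epsilon\|\alpha\|_1$ (H\"older), and conclude by the separating-hyperplane characterisation of the closed convex hull. The paper instead argues primally: it takes each corner $y=x_0+\rho'(e_1,\dots,e_d)$ of a slightly smaller box, writes it as a finite convex combination $\sum a_i z_i$, and notes that the perturbed combination $\sum a_i\tilde z_i$ lands in the same orthant relative to $x_0$ with every coordinate at least $\rho'-\epsilon$ in absolute value, so that $\mathrm{conv}(\tilde Z)$ contains points dominating all $2^d$ corners of $B_{\rho'-\epsilon}^\infty(x_0)$, hence the box itself; letting $\rho'\nearrow\rho$ finishes. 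Your argument is arguably cleaner and generalises immediately to other norm balls (replace $\|\cdot\|_1$ by the appropriate dual norm), but it does lean on $\mathrm{conv}(\tilde Z)$ being closed — you correctly flag that this holds because $\tilde Z$ is finite in the application; for an infinite generating set your argument would only place $B_{\rho-\epsilon}^\infty(x_0)$ in the \emph{closed} convex hull, whereas the paper's corner argument uses only finite convex combinations and so works for arbitrary $Z$ as stated. You might also add the one-line remark (as the paper does) that the case $\epsilon\ge\rho$ is vacuous.
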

For a proof, see Section~\ref{zempan16supp}.

\begin{proposition}[Boundedness]\label{lem:yinkstrong}
Suppose that \eqref{eq:measuressetting} holds, and fix a compact $\Omega\subseteq\mathrm{int}(\mathrm{conv}(\mathrm{supp}(\mu)))$.  Then for $n>N(\Omega)$ sufficiently large, $u_n(x)$ is nonempty for all $x\in\Omega$ and $u_n(\Omega)$ is bounded uniformly.
\end{proposition}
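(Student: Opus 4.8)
The plan is to prove both assertions by contradiction, after first surrounding $\Omega$ by finitely many points of $E=\mathrm{supp}(\mu)$ at which $u$ is single-valued, and then transferring the control we have over $u$ at these anchors to control over $u_n$ by combining Lemma~\ref{lem:exists} with the monotonicity of $u_n$. First I would fix $\rho>0$ so small that the closed $\ell_\infty$-neighbourhood $\Omega^{2\rho}=\{x:\mathrm{dist}_\infty(x,\Omega)\le 2\rho\}$ is still contained in $\mathrm{int}(\mathrm{conv}(E))$, which is possible since $\Omega$ is compact and lies in this open set. The multivalued set $\mathcal N$ of $u$ is Lebesgue-negligible \cite[Remark~2.3]{albamb}, and $\mu(\mathcal N)=0$ (this is precisely the assertion in \eqref{eq:measuressetting} that $\pi$ is carried by the graph of the map $u$, i.e.\ $u\#\mu=\nu$; in all our applications $\mu$ is absolutely continuous, so this is immediate). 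Hence the single-valued points of $u$ inside $E$, call them $D=E\setminus\mathcal N$, are dense in $E$, because any neighbourhood of a point of $E$ has positive $\mu$-mass and therefore meets $E\setminus\mathcal N$. Density gives $\overline{\mathrm{conv}(D)}=\overline{\mathrm{conv}(E)}$ and thus $\mathrm{int}(\mathrm{conv}(D))=\mathrm{int}(\mathrm{conv}(E))\supseteq\Omega^{2\rho}$; by compactness of $\Omega^{2\rho}$ and Carath\'eodory's theorem I then select finitely many anchors $z_1,\dots,z_m\in D$ with $\Omega^{2\rho}\subseteq\mathrm{conv}(\{z_1,\dots,z_m\})$ and $u(z_i)=\{w_i\}$ a singleton.

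Suppose now that the conclusion fails along a sequence $n_k\to\infty$: either $u_{n_k}(x_{n_k})=\emptyset$ for some $x_{n_k}\in\Omega$, or there exist $x_{n_k}\in\Omega$ and $y_{n_k}\in u_{n_k}(x_{n_k})$ with $\|y_{n_k}\|\to\infty$. Applying Lemma~\ref{lem:exists} at each anchor $z_i$ in turn (each time to the current subsequence, which still satisfies \eqref{eq:measuressetting}), and since there are only finitely many anchors, I extract a single subsequence along which there are pairs $(x_{n_k}^i,y_{n_k}^i)\in u_{n_k}$ with $x_{n_k}^i\to z_i$ and $y_{n_k}^i\to w_i$ for every $i=1,\dots,m$; in particular the $y_{n_k}^i$ stay bounded. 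This nested extraction is the device that upgrades the purely subsequential output of Lemma~\ref{lem:exists} into control valid all along the contradicting sequence.

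For non-emptiness, note that for $k$ large $\|x_{n_k}^i-z_i\|_\infty\le\rho$ for all $i$, so for each $x_0\in\Omega$ the inclusion $B_{2\rho}^\infty(x_0)\subseteq\Omega^{2\rho}\subseteq\mathrm{conv}(\{z_i\})$ together with Lemma~\ref{lem:linfchull} yields $B_\rho^\infty(x_0)\subseteq\mathrm{conv}(\{x_{n_k}^i\})$, whence $\Omega\subseteq\mathrm{int}(\mathrm{conv}(\{x_{n_k}^i\}))$. As each $x_{n_k}^i$ lies in the domain of the maximal monotone operator $u_{n_k}$, we get $\Omega\subseteq\mathrm{int}(\mathrm{conv}(\mathrm{dom}\,u_{n_k}))$, on which $u_{n_k}$ is everywhere non-empty \cite[Corollary~1.3(2)]{albamb}, contradicting $u_{n_k}(x_{n_k})=\emptyset$. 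For boundedness, pass to a further subsequence with $x_{n_k}\to x_*\in\Omega$ and $y_{n_k}/\|y_{n_k}\|\to\bar e$, a unit vector. Monotonicity against each anchor pair gives $\innprod{y_{n_k}-y_{n_k}^i}{x_{n_k}-x_{n_k}^i}\ge0$; dividing by $\|y_{n_k}\|\to\infty$ and letting $k\to\infty$ (the $y_{n_k}^i$ being bounded) yields $\innprod{\bar e}{x_*-z_i}\ge0$, i.e.\ $\innprod{\bar e}{z_i}\le\innprod{\bar e}{x_*}$ for all $i$. But $x_*\in\Omega\subseteq\mathrm{int}(\mathrm{conv}(\{z_i\}))$, so $x_*+t\bar e=\sum_i\lambda_i z_i$ for some $t>0$ and convex weights $\lambda_i$, giving $\innprod{\bar e}{x_*}+t=\sum_i\lambda_i\innprod{\bar e}{z_i}\le\innprod{\bar e}{x_*}$, a contradiction. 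Hence $u_n(\Omega)$ is uniformly bounded for $n$ large.

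The main obstacle is reconciling the required uniformity in $n$ with the fact that Lemma~\ref{lem:exists} only furnishes subsequential approximants at each anchor; the contradiction framework, combined with the finiteness of the anchor set (so that finitely many nested extractions suffice), is what resolves this. The genuinely geometric heart is the boundedness step: rescaling by the blow-up magnitude and reading off $\innprod{\bar e}{x_*-z_i}\ge0$ converts a hypothetical failure of boundedness into the statement that $x_*$ is a supporting point of $\mathrm{conv}(\{z_i\})$ in the direction $\bar e$, which is impossible once $\Omega$ sits strictly inside that hull. This is exactly why the hypothesis $\Omega\subseteq\mathrm{int}(\mathrm{conv}(\mathrm{supp}(\mu)))$, rather than merely $\Omega\subseteq\mathrm{conv}(\mathrm{supp}(\mu))$, is indispensable.
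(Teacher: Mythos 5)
Your proof is correct, but it follows a genuinely different route from the paper's. The paper picks arbitrary anchor points $z_i\in\mathrm{supp}(\mu)$ and manufactures nearby graph points $(x_{ni},y_{ni})\in u_n$ with $y_{ni}$ in a fixed compact set by a pigeonhole argument: the Portmanteau lemma gives $\mu_n(B_\delta^\infty(z_i))>\mu(B_\delta^\infty(z_i))/2$ for all large $n$, while tightness of $\{\nu_n\}$ forces some point of that ball to be mapped into a common compact $K_\epsilon$; it then concludes with the quantitative estimate of \cite[Lemma~1.2(4)]{albamb}, which bounds $\|y_0\|$ explicitly in terms of the diameter of the anchor set, the norms of the $y_{ni}$, and the distance of $\omega$ to the complement of $\mathrm{conv}(X_n)$. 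You instead take anchors at which $u$ is single-valued so that Lemma~\ref{lem:exists} applies, and since that lemma only yields subsequential approximants you must run the whole argument by contradiction with a nested extraction; your boundedness step replaces the quantitative lemma by an elegant blow-up argument (normalise $y_{n_k}$, pass to a direction $\bar e$, and read off from monotonicity that $x_*$ would support $\mathrm{conv}(\{z_i\})$ in direction $\bar e$, impossible since $x_*$ is interior). Your version is more self-contained — it avoids \cite[Lemma~1.2(4)]{albamb} and the tightness-plus-pigeonhole device — but it buys this at two costs: it is non-quantitative, and it needs the single-valued points of $u$ to be dense in $\mathrm{supp}(\mu)$, i.e.\ essentially $\mu(\mathcal N)=0$, which you correctly flag; this holds in every application in the paper (where $\mu$ is regular) and is arguably implicit in the notation $u\#\mu=\nu$ of \eqref{eq:measuressetting}, but the paper's proof does not require it (nor does it use the uniqueness of $\pi$, which your route inherits through the stability argument underlying Lemma~\ref{lem:exists}). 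The Lemma~\ref{lem:linfchull} perturbation of the convex hull and the appeal to \cite[Corollary~1.3(2)]{albamb} for non-emptiness are common to both arguments.
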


\begin{proof}  Denote $E=\mathrm{supp}(\mu)$ and its convex hull by $F=\mathrm{conv}(E)$. There exists $\delta=\delta(\Omega)>0$ such that the closed $\ell_\infty$-ball, $\overline B^\infty_{3\delta}(\Omega)$, is included in $\mathrm{int}(F)$.  Cover $\Omega$ by a finite union of $B_{\delta}^\infty(\omega_j)$, and denote by $Q$ be the finite set of vertices of $\cup_j\overline B^\infty_{3\delta}(\omega_j)$.  Since $Q$ is included in the convex hull of $E$, each point in $Q$ can be written as a convex combination of elements of $E$.  We conclude that there exists a finite set $Z=\{z_1,\dots,z_m\}$ of points in $E$ whose convex hull includes $B_{3\delta}^\infty(\omega_j)$ for any $j$.

Let $B_i=B_\delta^\infty(z_i)$.  Since $B_i$ is an open neighbourhood of $z_i\in E=\mathrm{supp}(\mu)$, the Portmanteau lemma implies that when $n$ is large, $\mu_n(B_i)>\epsilon_i=\mu(B_i)/2$ for any $i=1,\dots,m$.  Let $\epsilon=\min_i\epsilon_i>0$.  Since $\{\nu_n\}$ is a tight sequence, there exists a compact set $K_\epsilon$ such that $\nu_n(K_\epsilon)>1-\epsilon$ for any integer $n$.  In particular, there exist $x_{ni}\in B_i$ and $y_{ni}\in u_n(x_{ni})$ such that $y_{ni}\in  K_\epsilon$.  Application of Lemma~\ref{lem:linfchull} to
\[
\tilde Z = X_n = \{x_{n1},\dots,x_{nm}\}
\]
and noticing that by definition $\|x_{ni} - z_i\|_\infty\le \delta$ yields
\[
\mathrm{conv}(X_n) = \mathrm{conv}(\{x_{n1},\dots,x_{nm}\})
\supseteq B_{3\delta-\delta}^\infty(\omega_j)
= B_{2\delta}^\infty(\omega_j)
\qquad \textrm{for all }j.
\]
For each $\omega\in \Omega$ there exists $j$ such that $\|\omega-\omega_j\|_\infty\le\delta$, so that $\mathrm{conv}(X_n)\supseteq B_\delta^\infty(\omega)\supseteq B_\delta(\omega)$, since $\ell_2$-balls are smaller than $\ell_\infty$-balls.  Summarising: $\mathrm{conv}(X_n)\supseteq B_\delta(\Omega)$.

By \cite[Lemma~1.2(4)]{albamb} it follows that for any $\omega\in\Omega$ and any $y_0\in u_n(\omega)$,
\[
\|y_0\|
\le \frac{[\sup_{x,z\in X_n}\|x - z\|  ]  [  \max_{x\in X_n}\inf_{y\in u_n(x)}\|y\|   ]}   {d(\omega, \mathbb R^d\setminus \mathrm{conv}(X_n))}
\le \frac1\delta \left[\sup_{k,l}\|x_{nk} - x_{nl}\|  \right]  \left[  \max_i\inf_{y\in u_n(x_{ni})}\|y\|  \right].
\]
Now observe that the infimum at the right-hand side is bounded by $\|y_{ni}\|\le\sup_{y\in K_\epsilon}\|y\|$.  Furthermore, $\|x_{nk} - x_{nl}\|\le 2\sqrt d\delta + \|z_k - z_l\|$.  Hence
\[
\forall\omega\in\Omega
\quad \forall y_0\in u_n(\omega):
\qquad
\|y_0\|
\le \frac1\delta \left(2\sqrt d\delta + \max_{k,l}\|z_k - z_l\|\right)  \sup_{y\in K_\epsilon}\|y\|,
\]
and the right-hand side is independent of $n$.  We may therefore conclude that for $n$ large enough, $u_n(\Omega$) stays in a compact set;  it is nonempty by \cite[Corollary~1.3(2)]{albamb}.
\end{proof}

\begin{proof}[Proof of Proposition~\ref{prop:unifcompacts}]  By Proposition~\ref{lem:yinkstrong} when $n>N_\Omega$ is large, $u_n(x)\ne\emptyset$ for all $x\in \Omega$ and
\[
\sup_{x\in \Omega}\sup_{y\in u_n(x)}\|y\|
\le C_{\Omega,d}
<\infty
,\qquad n>N_\Omega,
\]
where $C_{\Omega,d}$ is a constant that depends only on $\Omega$ (and the dimension $d$).

Suppose that the converse is true, and uniform convergence does not hold.  Then there exist $\epsilon>0$ and subsequences $y_{n_k}\in u_{n_k}(x_{n_k})$ such that $x_{n_k}\in \Omega$ and
\[
\|y_{n_k} - u(x_{n_k})\|
>\epsilon
,\qquad k=1,2,\dots.
\]
The $x_{n_k}$'s lie in the compact set $\Omega$, whereas by Proposition~\ref{lem:yinkstrong} the $y_{n_k}$'s lie in the ball of radius $C_{\Omega,d}$ centred at the origin.  Therefore, up to the extraction of a subsequence, we have $x_{n_k}\to x\in\Omega$ and $y_{n_k}\to y$. By Lemma~\ref{lem:liminT}, $y=u(x)$. But $u$ is continuous at $x$ (Proposition~\ref{prop:cont}), whence
\[
\epsilon
<\|y_{n_k} - u(x_{n_k})\|
\le\|y_{n_k}-y\|  +  \|y - u(x)\|  +  \|u(x) - u(x_{n_k})\|
\to0
,\qquad k\to\infty,
\]
a contradiction.
\end{proof}

\section{Some Examples}\label{SEC:EX}
As an illustration, we implement Algorithm~\ref{algo} in several settings for which pairwise optimal maps can be calculated explicitly at every iteration, allowing for fast computation without error propagation. Indeed, these settings allow for stronger convergence statements to be made on a case-by-case basis. More details on the calculations and properties of each individual scenario are given in Section~\ref{zempan16supp}.

\subsection{The case $d=1$}  When the measures are supported on the real line, the optimal maps have the explicit expression given in Equation~\eqref{eq:optimal1d} and one may apply Algorithm~\ref{algo} starting from one of these measures.  Figure~\ref{fig:F-8S80den} plots $N=4$ univariate densities and the Fr\'echet mean yielded by the algorithm in two different scenarios.  At the left, the densities were generated as
\begin{equation}\label{eq:bigaussden}
f^i(x)
=\frac12\phi\left(\frac{x - m^i_1}{\sigma^i_1}\right)
+\frac12\phi\left(\frac{x - m^i_2}{\sigma^i_2}\right)
,\qquad
\end{equation}
with $\phi$ the standard normal density, and the parameters generated independently as
\[
m^i_1\sim U[-13, -3],\quad
m^i_2\sim U[3, 13],\quad
\sigma^i_1,\sigma^i_2\sim Gamma(4, 4).
\]
At the right of Figure~\ref{fig:F-8S80den}, we used a mixture of a shifted gamma and a Gaussian:
\begin{equation}\label{eq:bigaussgammaden}
f^i(x)
=\frac 35\frac{\beta_i^3}{\Gamma(3)}(x - m^i_3)^2e^{-\beta_i(x-3)}
+\frac 25\phi(x - m^i_4),
\end{equation}
with
\[
\beta^i\sim Gamma(4, 1)
,\quad m^i_3\sim U[1,4]
,\quad m^i_4\sim U[-4,-1].
\]
The resulting Fr\'echet mean density for both settings is shown in thick light blue, and can be seen to capture the bimodal nature of the data.  Even though the Fr\'echet mean of Gaussian mixtures is not a Gaussian mixture itself, it is approximately so, provided that the peaks are separated enough. Figure~\ref{vector_fields}(a) shows the Procrustes maps pushing the Fr\'echet mean $\bar\mu$ to the measures $\mu^1,\dots,\mu^N$ in each case.  If one ignores the ``middle part" of the $x$ axis, the maps appear (approximately) affine for small values of $x$ and for large values of $x$, indicating how the peaks are shifted. In the middle region, the maps need to ``bridge the gap" between the different slopes and intercepts of these affine maps.

\begin{figure}
\includegraphics[trim=10mm 16mm 10mm 20mm, clip, scale = 0.42]{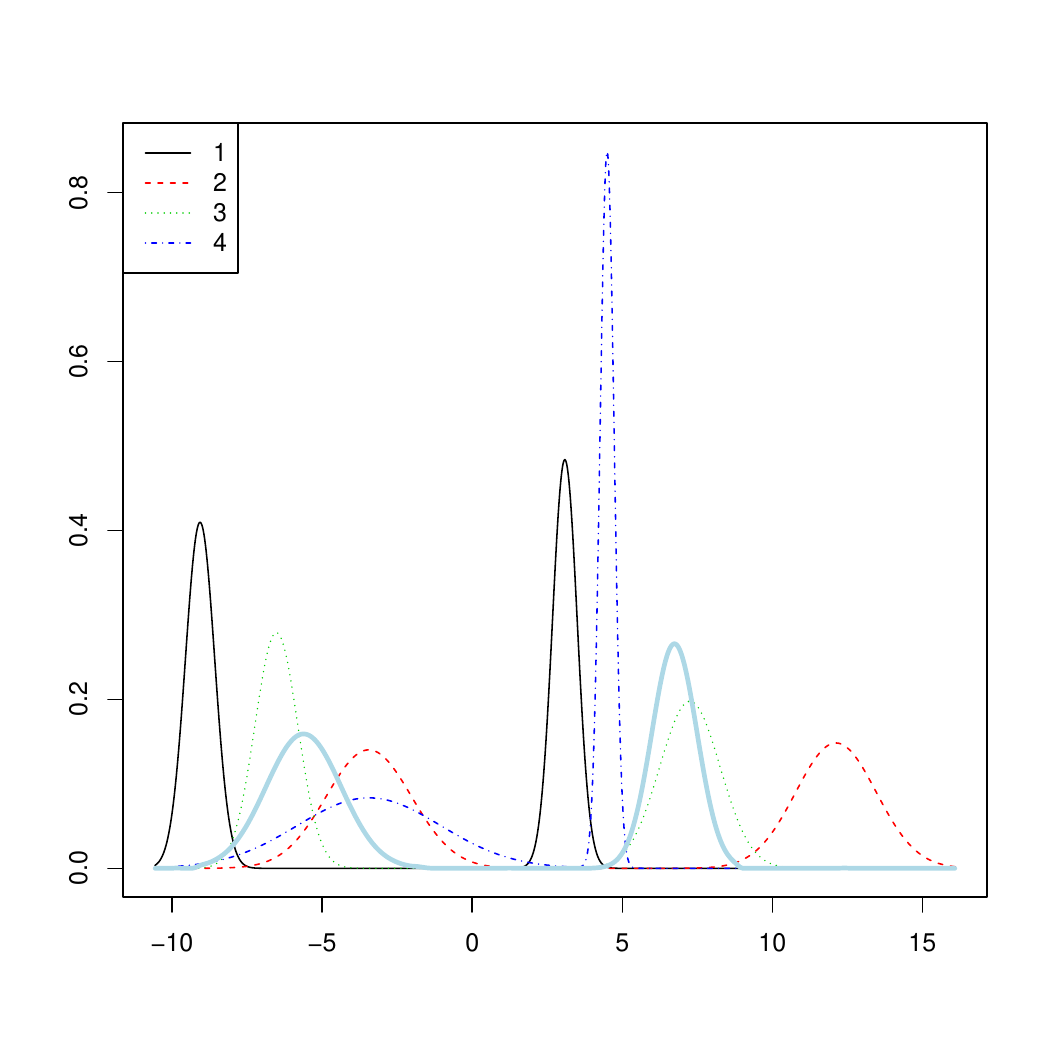}
\includegraphics[trim=10mm 16mm 10mm 20mm, clip, scale = 0.42]{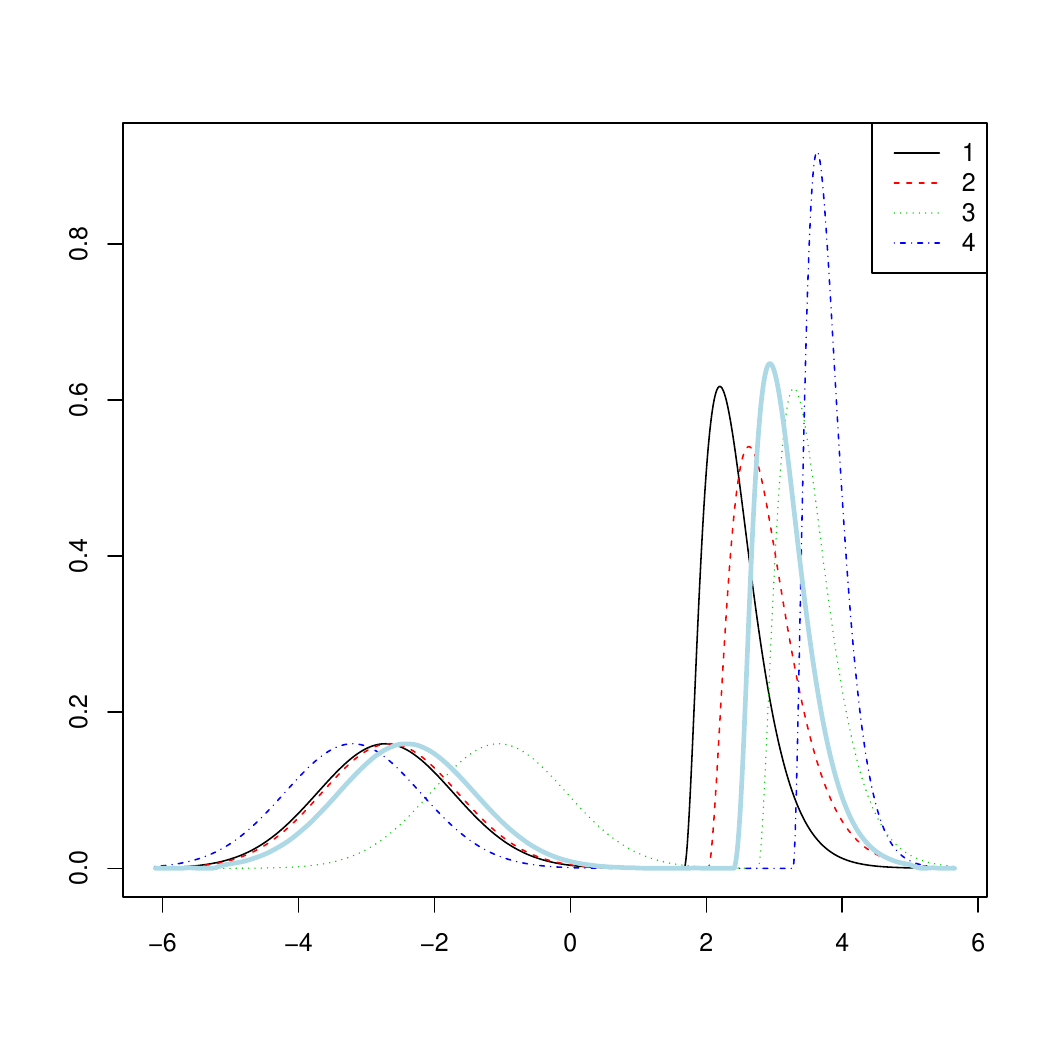}
\caption{Densities of bimodal Gaussian mixture (left) and a mixture of Gaussian with gamma (right), with the Fr\'echet mean density in light blue.}
\label{fig:F-8S80den}
\end{figure}

\subsection{Independence}  We next take measures $\mu^i$ on $\mathbb R^2$, having independent marginal densities $f_X^i$ as in \eqref{eq:bigaussden}, and $f_Y^i$ as in \eqref{eq:bigaussgammaden}.  Figure~\ref{fig:indS80} shows the density plot of $N=4$ such measures, constructed as the product of the measures from Figure~\ref{fig:F-8S80den}.  One can distinguish the independence by the ``parallel" structure of the figures:  for every pair $(y_1,y_2)$, the ratio $g(x,y_1)/g(x,y_2)$ does not depend on $x$ (and vice versa, interchanging $x$ and $y$).  Figure~\ref{fig:indS80bar} plots the density of the resulting Fr\'echet mean.  We observe that the Fr\'echet mean captures the four peaks, and their location.  Furthermore, the parallel nature of the figure is preserved in the Fr\'echet mean.  Indeed, we prove in the supplement (Section~\ref{zempan16supp}) that, unsurprisingly, the Fr\'echet mean is a product measure.

\begin{figure}
\includegraphics[trim=10mm 37mm 15mm 31mm, clip, scale = 0.25]{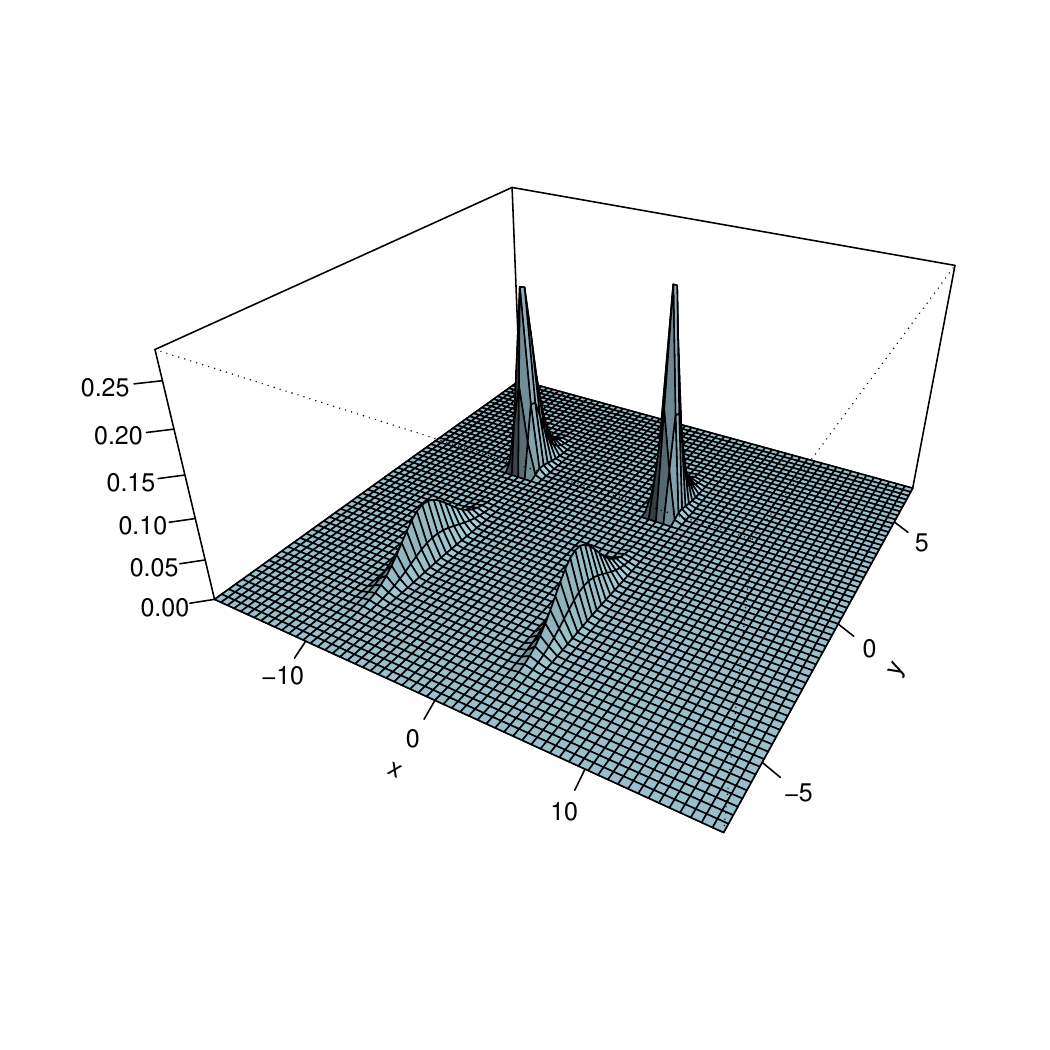}
\includegraphics[trim=10mm 37mm 15mm 31mm, clip, scale = 0.25]{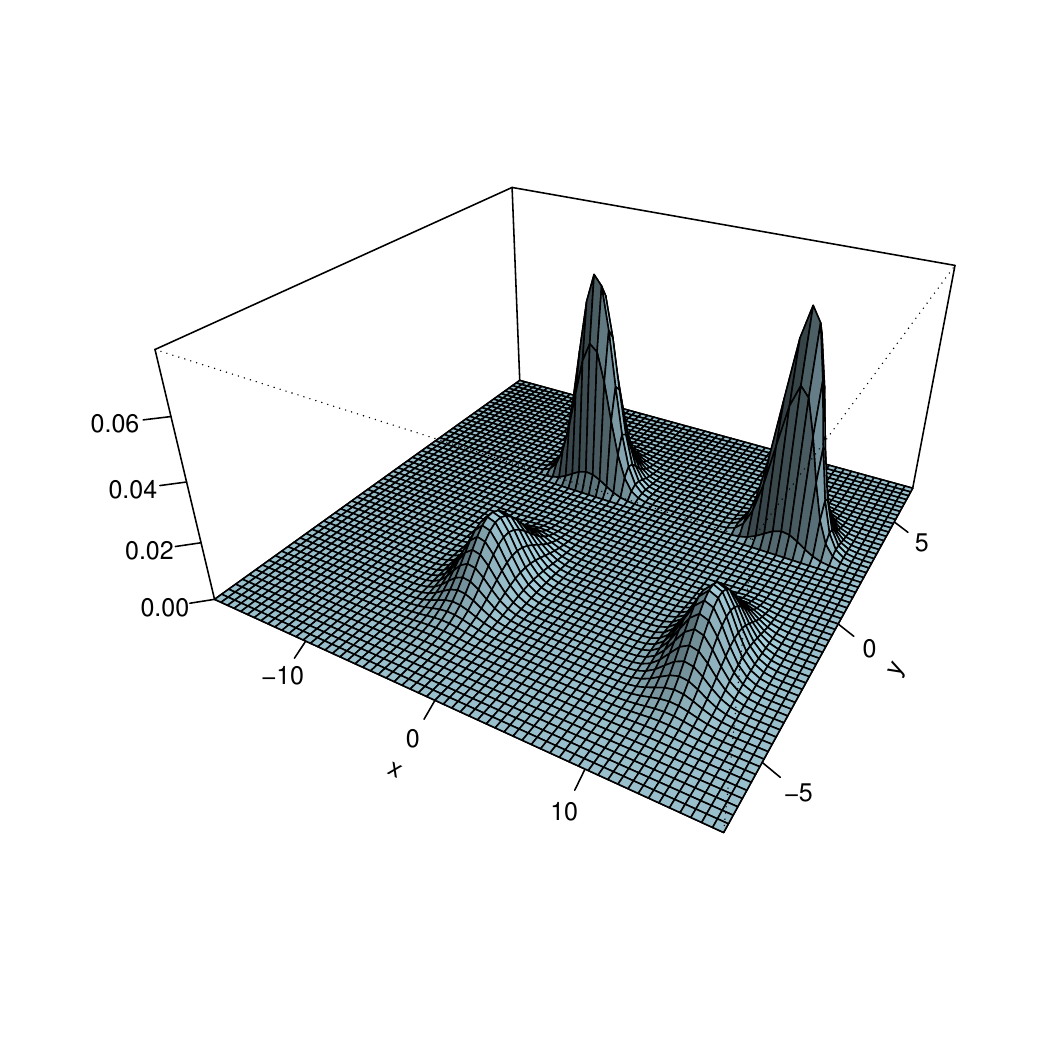}\\
\includegraphics[trim=10mm 37mm 15mm 31mm, clip, scale = 0.25]{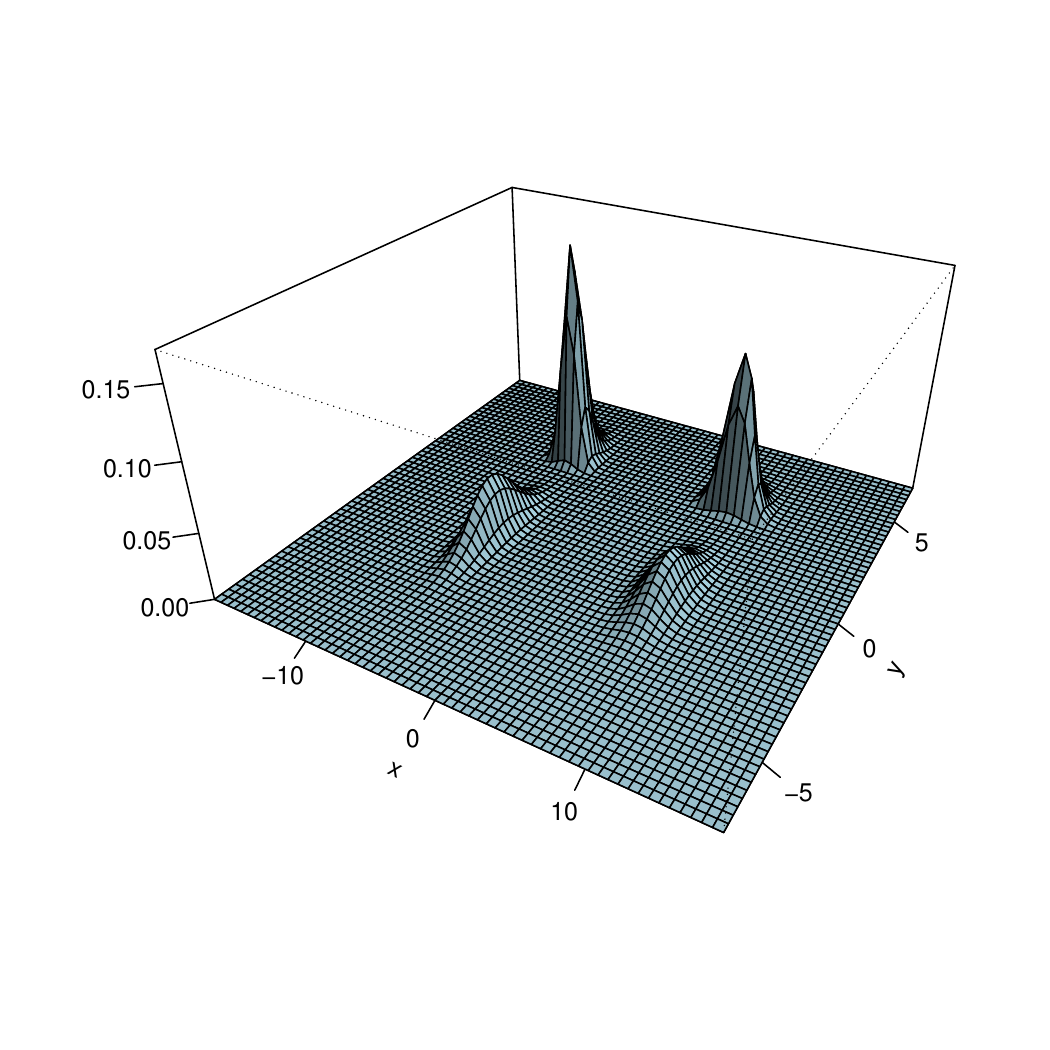}
\includegraphics[trim=10mm 37mm 15mm 31mm, clip, scale = 0.25]{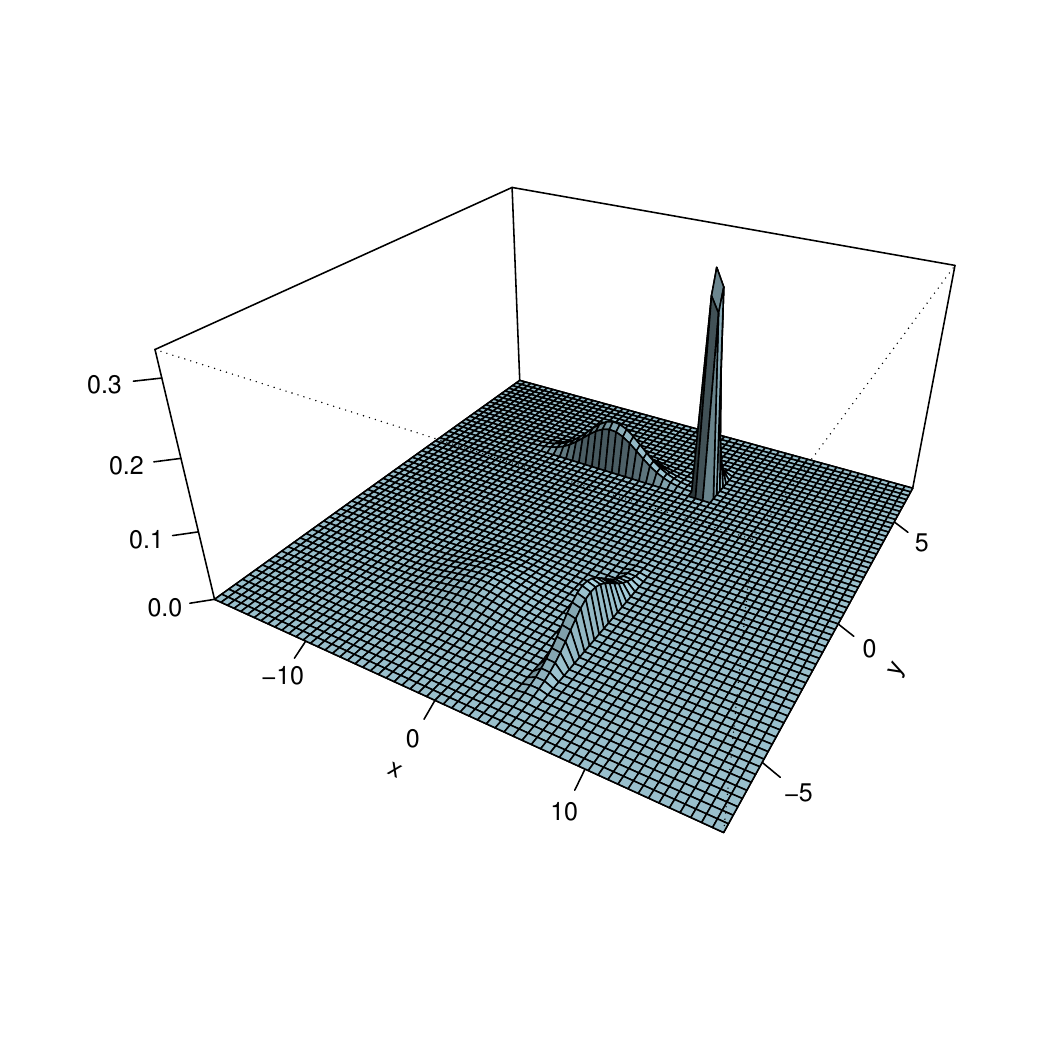}
\caption{Density plots of the four product measures of the measures in Figure~\ref{fig:F-8S80den}.}
\label{fig:indS80}
\end{figure}

\begin{figure}
\includegraphics[trim=10mm 37mm 15mm 31mm, clip, scale = 0.25]{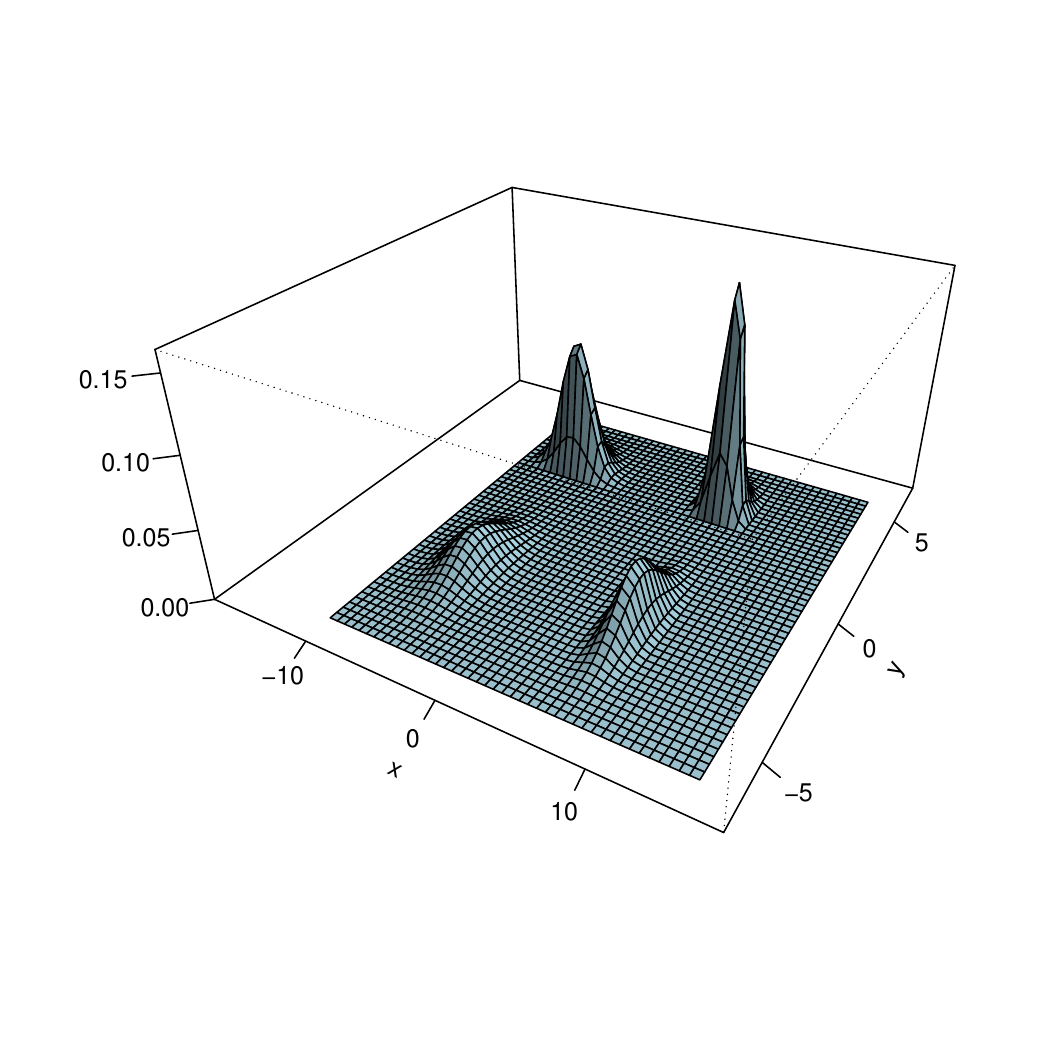}
\caption{Density plot of the Fr\'echet mean of the measures in Figure~\ref{fig:indS80}.}
\label{fig:indS80bar}
\end{figure}

\subsection{Common Copulas}  Let $\mu^i$ be a measure on $\mathbb R^2$ with density
\[
g^i(x,y)
=c(F_X^i(x),F_Y^i(y))f_X^i(x)f_Y^i(y),
\]
where $f_X^i$ and $f_Y^i$ are random densities on the real line with distribution functions $F_X^i$ and $F_Y^i$, and $c$ is a copula density.  Figure~\ref{fig:F-8S80} shows the density plot of $N=4$ such measures, with $f_X^i$ generated as in \eqref{eq:bigaussden}, $f_Y^i$ as in \eqref{eq:bigaussgammaden}, and $c$ is the Frank($-8$) copula density, while Figure~\ref{fig:F-8S80bar} plots the density of the Fr\'echet mean obtained.  (For ease of comparison we use the same realisations of the densities that appear in Figure~\ref{fig:F-8S80den}.)  The Fr\'echet mean can be seen to preserve the shape of the density, having four clearly distinguished peaks.  Figure \ref{vector_fields}(b), depicting the resulting Procrustes maps, allows for a clearer interpretation:  for instance the leftmost plot (in black) shows more clearly that the map splits the mass around $x=-2$ to a much wider interval;  and conversely a very large amount mass is sent to $x\approx 2$.  This rather extreme behaviour matches the peak of the density of $\mu^1$ located at $x=2$.

\begin{figure}
\includegraphics[trim=10mm 37mm 15mm 31mm, clip, scale = 0.25]{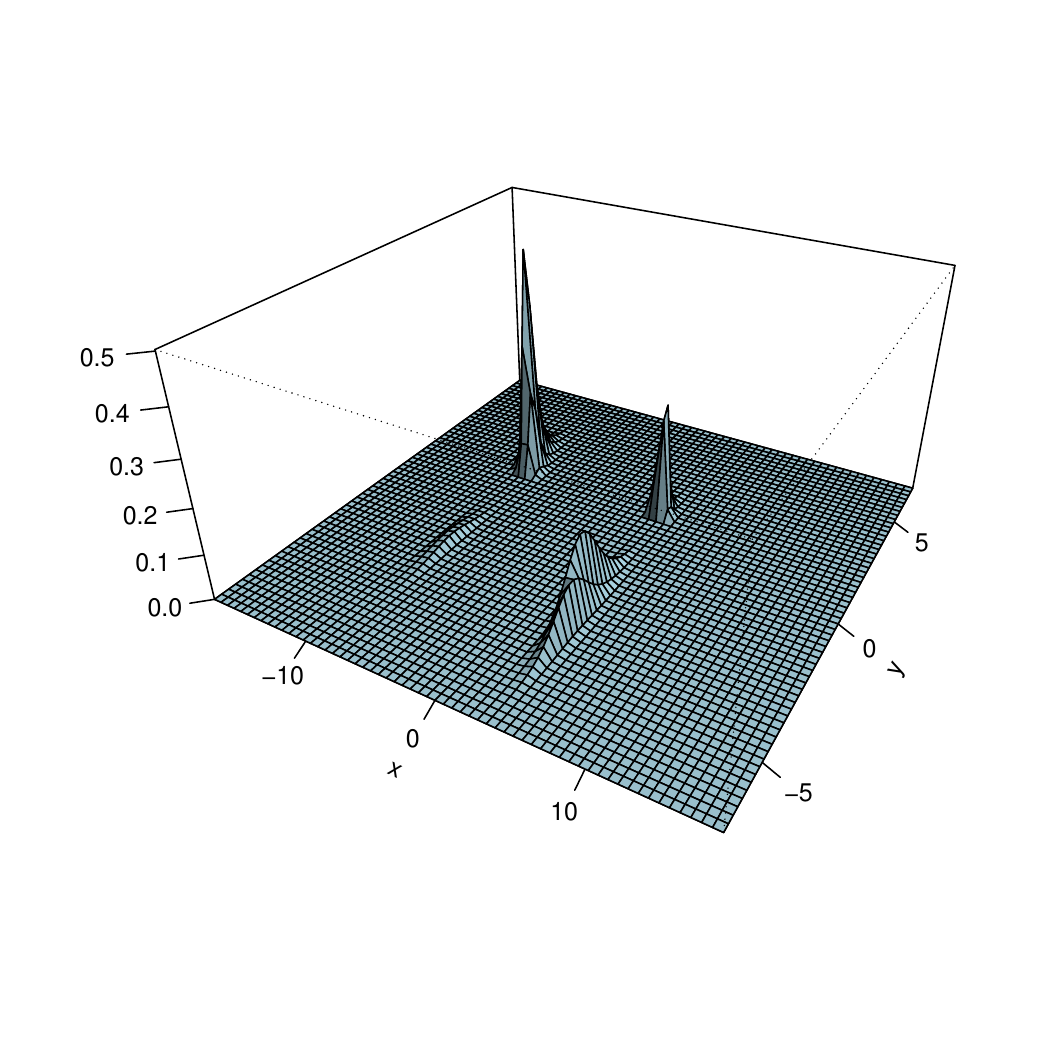}
\includegraphics[trim=10mm 37mm 15mm 31mm, clip, scale = 0.25]{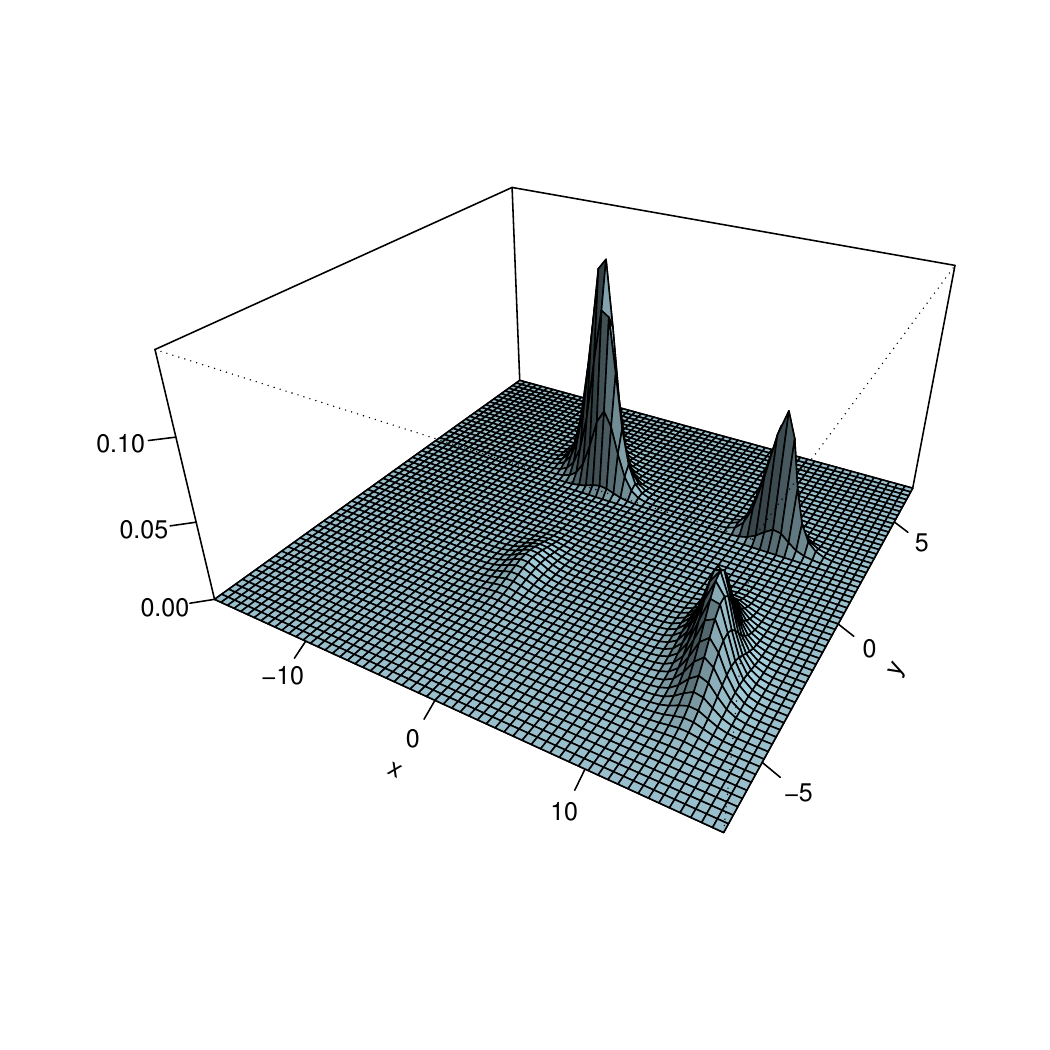}\\
\includegraphics[trim=10mm 37mm 15mm 31mm, clip, scale = 0.25]{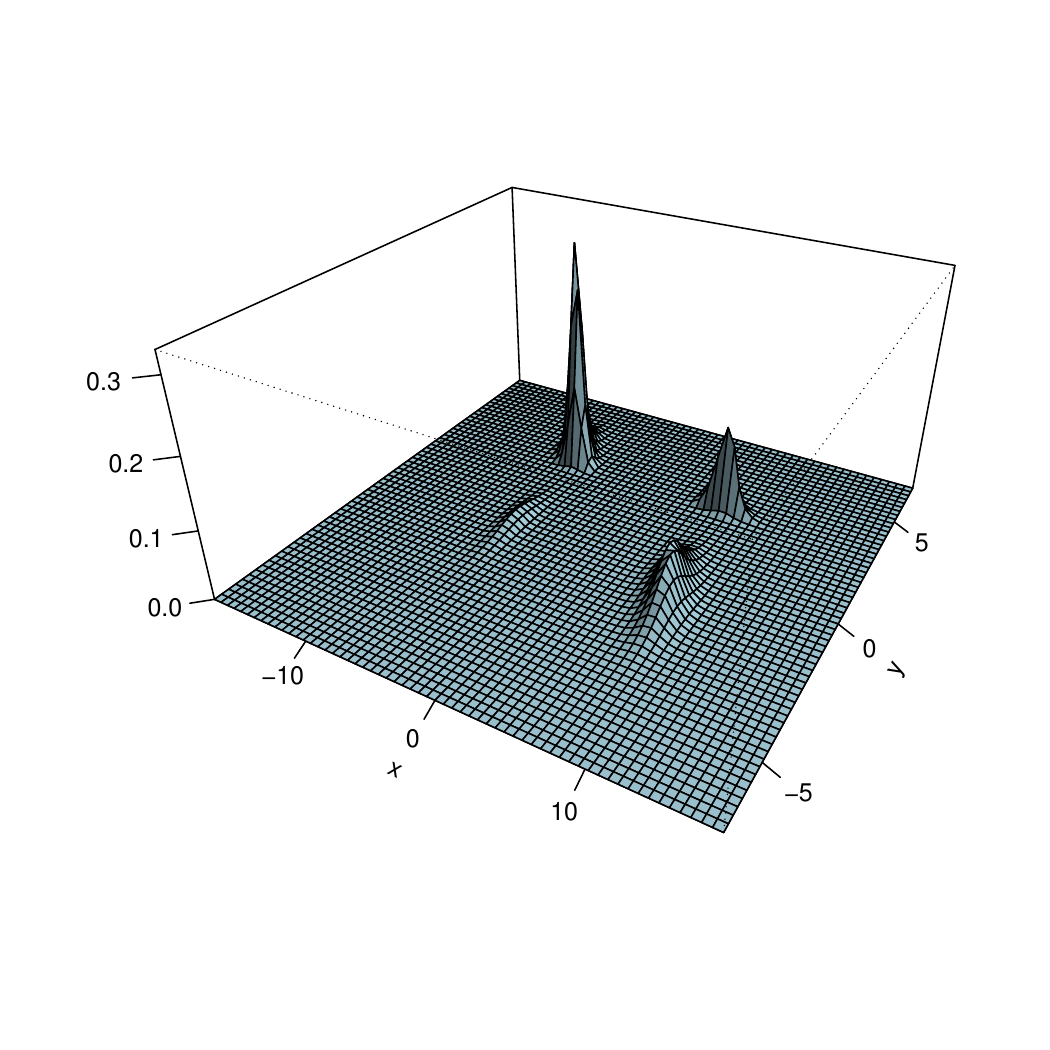}
\includegraphics[trim=10mm 37mm 15mm 31mm, clip, scale = 0.25]{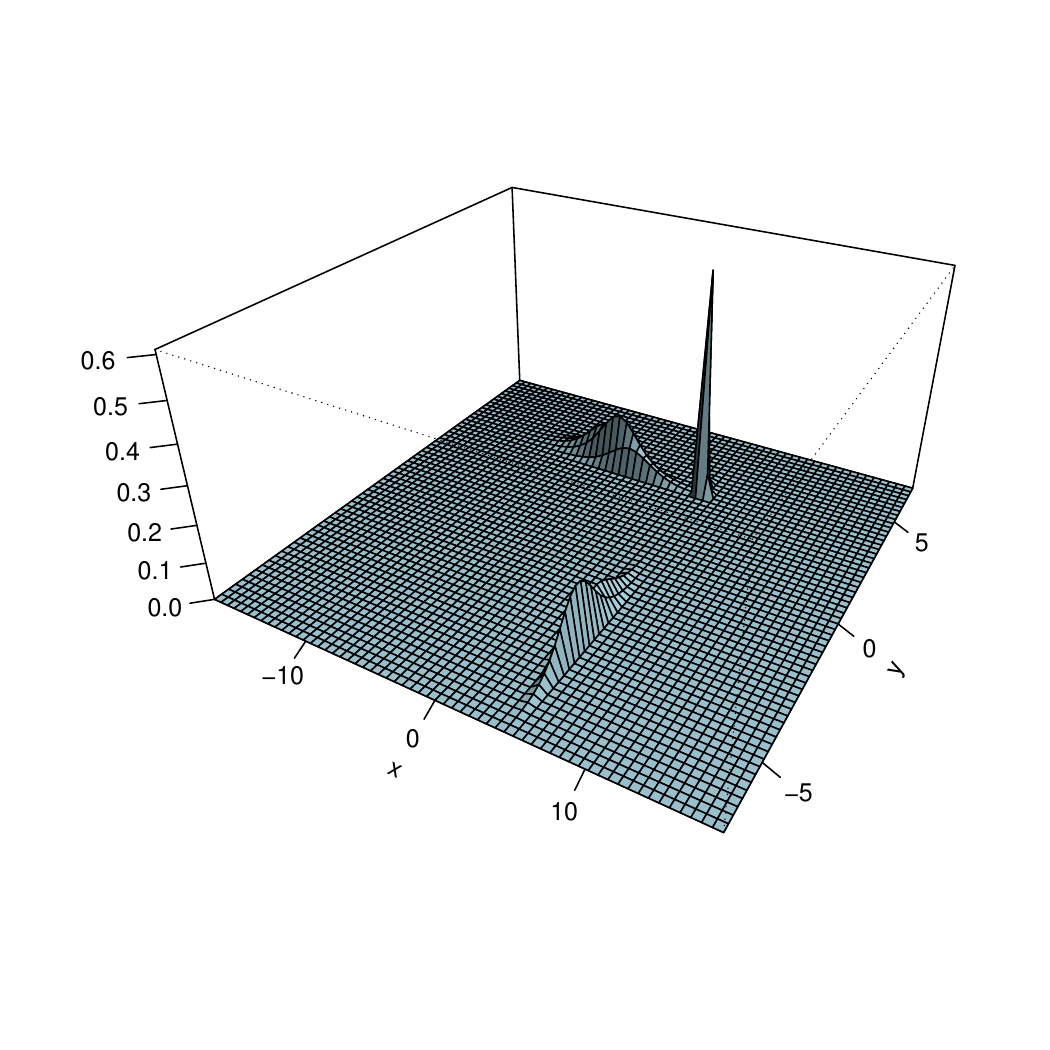}
\caption{Density plots of four measures in $\mathbb R^2$ with Frank copula of parameter $-8$.}
\label{fig:F-8S80}
\end{figure}

\begin{figure}
\includegraphics[trim=10mm 37mm 15mm 31mm, clip, scale = 0.25]{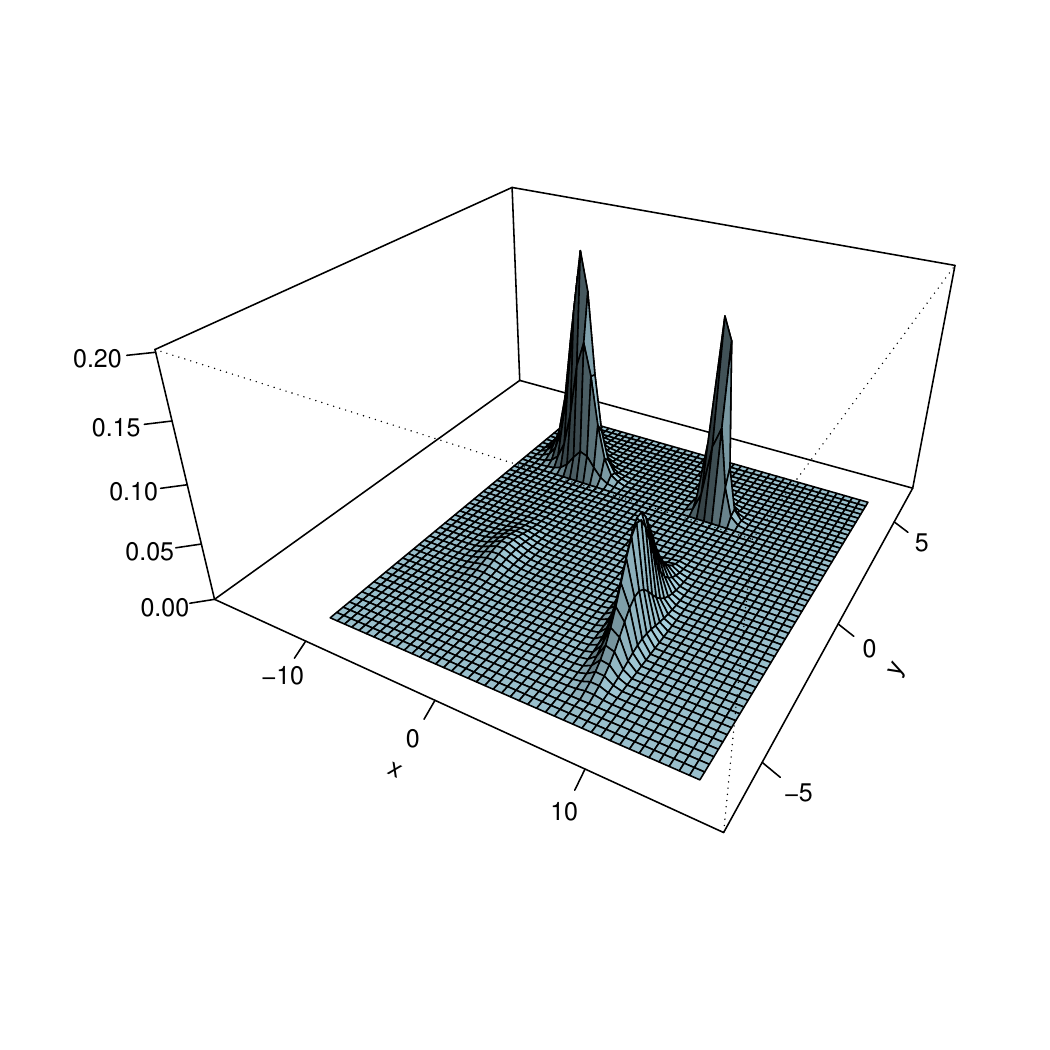}
\caption{Density plot of the Fr\'echet mean of the measures in Figure~\ref{fig:F-8S80}.}
\label{fig:F-8S80bar}
\end{figure}

The first three scenarios are examples of situations where the measures $\{\mu^i\}$ are \textit{compatible with each other} in the sense that $\mathbf t^{\mu^k}_{\mu^j}\circ \mathbf t_{\mu^i}^{\mu^j}=\mathbf t_{\mu^i}^{\mu^k}$.  Boissard et al.\ \cite{boissard2015distribution} tackle the problem of finding the Fr\'echet mean in such a setting, by means of the \textit{iterated barycentre}.  In the supplementary material (Section~\ref{zempan16supp}) we show that Algorithm~\ref{algo} will always converges to the Fr\'echet mean, provided the initial point $\gamma_0$ is compatible with $\{\mu^i\}$ (for instance, if $\gamma_0=\mu^i$).  In fact, we show that convergence is established after a single iteration of the algorithm. Since optimal maps are gradients of convex potentials, they must have positive definite derivatives.  Under regularity conditions, compatibility is essentially equivalent to the commutativity of the $d\times d$ matrices $\nabla\mathbf t_{\mu^j}^{\mu^k}(\mathbf t_{\mu^i}^{\mu^j}(x))$ and $\nabla\mathbf t_{\mu^i}^{\mu^j}(x)$ for $\mu^i$-almost any $x$.  We next discuss examples where this condition fails.

\subsection{Gaussian measures}  Suppose that each $\mu^i$ follows a non-degenerate multivariate Gaussian distribution with mean $0$ and covariance matrix $S_i$.  The optimal maps are known to be linear and admit the explicit formula (Dowson \& Landau \cite{dowson1982frechet};  Olkin \& Pukelsheim \cite{olkin1982distance})
\[
\mathbf t_i^j
=S_j^{1/2}[S_j^{1/2}S_iS_j^{1/2}]^{-1/2}S_j^{1/2}.
\]
If the initial point $\gamma_0$ is another Gaussian measure with covariance matrix $\Gamma_0$, then by the linearity of the maps one sees that $\gamma_k\sim\mathcal N(0,\Gamma_k)$ for some positive definite $\Gamma_k$.  Thus, one can calculate the optimal maps at each iteration;  in the supplement (Section~\ref{zempan16supp}) we prove that $\gamma_k$ must converge to the unique Fr\'echet mean, which is also a Gaussian measure. This example is also studied
independently  in \'Alvarez-Esteban et al. \cite[Section 4]{alvarez2016fixed}, where an alternative proof can be found. Our proof is shorter and arguably simpler, but the proof in \cite{alvarez2016fixed} shows the additional property that the traces of the matrix iterates are monotonically increasing.

Notice that the Gaussian measures $\{\mu^i\}$ will be compatible if $S_iS_j=S_jS_i$, but they might well fail to be. Thus, the algorithm does not converge in one step.  We observed, however, rapid convergence of the iterates of Algorithm~\ref{algo} to the Fr\'echet mean, even for rather large values of $N$ and $d$. Figure~\ref{fig:normalsS22} shows density plots of $N=4$ centred Gaussian measures on $\mathbb R^2$ with covariances $S_i\sim \mathrm{Wishart}(I_2,2)$, and Figure~\ref{fig:normalbarS22} shows the density of the resulting Fr\'echet mean.  In this particular example, the algorithm needed 11 iterations starting from the identity matrix. The corresponding Procrustes registration maps are displayed in Figure~\ref{vector_fields}(c).  It is apparent from the figure that these maps are linear, and after a more careful reflection one can be convinced that their average is the identity.  The four plots in the figure are remarkably different, in accordance with the measures themselves having widely varying condition numbers and orientations;  $\mu^3$ and more so $\mu^4$ are very concentrated, so the registration maps ``sweep" the mass towards zero.  In contrast, the registration maps to $\mu^1$ and $\mu^2$ spread the mass out away from the origin.

\begin{figure}
\includegraphics[trim=10mm 37mm 15mm 31mm, clip, scale = 0.25]{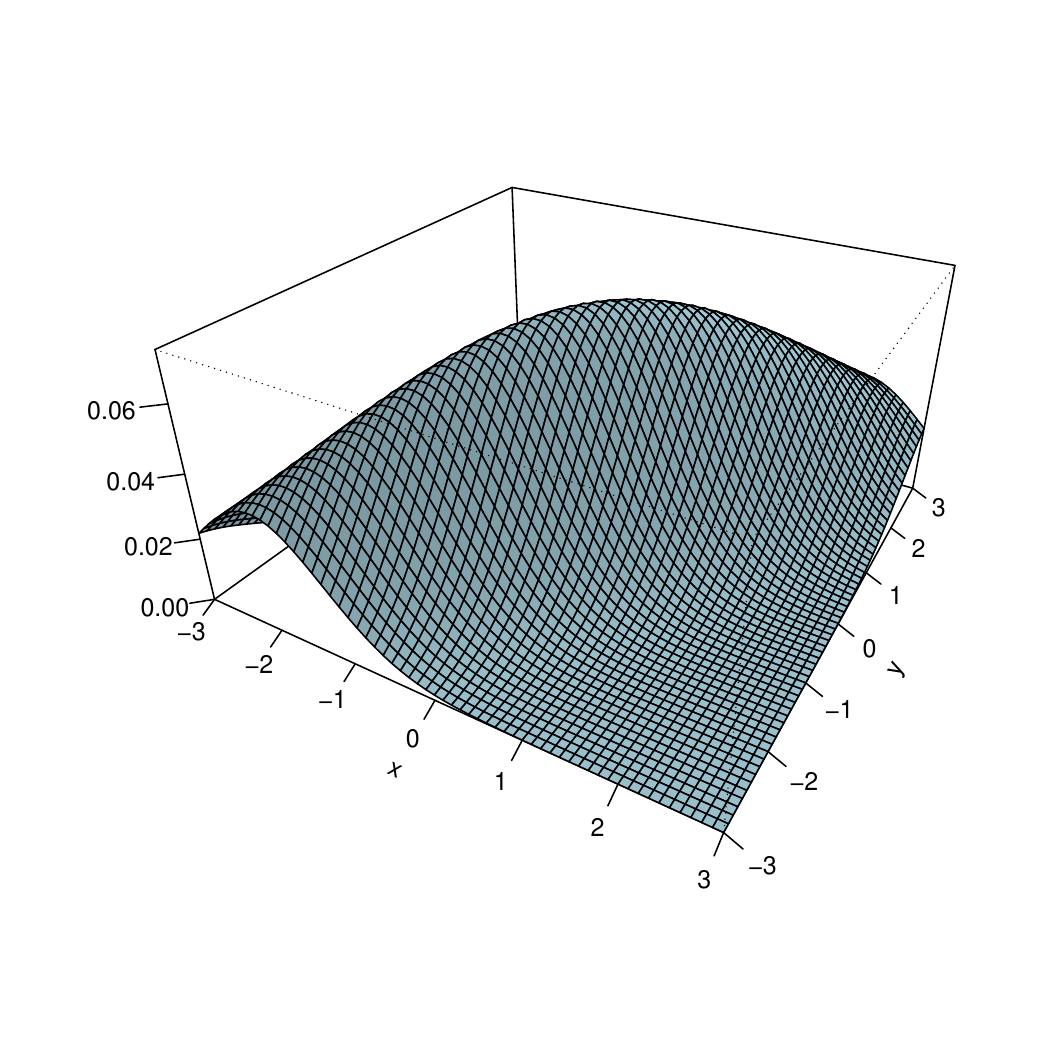}
\includegraphics[trim=10mm 37mm 15mm 31mm, clip, scale = 0.25]{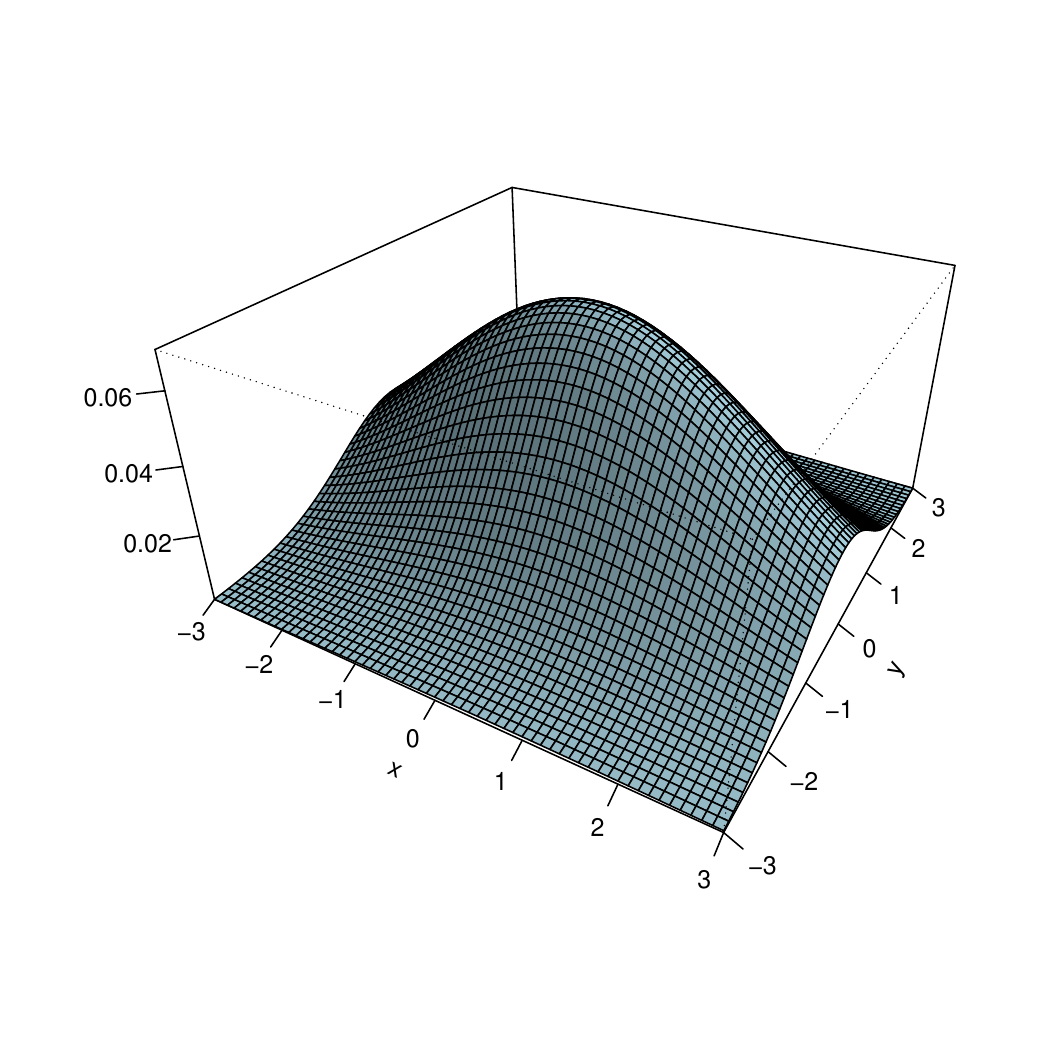}\\
\includegraphics[trim=10mm 37mm 15mm 31mm, clip, scale = 0.25]{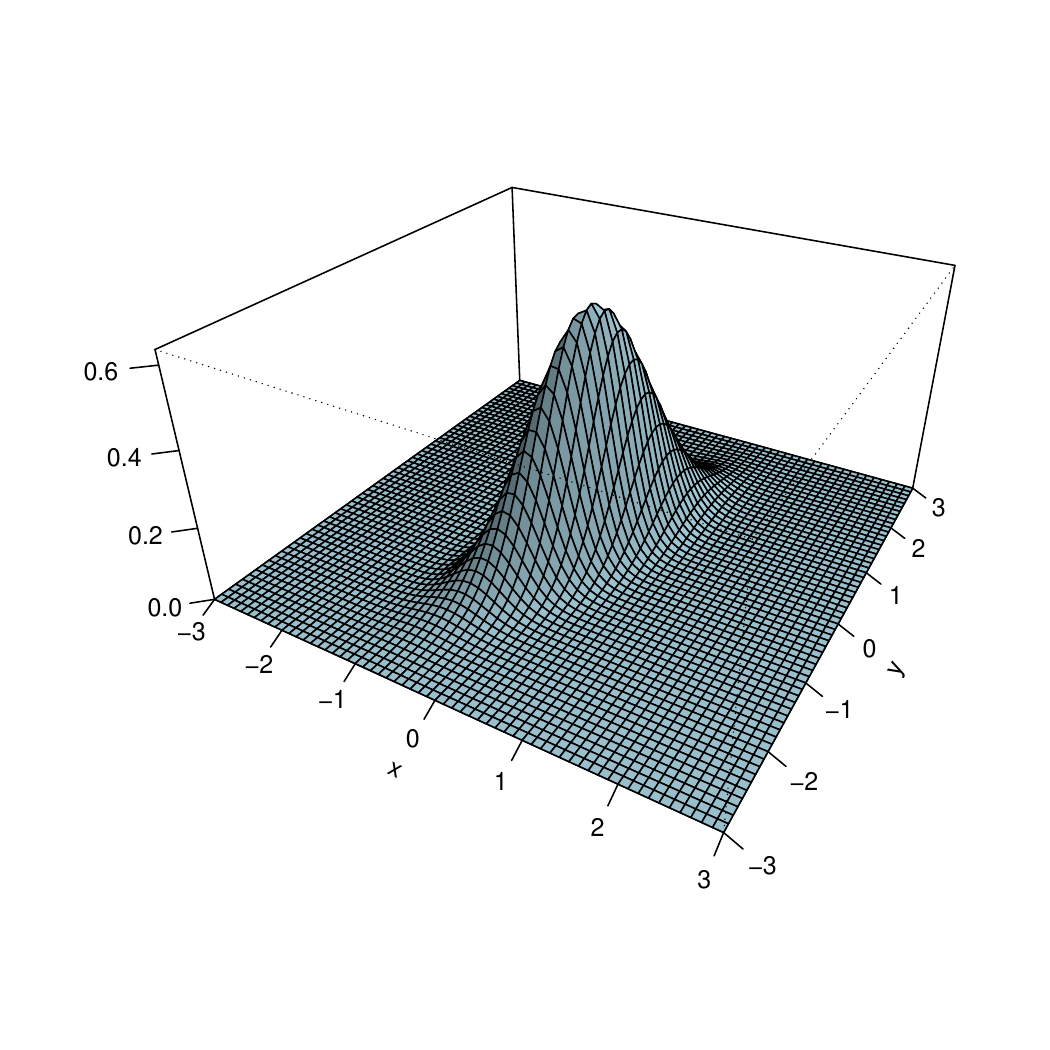}
\includegraphics[trim=10mm 37mm 15mm 31mm, clip, scale = 0.25]{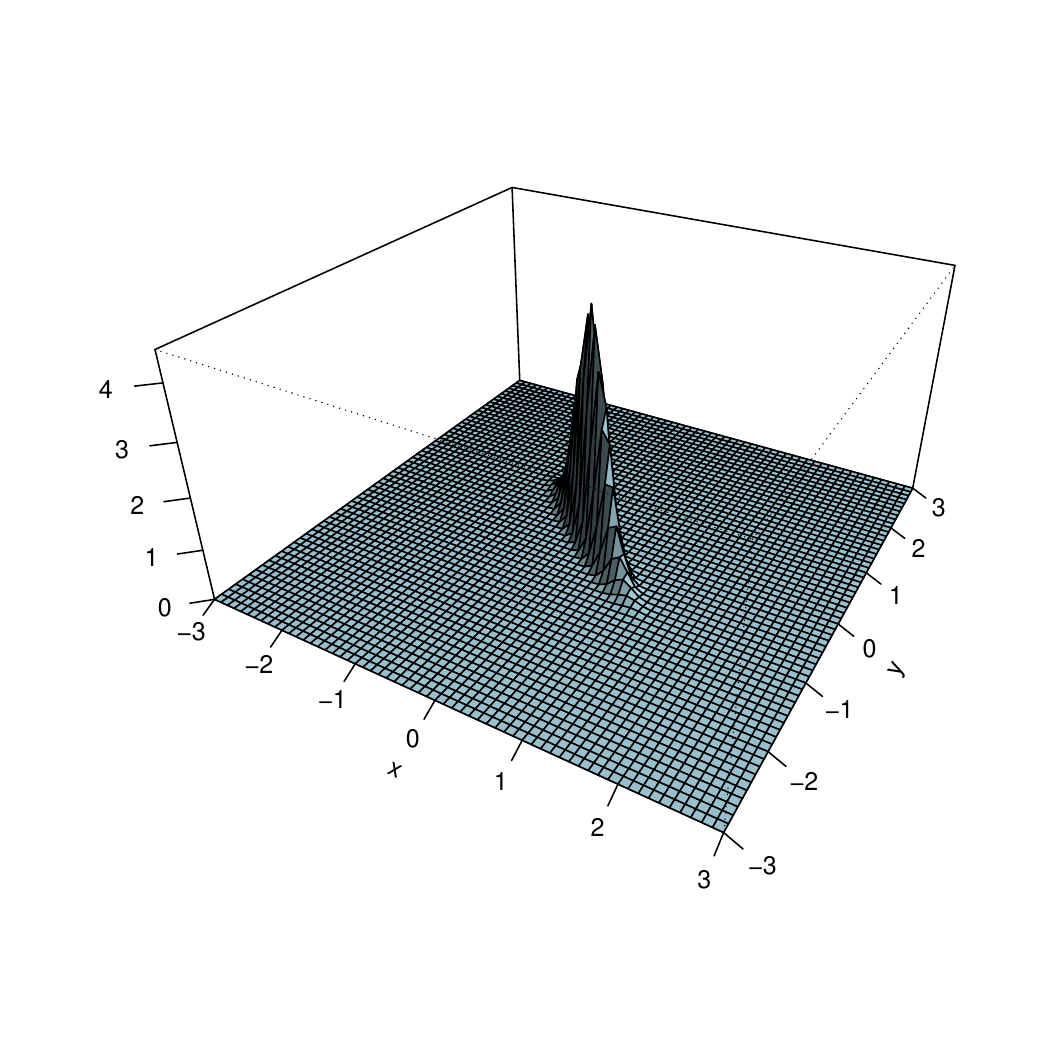}
\caption{Density plot of four Gaussian measures in $\mathbb R^2$.}
\label{fig:normalsS22}
\end{figure}

\begin{figure}
\includegraphics[trim=10mm 37mm 15mm 31mm, clip, scale = 0.25]{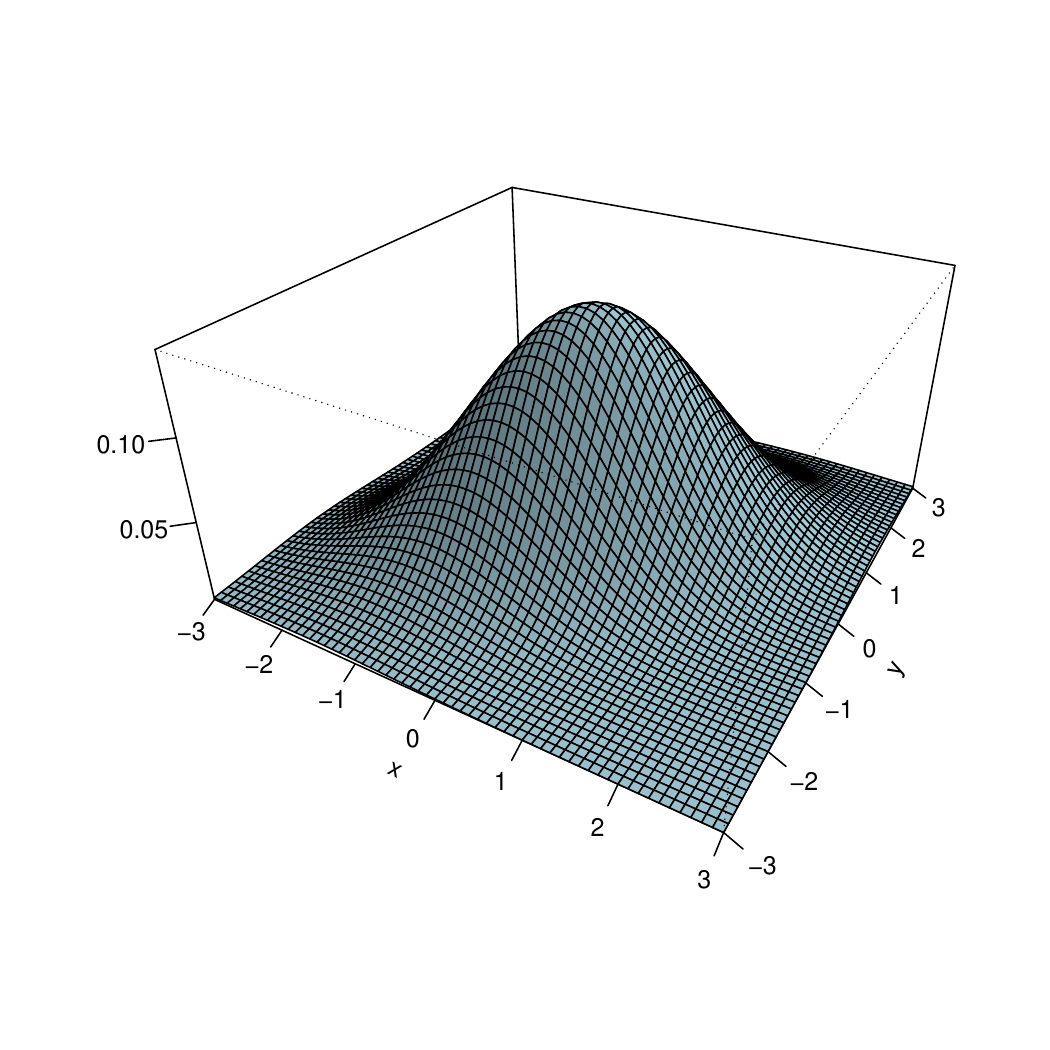}
\caption{Density plot of the Fr\'echet mean of the measures in Figure~\ref{fig:normalsS22}.}
\label{fig:normalbarS22}
\end{figure}

\begin{figure}
\centering

\subfigure[One-dimensional example: Procrustes registration maps $\mathbf t_{\bar\mu}^{\mu^i}$ from the Fr\'echet mean $\bar\mu$ to the four measures $\{\mu^i\}$ in Figure~\ref{fig:F-8S80den}. The left plot corresponds to the bimodal Gaussian mixture, and the right plot to the Gaussian/gamma mixture.]{
\includegraphics[trim=10mm 16mm 10mm 20mm, clip, scale = 0.42]{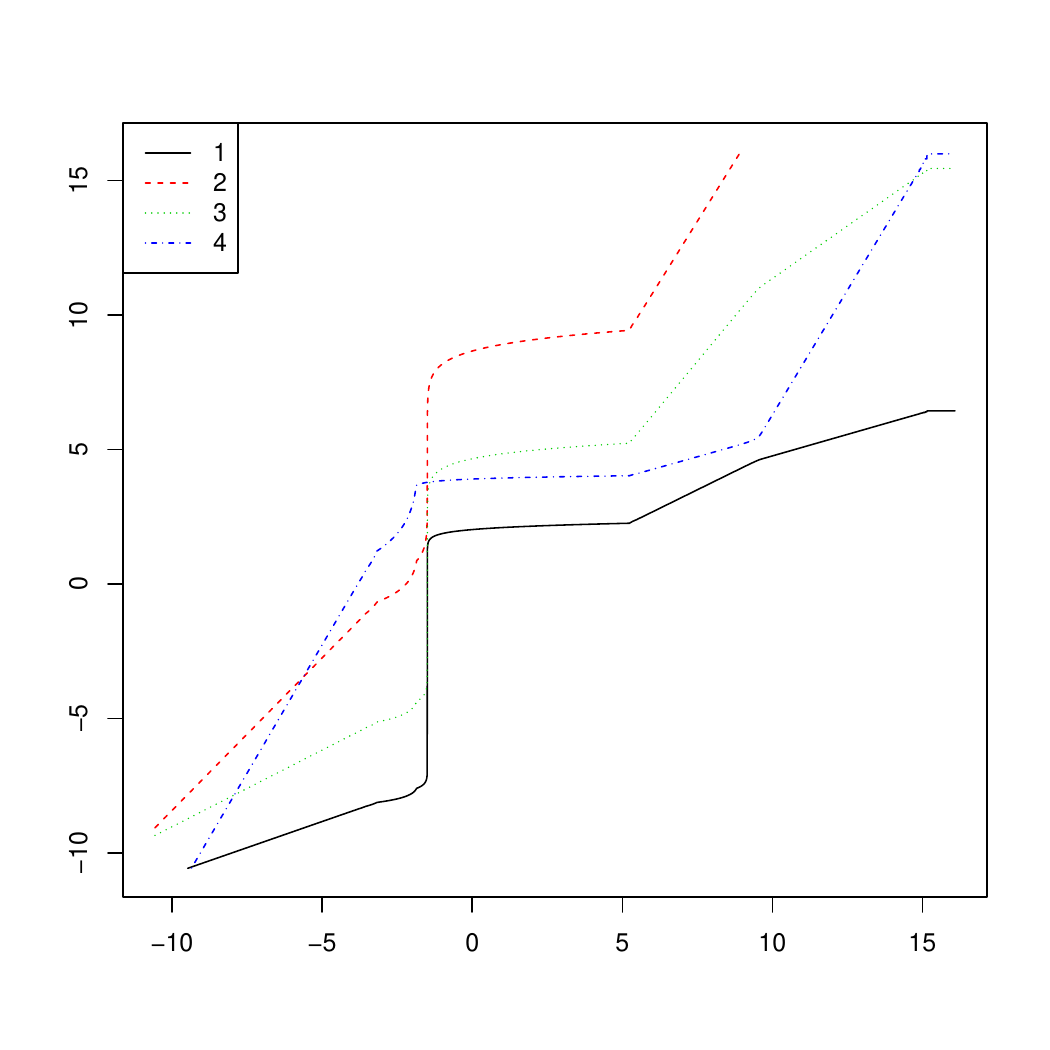}
\includegraphics[trim=10mm 16mm 10mm 20mm, clip, scale = 0.42]{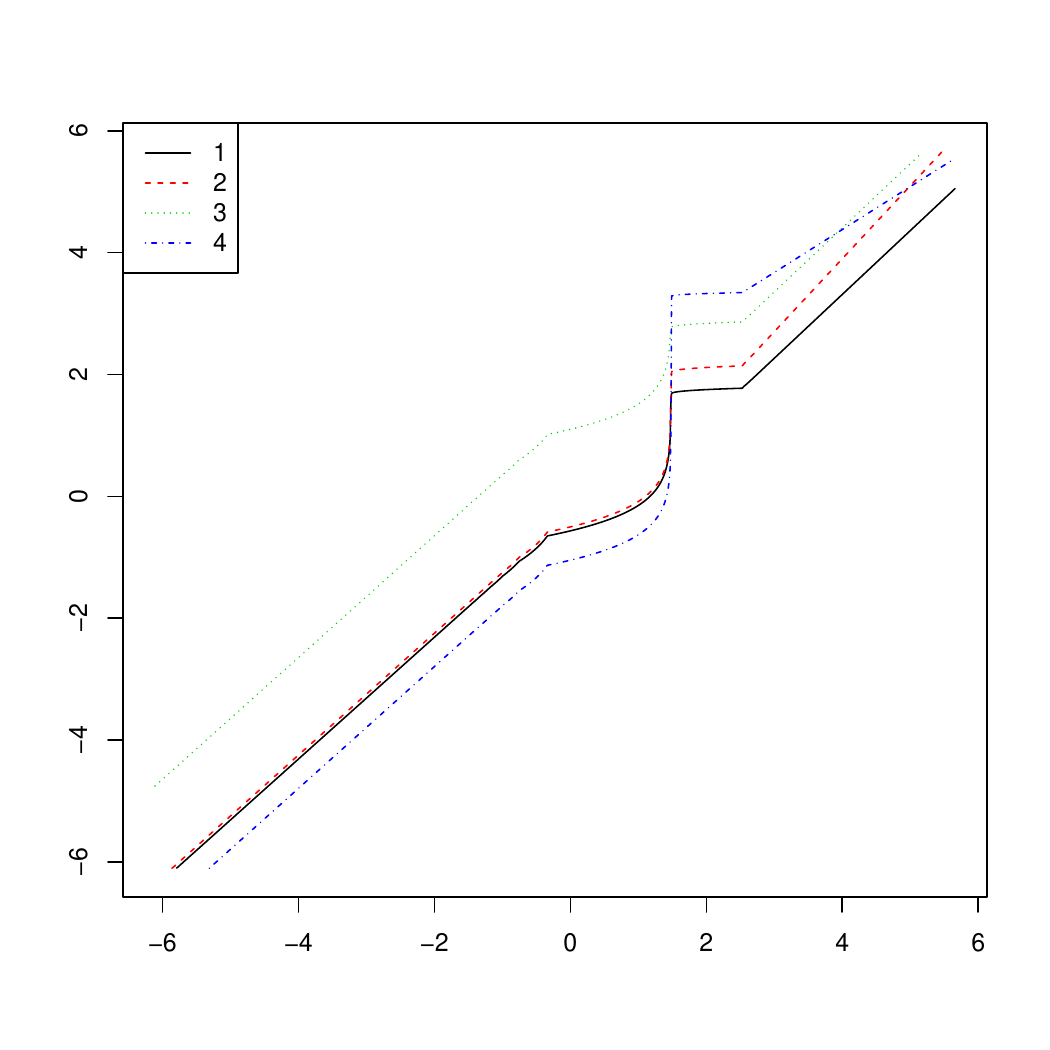}
}


\bigskip
\medskip

\subfigure[Common copula example: Procrustes registration maps $\mathbf t_{\bar\mu}^{\mu^i}$ (depicted as a vector field $\{\mathbf t_{\bar\mu}^{\mu^i}(x)-x:x\in\mathbb{R}^2\}$) from the Fr\'echet mean $\bar\mu$ of Figure~\ref{fig:F-8S80bar} to the four measures $\{\mu^i\}$ of Figure~\ref{fig:F-8S80}.  The colours match those of Figure~\ref{fig:F-8S80den}.]{
\includegraphics[trim=10mm 17mm 15mm 17mm, clip, scale = 0.25]{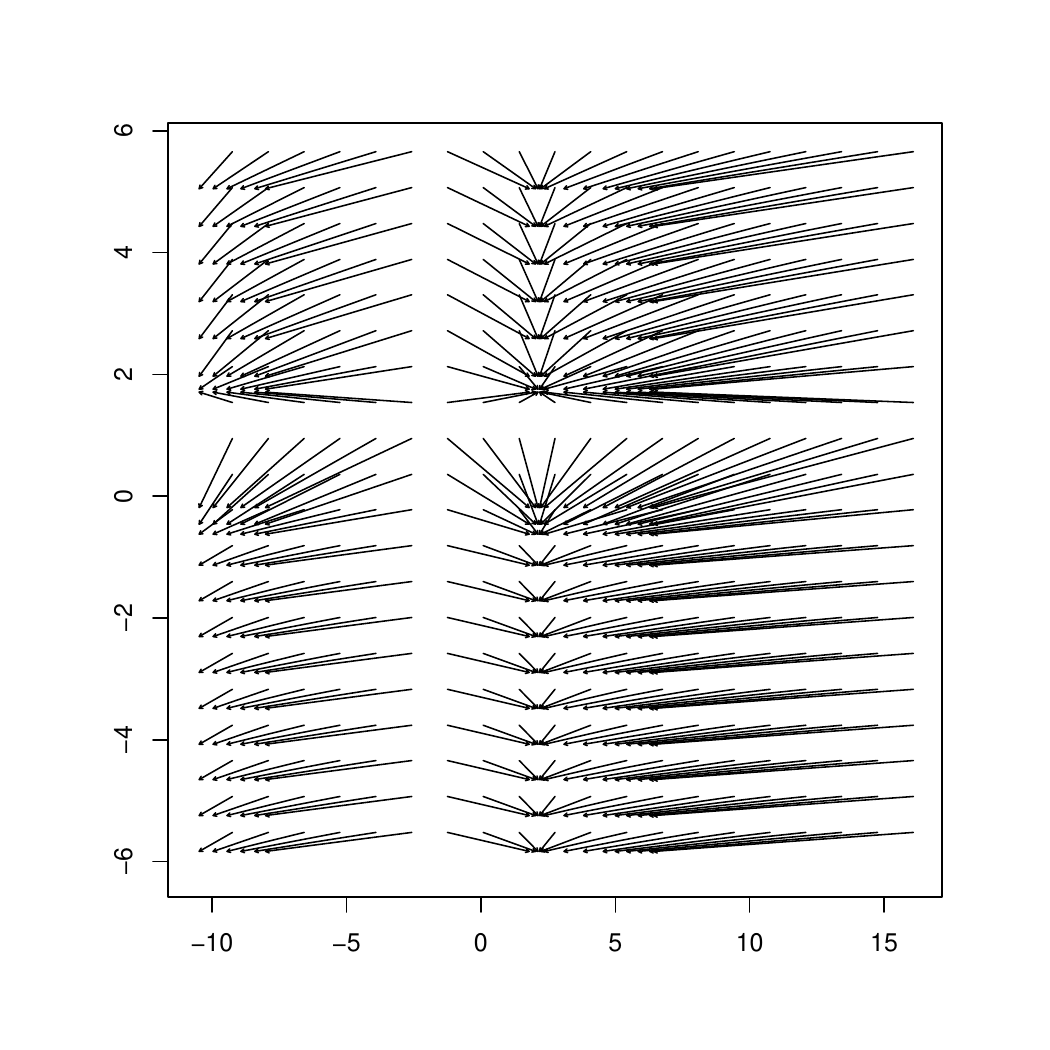}
\includegraphics[trim=10mm 17mm 15mm 17mm, clip, scale = 0.25]{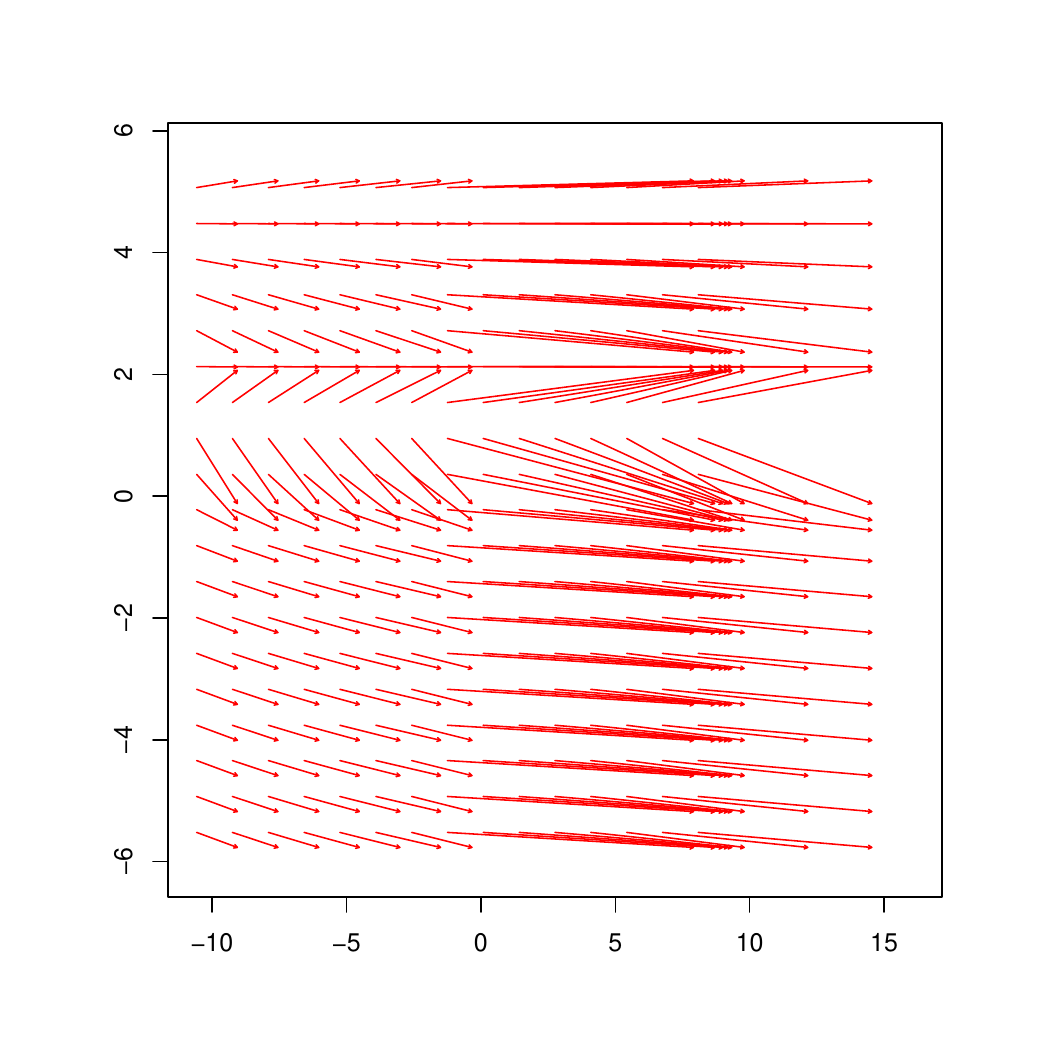}
\includegraphics[trim=10mm 17mm 15mm 17mm, clip, scale = 0.25]{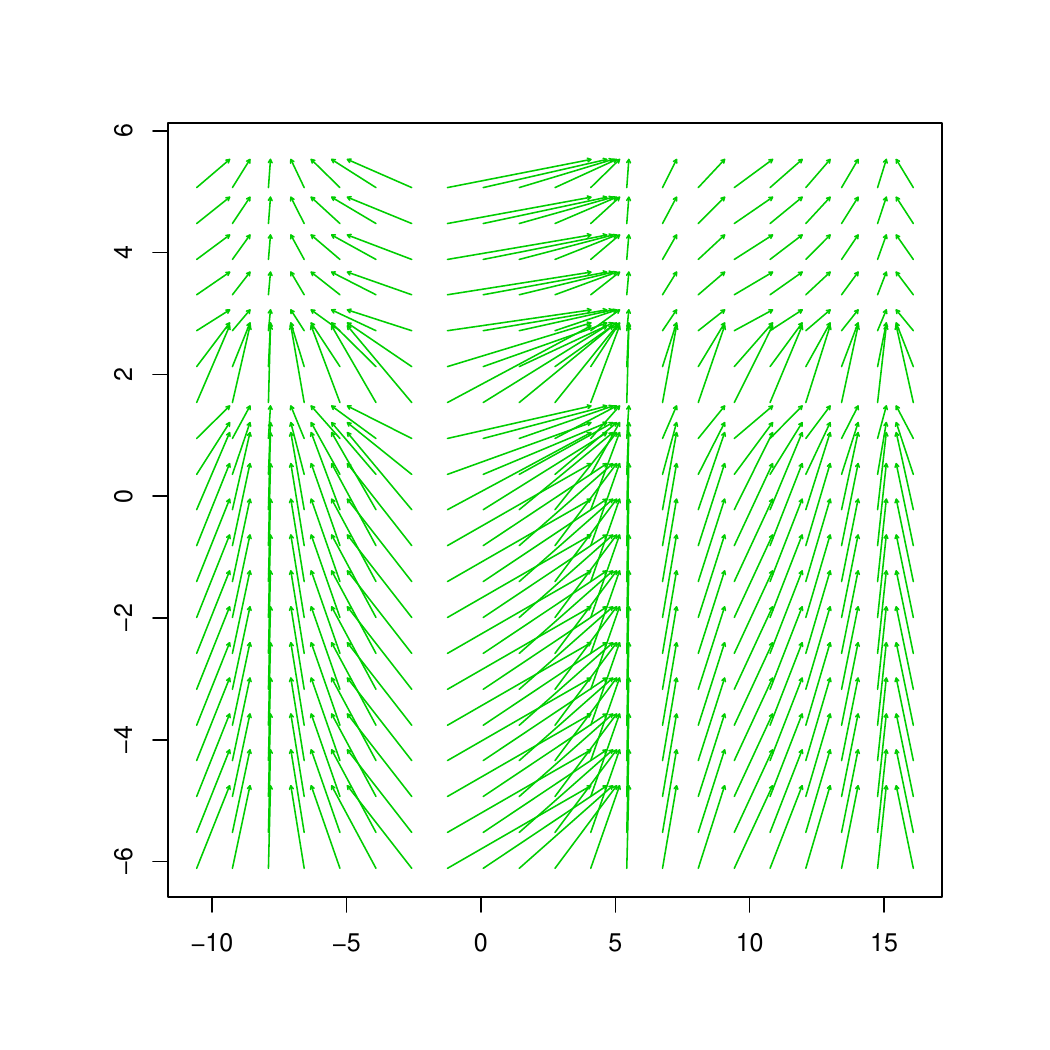}
\includegraphics[trim=10mm 17mm 15mm 17mm, clip, scale = 0.25]{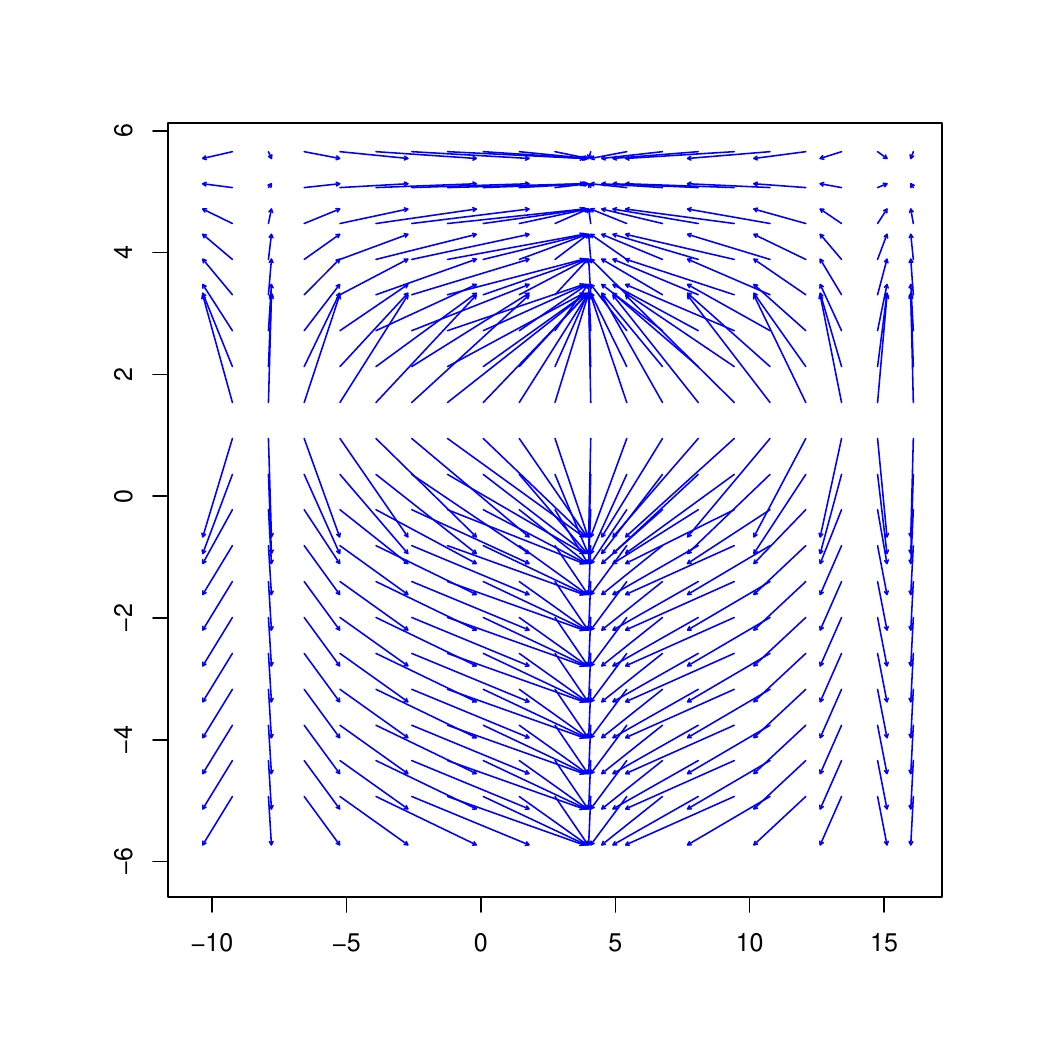}
}

\bigskip
\medskip
\subfigure[Gaussian example: Procrustes registration maps $\mathbf t_{\bar\mu}^{\mu^i}$ (depicted as a vector field $\{\mathbf t_{\bar\mu}^{\mu^i}(x)-x:x\in\mathbb{R}^2\}$) from the Fr\'echet mean $\bar\mu$ of Figure~\ref{fig:normalbarS22} to the four measures $\{\mu^i\}$ of Figure~\ref{fig:normalsS22}. The order corresponds to that of Figure~\ref{fig:normalsS22} (left to right and top to bottom).]{
\includegraphics[trim=10mm 17mm 15mm 17mm, clip, scale = 0.25]{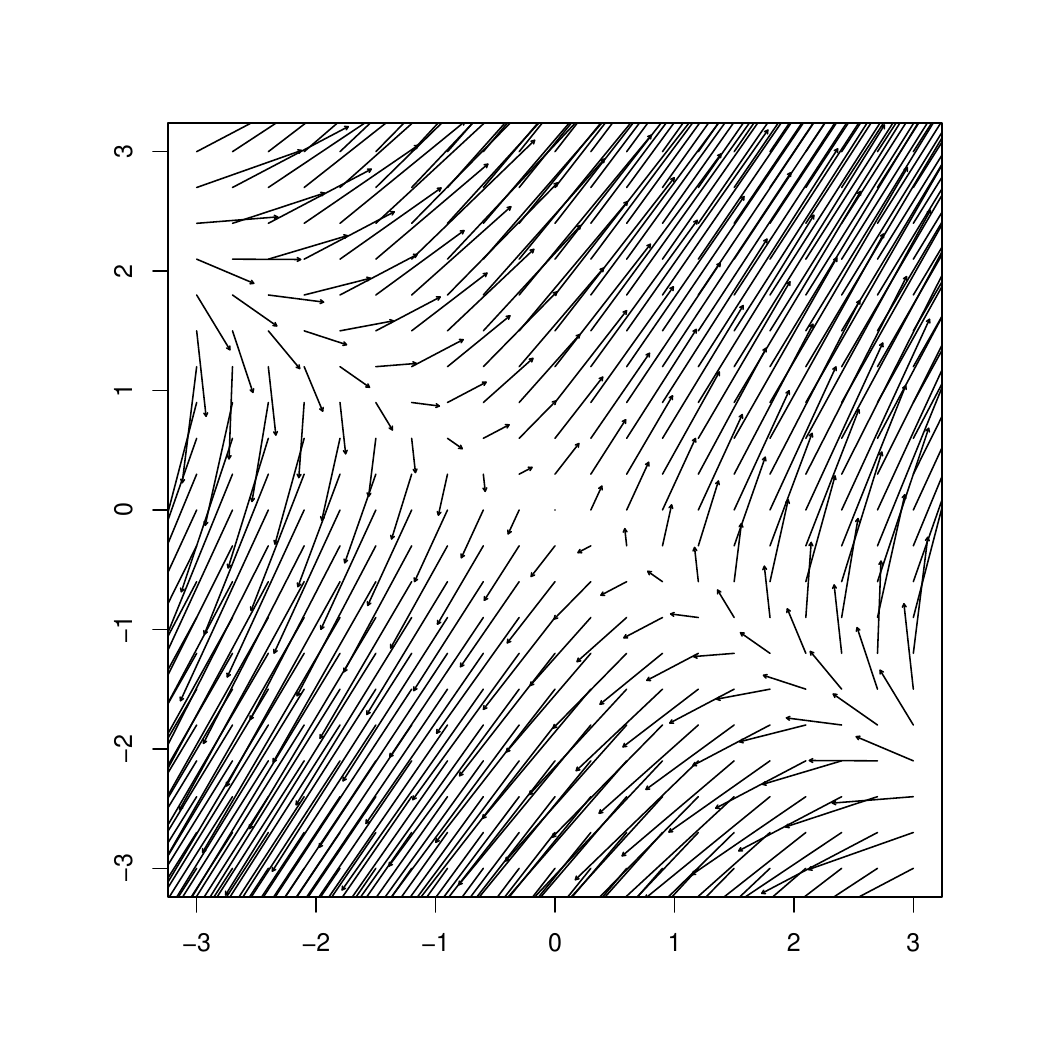}
\includegraphics[trim=10mm 17mm 15mm 17mm, clip, scale = 0.25]{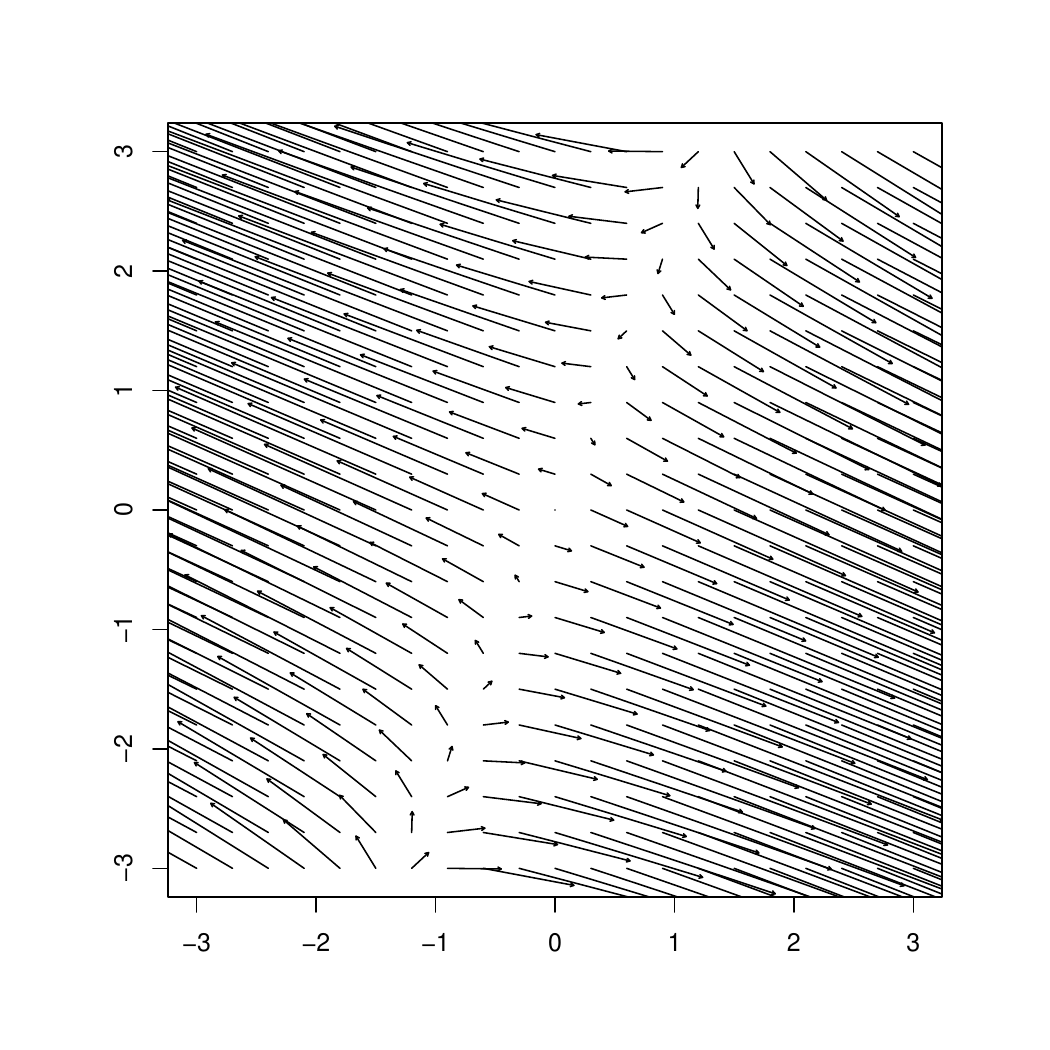}
\includegraphics[trim=10mm 17mm 15mm 17mm, clip, scale = 0.25]{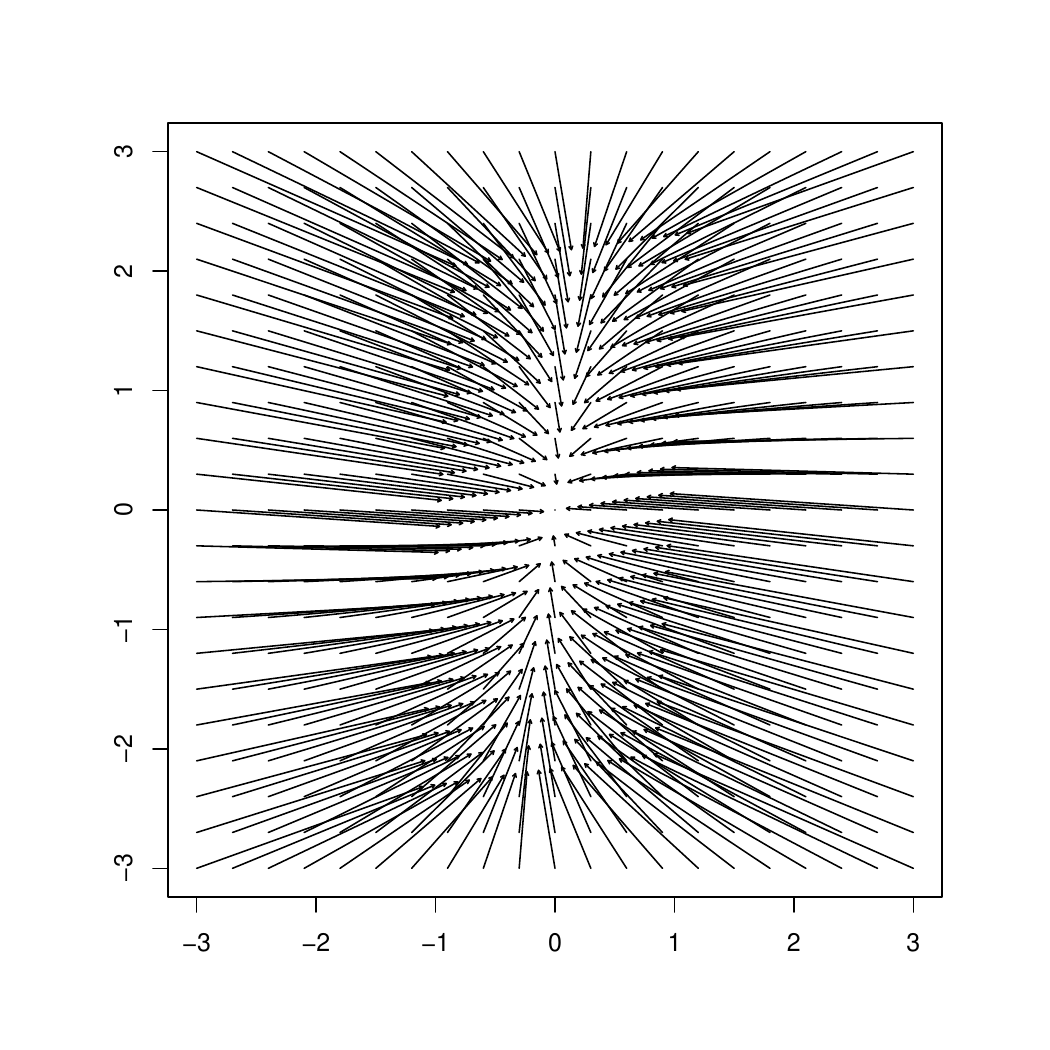}
\includegraphics[trim=10mm 17mm 15mm 17mm, clip, scale = 0.25]{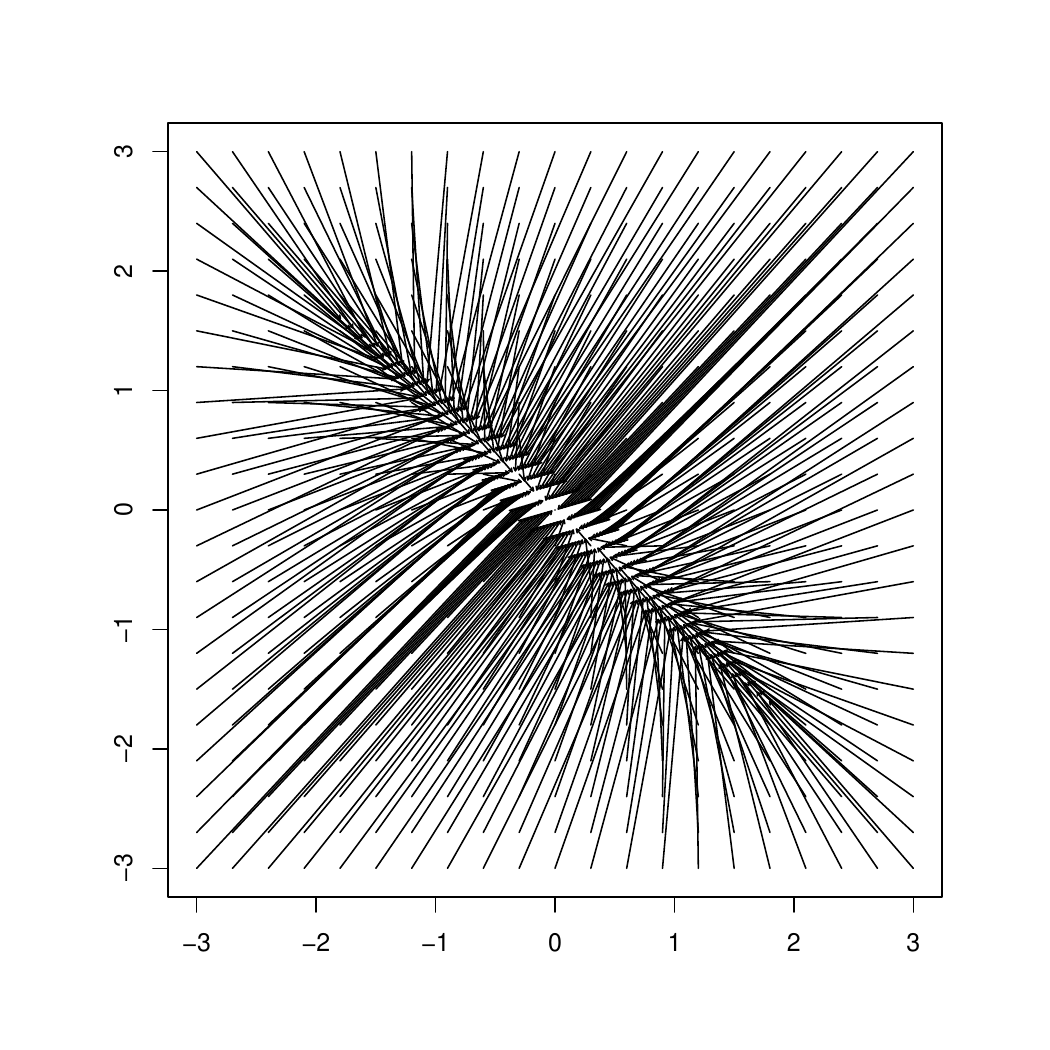}
}

\caption{Procrustes registration maps for the one-dimensional, common copula, and Gaussian examples.}\label{vector_fields}
\end{figure}

\subsection{Partially Gaussian Trivariate Measures}  We now apply Algorithm~\ref{algo} in a situation that entangles two of the previous settings.  Let $U$ be a $3\times 3$ real orthogonal matrix with columns $U_1$, $U_2$, $U_3$ and let $\mu^i$ have density
\[
g^i(y_1,y_2,y_3)
=g^i(y)
=f^i(U_3^ty)
\frac1{2\pi\sqrt{\mathrm{det}S^i}}     \exp\left[-\frac{(U_1^ty, U_2^ty)(S^i)^{-1}\binom{U_1^ty}{U_2^ty}}2\right],
\]
with $f^i$ bounded density on the real line and $S^i\in \mathbb R^{2\times 2}$ positive definite.  We simulated $N=4$ such densities with $f^i$ as in \eqref{eq:bigaussden} and $S^i\sim \mathrm{Wishart}(I_2, 2)$.   We apply Algorithm~\ref{algo} to this collection of measures and find their Fr\'echet mean (in Section~\ref{zempan16supp} we provide precise details on how the optimal maps were calculated).  Figure~\ref{fig:levelsetsS22L3} shows level set of the resulting densities for some specific values.  The bimodal nature of $f^i$ implies that for most values of $a$, $\{x:f^i(x)=a\}$ has four elements.  Hence the level sets in the figures are unions of four separate parts, with each peak of $f^i$ contributing two parts that form together the boundary of an ellipsoid in $\mathbb R^3$ (see Figure~\ref{fig:levelsetsS22L334col}).  The principal axes of these ellipsoids and their position in $\mathbb R^3$ differ between the measures, but the Fr\'echet mean can be viewed as an average of those in some sense.

In terms of orientation (principal axes) of the ellipsoids, the Fr\'echet mean is most similar to $\mu^1$ and $\mu^2$, whose orientations are similar to one another.

\begin{figure}
\includegraphics[trim=10mm 0mm 10mm 15mm, clip, scale = 0.55]{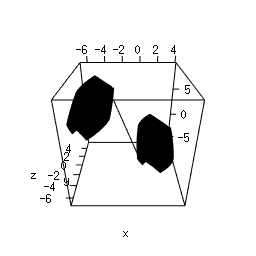}
\includegraphics[trim=10mm 0mm 10mm 15mm, clip, scale = 0.55]{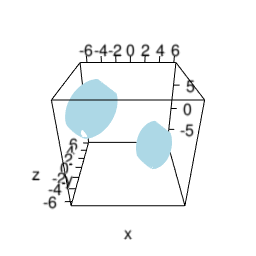}
\includegraphics[trim=10mm 0mm 10mm 15mm, clip, scale = 0.55]{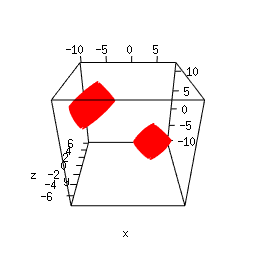}\\
\includegraphics[trim=10mm 0mm 10mm 15mm, clip, scale = 0.55]{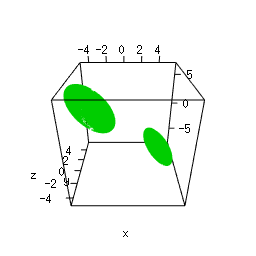}
\includegraphics[trim=10mm 0mm 10mm 15mm, clip, scale = 0.55]{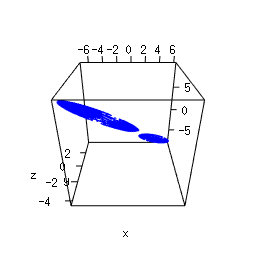}
\caption{The set $\{v\in\mathbb R^3:g^i(v)=0.0003\}$ for $i=1$ (black), the Fr\'echet mean (light blue), $i=2,3,4$ in red, green and dark blue respectively.}
\label{fig:levelsetsS22L3}
\end{figure}

\begin{figure}
\includegraphics[trim=10mm 0mm 10mm 15mm, clip, scale = 0.55]{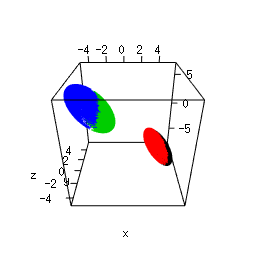}
\includegraphics[trim=10mm 0mm 10mm 15mm, clip, scale = 0.55]{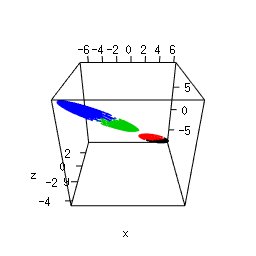}
\caption{The set $\{v\in\mathbb R^3:g^i(v)=0.0003\}$ for $i=3$ (left) and $i=4$ (right), with each of the four different inverses of the bimodal density $f^i$ corresponding to a colour.}
\label{fig:levelsetsS22L334col}
\end{figure}

In the most general examples, one might not be able to analytically obtain the optimal maps at each iteration.  In such situations, one needs to resort to numerical schemes such as Benamou \& Brenier \cite{benamou2000computational}, Haber et al.\ \cite{haber2010efficient} or Chartrand et al.\ \cite{chartrand2009gradient} to obtain the $N$ optimal maps at each iteration {(see the concluding remarks for further discussion about numerical issues)}.  Usually such schemes are iterative themselves, so one must take care in managing propagation of errors resulting from using approximate rather than exact transport maps.

\section{Concluding Remarks}\label{sec:conclusions}
While the algorithm and the convergence analysis in this work were discussed in the context of absolutely continuous measures, it is worth mentioning the possibility of applying it to discrete measures in some special cases.  Specifically, suppose that each measure $\mu^i$ is uniform on a set of $M$ distinct points, $\{x^i_m\}_{m=1}^M$.  Define as in Anderes et al.\ \cite{anderes2015discrete} the set
\[
S=
\frac1N\left\{x^1_{m_1} + \dots + x^N_{m_N}:
1\le m_i\le M,\quad i=1,\dots,N
\right\}
\]
of averages of choices of points from the supports of $\{\mu^i\}$.  Let $\gamma_0$ be an initial measure, uniform on $M$ distinct points as well.  There exist optimal maps (not necessarily unique) from $\gamma_0$ to each $\mu^i$, and they can be averaged to yield $\gamma_1$.  If $|S|=M^N$ (that is, the collection $\{x^i_m\}$ satisfies a general-position-type condition), then $\gamma_1$ will be concentrated on $M$ points as well, and one may carry out further iterations. A conceptual problem with this application is that the Fr\'echet functional is not differentiable at discrete measures, so Algorithm~\ref{algo} can no longer be viewed as gradient descent (but can still be seen as Procrustes averaging).  Also, the Fr\'echet mean itself may fail to be unique. In simulations we observed very rapid convergence of this iteration to a Karcher mean, but the specific limit depended quite heavily on the initial point, and was usually not a Fr\'echet mean.  For problems of moderate size, one can recast the problem of minimising the Fr\'echet functional as a linear program \cite{anderes2015discrete} and find an exact Fr\'echet mean.  In fact, Anderes et al.\ \cite{anderes2015discrete} treat the more general problem where the measures are supported on a different number of points and are not constrained to be uniform on their supports.

An important issue more generally is that of efficient approximate numerical schemes for calculating Fr\'echet means in Wasserstein space. This is a very active field of research with a rapidly-growing literature (both in numerical analysis and in computer science), and a detailed survey is far beyond the scope of this paper. If one is content with an \emph{approximate} solution, then there are several approaches suggested in the literature. Indicatively, let us mention Bonneel et al.\ \cite{bonneel2015sliced} who use a tomographic perspective to reduce the problem to 1-dimensional computations; Carlier, Oberman \& Oudet \cite{carlier2015numerical} who use nonsmooth optimisation techniques to solve a discretised version of the dual problem;  Oberman \& Ruan \cite{oberman2015efficient} exploit the sparsity of optimal plans to reduce the size of the linear program to a tractable one.

Another line of research involves \emph{entropic regularisation}, where one adds an entropy term to the definition of the Wasserstein distance.  This leads to a strictly convex problem that is far better behaved than the original problem. Though its solution no longer yields the actual mean, it can be thought of as a regularised surrogate Fr\'echet mean.  In this direction, Cuturi \& Doucet \cite{cuturi2014fast} employ differentiability properties and carry out what could be thought of as a ``gradient descent", a discrete analogue of Algorithm~\ref{algo};   Benamou et al.\  \cite{benamou2015iterative} exploit the structure of the constraints as an intersection of convex sets by means of iterating Bregman projections that can be evaluated efficiently.  Solomon et al.\ \cite{solomon2015convolutional} extend this idea to the manifold setup, by convoluting with a heat kernel;  and Cuturi \& Peyr\'e \cite{cuturi2016smoothed} employ the regularisation at the level of the dual, rather than the primal, problem.  Recently, Rolet, Cuturi \& Peyr\'e \cite{rolet2016fast} employed this technique in the context of dictionary learning;  and Bonneel, Peyr\'e \& Cuturi \cite{bonneel2016wasserstein} define a sort of ``barycentric convex hull" of given histograms and show how to project a new histogram onto that convex hull.

{{\footnotesize

\begin{center}
\textsc{Acknowledgements}
\end{center}
This research was supported by a European Research Council Starting Grant Award to Victor M.~Panaretos. Part of this work grew out of work presented at the Mathematical Biosciences Institute (Ohio State University), during the \href{http://mbi.osu.edu/event/?id=162}{``Statistics of Time Warping and Phase Variation"} Workshop, November 2012. We wish to acknowledge the stimulating environment offered by the Institute. We wish to warmly thank Prof.\ Cl\'{e}ment Hongler for several useful discussions. We are also very thankful to two reviewers and an associate editor for their detailed and constructive feedback.
}}

\section{Supplementary Material}\label{zempan16supp}
This section contains material supplementing the main article.  The first section contains the proof that no further requirement except for finite second moments is needed for the convergence of the algorithm presented in the article. Next, we provide further details and theoretical results pertaining to the simulation scenarios described in Section~\ref{SEC:EX}.  Finally, we provide all the proofs not included in the main body for tidiness, as well as additional technical details.

\section*{A complete proof of Lemma~\ref{lem:seqcompact}}
In this section we show that condition \eqref{eq:3mom} is not needed for \eqref{eq:uniformboundedness} to hold.  The idea is that \eqref{eq:uniformboundedness} only requires a tiny bit more than finite second moments, and that is provided in Lemma~\ref{lem:nohighestmom}.  Throughout this section, all functions are assumed nonnegative (possibly infinite-valued) and defined on $[0,\infty)$ unless explicitly stated otherwise.  We write $f(x)\in\omega(g(x))$ or $f\in\omega(g)$ if $f(x)/g(x)\to\infty$ as $x\to\infty$.

\begin{lem}\label{lem:intfun}
Let $f$ be integrable.  Then there exists a continuous nondecreasing function $g\in\omega(1)$ such that $fg$ is integrable.
\end{lem}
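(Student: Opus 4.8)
The plan is to build $g$ explicitly from the tail integral of $f$. Define
\[
F(x) = \int_x^\infty f(t)\,\mathrm{d}t,\qquad x\ge 0.
\]
Since $f$ is nonnegative and integrable, $F$ is finite, nonincreasing, and tends to $0$ as $x\to\infty$; moreover $F$ is absolutely continuous on every bounded interval, with $F'(x)=-f(x)$ for almost every $x$ (because $x\mapsto\int_0^x f$ is absolutely continuous with derivative $f$ a.e.). The only obstruction to using $F$ directly is that $F$ may vanish on a tail when $f$ does, which would make a reciprocal of $F$ infinite; I would therefore first regularise it.

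Set $\tilde F(x)=F(x)+e^{-x}$. As a sum of two nonnegative, nonincreasing, absolutely continuous functions tending to $0$, the function $\tilde F$ inherits all these properties, and in addition is \emph{strictly positive} everywhere. Now define
\[
g(x)=\frac{1}{\sqrt{\tilde F(x)}}.
\]
Because $t\mapsto t^{-1/2}$ is continuous and decreasing on $(0,\infty)$ while $\tilde F$ is continuous, strictly positive and nonincreasing, the composition $g$ is continuous and nondecreasing; and since $\tilde F(x)\to 0$, we get $g(x)\to\infty$, i.e.\ $g\in\omega(1)$.

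It remains to bound $\int_0^\infty fg$. Since $-\tilde F'=f+e^{-x}\ge f\ge 0$ almost everywhere, I would estimate, for each $R>0$,
\[
\int_0^R f(x)g(x)\,\mathrm{d}x
\le \int_0^R \frac{-\tilde F'(x)}{\sqrt{\tilde F(x)}}\,\mathrm{d}x
= 2\left(\sqrt{\tilde F(0)}-\sqrt{\tilde F(R)}\right)
\le 2\sqrt{\tilde F(0)},
\]
the middle equality being the fundamental theorem of calculus applied to $x\mapsto 2\sqrt{\tilde F(x)}$. This is the step that needs care, and I expect it to be the main (if modest) obstacle: $f$ is only Lebesgue integrable, so the FTC must be justified for absolutely continuous functions. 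I would do this by noting that on the compact interval $[0,R]$ the function $\tilde F$ is bounded below by a positive constant (being continuous and strictly positive), hence $t\mapsto\sqrt t$ is Lipschitz on its range there and $\sqrt{\tilde F}$ is absolutely continuous on $[0,R]$, so the FTC applies. Letting $R\to\infty$ and using $\tilde F(R)\to 0$ gives
\[
\int_0^\infty fg \le 2\sqrt{\tilde F(0)} = 2\sqrt{F(0)+1}<\infty,
\]
which completes the proof. As an alternative avoiding the FTC, one could instead pick $0=a_0<a_1<\cdots\to\infty$ with $F(a_n)\le 2^{-n}$, let $g$ be the continuous piecewise-linear function with $g(a_n)=n$, and bound $\int_0^\infty fg\le\sum_n (n+1)\int_{a_n}^\infty f\le\sum_n (n+1)2^{-n}<\infty$; I would nonetheless present the $\tilde F^{-1/2}$ construction as the main argument, since it is shorter and produces an explicit $g$.
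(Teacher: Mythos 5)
Your proof is correct and follows essentially the same route as the paper's: both take $g$ to be (a version of) $F^{-1/2}$ for the tail integral $F(x)=\int_x^\infty f$, and both reduce $\int fg$ to $\int_0^{F(0)}u^{-1/2}\,du$ via the fundamental theorem of calculus / change of variables. Your regularisation $\tilde F=F+e^{-x}$ is a welcome extra precaution that the paper omits, since it keeps $g$ finite even when $f$ vanishes on a tail (where $F\equiv 0$ and the paper's $g=F^{-1/2}$ would be undefined).
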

\begin{proof}  Set $F(x)=\ownint x\infty {f(t)}t$ and $g(x)=[F(x)]^{-1/2}$.  Then a change of variables gives
\[
\ownint 0\infty {f(x)g(x)}x
=\ownint 0\infty {f(x)[F(x)]^{-1/2}}x
=\ownint 0{F(0)} {u^{-1/2}}u
=2\sqrt{\|f\|_1}
<\infty,
\]
and $g(x)\to\infty$ because $F(x)\to0$ as $x\to\infty$ by dominated convergence.
\end{proof}

\begin{lem}\label{lem:nohighestmom}
Let $X$ be a random variable with $\E X^2<\infty$.  Then there exists a convex nondecreasing function $H(x)\in\omega(x^2)$ such that $\E H(X)<\infty$.
\end{lem}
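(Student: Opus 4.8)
The plan is to build $H$ as a primitive $H(x)=\int_0^x s\,g(s)\,ds$ of the product $s\mapsto s\,g(s)$, where $g\colon[0,\infty)\to[0,\infty)$ is a suitably chosen nondecreasing function diverging to $\infty$. With this form, convexity and monotonicity of $H$ are automatic, since the integrand $s\,g(s)$ is a product of two nonnegative nondecreasing functions and hence itself nondecreasing. The entire problem then reduces to a single requirement on $g$: it must diverge slowly enough to keep $\E[X^2 g(X)]$ finite, yet fast enough to force $H\in\omega(x^2)$. Replacing $X$ by $|X|$ if necessary, I will assume $X\ge 0$, which is the case of interest (the lemma is applied to $\|V\|$ for $V$ distributed as one of the $\mu^i$).

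First I would transfer the moment hypothesis into a statement about a finite measure. Writing $\mu_X$ for the law of $X$, define the finite Borel measure $\rho$ on $[0,\infty)$ by $d\rho(x)=x^2\,d\mu_X(x)$, so that $\rho([0,\infty))=\E X^2<\infty$. The task becomes: produce a nondecreasing $g$ with $g\to\infty$ and $\int_{[0,\infty)} g\,d\rho<\infty$, noting that this integral is exactly $\E[X^2 g(X)]$. This is the measure-theoretic analogue of Lemma~\ref{lem:intfun}; indeed, when $X$ has a density $p$ one may invoke Lemma~\ref{lem:intfun} directly with the integrable function $f(x)=x^2 p(x)$. For the general case I would argue by hand: since $\rho$ is finite, its tails satisfy $\rho([n,\infty))\to 0$, so I can choose integers $0=n_0<n_1<n_2<\cdots$ with $\rho([n_j,\infty))\le 2^{-j}$ and set $g(x)=j$ for $x\in[n_j,n_{j+1})$. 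Then $g$ is nondecreasing, $g(x)\to\infty$, and
\[
\int_{[0,\infty)} g\,d\rho
=\sum_{j=0}^\infty j\,\rho\big([n_j,n_{j+1})\big)
\le \sum_{j=0}^\infty j\,2^{-j}
<\infty .
\]

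It then remains to verify the three advertised properties of $H(x)=\int_0^x s\,g(s)\,ds$. Convexity and monotonicity hold as noted. For the growth, using that $g$ is nondecreasing,
\[
H(x)
\ge \int_{x/2}^x s\,g(s)\,ds
\ge g(x/2)\int_{x/2}^x s\,ds
=\tfrac{3}{8}\,x^2\,g(x/2),
\]
so $H(x)/x^2\ge \tfrac38 g(x/2)\to\infty$ and $H\in\omega(x^2)$. For integrability, again by monotonicity of $g$,
\[
H(x)
=\int_0^x s\,g(s)\,ds
\le g(x)\int_0^x s\,ds
=\tfrac12 x^2 g(x),
\]
whence $\E H(X)\le \tfrac12\,\E[X^2 g(X)]=\tfrac12\int g\,d\rho<\infty$.

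The only genuinely delicate step is the construction of $g$: the divergence $g\to\infty$ must be reconciled with the finiteness of $\int g\,d\rho$, and this is precisely the point where finiteness of $\rho$, i.e.\ $\E X^2<\infty$, is essential. Everything downstream — the formula $H(x)=\int_0^x s\,g(s)\,ds$ and the two one-line bounds — is routine bookkeeping, and in particular no continuity of $g$ is needed for $H$ to be convex and nondecreasing.
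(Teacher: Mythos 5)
Your proof is correct. It rests on the same underlying idea as the paper's---a de la Vall\'ee Poussin--type construction: find a nondecreasing weight $g$ diverging to infinity slowly enough that the weighted second moment stays finite, then integrate it up to obtain $H$. The execution differs in two places. First, where the paper applies its Lemma~\ref{lem:intfun} to the integrable tail function $t\mapsto\P(X^2>t)$, obtaining the explicit weight $g=F^{-1/2}$ and then passing to $\E G(X^2)$ via a layer-cake/change-of-variables identity (which uses that $G$ is continuous and invertible), you work directly with the finite measure $x^2\,d\mu_X$ and build $g$ as a dyadic step function; this bypasses the change of variables entirely and, as you note, costs nothing, since continuity of $g$ is not needed for convexity of $H$. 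Second, your $H(x)=\int_0^x s\,g(s)\,ds$ replaces the paper's $H(x)=G(x^2)$ with $G'=g$; these are the same construction up to reparametrising $g$, and your two bounds $\tfrac38\,x^2 g(x/2)\le H(x)\le\tfrac12\,x^2 g(x)$ deliver the growth and the integrability cleanly. The reduction to $X\ge0$ is harmless: the paper's standing convention in that section restricts all functions to $[0,\infty)$, and the lemma is only invoked for $X=\|Z^i\|$. On balance, your argument is more elementary and self-contained, while the paper's yields a closed-form weight; either suffices for the application.
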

\begin{proof}  Since
\[
\infty>
\E X^2
=\ownint 0\infty {\P(X^2>t)}t,
\]
there exists a function $g$ as in Lemma~\ref{lem:intfun} such that
\[
\infty
>\ownint 0\infty {\P(X^2>t)g(t)}t
=\ownint 0\infty {\P(X^2>G^{-1}(u))}u
=\ownint 0\infty {\P(G(X^2)>u)}u
=\E G(X^2),
\]
where $G$ is the primitive of $g$ and $G(0)=0$.  The properties of $g$ imply that $G$ is convex and invertible, and that for $y<x$,
\[
G(x)
\ge \ownint yx{g(t)}t
\ge \ownint yx{g(y)}t
=(x-y)g(y),
\]
which, combined with $g(y)\to\infty$ as $y\to\infty$, yields
\[
\liminf_{x\to\infty}\frac{G(x)}x
\ge g(y)
\to \infty,
\qquad y\to\infty,
\]
so that $G(x)\in\omega(x)$.  The function $H(x)=G(x^2)$ then has all the desired properties.
\end{proof}

\begin{prop}
Equation~\eqref{eq:uniformboundedness} holds if merely
\[
\ownint {\R^d}{}{\|x\|^2}{\mu^i(x)}
<\infty
,\qquad i=1,\dots,N.
\]
\end{prop}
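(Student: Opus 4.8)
The plan is to substitute, for the cubic weight $\|x\|^3$ used under the third--moment hypothesis \eqref{eq:3mom}, a weight $H(\|x\|)$ tailored to the given measures, where $H$ is furnished by Lemma~\ref{lem:nohighestmom}. Concretely, I would first form the mixture measure $\bar\nu = N^{-1}\sum_{i=1}^N\mu^i$, which has finite second moment by hypothesis, and apply Lemma~\ref{lem:nohighestmom} to a random variable whose law is the push-forward of $\bar\nu$ under $x\mapsto\|x\|$. This produces a single convex, nondecreasing function $H$ with $H(x)\in\omega(x^2)$ and $\ownint{}{}{H(\|x\|)}{\bar\nu(x)}<\infty$; since $H\ge0$ and $\bar\nu$ is an average, each individual integral $\ownint{}{}{H(\|x\|)}{\mu^i(x)}$ is then finite, and I set $M_H = N^{-1}\sum_i\ownint{}{}{H(\|x\|)}{\mu^i(x)}<\infty$.

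The key estimate is a uniform bound $\sup_{j\ge1}\ownint{}{}{H(\|x\|)}{\gamma_j(x)}\le M_H$. For each $j\ge1$ I would use $\gamma_j=T_{j-1}\#\gamma_{j-1}$ with $T_{j-1}=N^{-1}\sum_i\mathbf t_{\gamma_{j-1}}^{\mu^i}$ and write
\[
\ownint{}{}{H(\|x\|)}{\gamma_j(x)}
=\ownint{}{}{H\left(\left\|\frac1N\sum_{i=1}^N\mathbf t_{\gamma_{j-1}}^{\mu^i}(x)\right\|\right)}{\gamma_{j-1}(x)}.
\]
The triangle inequality together with $H$ nondecreasing and convex gives, pointwise, $H(\|N^{-1}\sum_i a_i\|)\le H(N^{-1}\sum_i\|a_i\|)\le N^{-1}\sum_i H(\|a_i\|)$; applying this to $a_i=\mathbf t_{\gamma_{j-1}}^{\mu^i}(x)$ and using the measure-preserving identity $\mathbf t_{\gamma_{j-1}}^{\mu^i}\#\gamma_{j-1}=\mu^i$ bounds the right-hand side by $N^{-1}\sum_i\ownint{}{}{H(\|x\|)}{\mu^i(x)}=M_H$. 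This is exactly the role played by Jensen's inequality and the push-forward in the original third--moment argument, and it holds for every $j\ge1$ on the nose --- crucially, \emph{without} assuming $\ownint{}{}{H(\|x\|)}{\gamma_{j-1}(x)}$ is finite, since the convexity step already lands the integral on the $\mu^i$'s.

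To conclude, I would compare weights: for $\|x\|>R$,
\[
\|x\|^2
=\frac{\|x\|^2}{H(\|x\|)}\,H(\|x\|)
\le\Big(\sup_{s>R}\tfrac{s^2}{H(s)}\Big)H(\|x\|),
\]
so that for all $j\ge1$, $\ownint{\{\|x\|>R\}}{}{\|x\|^2}{\gamma_j(x)}\le\big(\sup_{s>R}s^2/H(s)\big)M_H$. Because $H(s)/s^2\to\infty$, the prefactor $\sup_{s>R}s^2/H(s)$ tends to $0$ as $R\to\infty$, giving the uniform vanishing over $j\ge1$. The single index $j=0$ is handled separately: $\gamma_0\in\mathcal P_2(\mathbb R^d)$ has finite second moment, so $\ownint{\{\|x\|>R\}}{}{\|x\|^2}{\gamma_0(x)}\to0$ on its own. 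Taking the maximum of the two contributions yields \eqref{eq:uniformboundedness}, completing the gap left in Lemma~\ref{lem:seqcompact}.

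I expect the main obstacle to be the second step --- producing one weight $H$ that simultaneously grows strictly faster than $x^2$, stays convex and nondecreasing, and remains integrable against all $N$ measures at once. Passing to the mixture $\bar\nu$ resolves this cleanly, reducing the construction to a single application of Lemma~\ref{lem:nohighestmom} and exploiting nonnegativity to split the integral back across the $\mu^i$. A secondary point worth care is that the induction cannot be seeded at $\gamma_0$ (whose $H$-moment may be infinite), but the one-step convexity bound sidesteps this entirely.
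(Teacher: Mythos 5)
Your proposal is correct and follows essentially the same route as the paper: replace the cubic weight by a convex, nondecreasing $H\in\omega(x^2)$ from Lemma~\ref{lem:nohighestmom}, propagate the uniform $H$-moment bound through the iteration via the triangle inequality, convexity of $H$, and the push-forward identity, then compare weights using $\sup_{s>R}s^2/H(s)\to0$. The only (cosmetic) difference is how the single common $H$ is produced --- you apply the lemma once to the mixture $N^{-1}\sum_i\mu^i$, whereas the paper takes the minimum of the functions $g^i$ obtained for each $\mu^i$ separately; both work, and your explicit treatment of the $j=0$ term is a welcome extra care.
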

\begin{proof}  Let $X_i=\|Z^i\|$ where $Z^i\sim \mu^i$.  Then there exist functions $g^i$ as in Lemma~\ref{lem:intfun} with
\[
\ownint 0\infty {\P(X^2_i>t)g^i(t)}t
<\infty
,\qquad i=1,\dots,N.
\]
The same holds with $g^i$ replaced by $g=\min_i g^i$, which is still continuous, nondecreasing and divergent.  Setting $H$ as in Lemma~\ref{lem:nohighestmom}, we see that $H(x)\in\omega(x^2)$ and
\[
M^i
=\mathbb EH(X^i)
=\ownint {\R^d}{}{H(\|x\|)}{\mu^i(x)}
<\infty,
\qquad i=1,\dots,N.
\]
Convexity of $H$ and $\|\cdot\|$ combined with monotonicity of $H$ yield
\begin{align*}
\ownint {\R^d}{}{H(\|x\|)}{\gamma_j(x)}
&=\ownint {\R^d}{}{H\left(\left\|\frac1N \sum_{i=1}^N \mathbf t_{\gamma_{j-1}}^{\mu^i} (x)\right\|\right)}{\gamma_{j-1}(x)}\\
&\le \frac1N\sum_{i=1}^N \ownint {\R^d}{}{H(\|\mathbf t_{\gamma_{j-1}}^{\mu^i}(x)\|)}{\gamma_{j-1}(x)}
=\frac1N\sum_{i=1}^N \ownint {\R^d}{}{H(\|x\|)}{\mu^i(x)}
\le M,
\end{align*}
where $M=\sum_{i=1}^N M^i/N$.  This implies that for any $R>0$ and any $j>0$,
\[
\ownint{\{x:\|x\|>R\}}{}{\|x\|^2}{\gamma_j(x)}
\le \sup_{y>R}\frac {y^2}{H(y)}\ownint{\{x:\|x\|>R\}}{}{H(\|x\|)}{\gamma_j(x)}
\le M\sup_{y>R}\frac {y^2}{H(y)},
\]
and \eqref{eq:uniformboundedness} follows because $H(y)\in\omega(y^2)$.
\end{proof}

\section*{Details for the illustrative examples in Section~\ref{SEC:EX}}
In this section we provide further details for finding the optimal maps in the examples of Section~\ref{SEC:EX} and theoretical results about the Fr\'echet mean and the behaviour of the algorithm.  Throughout this section, $\mu^1,\dots,\mu^N$ are given measures and $\gamma_0$ is the initial point of Algorithm~\ref{algo}.  We begin with two  lemmas regarding compatibility of the measures as defined in Section~\ref{SEC:EX}.

\begin{lem}[Compatibility and Convergence]
If $\mathbf t_{\mu^1}^{\mu^i}\circ \mathbf t_{\gamma_0}^{\mu^1}=\mathbf t_{\gamma_0}^{\mu^i}$ and $\mathbf t_{\mu^1}^{\mu^j}\circ \mathbf t_{\mu^i}^{\mu^1}=\mathbf t_{\mu^i}^{\mu^j}$ (in the relevant $L^2$ spaces) for all $i$ and all $j$, then Algorithm~\ref{algo} converges after a single step.
\end{lem}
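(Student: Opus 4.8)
The plan is to show that a single Procrustes iteration lands exactly on the measure $S\#\mu^1$, where $S = \frac1N\sum_{i=1}^N \mathbf t_{\mu^1}^{\mu^i}$, and that this measure is a Karcher (indeed Fr\'echet) mean, so that the iteration is stationary from the first step onward. First I would use the first compatibility hypothesis to identify $\gamma_1$ explicitly. Writing $T_0 = \frac1N\sum_{i=1}^N \mathbf t_{\gamma_0}^{\mu^i}$ and substituting $\mathbf t_{\gamma_0}^{\mu^i} = \mathbf t_{\mu^1}^{\mu^i}\circ\mathbf t_{\gamma_0}^{\mu^1}$ gives $T_0 = S\circ\mathbf t_{\gamma_0}^{\mu^1}$, whence
\[
\gamma_1 = T_0\#\gamma_0 = S\#\left(\mathbf t_{\gamma_0}^{\mu^1}\#\gamma_0\right) = S\#\mu^1.
\]
Since $S$ is an average of gradients of convex potentials, it is itself the gradient of a convex function, and by regularity of the $\mu^i$ it is strictly monotone, hence injective; so $S$ is the optimal map $\mathbf t_{\mu^1}^{\gamma_1}$, its inverse $S^{-1}$ is the optimal map $\mathbf t_{\gamma_1}^{\mu^1}$, and $\gamma_1$ is regular (by Lemma~\ref{iterate_regularity} and Proposition~\ref{prop:Cmu}). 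Notably, $\gamma_1$ does not depend on the choice of $\gamma_0$.

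The crux is to identify the optimal map from $\gamma_1$ to each $\mu^i$ as $\mathbf t_{\gamma_1}^{\mu^i} = \mathbf t_{\mu^1}^{\mu^i}\circ S^{-1}$. This map certainly pushes $\gamma_1$ forward to $\mu^i$; the content is its \emph{optimality}, i.e.\ that it is a gradient of a convex function, and this is precisely where the second compatibility hypothesis enters. Differentiating the identity $\mathbf t_{\mu^1}^{\mu^j}\circ \mathbf t_{\mu^i}^{\mu^1} = \mathbf t_{\mu^i}^{\mu^j}$ and using that optimal maps have symmetric positive semidefinite derivatives, one finds (as recorded in Section~\ref{SEC:EX}) that the Jacobians $\nabla \mathbf t_{\mu^1}^{\mu^i}$ pairwise commute $\mu^1$-almost everywhere. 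Consequently $\nabla S = \frac1N\sum_i \nabla\mathbf t_{\mu^1}^{\mu^i}$ commutes with each $\nabla\mathbf t_{\mu^1}^{\mu^i}$, so the derivative of $\mathbf t_{\mu^1}^{\mu^i}\circ S^{-1}$ at $S(x)$, namely $\nabla\mathbf t_{\mu^1}^{\mu^i}(x)\,[\nabla S(x)]^{-1}$, is a product of commuting symmetric positive (semi)definite matrices and is therefore itself symmetric positive semidefinite. A field with symmetric positive semidefinite Jacobian on a convex domain is the gradient of a convex function, so $\mathbf t_{\mu^1}^{\mu^i}\circ S^{-1}$ is optimal, as claimed.

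Granting this, stationarity is immediate:
\[
\frac1N\sum_{i=1}^N \mathbf t_{\gamma_1}^{\mu^i} = \left(\frac1N\sum_{i=1}^N \mathbf t_{\mu^1}^{\mu^i}\right)\circ S^{-1} = S\circ S^{-1} = \mathbf i,
\]
so $\frac1N\sum_i(\mathbf t_{\gamma_1}^{\mu^i}-\mathbf i)=0$ and $\gamma_1$ is a Karcher mean by Corollary~\ref{thm:minisstationary}; since the identity in fact holds everywhere on the support, the Agueh--Carlier criterion (\cite{bary}, cf.\ the discussion following Corollary~\ref{thm:minisstationary} and Theorem~\ref{thm:optimalKarcher}) upgrades it to the unique Fr\'echet mean. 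Finally, stationarity gives $T_1 = \mathbf i$ $\gamma_1$-almost everywhere, so $\gamma_2 = T_1\#\gamma_1 = \gamma_1$ and the whole sequence is constant from the first step on; the algorithm has converged after a single iteration.

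I expect the main obstacle to be making the Jacobian/commutativity argument rigorous under the stated $L^2$ (almost-everywhere) hypotheses: justifying the differentiation of the compatibility identity on a set of full measure, the invertibility and strict monotonicity of $S$ (so that $S^{-1}$ and $\gamma_1$ are well behaved), and the passage from an almost-everywhere symmetric positive semidefinite Jacobian to a genuine convex potential, which requires care about the domain and about Lebesgue-null exceptional sets. An alternative route that avoids differentiability would be to establish directly that $\mathbf t_{\mu^1}^{\mu^i}\circ S^{-1}$ is cyclically monotone, but the commuting-Jacobian argument appears the most transparent and dovetails with the characterisation of compatibility already noted in Section~\ref{SEC:EX}.
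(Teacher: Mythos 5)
Your first step coincides with the paper's own proof: both of you use the hypothesis $\mathbf t_{\gamma_0}^{\mu^i}=\mathbf t_{\mu^1}^{\mu^i}\circ\mathbf t_{\gamma_0}^{\mu^1}$ to write $T_0=S\circ\mathbf t_{\gamma_0}^{\mu^1}$ with $S=\frac1N\sum_{i}\mathbf t_{\mu^1}^{\mu^i}$, and conclude $\gamma_1=S\#\mu^1$ independently of $\gamma_0$. You diverge in the second half: the paper stops there and simply cites Boissard et al.\ \cite{boissard2015distribution} for the fact that $S\#\mu^1$ is the Fr\'echet mean, whereas you attempt a self-contained argument, identifying $\mathbf t_{\gamma_1}^{\mu^i}=\mathbf t_{\mu^1}^{\mu^i}\circ S^{-1}$ and checking that the gradient of the Fr\'echet functional vanishes at $\gamma_1$. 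Your route is more informative --- it exhibits the Procrustes maps of the limit explicitly, ties the statement back to Corollary~\ref{thm:minisstationary} and the Agueh--Carlier criterion, and makes visible the stationarity $\gamma_2=\gamma_1$ --- and it dovetails with the commutativity characterisation of compatibility mentioned in Section~\ref{SEC:EX}.

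The one step that is not yet a proof is the optimality of $\mathbf t_{\mu^1}^{\mu^i}\circ S^{-1}$, and you correctly flag it. Optimal maps are only (Alexandrov) differentiable $\mu^1$-almost everywhere, the chain rule for compositions of merely a.e.-differentiable maps needs justification, and an almost-everywhere symmetric positive semidefinite Jacobian does not by itself make a vector field the gradient of a convex function; so the commuting-Jacobian computation, while the right heuristic, does not close the argument as written. To make it rigorous you would either verify cyclical monotonicity of $\mathbf t_{\mu^1}^{\mu^i}\circ S^{-1}$ directly, or work at the level of potentials as the paper does elsewhere in the supplement: with $\mathbf t_{\mu^1}^{\mu^i}=\nabla\varphi_i$, exhibit convex $\psi_i$ satisfying $\nabla\psi_i\#\gamma_1=\mu^i$ and $\frac1N\sum_i\psi_i^*(y)\le\|y\|^2/2$ with equality $\gamma_1$-a.e., and invoke \cite[Proposition~3.8]{bary} --- which is essentially the content of the result of \cite{boissard2015distribution} that the paper cites at exactly the point where your argument has its hole.
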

\begin{proof}
For all $i$, $j$ and $k$ we have $\mathbf t_{\mu^j}^{\mu^k}\circ \mathbf t_{\mu^i}^{\mu^j}=\mathbf t_{\mu^i}^{\mu^k}$, so that the optimal maps are admissible, and
\[
\gamma_1
=\left[\frac1N\sum_{i=1}^N  \mathbf t_{\gamma_0}^{\mu^i} \right]\#\gamma_0
=\left[\frac1N\sum_{i=1}^N  \mathbf t_{\mu^1}^{\mu^i}\circ \mathbf t_{\gamma_0}^{\mu^1}\right]\#\gamma_0
=\left[\frac1N\sum_{i=1}^N   \mathbf t_{\mu^1}^{\mu^i}\right]\#\mu^1.
\]
Boissard et al.\ \cite{boissard2015distribution} show that this is indeed the Fr\'echet mean.
\end{proof}
When $d=1$, all (diffuse) measures are compatible with each other, and Algorithm~1 converges after one step.  Generally, the algorithm requires the calculation of $N$ pairwise optimal maps, and this can be reduced to $N-1$ if $\gamma_0=\mu^1$.  This is the same computational complexity as the calculation of the iterated barycentre proposed in \cite{boissard2015distribution}.

Measures on $\mathbb R^d$ that have a common dependence structure are compatible with each other.  More precisely, we say that $C:[0,1]^d\to[0,1]$ is a \textit{copula} if there exists a random vector $U$ with $U[0,1]$ margins and such that
\[
\mathbb P(U_1\le u_1,\dots,U_d\le u_d)
=C(u_1,\dots,u_d),
\qquad u_i\in[0,1].
\]
In other words, a copula is the restriction to $[0,1]^d$ of the probability distribution function of some $d$-dimensional random variable with uniform margins.  See, for example, Nelsen \cite{nelsen2013introduction} for an overview.  Given a measure $\mu$ on $\mathbb R^d$ with distribution function $G$ and marginal distribution functions $G_j$, the copula associated with $\mu$ is a copula such that
\[
G(a_1,\dots,a_d)
=\mu((-\infty,a_1]\times\dots\times (-\infty,a_d])
=C(G_1(a_1),\dots,G_d(a_d)).
\]
This equation defines $C$ uniquely if each marginal $G_i$ is continuous, which we shall assume for simplicity.  (If some $G_i$ is discontinuous then $C$ might not be unique, but it always exists, see \cite[Chapter~2]{nelsen2013introduction}.)

\begin{lem}[Compatibility and Copulae]\label{lem:compcop}
Let $\mu,\nu\in\mathcal P_2(\mathbb R^d)$ be regular.  Then $\mu$ and $\nu$ have the same associated copula if and only if $\mathbf t_\mu^\nu$ takes the separable form
\begin{equation}\label{eq:optimalsep}
\mathbf t_\mu^\nu(x_1,\dots,x_d)
=(T_1(x_1),\dots,T_d(x_d)),
\qquad T_i:\mathbb R\to\mathbb R.
\end{equation}
\end{lem}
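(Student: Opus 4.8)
The plan is to lean on Brenier's characterisation (Villani \cite[Theorem~2.12]{vil}): since $\mu$ is regular, the optimal map $\mathbf t_\mu^\nu$ exists, is unique, and is the \emph{only} gradient of a convex function that pushes $\mu$ forward to $\nu$. Consequently, to identify $\mathbf t_\mu^\nu$ it suffices to exhibit \emph{any} gradient of a convex function transporting $\mu$ to $\nu$. Throughout, I would write $G_1,\dots,G_d$ and $H_1,\dots,H_d$ for the marginal distribution functions of $\mu$ and $\nu$; because $\mu$ and $\nu$ are regular, all their one-dimensional margins are absolutely continuous as well, so each $G_j,H_j$ is continuous and the margins are diffuse.

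For the implication \emph{same copula $\Rightarrow$ separability}, I would set $T_j=H_j^{-1}\circ G_j$, the one-dimensional monotone (optimal) map between the $j$-th margins as in \eqref{eq:optimal1d}, and define $\mathbf S(x_1,\dots,x_d)=(T_1(x_1),\dots,T_d(x_d))$. The first step is to verify $\mathbf S\#\mu=\nu$: if $X\sim\mu$, then by the probability integral transform the vector $(G_1(X_1),\dots,G_d(X_d))$ is distributed according to the common copula $C$, and applying the increasing quantile maps $H_j^{-1}$ coordinatewise yields a vector with margins $H_j$ and copula $C$, which is precisely $\nu$. The second step is optimality: each $T_j$ is nondecreasing, hence the derivative of a convex $\varphi_j:\mathbb R\to\mathbb R$, so $\mathbf S=\nabla\Phi$ with $\Phi(x)=\sum_{j=1}^d\varphi_j(x_j)$ convex; Brenier's uniqueness then forces $\mathbf t_\mu^\nu=\mathbf S$, which is separable.

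For the converse \emph{separability $\Rightarrow$ same copula}, suppose $\mathbf t_\mu^\nu=(T_1,\dots,T_d)$. Since $\mathbf t_\mu^\nu=\nabla\Phi$ is monotone, testing monotonicity along the coordinate axes (points $x,x'$ differing only in the $j$-th entry) shows each $T_j$ is nondecreasing. Pushing forward the $j$-th margin gives $T_j\#\mu_j=\nu_j$, and a nondecreasing map between diffuse one-dimensional laws must agree $\mu_j$-almost everywhere with $H_j^{-1}\circ G_j$. Then for $X\sim\mu$ and $Y=\mathbf t_\mu^\nu(X)\sim\nu$ one computes $H_j(Y_j)=H_j(H_j^{-1}(G_j(X_j)))=G_j(X_j)$ almost surely, using the continuity of $H_j$ so that $H_j\circ H_j^{-1}=\mathrm{id}$ on $(0,1)$. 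Hence the uniformised vectors satisfy $(H_1(Y_1),\dots,H_d(Y_d))=(G_1(X_1),\dots,G_d(X_d))$ almost surely, so $\mu$ and $\nu$ share the same copula.

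The main obstacle I anticipate is the careful bookkeeping around the \emph{generalised inverses} $H_j^{-1}$ and $G_j^{-1}$: establishing the identity $H_j\circ H_j^{-1}=\mathrm{id}$ on the relevant range (which requires continuity of the CDFs, guaranteed by absolute continuity of the margins) and justifying that $(G_j(X_j))_{j=1}^d\sim C$ when the $G_j$ are continuous but possibly not strictly increasing. These steps are routine but must be treated with some care; once they are in place, the equivalence follows cleanly from Brenier's theorem and the uniqueness of the optimal map.
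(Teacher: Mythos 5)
Your proposal is correct and follows essentially the same route as the paper's proof: build the candidate map from the coordinatewise monotone rearrangements $T_j=H_j^{-1}\circ G_j$, verify the push-forward via the shared copula, and conclude optimality from the fact that a separable nondecreasing map is the gradient of a convex function, with the converse obtained by reversing these steps. Your phrasing in terms of the probability integral transform rather than the multivariate distribution-function identity is only a cosmetic difference.
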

The result can be obtained as a corollary of Cuesta-Albertos et al. \cite[Theorem~2.9]{cuesta1993optimal}, but here is an alternative direct proof.
\begin{proof}
If $\mu$ and $\nu$ have the same copula then
\[
G(G_1^{-1}(u_1),\dots,G_d^{-1}(u_d))
=C(u_1,\dots,u_d)
=F(F_1^{-1}(u_1),\dots,F_d^{-1}(u_d)),
\]
where $G_j^{-1}(u_j)$ is any number satisfying $G_j(G_j^{-1}(u_j))=u_j$ (such numbers exist because $G_j$ is surjective), and similarly for $F_j^{-1}$.  Consequently, $F(x_1,\dots,x_d)=G(T_1(x_1),\dots,T_d(x_d))$ with $T_j=G_j^{-1}\circ F_j$.  It follows that $\nu=(T_1,\dots,T_d)\#\mu$, and this map is optimal, hence equals $\mathbf t_\mu^\nu$, because the $T_j$'s are nondecreasing.

One proves the converse implication similarly:  if $\mathbf t_\mu^\nu$ takes this form, then each $T_j$ needs to be nondecreasing.  Since it must push $F_j$ forward to $G_j$, we have $T_j=G_j^{-1}\circ F_j$, and this yields the above equality for the copula.
\end{proof}
It is easy to see that if the optimal maps between each $\mu^i$ and each $\mu^j$ are of the form \eqref{eq:optimalsep}, then $\{\mu^i\}$ are compatible with other.  This follows from this property holding for each marginal, and the possibility of working with the marginals separately;  it has already been observed by Boissard et al.\ \cite[Proposition~4.1]{boissard2015distribution}.  This explains why the algorithm converges in one iteration for the example with the Frank copula.

Next, we give a convergence analysis for the Gaussian example.
\begin{thm}[Convergence in Gaussian case]\label{gaussian_convergence}
Let $\mu^i\sim\mathcal N(0,S_i)$ for $S_i$ positive definite, and let the initial point $\gamma_0=\mathcal N(0,\Gamma_0)$ for positive definite $\Gamma_0$.  Then the sequence of iterates generated by Algorithm~\ref{algo} converges to the unique Fr\'echet mean of $(\mu^1,\dots,\mu^N)$.
\end{thm}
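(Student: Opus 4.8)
The plan is to reduce Algorithm~\ref{algo} to a finite-dimensional recursion on symmetric positive definite matrices and then invoke the general convergence results (Theorem~\ref{thm:convtosta} and Theorem~\ref{thm:optimalKarcher}), the only genuinely new ingredient being a determinant estimate that prevents the iterates from degenerating.

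First I would observe that the iteration never leaves the family of centred nondegenerate Gaussians. If $\gamma_k=\mathcal N(0,\Gamma_k)$ with $\Gamma_k$ positive definite, then by the displayed formula each optimal map $\mathbf t_{\gamma_k}^{\mu^i}$ is the symmetric positive definite matrix $T_i(\Gamma_k)=S_i^{1/2}[S_i^{1/2}\Gamma_kS_i^{1/2}]^{-1/2}S_i^{1/2}$, so the averaged map $T_k=\tfrac1N\sum_{i=1}^N T_i(\Gamma_k)$ is again symmetric positive definite and $\gamma_{k+1}=T_k\#\gamma_k=\mathcal N(0,\Gamma_{k+1})$ with $\Gamma_{k+1}=T_k\Gamma_kT_k$. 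Since $\gamma_0=\mathcal N(0,\Gamma_0)$ with $\Gamma_0$ positive definite, all iterates are nondegenerate centred Gaussians, hence regular, and Algorithm~\ref{algo} is a bona fide gradient descent in this setting.

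I would then bring in the general theory. By Theorem~\ref{thm:convtosta} the sequence $\{\gamma_k\}$ stays in a Wasserstein-compact set and each of its limit points is a Karcher mean of $(\mu^1,\dots,\mu^N)$. Compactness yields uniform boundedness of the second moments $\int\|x\|^2\,d\gamma_k=\mathrm{tr}(\Gamma_k)$, so $\lambda_{\max}(\Gamma_k)\le M$ for some finite $M$. The crucial point, and the main obstacle, is to keep $\Gamma_k$ bounded away from singular matrices, for otherwise a limit point could be a degenerate Gaussian outside the scope of Theorem~\ref{thm:optimalKarcher}. Here I would use the elementary identity $\det T_i(\Gamma_k)=(\det S_i/\det\Gamma_k)^{1/2}$ together with the Minkowski determinant inequality $\sqrt[d]{\det(A+B)}\ge\sqrt[d]{\det A}+\sqrt[d]{\det B}$ for positive semidefinite matrices (Fiedler \cite{fiedler1971bounds}, already invoked in the proof of Proposition~\ref{prop:Cmu}). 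These give
\[
(\det T_k)^{1/d}\ge\frac1N\sum_{i=1}^N\big(\det T_i(\Gamma_k)\big)^{1/d}=(\det\Gamma_k)^{-1/(2d)}\,\frac1N\sum_{i=1}^N(\det S_i)^{1/(2d)},
\]
and therefore, with $s=\tfrac1N\sum_{i=1}^N(\det S_i)^{1/(2d)}>0$,
\[
\det\Gamma_{k+1}=(\det T_k)^2\det\Gamma_k\ge s^{2d}(\det\Gamma_k)^{-1}\det\Gamma_k=s^{2d}.
\]
Thus $\det\Gamma_k\ge s^{2d}>0$ for every $k\ge1$, and combined with $\lambda_{\max}(\Gamma_k)\le M$ this forces $\lambda_{\min}(\Gamma_k)\ge s^{2d}/M^{d-1}>0$; hence the $\Gamma_k$ remain in a compact subset of the cone of positive definite matrices bounded away from its boundary.

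Finally I would conclude by combining these facts. Any Wasserstein-limit point $\gamma_\infty$ of $\{\gamma_k\}$ is the weak limit of Gaussians $\mathcal N(0,\Gamma_{k_j})$ with $\Gamma_{k_j}$ converging to some positive definite $\Gamma_\infty$, so $\gamma_\infty=\mathcal N(0,\Gamma_\infty)$ is itself a nondegenerate centred Gaussian, and it is a Karcher mean by Theorem~\ref{thm:convtosta}. Its density is $C^1$, bounded and strictly positive on all of $\mathbb R^d$, exactly as are the densities $g^i$ of the $\mu^i$, so the first alternative of Theorem~\ref{thm:optimalKarcher} applies and identifies $\gamma_\infty$ with the unique Fr\'echet mean $\bar\mu$. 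Since every limit point coincides with $\bar\mu$ and the sequence lies in a compact set, the full sequence converges, $d(\gamma_k,\bar\mu)\to0$, which is the desired conclusion; as a byproduct $\bar\mu$ is itself Gaussian.
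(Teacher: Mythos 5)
Your proof is correct, and it follows the same overall skeleton as the paper's: show the iterates remain nondegenerate centred Gaussians, trap the covariance matrices in a compact set of positive definite matrices bounded away from singularity, invoke Theorem~\ref{thm:convtosta} to identify limit points as Karcher means, upgrade Karcher to Fr\'echet, and conclude by compactness. You diverge in two sub-steps, both legitimately. For the lower bound on $\det\Gamma_k$, the paper simply cites Proposition~\ref{prop:Cmu}: the densities of all iterates are uniformly bounded by $C_\mu$, and since the density of $\mathcal N(0,\Gamma_k)$ peaks at $(2\pi)^{-d/2}(\det\Gamma_k)^{-1/2}$, this immediately gives $\det\Gamma_k\ge (2\pi)^{-d}C_\mu^{-2}$. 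Your explicit recursion $\det\Gamma_{k+1}\ge s^{2d}$ via the identity $\det T_i(\Gamma_k)=(\det S_i/\det\Gamma_k)^{1/2}$ and Minkowski's determinant inequality is a self-contained alternative that yields a sharper, fully explicit constant, at the cost of redoing work the general theory already provides. For the final identification, the paper argues that the averaged optimal maps from a Gaussian Karcher mean are linear, so the stationarity condition $N^{-1}\sum\mathbf t_\gamma^{\mu^i}=\mathbf i$, which a priori holds only $\gamma$-almost everywhere, in fact holds everywhere on $\mathbb R^d$, and then invokes Agueh--Carlier's criterion (the discussion after Corollary~\ref{thm:minisstationary}); you instead verify the hypotheses of Theorem~\ref{thm:optimalKarcher}(1), which Gaussian densities satisfy trivially. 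Your route is slightly cleaner in that it leans on a stated theorem rather than a remark, though it uses heavier machinery (Caffarelli regularity) than the situation requires. Both versions are complete and correct.
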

\begin{proof}
We first observe that for any centred measure $\mu$ with covariance matrix $S$,
\[
d^2(\mu,\delta_0)
=\mathrm{tr}S,
\]
where $\delta_0$ is a dirac mass at the origin.  (This follows from the singular value decomposition of $S$.)  Next, each iteration stays (centred) Gaussian, say $\mathcal N(0,\Gamma_k)$, because the optimal maps are linear;  and since the iterates are absolutely continuous (Lemma~\ref{iterate_regularity}), each $\Gamma_k$ is nonsingular.

Proposition~\ref{prop:Cmu} implies that $\mathrm{det}\Gamma_k$ is bounded below uniformly;  on the other hand,
\[
0\le
\mathrm{tr}\Gamma_k
=d^2(\gamma_k,\delta_0)
\]
is bounded uniformly, because $\{\gamma_k\}$ stays in a Wasserstein-compact set by Lemma~\ref{lem:seqcompact}.  Let $C_1=\inf_k\mathrm{det}\Gamma_k>0$ and $C_2=\sup_k\mathrm{tr}\Gamma_k<\infty$.  Then each eigenvalue $\lambda$ of $\Gamma_k$ is nonnegative, bounded above by $C_2$, and satisfies
\[
C_1\le \mathrm{det}\Gamma_k
\le \lambda C_2^{d-1}
\qquad\Longrightarrow\quad \lambda\ge C_1C_2^{1-d}=C_3>0.
\]
The matrices $\Gamma_k$ stay in a bounded set, and each limit point $\Gamma$ is positive definite because $x^t\Gamma x\ge C_3\|x\|^2$ for all $x\in\mathbb R^d$.  Each limit point $\gamma$ of $\gamma_k$ is a Karcher mean by Theorem~\ref{thm:convtosta}, and the limit must follow a $\mathcal N(0,\Gamma)$ distribution with $\Gamma$ (nonsingular) limit point of $\Gamma_k$ (e.g., by Lehmann--Scheff\'e's theorem).   Since $F'(\gamma)=0$ everywhere on $\mathbb R^d$, $\gamma$ is the Fr\'echet mean by the discussion after Corollary~\ref{thm:minisstationary}.  Every limit of $\gamma_k$ is the Fr\'echet mean and the sequence is compact, so $\gamma_k$ must converge to the Fr\'echet mean.
\end{proof}
\begin{rem}
During the review process, a referee asked whether the result in Theorem \ref{gaussian_convergence} is related to the iteration $\Sigma\mapsto N^{-1}\sum_{i=1}^N (\Sigma^{1/2} \Sigma_i\Sigma^{1/2})^{1/2}$, introduced by Knott \& Smith \cite{knott1994generalization} and later considered in Xia et al.\ \cite{xia2014synthesizing} for the Gaussian case. That iteration, however, is distinctly different from Algorithm \ref{algo}, which Theorem \ref{gaussian_convergence} concerns (for instance, the latter involves inversion operations, which the former does not).  As pointed out by R\"uschendorf \&  Uckelmann \cite[p.\ 6]{ruschendorf2002n}, the scheme $\Sigma\mapsto N^{-1}\sum_{i=1}^N (\Sigma^{1/2} \Sigma_i\Sigma^{1/2})^{1/2}$ is not known to converge, and indeed Theorem \ref{gaussian_convergence} does not furnish any additional insight on the matter.
\end{rem}

In order to deal with the last example of Section~\ref{SEC:EX}, we need two more results.  The first involves coupling measures of dimensions greater than one, while the second shows the equivariance of the Fr\'echet mean with respect to rotations.

Invoking the \textit{independence copula} $C(u_1,\dots,u_d)=u_1\dots u_d$, a special case of Lemma~\ref{lem:compcop} above is when the marginals of $\mu$ and $\nu$ are independent.  In this independence case, it is possible in fact to replace the marginals by measures of arbitrary dimension:
\begin{lem}\label{lem:Frechetindep}
Let $\mu^1,\dots,\mu^N$ and $\nu^1,\dots,\nu^N$ be regular measures in $\mathcal P_2(\mathbb R^{d_1})$ and $\mathcal P_2(\mathbb R^{d_2})$ with (unique) Fr\'echet means $\mu$ and $\nu$ respectively.  Then the independent coupling $\mu\otimes\nu$ is the Fr\'echet mean of $\mu^1\otimes\nu^1,\dots,\mu^N\otimes\nu^N$.
\end{lem}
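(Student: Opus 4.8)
The plan is to prove the statement directly from the variational definition of the Fr\'echet mean, rather than through the Karcher-mean characterisation. The reason is that the stationarity route would only deliver $\frac1N\sum_i(\mathbf t_{\mu\otimes\nu}^{\mu^i\otimes\nu^i}-\mathbf i)=0$ \emph{$(\mu\otimes\nu)$-almost everywhere}, and upgrading this to global optimality would require regularity of the transport maps that is not assumed here. The engine of the direct argument is a Pythagorean identity for the Wasserstein distance between product measures, combined with a marginalisation lower bound that handles arbitrary (non-product) competitors.

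First I would record the decomposition
\[
d^2(\mu\otimes\nu,\mu^i\otimes\nu^i)
=d^2(\mu,\mu^i)+d^2(\nu,\nu^i),
\qquad i=1,\dots,N.
\]
The inequality ``$\le$'' is immediate: taking the product of an optimal coupling of $(\mu,\mu^i)$ with an optimal coupling of $(\nu,\nu^i)$ (after rearranging coordinates so that it becomes a measure on $\mathbb R^{d_1+d_2}\times\mathbb R^{d_1+d_2}$ with marginals $\mu\otimes\nu$ and $\mu^i\otimes\nu^i$) yields an admissible coupling whose cost is exactly $d^2(\mu,\mu^i)+d^2(\nu,\nu^i)$, because the quadratic cost on $\mathbb R^{d_1+d_2}$ splits as $\|x-x'\|^2+\|y-y'\|^2$. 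The reverse inequality will come out as a special case of the next step.

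The central step is a lower bound valid for an \emph{arbitrary} competitor $\theta\in\mathcal P_2(\mathbb R^{d_1+d_2})$ with $\mathbb R^{d_1}$- and $\mathbb R^{d_2}$-marginals $\theta_X$ and $\theta_Y$:
\[
d^2(\theta,\mu^i\otimes\nu^i)
\ge d^2(\theta_X,\mu^i)+d^2(\theta_Y,\nu^i).
\]
To prove it I would take any $\pi\in\Gamma(\theta,\mu^i\otimes\nu^i)$, project it onto the $x$-coordinates (obtaining a coupling of $\theta_X$ and $\mu^i$) and onto the $y$-coordinates (obtaining a coupling of $\theta_Y$ and $\nu^i$); since the integrand separates, the cost of $\pi$ dominates $d^2(\theta_X,\mu^i)+d^2(\theta_Y,\nu^i)$, and taking the infimum over $\pi$ gives the bound. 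Applying it with $\theta=\mu\otimes\nu$, so that $\theta_X=\mu$ and $\theta_Y=\nu$, closes the Pythagorean identity of the previous paragraph.

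Finally I would assemble the Fr\'echet functional. Summing the lower bound over $i$ and invoking that $\mu$ and $\nu$ are the Fr\'echet means of $\{\mu^i\}$ and $\{\nu^i\}$ respectively, i.e.\ $N^{-1}\sum_i d^2(\theta_X,\mu^i)\ge N^{-1}\sum_i d^2(\mu,\mu^i)$ and similarly for $\nu$, yields
\[
\frac1N\sum_{i=1}^N d^2(\theta,\mu^i\otimes\nu^i)
\ge \frac1N\sum_{i=1}^N d^2(\mu,\mu^i)+\frac1N\sum_{i=1}^N d^2(\nu,\nu^i)
=\frac1N\sum_{i=1}^N d^2(\mu\otimes\nu,\mu^i\otimes\nu^i),
\]
the last equality being the Pythagorean identity. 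Thus $\mu\otimes\nu$ minimises the population Fr\'echet functional over $\mathcal P_2(\mathbb R^{d_1+d_2})$. For uniqueness I would observe that $\mu\otimes\nu$ is regular (a product of regular measures), and that if some $\mu^i$ and the corresponding $\nu^i$ have bounded densities then $\mu^i\otimes\nu^i$ has bounded density, so the Agueh--Carlier proposition forces the minimiser to be unique. I do not expect a serious obstacle: the only point that demands care is the marginalisation step, where one must verify that projecting a coupling on $\mathbb R^{d_1+d_2}\times\mathbb R^{d_1+d_2}$ returns couplings of the respective marginals and that the separable quadratic cost decouples accordingly.
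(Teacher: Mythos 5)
Your proof is correct, but it takes a genuinely different route from the paper's. The paper proves the lemma by invoking the Agueh--Carlier characterisation of Fr\'echet means via convex potentials (Proposition~3.8 of \cite{bary}): it takes the marginal potentials $\psi_i$, $\varphi_i$ with $\nabla\psi_i\#\mu=\mu^i$, $\nabla\varphi_i\#\nu=\nu^i$ satisfying $\frac1N\sum_i\psi_i^*\le\|\cdot\|^2/2$ (equality $\mu$-a.s.) and likewise for $\varphi_i$, extends them to $\phi_i(x,y)=\psi_i(x)+\varphi_i(y)$ on the product space, observes that $\phi_i^*(x,y)=\psi_i^*(x)+\varphi_i^*(y)$, and checks that the sufficient optimality condition then holds for $\mu\otimes\nu$. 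Your argument instead works directly with the variational definition: the Pythagorean identity $d^2(\mu\otimes\nu,\mu^i\otimes\nu^i)=d^2(\mu,\mu^i)+d^2(\nu,\nu^i)$, the marginalisation lower bound $d^2(\theta,\mu^i\otimes\nu^i)\ge d^2(\theta_X,\mu^i)+d^2(\theta_Y,\nu^i)$ for an arbitrary (possibly non-product) competitor $\theta$, and the minimality of $\mu$ and $\nu$ for their respective marginal functionals. Both steps are sound --- the projection of a coupling in $\Gamma(\theta,\mu^i\otimes\nu^i)$ onto the $x$- and $y$-blocks does produce couplings in $\Gamma(\theta_X,\mu^i)$ and $\Gamma(\theta_Y,\nu^i)$, and the separable quadratic cost decouples exactly as you claim. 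What your approach buys is elementarity and generality: it uses only couplings, never transport maps or duality, so regularity of the measures plays no role in the minimisation argument itself (it is only needed for uniqueness, which you handle correctly; note that absolute continuity of one product $\mu^i\otimes\nu^i$ --- automatic from regularity of $\mu^i$ and $\nu^i$ --- already suffices for uniqueness by Agueh--Carlier, without bounded densities). What the paper's approach buys is that it hands you the optimal maps $(\nabla\psi_i,\nabla\varphi_i)$ from $\mu\otimes\nu$ to each $\mu^i\otimes\nu^i$ explicitly as a by-product, which is the form of the information actually exploited in the surrounding examples, and it extends by the same token to finitely many factors with no extra work.
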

By induction (or a straightforward modification of the proof), one can show that the Fr\'echet mean of $(\mu^i\otimes \nu^i\otimes\rho^i)$ is $\mu\otimes\nu\otimes\rho$, and so on. While we are confident this result should already be known, we could not find a reference, and thus we provide a full proof for completeness.
\begin{proof}
Agueh \& Carlier \cite[Proposition~3.8]{bary} show that there exist convex lower semicontinuous potentials $\psi_i$ on $\mathbb R^{d_1}$ and $\varphi_i$ on $\mathbb R^{d_2}$ whose gradients push $\mu$ forward to $\mu^i$ and $\nu$ to $\nu^i$ respectively, and such that
\[
\frac1N\sum_{i=1}^N\psi_i^*(x)
\le \frac{\|x\|^2}2
,\quad x\in\mathbb R^{d_1};
\qquad
\frac1N\sum_{i=1}^N\varphi_i^*(y)
\le \frac{\|y\|^2}2
,\quad y\in\mathbb R^{d_2},
\]
with equality $\mu$- and $\nu$-almost surely respectively.  It is easy to see that the extensions $\tilde\psi_i(x,y)=\psi_i(x)$ and $\tilde\varphi_i(x,y)=\varphi_i(y)$ defined on $\mathbb R^{d_1+d_2}$ are convex lower semicontinuous functions whose sum $\phi_i$ is a convex function satisfying
\[
\phi_i^*(x,y)
=(\tilde\psi_i+\tilde\varphi_i)^*(x,y)
=\psi_i^*(x)+\varphi_i^*(y).
\]
Clearly $\nabla\phi_i\#(\mu^i\otimes\nu^i)=\mu\otimes\nu$ and
\[
\frac1N\sum_{i=1}^N\phi_i^*(x,y)
\le \frac{\|x\|^2}2 + \frac{\|y\|^2}2
= \frac{\|(x,y)\|^2}2
,\quad (x,y)\in\mathbb R^{d_1+d_2},
\]
with equality $\mu\otimes\nu$-almost surely.  By the same Proposition~3.8 in \cite{bary}, $\mu\otimes\nu$ is the Fr\'echet mean.
\end{proof}

\begin{lem}\label{lem:Frechetortho}
If $\mu$ is the Fr\'echet mean of the regular measures $\mu^1,\dots,\mu^N$, one with bounded density, and $U$ is orthogonal, then $U\#\mu$ is the Fr\'echet mean of $U\#\mu^1,\dots,U\#\mu^N$.
\end{lem}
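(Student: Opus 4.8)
The plan is to exploit the fact that an orthogonal map $U$ induces an isometry of Wasserstein space, so that the Fr\'echet functional of the rotated collection is merely the original Fr\'echet functional precomposed with the action of $U$. The first step is to verify this isometry property: for any $\nu,\rho\in\mathcal P_2(\mathbb R^d)$,
\[
d(U\#\nu,U\#\rho)=d(\nu,\rho).
\]
This holds because the map $\xi\mapsto(U\times U)\#\xi$ is a bijection from $\Gamma(\nu,\rho)$ onto $\Gamma(U\#\nu,U\#\rho)$, and since $\|Ux-Uy\|=\|x-y\|$ for all $x,y$ (as $U$ is orthogonal), it preserves the transport cost $\|x-y\|^2$. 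Taking the infimum over couplings on both sides yields the claimed equality.

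Next I would check that the rotated collection $U\#\mu^1,\dots,U\#\mu^N$ falls within the scope of the Agueh--Carlier existence/uniqueness proposition, so that it possesses a unique Fr\'echet mean that we may then identify. Indeed, $U$ is a linear bijection with $|\det U|=1$, so if $\mu^i$ is regular with density $g^i$, then $U\#\mu^i$ is regular with density $g^i\circ U^{-1}$; in particular $U\#\mu^i$ has bounded density exactly when $\mu^i$ does. Thus the bounded-density hypothesis transfers, and the rotated collection has a unique Fr\'echet mean.

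To identify it, write $\tilde F(\gamma)=\frac1{2N}\sum_{i=1}^N d^2(\gamma,U\#\mu^i)$ for the Fr\'echet functional of the rotated measures, and reparametrise using $\gamma=U\#\beta$ with $\beta=U^{-1}\#\gamma=U^\top\#\gamma$ (using $U^{-1}=U^\top$). The isometry property then gives
\[
\tilde F(U\#\beta)=\frac1{2N}\sum_{i=1}^N d^2(U\#\beta,U\#\mu^i)=\frac1{2N}\sum_{i=1}^N d^2(\beta,\mu^i)=F(\beta),
\]
where $F$ is the original Fr\'echet functional of $\{\mu^i\}$. Since $\beta\mapsto U\#\beta$ is a bijection of $\mathcal P_2(\mathbb R^d)$, minimising $\tilde F$ over $\gamma$ is equivalent to minimising $F$ over $\beta$, and because $\mu$ is the unique minimiser of $F$, the unique minimiser of $\tilde F$ is attained at $\gamma=U\#\mu$. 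Hence $U\#\mu$ is the Fr\'echet mean of $\{U\#\mu^i\}$.

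The argument is essentially an isometry reduction and presents no serious obstacle; the only point requiring mild care is confirming that regularity and the bounded-density hypothesis are preserved under the orthogonal pushforward, so that the existence/uniqueness result genuinely applies to the rotated collection.
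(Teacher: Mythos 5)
Your proof is correct, but it takes a genuinely different route from the paper's. The paper verifies the Agueh--Carlier optimality certificate directly: it takes convex potentials $\varphi_i$ with $\nabla\varphi_i\#\mu=\mu^i$ satisfying $\frac1N\sum_i\varphi_i^*(x)\le\|x\|^2/2$ with equality $\mu$-a.e., observes that $x\mapsto\varphi_i(U^{-1}x)$ is convex with gradient $U\nabla\varphi_i(U^{-1}x)$ and conjugate $\varphi_i^*(U^{-1}x)$, and checks via the change of variables $y=Ux$ that the same certificate holds for the rotated family, so that \cite[Proposition~3.8]{bary} identifies $U\#\mu$ as the Fr\'echet mean. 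You instead reduce everything to the fact that $\nu\mapsto U\#\nu$ is an isometry of $\mathcal P_2(\mathbb R^d)$ (since $(U\times U)\#$ bijects the coupling sets and preserves the quadratic cost), so the rotated Fr\'echet functional is the original one precomposed with a bijection, and the unique minimiser transports accordingly. Your argument is more elementary and more general --- it works verbatim for any isometry of $\mathbb R^d$, including translations and reflections, and needs no variational characterisation of the barycenter; indeed your verification that regularity and bounded density transfer is only needed to guarantee that ``the Fr\'echet mean'' of the rotated family is well defined, not to identify it. What the paper's approach buys in exchange is an explicit exhibition of the optimal maps $\nabla(\varphi_i\circ U^{-1})$ from $U\#\mu$ to $U\#\mu^i$, which keeps the proof inside the convex-potential framework used throughout the rest of the supplementary section (e.g.\ in Lemma~\ref{lem:Frechetindep}, whose proof is structured identically).
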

\begin{proof}
Bonneel et al.\ sketch a proof of this statement in \cite[Proposition~1]{bonneel2015sliced}, and it also appears implicitly in Boissard et al.\ \cite[Proposition~4.1]{boissard2015distribution}; we give an alternative argument here.

If $x\mapsto\varphi(x)$ is convex, then $x\mapsto\varphi(U^{-1}x)$ is convex with gradient $U\nabla\varphi(U^{-1}x)$ at (almost all) $x$ and conjugate $x\mapsto\varphi^*(U^{-1}x)$.  If $\varphi_i$ are convex potentials with $\nabla\varphi_i\#\mu=\mu^i$, then $\nabla(\varphi_i\circ U^{-1})$ pushes $U\#\mu$ forward to $U\#\mu^i$ and by \cite[Proposition~3.8]{bary}
\[
\frac1N\sum_{i=1}^N (\varphi_i\circ U^{-1})^*(Ux)
=\frac1N\sum_{i=1}^N \varphi_i^*(x)
\le \frac{\|x\|^2}2
=\frac{\|Ux\|^2}2
\]
with equality for $\mu$-almost any $x$.  A change of variables $y=Ux$ shows that the set of points $y$ such that $\sum(\varphi_i\circ U^{-1})^*(y)< N\|y\|^2/2$ is $(U\#\mu)$-negligible, completing the proof.
\end{proof}

We apply these results in the context of the simulated example in Section~\ref{SEC:EX}.  If $Y=(y_1,y_2,y_3)\sim \mu^i$, then the random vector $(x_1,x_2,x_3)=X=U^{-1}Y$ has joint density
\[
f^i(x_3)\exp\left[-\frac{(x_1,x_2)(\Sigma^i)^{-1}\binom{x_1}{x_2}}2\right]
\frac1{2\pi\sqrt{\mathrm{det}\Sigma^i}},
\]
so the probability law of $X$ is $\rho^i\otimes\nu^i$ with $\rho^i$ centred Gaussian with covariance matrix $\Sigma^i$ and $\nu^i$ having density $f^i$ on $\mathbb R$.  By Lemma~\ref{lem:Frechetindep}, the Fr\'echet mean of $(U^{-1}\#\mu^i)$ is the product measure of that of $(\rho^i)$ and that of $(\nu^i)$;   by Lemma~\ref{lem:Frechetortho}, the Fr\'echet mean of $(\mu^i)$ is therefore
\[
U\#(\mathcal N(0,\Sigma)\otimes f),
\qquad f=F',
\quad F^{-1}(q)
=\frac1N\sum_{i=1}^NF_i^{-1}(q),
\quad F_i(x)
=\ownint{-\infty}x{f^i(s)}s,
\]
where $\Sigma$ is the Fr\'echet--Wasserstein mean of $\Sigma_1,\dots,\Sigma_N$.

Starting at an initial point $\gamma_0=U\#(\mathcal N(0,\Sigma_0)\otimes \nu_0)$, with $\nu_0$ having continuous distribution $F_{\nu_0}$, the optimal maps are $U\circ \mathbf t_0^i\circ U^{-1}=\nabla(\varphi_0^i\circ U^{-1})$ with
\[
\mathbf t_0^i(x_1,x_2,x_3)
=\binom{\mathbf t_{\Sigma_0}^{\Sigma^j}(x_1,x_2)}{F_j^{-1}\circ F_{\nu_0} (x_3)}
\]
the gradients of the convex function
\[
\varphi_0^i(x_1,x_2,x_3)
=(x_1, x_2)\mathbf t_{\gamma_0}^{\Sigma^i}\binom{x_1}{x_2}
+\ownint 0{x_3}{F_j^{-1}(F_{\nu_0}(s))}s,
\]
where we identify $\mathbf t_{\gamma_0}^{\Sigma^i}$ with the positive definite matrix $(\Sigma^i)^{1/2}[(\Sigma^i)^{1/2}\Sigma_0(\Sigma^i)^{1/2}]^{-1/2}(\Sigma^i)^{1/2}$ that pushes forward $\mathcal N(0,\Sigma_0)$ to $\mathcal N(0,\Sigma^i)$.  Due to the one-dimensionality, the algorithm finds the third component of the rotated measures after one step, but the convergence of the Gaussian component requires further iterations.

\section*{Proofs and Details Omitted from the Main Article}

\subsection*{Proofs of statements from Section~\ref{population_to_sample}}

\begin{proof}[Proof of Corollary~\ref{thm:minisstationary}, Section \ref{characterisation_mean}]
The characterisation of Karcher means follows immediately from Theorem \ref{gradient_definition}. Now suppose that $\mu\in\mathcal P_2(\mathbb R^d)$ is regular and $F'(\mu)\ne0\in L^2(\mu)$.  The function $S=N^{-1}\sum_{i=1}^N\mathbf t_{\mu}^{\mu^i}$ is a gradient of a convex function and
\[
\lim_{\nu\to\mu}\frac{F(\nu)- F(\mu) + \innprod{S - \mathbf i}{\mathbf t_\mu^\nu - \mathbf i}_{L^2(\mu)}}
{\|\mathbf t_\mu^\nu - i\|_{L^2(\mu)}}
=\lim_{\nu\to\mu}\frac{F(\nu)- F(\mu) + \ownint {}{}{\innprod{S(x)-x}{\mathbf t_\mu^\nu(x)-x}}{\mu(x)}}
{d(\nu,\mu)}
=0.
\]
By assumption $W=S-\mathbf i\ne0\in L^2(\mu)$.  The measure $\nu_s=[\mathbf i + s(W-\mathbf i)]\#\mu$ with $s\in(0,1)$ is such that $d(\nu_s,\mu)=s\|W\|_{L^2(\mu)}$ and
\[
0=
\lim_{s\to 0^+}\frac{F(\nu_s)- F(\mu) + \ownint {}{}{\innprod{W(x)}{sW(x)}}{\mu(x)}}
{s\|W\|_{L^2(\mu)}}
=\lim_{s\to 0^+}\frac{F(\nu_s)- F(\mu) }{s\|W\|_{L^2(\mu)}} + \|W\|_{L^2(\mu)}.
\]
This means that when $s$ is small enough, $F(\nu_s)<F(\mu)$, so $\mu$ cannot be the minimiser of $F$.  Since $\bar\mu$ has to be regular \cite[Proposition 5.1]{bary}, necessity of $F'(\bar\mu)=0$ is proven.
\end{proof}

\subsection*{Proofs of statements from Section~\ref{sec:population}}
\begin{proof}[Additional Details on the Proof of Theorem~\ref{thm:meanTid}]
Write $M(\gamma)=\mathbb{E}[d^2(\Lambda,\gamma)]$.  We wish to show that $M$ has a unique minimiser $\gamma$ and that $\gamma$ is supported on $K$.  We first establish (weak) convexity of $M$. Indeed, for given measures $\gamma$ and $\rho$ and $0<t<1$,
\begin{align*}
tM_\omega(\gamma) + (1-t)M_\omega(\rho)
&=t\ownint{\mathbb R^d\times\mathbb R^d}{}{ (x-y)^2}{\pi_{\omega,\gamma} (x,y)} + (1-t)\ownint{\mathbb R^d\times\mathbb R^d}{}{ (x-y)^2}{\pi_{\omega,\rho}(x,y)}\\
&=\ownint{\mathbb R^d\times\mathbb R^d}{}{ (x-y)^2}{[t\pi_{\omega,\gamma} + (1-t)\pi_{\omega,\rho}]},
\end{align*}
where $\pi_{\omega,\gamma}$ is the optimal coupling between $\Lambda=\Lambda(\omega)$ and $\gamma$.  The measure $t\pi_{\omega,\gamma} + (1-t)\pi_{\omega,\rho}$ is a coupling between $\Lambda$ and $t\gamma+(1-t)\rho$, and this shows that $M_\omega$ is convex without any regularity assumptions on $\Lambda$. To upgrade to strict convexity when $\Lambda$ is regular, observe firstly that $M$ is finite on the set of probability measures supported on $K$.  If $\Lambda$ is regular, then optimal measures are supported on graphs of functions:
\begin{align*}
\pi_{\omega,\gamma}(A\times B)
&=\Lambda(A\cap T_1^{-1}(B))\\
\pi_{\omega,\rho}(A\times B)
&=\Lambda(A\cap T_2^{-1}(B))\\
\pi_{\omega,t\gamma+(1-t)\rho}(A\times B)
&=\Lambda(A\cap T_3^{-1}(B))\\
[t\pi_{\omega,\gamma}+(1-t)\pi_{\omega,\rho}](A\times B)
&=t\Lambda(A\cap T_1^{-1}(B))
+(1-t)\Lambda(A\cap T_2^{-1}(B)).
\end{align*}
The measure $t\pi_{\omega,\gamma}+(1-t)\pi_{\omega,\rho}$ is supported on the graph of two functions, $T_1$ and $T_2$.  It can only be optimal if it is supported on the graph of one function, and this will only happen if $T_1=T_2$, $\Lambda$-almost surely, that is, if $\gamma=\rho$.  (See \cite[Corollary 2.9]{alvarez2011} for a rigorous proof.)
We can thus conclude that
\[
\Lambda \textrm{ regular}
\quad\Longrightarrow\quad M \textrm{ strictly convex}.
\]
Since $M$ was already shown to be weakly convex in any case, it follows that
\[
\mathbb P(\Lambda \textrm{ regular})>0
\quad\Longrightarrow\quad M\textrm{ strictly convex}.
\]
Now we turn to the existence of a solution (once existence is established, uniqueness will follow from strict convexity). Let $\mathrm{proj}_K:\mathbb R^d\to K$ denote the projection onto the set $K$, which is well-defined since $K$ is closed and convex, and of course satisfies
\[
\|x - y\|
\geq \|x - \mathrm{proj}_K(y)\|
,\qquad x\in K,\quad y\in\mathbb R^d.
\]
Since $\Lambda$ is concentrated on $K$, the above inequality holds $\Lambda$-almost surely with respect to $x$. Let $T$ be the optimal map from $\Lambda$ to $\gamma$ (a proper map almost surely, as argued above).  Observe that
\[
d^2(\Lambda,\gamma)
=\ownint{K}{}{\|T(x) - x\|^2}\Lambda
\ge \ownint{}{K} {\| \mathrm{proj}_K(T(x)) - x \|^2}\Lambda
\ge d^2(\Lambda,\mathrm{proj}_K\#\gamma),
\]
since $(\mathrm{proj}_K\circ T)\#\Lambda=\mathrm{proj}_K\#(T\#\Lambda)=\mathrm{proj}_K\#\gamma$.  This measure is concentrated on $K$, and taking expectations gives $M(\gamma)\ge M(\mathrm{proj}_K\#\gamma)$.  Hence, the infimum of $M$ equals the infimum of $M$ on $\mathcal P(K)$, the collection of probability measures supported on $K$ (or else, we could project all the remaining mass to $K$ to reduce the total cost further).  The restriction of $M$ to $\mathcal P(K)$ is a continuous functional on a compact set (measures whose support is contained in a common compactum are a compact set in Wasserstein space), and existence follows.

In order to establish that $\lambda$ minimises $M$, we need to justify the following facts:
\begin{align*}
d^2(T\#\lambda,\theta)\qquad\textrm{is measurable for all }\theta\in P(K);\\
\E \ownint {\mathbb R^d}{}{\left(\frac12\|x\|^2 - \phi(x)\right)}{\theta(x)}
= \ownint {\mathbb R^d}{}{\left(\frac12\|x\|^2 - \E\phi(x)\right)}{\theta(x)};\\
\E \phi(x)
=\|x\|^2 /2 + C
\end{align*}
The space
\[
C_b(K,\mathbb R^d)
=\{f:K\to \mathbb R^d;f \textrm{ continuous}\}
\]
(endowed with the supremum norm $\|f\|_\infty=\sup_{x\in K}\|f(x)\|$) is a separable Banach space and therefore any random element $T:(\Omega,\mathcal F,\mathbb P)\to C_b(K,\mathbb R^d)$ is Bochner measurable.  Clearly
\[
d(T\#\lambda,R\#\lambda)
\le \sqrt{\ownint {\mathbb R^d}{}{\|T(x) - R(x)\|^2}{\lambda(x)}}
\le \sup_{x\in K}\|T(x) - R(x)\|
=\| T - R\|_\infty
\]
so that $\Lambda=T\#\lambda$, when viewed as a random measure in the Wasserstein space, is a continuous function of $T$, and hence measurable.  Again by continuity, $d^2(\Lambda,\theta):(\Omega,\mathcal F,\mathbb P)\to \mathbb R$ is measurable.

Let us now show the remaining two Fubini-type assertions.  For simplicity, we assume that $K$ includes the origin.  Then the convex potential $\phi$ of $T$ can be recovered as the line integral
\[
\phi(x)
=\phi_T(x)
=\ownint01{\innprod {T(sx)}x}s
\]
and the Legendre transform $\phi^*:K\to\R$ of $\phi$ by
\[
\phi^*(y)
=\sup_{x\in K}\innprod xy - \phi(x).
\]
One can then verify the following properties (invoking the uniform continuity of $T$), which imply in particular that $\phi$ and $\phi^*$ are measurable:
\begin{enumerate}
\item $\phi$ is a continuous and bounded, hence an element of $C_b(K)$;
\item the same holds for $\phi^*$;
\item the map $T\mapsto\phi$ from $C_b(K,\mathbb R^d)$ to $C_b(K)$ is Lipschitz;
\item the map $\phi\mapsto\phi^*$ from $C_b(K)$ to itself is Lipschitz.
\end{enumerate}
Indeed, we write
\[
\phi(x) - \phi(y)=
\ownint01{\innprod {T(sx)}x}s
-\ownint01{\innprod {T(sy)}y}s
=\ownint01{\innprod {T(sx) - T(sy)}x}s
+\ownint01{\innprod {T(sy)}{x-y}}s
\]
and notice that the first integral vanishes as $y\to x$ by uniform continuity.  The last integral also vanishes by the Cauchy--Schwarz inequality, because $T$ is bounded (as a continuous function on $K$).

Again by the Cauchy--Schwarz inequality,
\[
|\phi_T(x) - \phi_R(x)|
\le \|T - R\|_\infty \|x\|
\le \|T - R\|_\infty \sup_{x\in K}\|x\|
<\infty,
\]
so $\phi:C_b(K,\mathbb R^d)\to C_b(K)$ is Lipschitz with constant $d_K=\sup_{x\in K}\|x\|$.  It is also obvious that $\|\phi^*_T - \phi^*_R\|_\infty \le \|\phi_T - \phi_R\|_\infty$ and that $\phi^*$ is bounded on $K$ because $\phi$ is bounded and $K$ are bounded.  Uniform continuity can be verified directly as follows.  Fix $\delta>0$ and $y,z\in K$ with $\|z-y\|<\delta$.  Then for any $\epsilon>0$ we can pick some $x\in K$ such that
\[
\phi^*(z)\le \innprod xz-\phi(x)+\epsilon
= \innprod xy - \phi(x) + \epsilon + \innprod x{z-y}
\le \phi^*(y) + \epsilon + \delta\sup_{x\in K}\|x\|.
\]
Letting $\epsilon\to0$ and since the supremum is finite, we see that $\phi^*(z) - \phi^*(y)\le d_K \|y-z\|$;  interchanging the roles of $y$ and $z$ above shows that in fact $|\phi^*(z) - \phi^*(y)|\le d_K\|y-z\|$, so $\phi^*$ is even Lipschitz.

It now remains to show that
\[
\E\phi_{T}(x)
=\ownint 01{\innprod {\E T(sx)}x}s,
\qquad x\in K,
\qquad\textrm{and}\quad
\E\ownint K{}{\phi}\theta
=\ownint K{}{\E\phi}\theta
\qquad \forall\theta\in P(K).
\]
Both equalities hold when $T$ is a simple function.  Since $C_b(K,\mathbb R^d)$ is separable, any $T$ can be approximated by simple functions $T_n$ such that $\|T_n\|\le 2\|T\|$ almost surely.  The assumption that $T$ takes values in $K$ almost surely implies that $\E T_n\to \E T$ in the Bochner sense, which means that $\|\E T_n - \E T\|_\infty\to0$.  Let $\phi_n$ be the convex potential of $T_n$.  Then $\|\phi_n - \phi\|_\infty\le \|T_n - T\|_\infty d_K$ and $\|\phi_n\|_\infty\le \|T_n\|_\infty d_K$, which is integrable.  It follows that $\E \phi_n\to \E\phi$ in the Bochner sense in $C_b(K)$, and in particular $\E \phi_n(x)\to \E\phi(x)$ for all $x\in K$, proving the first equality.  The second equality is proven by a similar approximation argument.
\end{proof}

\begin{proof}[Proof of Lemma~\ref{lem:smoothing}]
It is assumed that $\psi(z)=\psi_1(\|z\|)$ with $\psi_1$ non-increasing, strictly positive and
\[
\ownint{\mathbb R^d}{}{\psi(z)}z
=1
=\ownint{\mathbb R^d}{}{\|z\|^2\psi(z)}z.
\]
Let $\Psi(A)=\ownint A{}{\psi(x)}x$ be the corresponding probability measure and recall that $\psi_\sigma(x)=\sigma^{-d}\psi(x/\sigma)$ for $\sigma>0$.

For $y\in K$ set $\tilde\mu_y=\delta\{y\}\ast\psi_\sigma$ and its restricted renormalized version $\mu_y=(1/\tilde \mu_y(K))\tilde \mu_y|_K$, so that $\widehat\Lambda_i=(1/m)\sum_{j=1}^m\mu_{x_j}$, and it is assumed that $m\ge1$ and $x_j\in K$ (because $\Lambda_i(K)=1$).

One way (certainly not optimal, unless $m=1$) to couple $\widehat\Lambda_i$ with $\widetilde\Pi_i/m$ is to send the $1/m$ mass of $\mu_{x_j}$ to $x_j$.  This gives
\[
d^2(\widehat\Lambda_i,\tilde \Pi_i/m)
\le \frac1m\sum_{j=1}^md^2(\mu_{x_j},\delta\{x_j\})
=\frac1m\sum_{j=1}^m \frac 1{\tilde\mu_{x_j}(K)}\ownint K{}{\|x-x_j\|^2\psi_\sigma(x-x_j)}x.
\]
However, for an arbitrary $y\in K$,
\[
\frac 1{\tilde\mu_y(K)}\ownint K{}{\|x-y\|^2\psi_\sigma(x-y)}x
=\frac 1{\tilde\mu_y(K)}\sigma^2\ownint {(K-y)/\sigma}{}{\|z\|^2\psi(z)}z.
\]
The last displayed integral is bounded by 1.  Hence, we seek a lower bound, uniformly in $y$ and $\sigma$, for
\[
\tilde\mu_y(K)
=\ownint K {}{\psi_\sigma(x-y)}x
=\ownint{(K-y)/\sigma}{}{ \psi(x)}x
=\Psi\left(\frac{K-y}\sigma\right).
\]
Since $K-y$ is a convex set that contains the origin, the collection of sets $\{\sigma^{-1}(K-y)\}_{\sigma>0}$ is increasing as $\sigma\searrow0$.  Consequently, if $\sigma\le1$,
\[
\Psi\left( \frac{K-y}\sigma\right)
\ge \Psi(K-y)
=\ownint {K-y}{}{\psi(x)}x
\ge \ownint {K-y}{}{\psi_1(d_K)}x
=\psi_1(d_K)\mathrm{Leb}(K)
>0.
\]
Here $d_K=\sup\{\|x-y\|:x,y\in K\}$ is the (finite) diameter of $K$, and we have used the monotonicity of $\psi_1$.

It follows that for $C_{\psi,K}=[\psi_1(d_K)\mathrm{Leb}(K)]^{-1}<\infty$ (depending only on $\psi$ and $K$),
\begin{equation}\label{eq:upperboundsmooth}
d^2(\mu_y,\delta_y)
\le C_{\psi,K}\sigma^2
,\qquad y\in K
,\quad \sigma\le1.
\end{equation}
Since the bound is uniform in $y$, the proof is complete.  In the context of Remark~\ref{rem:nonisotropic}, one simply needs to replace the term $\psi_1(d_K)$ in $C_{\psi,K}$ by $\delta_\psi(d_K)$.
\end{proof}

\begin{proof}[Proof of Theorem~\ref{thm:consistency}]
Convergence in probability in part (1) follows as in \cite[pp.\ 793--794]{panzem15}, using \eqref{eq:smoothbound}. For convergence almost surely, let $a=(a_1,\dots,a_d)\in\mathbb R^d$.  A straightforward generalisation of the argument in \cite[pp.\ 794--795]{panzem15} gives
\[
\mathbb P\left( \frac{\widetilde \Pi_i((-\infty,a])} {\tau_n} - \Lambda_i((-\infty,a]) \to 0  \right)
= 1,
\]
where for $-\infty\le a_i\le b_i\le\infty$ ($i=1,\dots,d$) we denote
\[
(a,b] = (a_1,b_1]\times\dots\times(a_d,b_d].
\]
Consequently
\[
\mathbb P\left( \frac{\widetilde \Pi_i((-\infty,a])} {\tau_n} - \Lambda_i((-\infty,a]) \to 0
\textrm{ for any }  a \in \mathbb Q^d  \right)
= 1.
\]
For a general $a\in\mathbb R^d$ there exist sequences $a^k\nearrow a\swarrow b^k$ with $a^k,b^k\in\mathbb Q^d$ (that is, $a_i^k\nearrow a_i\swarrow b_i^k$ for any coordinate $i$).  Since for any $k$
\[
\frac{\widetilde \Pi_i((-\infty,a])} {\tau_n} - \Lambda_i((-\infty,a])
\le \frac{\widetilde \Pi_i((-\infty,b^k])} {\tau_n} - \Lambda_i((-\infty,b^k]) + \Lambda_i((-\infty,b^k]) - \Lambda_i((-\infty,a]),
\]
it follows that with probability one
\[
\limsup_{n\to\infty}
\frac{\widetilde \Pi_i((-\infty,a])} {\tau_n} - \Lambda_i((-\infty,a])
\le 
  \Lambda_i\left(  (-\infty,b^k] \setminus (-\infty,a]  \right)
  \to0,
  \qquad k\to\infty,
\]
as the sequence of sets at the right-hand side converges monotonically to the empty set.

Similarly, with probability one
\[
\liminf_{n\to\infty}
\frac{\widetilde \Pi_i((-\infty,a])} {\tau_n} - \Lambda_i((-\infty,a])
\ge  \Lambda_i\left(  (-\infty,a] \setminus (-\infty,a^k]  \right)
  \to\Lambda_i((-\infty,a] \setminus (-\infty,a)),
\]
and the right-hand side vanishes because $\Lambda_i$ is assumed absolutely continuous (the set $(-\infty,a] \setminus (-\infty,a)$ is union of $d$ $d-1$-dimensional rays).  Specifying $a=\infty$ shows that almost surely $\widetilde\Pi_i(K)/\tau_n\to1$ and we conclude that almost surely $\widetilde\Pi_i/\widetilde\Pi_i(K)\to \Lambda_i$ weakly.  Further, $d(\widehat\Lambda_i,\widetilde\Pi_i/N_i)\to0$ since $\sigma_n\to0$ by Lemma~\ref{lem:smoothing}.

We sketch the main ideas of the proof of (2);  more details can be found in \cite[pp.\ 795--797]{panzem15}.  We wish to show that
\[
\widehat M_n(\gamma)
=\frac1n\sum_{i=1}^n d^2(\widehat\Lambda_i,\gamma)
\to \mathbb Ed^2(\Lambda_i,\gamma)
= M(\gamma)
,\quad\textrm{uniformly in }\gamma.
\]
In order to do this we write
\[
\widehat M_n(\gamma)  -  M(\gamma)
= \left[\widehat M_n(\gamma) - M_n(\gamma)\right]
+ \left[M_n(\gamma) - M(\gamma)\right],
\]
where we introduce the empirical Fr\'echet functional
\[
M_n(\gamma)
=\frac1n\sum_{i=1}^n d^2(\Lambda_i,\gamma).
\]

Since for any three probability measures on $K$ it holds that
\begin{align*}
d(\mu,\nu)
\le \sqrt{\sup_{\gamma\in P(K^2)}\ownint {K^2}{}{ \|x-y\|^2}{\gamma(x,y)}}
\le \sqrt{\sup_{x,y\in K}\|x-y\|^2}
= d_K
<\infty;\\
|d^2(\mu,\rho) - d^2(\nu,\rho)|
=|d(\mu,\rho) + d(\nu,\rho)|  |d(\mu,\rho) - d(\nu,\rho)|
\le 2 d_K d(\mu,\nu),
\end{align*}
we see that
\[
\sup_{\gamma\in P(K)} |\widehat M_n(\gamma) - M_n(\gamma)|
\le \frac{2d_K}n  \sum_{i=1}^n  d\left(  \widehat\Lambda_i, \Lambda_i\right)
=  \frac{2d_K}n \sum_{i=1}^n X_{ni}
= 2d_K \overline X_n.
\]
Each $X_{ni}$ is a function of $T_i$, $\Pi_i^{(n)}$ and $\sigma_i^{(n)}$, and $0\le X_{ni}\le d_K$.  If $\sigma_i^{(n)}$ is a function of $\widetilde\Pi_i^{(n)}=T_i\#\Pi_i^{(n)}$ only, then $X_{ni}$ are iid across $i$.  Part (1) shows that $X_{n1}\to0$ in probability and by the bounded convergence theorem $\mathbb E\overline X_n=\mathbb EX_{n1}\stackrel p\to0$ and therefore the above expression converges to 0 in probability.  In general, $L^1$-convergence of random variables does not guarantee convergence almost surely.  As we deal with averages, however, almost sure convergence can be established:  let $Y_{ni}=X_{ni} - \mathbb EX_{ni}\in[-d_K,d_K]$.  Then $Y_{ni}$ are mean zero iid random variables, so that
\[
\mathbb P\left(  \left| \overline X_n - \mathbb E\overline X_n \right| > \epsilon \right)
=\mathbb P\left(  \overline Y_n^4  > \epsilon^4\right)
\le \frac{n\mathbb E\left[Y_{n1}^4\right]  + 3n(n-1)\mathbb E\left[Y_{n1}^2\right]} {\epsilon^4n^4}
\le  \frac {3 \max(d_K^4,d_K^2)}  {\epsilon^4 n^2}.
\]
By the Borel--Cantelli lemma, $|\overline X_n - \mathbb E\overline X_n|\stackrel{as}\to0$, hence $\overline X_n\stackrel{as}\to0$.

If the smoothing is not carried out independently across trains, then $X_{ni}$ may be correlated across $i$.  In that case, one can introduce the functional $M_n^*(\gamma)=n^{-1}\sum_{i=1}^n d^2\left(\widetilde\Pi_i/N_i,\gamma\right)$ and proceed as in \cite{panzem15}.  For $M_n^*$ to be well-defined one may use Lemma~\ref{lem:numpoints} and that requires $\tau_n/\log n\to\infty$.

Finally, observe that by the strong law of large numbers $M_n(\gamma)\stackrel{as}\to M(\gamma)$ for all $\gamma\in P(K)$.  That the convergence is uniform follows from the equicontinuity of the collection $\{M_n\}_{n=1}^\infty$ (they are $2d_K$-Lipschitz).  We have thus established
\[
\sup_{\gamma\in P(K)} |\widehat M_n(\gamma) - M(\gamma)|
\stackrel{as}\to 0,
\qquad n\to\infty.
\]
By standard arguments, the minimiser $\widehat\lambda_n$ of $\widehat M_n$ converges to the minimiser $\lambda^*$ of $M$, since the latter is unique by Theorem~\ref{thm:meanTid}.  But $\lambda^*=\lambda$ by the hypothesis.
\end{proof}

\subsection*{Proofs of statements from Section~\ref{appendix}}

\begin{proof}[Proof of Lemma~\ref{lem:densityodist}]  For any $1>\epsilon>0$ there exists $0<t_\epsilon$ such that for $t<t_\epsilon$,
\[
\frac {\mathrm{Leb}(B_t(x_0)\cap G)} {\mathrm{Leb}(B_t(x_0))}
> 1 - \epsilon^d.
\]
Fix $z$ such that $t = t(z) = \|z - x_0\|<t_\epsilon$.  The intersection of $B_t(x_0)$ with $B_{2\epsilon t}(z)$ includes a ball of radius $\epsilon t$ centred at $y=x_0+(1-\epsilon)(z-x_0)$, so that
\[
\frac {\mathrm{Leb}(B_t(x_0)\cap B_{2\epsilon t}(z))}  {\mathrm{Leb}(B_t(x_0))}
\ge \frac{\mathrm{Leb}(B_{\epsilon t}(y)) } {\mathrm{Leb}(B_t(x_0))}
=\epsilon ^d.
\]
It follows that $G\cap B_{2\epsilon t}(z)$ is nonempty.  In other words:  for any $\epsilon>0$ there exists $t_\epsilon$ such that if $\|z-x_0\|<t_\epsilon$, then there exists $x\in G$ with $\|z - x\|\le 2\epsilon t(z) = 2\epsilon \|z - x_0\|$.  This means precisely that $\delta(z)=o(\|z - x_0\|)$ as $z\to x_0$.
\end{proof}

\begin{proof}[Proof of Lemma~\ref{lem:linfchull}]  Assume $\epsilon<\rho$ (there is nothing to prove otherwise).  Take a corner of the $\ell_\infty$ ball of radius $\rho'<\rho$ around $x_0$,
\[
y = x_0 + \rho'(e_1,\dots,e_d)
,\qquad e_d\in\{\pm1\},
\]
and write $y=\sum a_iz_i$ as a (finite) convex combination of elements of $Z$.  Then $\tilde y=\sum a_i\tilde z_i\in \mathrm{conv}(\tilde Z)$ is such that $\|\tilde y - y\|_\infty\le \epsilon$.  It follows that $\tilde y$ lies at the same quadrant as $y$ with each coordinate larger in absolute value than $\rho'-\epsilon$.  In other words, $\tilde y$ is ``more extreme" than the corner
\[
x_0 + (\rho' - \epsilon)(e_1,\dots,e_d)
\]
of the $\ell_\infty$-ball $B_{\rho'-\epsilon}^\infty(x_0)$.  Since this is true for all the corners, $\mathrm{conv}(\tilde Z)\supseteq B_{\rho'-\epsilon}(x_0)$ for any $\rho'<\rho$.  Now let $\rho'\nearrow \rho$ to conclude.
\end{proof}

\end{document}